\def\eqref#1{equation~(\ref{#1})}
\def\floor#1{\left\lfloor #1 \right\rfloor}
\def\1{\bf{1}}
\newcommand{\Norm}[1]{\left\| #1 \right\|}
\newcommand{\norm}[1]{\left\| #1 \right\|_2}
\def\inner#1#2{\left\langle #1, #2 \right\rangle}
\def\eps{{\varepsilon}}
\def\vzero{{\bf{0}}}
\def\vone{{\bf{1}}}
\def\vb{{\bf{b}}}
\def\vc{{\bf{c}}}
\def\ve{{\bf{e}}}
\def\vu{{\bf{u}}}
\def\vv{{\bf{v}}}
\def\vw{{\bf{w}}}
\def\vx{{\bf{x}}}
\def\vy{{\bf{y}}}
\def\fA{{\mathcal{A}}}
\def\fB{{\mathcal{B}}}
\def\fD{{\mathcal{D}}}
\def\fF{{\mathcal{F}}}
\def\fH{{\mathcal{H}}}
\def\fL{{\mathcal{L}}}
\def\fO{{\mathcal{O}}}
\def\fP{{\mathcal{P}}}
\def\fX{{\mathcal{X}}}
\def\fY{{\mathcal{Y}}}
\def\BN{{\mathbb{N}}}
\def\BP{{\mathbb{P}}}
\def\BR{{\mathbb{R}}}
\def\mA {{\bf A}}
\def\mB {{\bf B}}
\def\mD {{\bf D}}
\def\mI {{\bf I}}
\newcommand{\E}{\mathbb{E}}
\newcommand{\Var}{\mathrm{Var}}
\DeclareMathOperator*{\argmin}{arg\,min}
\DeclareMathOperator*{\argminmax}{arg\,min\,max}
\DeclareMathOperator{\diag}{diag}
\DeclareMathOperator{\diam}{diam}
\DeclareMathOperator{\spn}{span}
\DeclareMathOperator{\prox}{prox}
\DeclareMathOperator{\geo}{Geo}
\theoremstyle{plain}
\newtheorem{theorem}{Theorem}[section]
\newtheorem{definition}[theorem]{Definition}
\newtheorem{lemma}[theorem]{Lemma}
\newtheorem{remark}[theorem]{Remark}
\newtheorem{corollary}[theorem]{Corollary}
\newtheorem{proposition}[theorem]{Proposition}
\def\Ddots{\mathinner{\mkern1mu\raise\p@
                \vbox{\kern7\p@\hbox{.}}\mkern2mu
                \raise4\p@\hbox{.}\mkern2mu\raise7\p@\hbox{.}\mkern1mu}}
\newcommand{\pr}[2][]{\BP_{#1}\left[ #2 \right]}
\newcommand*{\rom}[1]{\expandafter\@slowromancap\romannumeral #1@}
\newcommand{\bone}{\mathbbm{1}}
\def\todo#1 {\textcolor{red}{Todo: #1}}
\begin{document}

\title{Lower Complexity Bounds of Finite-Sum Optimization Problems: The Results and Construction}

\author{
  Yuze Han 
  \thanks{Equal Contribution. }
  \thanks{School of Mathematical Sciences,
  Peking University; email: 
  \texttt{hanyuze97@pku.edu.cn}.}
  \and
  Guangzeng Xie 
  \footnotemark[1]
  \thanks{Academy for Advanced Interdisciplinary Studies,
  Peking University; email: \texttt{smsxgz@pku.edu.cn}.}
  \and
  Zhihua Zhang 
  \thanks{School of Mathematical Sciences,
  Peking University; email:
  \texttt{zhzhang@math.pku.edu.cn}.}
}


\maketitle

\begin{abstract}
In this paper we study the lower complexity bounds for finite-sum optimization problems, where the objective is the average of $n$ individual component functions.
We consider Proximal Incremental First-order (PIFO) algorithms which have access to the gradient and proximal oracles for each component function. To incorporate loopless methods, we also allow PIFO algorithms to obtain the full gradient infrequently.
We develop a novel approach to constructing the hard instances, which partitions the tridiagonal matrix of classical examples into $n$ groups. This construction is friendly to the analysis of PIFO algorithms.
Based on this construction,
we establish the lower complexity bounds for finite-sum minimax optimization problems when the objective is convex-concave or nonconvex-strongly-concave and the class of component functions is $L$-average smooth. Most of these bounds are nearly matched by existing upper bounds up to log factors.
We can also derive similar lower bounds for finite-sum minimization problems as previous work under both smoothness and average smoothness assumptions.
Our lower bounds imply that proximal oracles for smooth functions are not much more powerful than gradient oracles.
\end{abstract}


\section{Introduction}
We consider the following optimization problem
\begin{align}\label{prob:main}
    \min_{\vx \in \fX} \max_{\vy \in \fY} f(\vx, \vy) \triangleq \frac{1}{n} \sum_{i=1}^n f_i(\vx, \vy),
\end{align}
where 
the feasible sets  $\fX \subseteq \BR^{d_x}$ and $\fY \subseteq \BR^{d_y}$ are closed and convex. 
This formulation contains several popular machine learning applications such as matrix games~\citep{carmon2019variance,carmon2020coordinate,ibrahim2019linear}, 
regularized empirical risk minimization~\citep{zhang2017stochastic,tan2018stochastic},
AUC maximization~\citep{joachims2005support,ying2016stochastic,shen2018towards}, 
robust optimization~\citep{ben2009robust,yan2019stochastic} 
and reinforcement learning~\citep{du2017stochastic,dai2018sbeed}. 

A popular approach for solving minimax problems is the first-order algorithm which iterates with gradient and proximal point operation~\citep{chambolle2011first,chambolle2016ergodic,mokhtari2019proximal,mokhtari2019unified,thekumparampil2019efficient,luo2019stochastic}. Along this line,
\citet{zhang2019lower} and \citet{ibrahim2019linear} presented tight lower bounds for solving strongly-convex-strongly-concave minimax problems by first-order algorithms. 
\citet{ouyang2018lower} studied a more general case that the objective function is only convex-concave.
However, these analyses~\citep{ouyang2018lower,zhang2019lower,ibrahim2019linear} do not consider the specific finite-sum structure as in Problem~(\ref{prob:main}). They only considered the deterministic first-order algorithms which are based on the full gradient and exact proximal point iteration.

In big data regimes, the number of components $n$ in Problem~(\ref{prob:main}) could be very large and we would like to devise randomized optimization algorithms that avoid accessing the full gradient frequently. For example, \citet{palaniappan2016stochastic} used stochastic variance reduced gradient (SVRG) algorithms to solve Problem~(\ref{prob:main}). Similar to convex optimization, one can accelerate it by catalyst~\citep{lin2018catalyst,yang2020catalyst} and proximal point  techniques~\citep{defazio2016simple,luo2019stochastic}. 
Note that SVRG is a double-loop algorithm, where the full gradient is calculated periodically with a constant interval.
There are also some loopless algorithms where a coin flip decides whether to calculate the full gradient at each iteration \citep{alacaoglu2021stochastic, luo2021near}.
Although randomized algorithms are widely used for solving minimax problems, the study of their lower bounds  is still open. All of the existing lower bound analysis 
focuses on convex or nonconvex minimization problems~\citep{agarwal2015lower,woodworth2016tight,arjevani2016dimension,lan2017optimal,hannah2018breaking,fang2018spider}.  

This paper considers randomized PIFO algorithms for solving Problem~(\ref{prob:main}), which are formally defined in Definition \ref{def:alg}. 
These algorithms have access to the Proximal Incremental First-order Oracle (PIFO)
\begin{align}\label{eq:oracle}
    h^{\mathrm{PIFO}}_{f_i}(\vx, \vy, \gamma) 
    \triangleq & \big[ f_i(\vx,\vy), \nabla_\vx f_i(\vx,\vy),
    -\nabla_\vy f_i(\vx,\vy),
    \prox^\gamma_{f_i}(\vx,\vy)
    \big],
\end{align}
where $i\in\{1,\dots,n\}$, $\gamma>0$, and the proximal operator is defined as
\begin{align*}
    \prox^\gamma_{f_i}(\vx, \vy) \triangleq 
   \argminmax_{\vu \in \BR^{d_x},\vv \in \BR^{d_y}} \left\{ f_i(\vu, \vv)
   +\frac{1}{2\gamma}\norm{\vx-\vu}^2 -\frac{1}{2\gamma} \norm{\vy-\vv}^2 \right\}.
\end{align*}
Compared to Incremental First-order Oracle (IFO), which is defined as
$h^{\mathrm{IFO}}_{f_i} (\vx, \vy) 
=
[f_i(\vx, \vy),$
$\nabla_\vx f_i (\vx, \vy), - \nabla_\vy f_i (\vx, \vy) ]$, 
PIFO additionally provides the proximal oracle of the component function.
To incorporate loopless methods, we also allow PIFO algorithms to access the full gradient infrequently with the interval obeying geometric distributions.

We consider the general setting where $f(\vx, \vy)$ is $L$-smooth and $(\mu_x, \mu_y)$-convex-concave, i.e., the function $f(\cdot, \vy) - \frac{\mu_x}{2} \norm{\cdot}^2$ is convex for any $\vy \in \fY$ and the function $-f(\vx, \cdot) - \frac{\mu_y}{2} \norm{\cdot}^2$ is convex for any $\vx \in \fX$. 
When $\mu_x, \mu_y \ge 0$, our goal is to find an $\eps$-suboptimal solution $(\hat{\vx}, \hat{\vy})$ to Problem~(\ref{prob:main}) such that the primal-dual gap is less than $\eps$, i.e.,
\begin{align*}
    \max_{\vy \in \fY} f(\hat{\vx}, \vy) - \min_{\vx \in \fX} f(\vx, \hat\vy) < \eps.
\end{align*}
On the other hand, when $\mu_x < 0, \mu_y > 0$, $f(\vx, \vy)$ is called a nonconvex-strongly-concave function, which has been widely studied in \citep{rafique2018non, lin2020near, ostrovskii2020efficient, luo2020stochastic}.
In this case, our goal is instead to find an $\eps$-stationary point $\hat{\vx}$ of $\phi_f(\vx) \triangleq \max_{\vy \in \fY} f(\vx, \vy)$, which is defined as
\begin{align*}
    \norm{\nabla \phi_f(\hat{\vx})} < \eps.
\end{align*}
It is worth noting that
by setting the feasible set of $\vy$ as a singleton, the minimax problem becomes a minimization problem.
Then we can omit the dependence of $f$ on $\vy$ and rewrite the function as $f(\vx)$ with some abuse of notation. When $f(\vx)$ is convex, our goal is to find an $\eps$-suboptimal solution $\hat{\vx}$ such that $f(\hat{\vx}) - \min_{\vx \in \fX} f(\vx) < \eps$, while when $f(\vx)$ is nonconvex, our goal is to find an $\eps$-stationary point $\hat{\vx}$ such that $\norm{\nabla f(\hat{\vx})} < \eps$.
\subsection{Contributions}
Our contributions are summarized as follows.
\begin{enumerate}
    \item We propose a novel construction framework to analyze lower complexity bounds for finite-sum optimization problems. Different from previous work, we decompose
    the classical tridiagonal matrix in \citet{nesterov2013introductory} into $n$ groups and each component function is defined in terms of only one group.
    Such a construction facilitates the analysis for both IFO and PIFO algorithms (see Definition~\ref{def:alg}).
    \item We establish the lower complexity bounds for finite-sum minimax problems when $f$ is convex-concave or nonconvex-strongly-concave and $\{f_i\}_{i=1}^n$ is $L$-average smooth (see Definition~\ref{def:average}). When $f$ is convex-concave, our lower bounds nearly match existing upper bounds up to log factors. The results are summarized in Table~\ref{table-minimax-average}\footnote{The work of \citet{zhang2021complexity} appeared on arXiv during the review process of our work.}.
    \item  For finite-sum minimization problems, we derive similar lower bounds as \citet{woodworth2016tight, hannah2018breaking, zhou2018stochastic} when each $f_i$ is $L$-smooth or $\{ f_i \}_{i=1}^n$ is $L$-average smooth.
    The results are summarized in Tables \ref{table-min} and \ref{table-min-average} in Section \ref{sec:min}.
    Compared to previous work, our framework provides more intuition about the optimizing process and requires fewer dimensions to construct the hard instances.
    \item For most cases, our lower bounds are nearly matched by IFO algorithms. This implies that the proximal oracles for smooth functions are not much more powerful than gradient oracles, which is consistent with the observation in \citet{woodworth2016tight}.
\end{enumerate}

\renewcommand{\arraystretch}{1.8}
\begin{table}[t]
\caption{Upper and lower bounds with the assumption that $\{f_i\}_{i=1}^n$ is $L$-average smooth and $f$ is $(\mu_x, \mu_y)$-convex-concave. When $\mu_x \ge 0$ and $\mu_y \ge 0$, the goal is to find an $\eps$-suboptimal solution with $\diam(\fX) \le 2 R_x, \diam(\fY) \le 2 R_y$. And when $\mu_x < 0$, the goal is to find an $\eps$-stationary point of the function $\phi_f(\vx) \triangleq \max_{\vy \in \fY} f(\cdot, \vy)$ with $\Delta = \phi_f(\vx_0) - \min_{\vx}\phi_f(\vx)$ and $\fX = \BR^{d_x}, \fY = \BR^{d_y}$. The condition numbers are defined as $\kappa_x = L / \mu_x$ and $\kappa_y = L / \mu_y$ when $\mu_x, \mu_y > 0$. }
\label{table-minimax-average}
\footnotesize
\setlength{\tabcolsep}{3pt}
\begin{center}
 \begin{tabular}{|c|c|c|} 
 \hline
 Cases & Upper or Lower bounds & References \\ 
  \hline
  \multirow{2}{*}{$\mu_x > 0, \mu_y > 0$} & $ \tilde{\fO} \left( \sqrt{ n \left(\sqrt{n} + \kappa_x
  \right)\left(\sqrt{n} + \kappa_y \right)} \log(1/\eps) \right) $ & \citet{luo2021near} \\
  \cline{2-3}
 & $\Omega\left( \sqrt{ n \left(\sqrt{n} + \kappa_x \right)\left(\sqrt{n} + \kappa_y \right)} \log(1/\eps)\right)$ & Theorem \ref{thm:average:strongly-strongly}  \\
 \hline
 \multirow{2}{*}{ $\mu_x = 0, \mu_y > 0$ } & $ \tilde{\fO} \Big( \big( n + R_x n^{3/4} \sqrt{\frac{L}{\eps}} +
 R_x \sqrt{ \frac{n L \kappa_y }{\eps} }
 + n^{3/4}\! \sqrt{\kappa_y } \big) \log \left( \frac{1}{\eps} \right) \Big) $ & \citet{luo2021near} \\
 \cline{2-3}
 & $\Omega\Big(n + R_x n^{3/4} \sqrt{\frac{L}{\eps}} + 
 R_x \sqrt{ \frac{n L \kappa_y }{\eps} }
 + n^{3/4}\! \sqrt{\kappa_y } \log \left( \frac{1}{\eps} \right) \Big)$ & Theorem \ref{thm:average:convex-strongly} \\
 \hline
 \multirow{2}{*}{ $\mu_x = 0, \mu_y = 0$ } & $\tilde{\fO} \Big( \big(n + \frac{\sqrt{n} L R_x R_y}{\eps} + (R_x {+} R_y) n^{3/4} \sqrt{\frac{ L}{\eps}} \big) \log \left( \frac{1}{\eps} \right)  \Big) $ & \citet{luo2021near} \\
 \cline{2-3}
 & $\Omega\Big(n + \frac{\sqrt{n} L R_x R_y}{\eps} + (R_x {+} R_y) n^{3/4} \sqrt{\frac{ L}{\eps}}\Big)$ & Theorem \ref{thm:average:convex-concave} \\
 \hline
 \multirow{2}{*}{ \makecell[c]{ $\mu_x < 0, \mu_y > 0$, \\ $\kappa_y = \Omega(n)$ } } 
 & $\tilde{\fO} \left( (n + n^{3/4} \sqrt{\kappa_y} ) \Delta L \eps^{-2}  \right)$ & \citet{zhang2021complexity} \\
 \cline{2-3}
 & $ \Omega \left( n + \sqrt{n \kappa_y} \Delta L \eps^{-2}  \right) $ & Theorem \ref{thm:average:nonconvex-strongly}; \citet{zhang2021complexity} \\
 \hline
\end{tabular}
\end{center}
\end{table}
\renewcommand{\arraystretch}{1}

\subsection{Related Work}

\paragraph{Lower bounds for finite-sum minimization problems}
There has been extensive study on this topic.
\citet{agarwal2015lower} established the lower bound $\Omega(n + \sqrt{n(\kappa-1) } \log (1/\eps) )$ when each component is $L$-smooth and their average is $\mu$-strongly convex by a resisting oracle construction, where $\kappa = L / \mu$ is the condition number. However, their lower bound only applies to deterministic algorithms. \citet{lan2017optimal}  obtained the lower bound $\Omega( (n + \sqrt{n \kappa}) \log (1/\eps) )$ for randomized incremental gradient methods, but their bound does not apply to multi-loop methods such as SVRG \citep{johnson2013accelerating} and SARAH \citep{nguyen2017sarah}.
\citet{woodworth2016tight} provided the lower bound $\Omega(n + \sqrt{n \kappa} \log(1/\eps) )$ for any randomized algorithms using gradient and proximal oracles.
Moreover, when the objective is only convex, their lower bound is $\Omega(n + \sqrt{n L /\eps} )$.
\citet{arjevani2016dimension} established a similar lower bound for the strongly convex case and their bound also applies to stochastic coordinate-descent methods.
\citet{hannah2018breaking} improved this bound to $\Omega \big( \frac{n \log (1/\eps)}{(1 + \log ( n / \kappa ) )_+ } \big)  $ when $\kappa = \fO(n)$.
\citet{zhou2019lower} proved lower bounds $\Omega( n + n^{3/4}\sqrt{\kappa} \log(1/\eps) )$ and $\Omega(n + n^{3/4} \sqrt{L/\eps})$ for the strongly convex and convex case respectively under the weaker condition that the class of component function is $L$-average smooth.

When the objective is nonconvex, \citet{fang2018spider}
proved the lower bound $\Omega( L \sqrt{n} / \eps^2 ) $ for 
$\eps = \fO (\sqrt{L} / n^{1/4}) $ under the average smooth condition.
\citet{li2021page} improved the bound to $\Omega(n + L \sqrt{n} / \eps^2)$ for an arbitrary $\eps$.
Under a more refined condition that objective is $\mu$-weakly convex (see Definition \ref{def:weakly}),
 \citet{zhou2019lower} established the lower bound to $\Omega ( 1/\eps^2 \min\{ n^{3/4} \sqrt{ L \mu}, \sqrt{n} L \}  )$ when $\eps$ is sufficiently small.
They also provided the lower bound $\Omega ( 1/\eps^2 \min\{ \sqrt{ n L \mu},  L \}  ) $ when each component is $L$-smooth.

\paragraph{Upper bounds for finite-sum minimax problems} 
For Problem~(\ref{prob:main}), if $\mu_x, \mu_y \ge 0$ and
each $f_i$ is $L$-smooth, the best known upper bound is $\fO\left(\left(n + \sqrt{n} (\kappa_x + \kappa_y) \right) \log(1/\eps) \right)$~\citep{carmon2019variance,luo2019stochastic}.
 Furthermore, if each $f_i$ has ${L}$-cocoercive gradient, which is a stronger assumption than smoothness, \citet{chavdarova2019reducing} provided an upper bound $\fO\left(\left(n + \kappa_x + \kappa_y \right) \log(1/\eps) \right)$.
 If $\{ f_i \}_{i=1}^n$ is $L$-average smooth, Accelerated SVRG \citep{palaniappan2016stochastic} attained the upper bound
$\tilde{\fO} \left( \left( n + \sqrt{n} (\kappa_x + \kappa_y) \right) \log (1 / \eps) \right)$ and \citet{alacaoglu2021stochastic} obtained the bound ${\fO} \left( \left( n + \sqrt{n} (\kappa_x + \kappa_y) \right) \log (1 / \eps) \right)$. Then \citet{luo2021near} improved this bound to $\tilde{\fO} ( \sqrt{  n (\sqrt{n} + \kappa_x) (\sqrt{n} + \kappa_y)  }   \log (1 / \eps) )$ by catalyst acceleration.
The same technique was also employed to derive lower bounds for the convex-strongly-concave case where $\mu_x = 0, \mu_y > 0$ \citep{yang2020catalyst, luo2021near}.

For the convex-concave case ($\mu_x = \mu_y = 0$), \citet{carmon2019variance} established the upper bound $\fO(n + \sqrt{n} L / \eps )$ under the smoothness assumption, while \citet{alacaoglu2021stochastic} developed the same upper bound under the average smoothness assumption. \citet{luo2021near} still used the catalyst acceleration and derived a similar bound.

In terms of the nonconvex-strongly-concave case ($\mu_x < 0, \mu_y > 0$),
\citet{luo2020stochastic} proposed an upper bound $\tilde\fO\left( n + \min\{  \sqrt{n} \kappa_y^2, \kappa_y^{2} + n \kappa_y \} \eps^{-2} \right)$, while \citet{zhang2021complexity} developed an upper bound $\tilde\fO\left( (n + n^{3/4} \sqrt{\kappa_y}) L \eps^{-2} \right)$. The latter is better when $n = \fO(\kappa^4)$. We emphasize that both results are under the average smoothness assumption.

\paragraph{Loopless methods} Variance-reduced methods for finite-sum minimization problems such as SVRG~\citep{johnson2013accelerating}, Katyusha~\citep{allen2017katyusha} and  SARAH~\citep{nguyen2017sarah} have a double-loop design where the full gradient needs to be calculated periodically. Recently, many researchers aim to study their loopless variants or devise new loopless methods such that whether to access the full gradient depends on a coin toss with a small head probability.
Equivalently speaking, the inner loop size obeys the geometric distribution with a small success probability.
Such a design facilities theoretical analysis without deteriorating the convergence rates.
For example, loopless SVRG (L-SVRG) was first proposed in 
\citet{hofmann2015variance}
and then further analyzed in 
\citet{kovalev2020don, qian2021svrg}
together with loopless Katyusha (L-Katyusha). Loopless SARAH (L2S) was developed in
\citet{li2020convergence}.
Other loopless methods include but are not limited to KatyushaX~\citep{allen2018katyushax}, PAGE~\citep{li2021page} and Anita~\citep{li2021anita}.
For finite-sum minimax problems, there are also many loopless methods~\citep{loizou2020stochastic, alacaoglu2021stochastic, beznosikov2022stochastic}.

\paragraph{The proximal oracle}
The proximal oracle provides more information than the gradient oracle and has been used in algorithm design~\citep{shalev2013stochastic, defazio2016simple, lan2017optimal, luo2019stochastic}.
Compared with catalyst acceleration, employing proximal oracles would neither increase the number of loops nor induce additional parameter tuning.
When each component function enjoys a simple form~\citep{zhang2017stochastic, du2017stochastic, lan2017optimal, carmon2019variance}, 
the proximal operator can be computed efficiently.
In terms of the power of proximal oracles, \citet{woodworth2016tight}
have shown that for smooth functions, the gradient oracle is sufficient for the optimal rate.
As a comparison,
for nonsmooth functions, having access to proximal oracles does reduce the complexity and \citet{woodworth2016tight} presented optimal
methods that improve over those only using gradient oracles.

\subsection{Organization}
The remainder of this paper is organized as follows. 
In Section~\ref{sec:preliminaries}, we introduce some necessary notation and definitions and give a concentration inequality for geometric distributions.
In Section~\ref{sec:alg}, we present and discuss the definition of PIFO algorithms.
In Section~\ref{sec:frame}, we define the optimization complexity and construct the hard instances for Problem~(\ref{prob:main}).
In Sections~\ref{sec:minimax} and \ref{sec:min}, we provide and analyze our lower bounds for finite-sum minimax and minimization problems respectively.
Finally, in Section~\ref{sec:conclusion}, we summarize our results and propose some future research directions.
\section{Preliminaries}\label{sec:preliminaries}
In this section, we present some necessary notation and definitions used in our paper and then give a concentration inequality about geometric distributions.

\paragraph{Notation}
We denote the set $\{1,2,\dots, n \}$ by $[n]$.
$a_+ \triangleq \max\{a, 0\}$ represent the positive part of a real number.
The projection operator is defined as  $\fP_{\fX} (\vx) \triangleq \argmin_{\vx' \in \fX} \norm{\vx' - \vx} $ where $\fX$ is a convex set and $\norm{\cdot}$ is the Euclidean norm.
We use $\vzero$ for all-zero vectors and $\ve_i$ for the unit vector with the $i$-th element equal to $1$ and others equal to $0$. Their dimensions will be specified by an additional subscript, if necessary, and otherwise are clear from the context.
We use $\geo(p)$ to denote the geometric distribution with success probability $p$, i.e., $Y \sim \geo(p)$ implies $\pr{Y = k} = (1 - p)^k p \text{ for } 0 < p \le 1, k \in \{0,1,2,\dots \}$.
Finally, we use the notation $\fO(\cdot), \Omega(\cdot), \Theta(\cdot)$ to hide absolute constants which do not depend on any problem parameter, and notation $\tilde{\fO}(\cdot)$ to hide absolute constants and log factors.

\begin{definition}\label{def:smooth}
    For a differentiable function $\varphi(\vx)$ from $\fX$ to $\BR$ and $L>0$,  $\varphi$ is said to be $L$-smooth if its gradient is $L$-Lipschitz continuous; that is, for any $\vx_1, \vx_2 \in \fX$, we have
    \begin{align*}
        \|\nabla \varphi(\vx_1) - \nabla \varphi(\vx_2)\|_2 \leq L \norm{\vx_1 - \vx_2}.
    \end{align*}
\end{definition}

\begin{definition}\label{def:average}
    For a class of differentiable functions $\{\varphi_i(\vx): \fX \to \BR \}_{i=1}^n$ and $L > 0$, $\{ \varphi_i \}_{i=1}^n$ is said to be $L$-average smooth if for any $\vx_1, \vx_2 \in \fX$, we have
    \begin{align*}
        \frac{1}{n} \sum_{i=1}^n \|\nabla \varphi_i(\vx_1) - \nabla \varphi_i(\vx_2)\|_2^2 \leq L^2 \norm{\vx_1 - \vx_2}^2.
    \end{align*}
\end{definition}
The assumption of average smoothness is widely used in many finite-sum optimizations~\citep{zhou2018stochastic, fang2018spider, zhou2019lower, alacaoglu2021stochastic}.

Now we discuss the relationship between smoothness and average smoothness.
For a class of differentiable
functions $\{ \varphi_i(\vx): \fX \rightarrow \BR \}_{i=1}^n$ and their average $\bar{\varphi} (\vx) = \frac{1}{n} \sum_{i=1}^n \varphi_i(\vx)$, we have the following result
\begin{align*}
    \varphi_i \text{ is } L \text{-smooth}, \forall i \Longrightarrow 
    \{ \varphi_i  \}_{i=1}^n \text{ is } L \text{-average smooth} \Longrightarrow
    \bar{\varphi} \text{ is } L \text{-smooth}.
\end{align*}
Moreover, suppose that $\varphi_i$ is $L_i$-smooth, $\bar{\varphi}$ is $L$-smooth and $\{ \varphi_i \}_{i=1}^n$ is $L'$-average smooth, we have $L \le L' \le \sqrt{ \frac{1}{n} \sum_{i=1}^n L_i^2 } $ and $L \le \frac{1}{n} \sum_{i=1}^n L_1  $.

However, $L$ and $L'$ can be much smaller than $L_i$. 
For example, if $\varphi_i(\vx) = \frac{1}{2} \left( \inner{\ve_i}{\vx} \right)^2$, then we have $L_i = 1$, $L = 1 / n$ and $L' = 1 / \sqrt{n}$. 
As a result, it is more restrictive to say that each $\varphi_i$ is $L$-smooth than to say that $\{ \varphi_i \}_{i=1}^n$ is $L$-average smooth.

\begin{definition}
    For a differentiable function $\varphi(\vx)$ from $\fX$ to $\BR$,  $\varphi$ is said to be convex if for any $\vx_1, \vx_2 \in \fX$, we have
    \begin{align*}
        \varphi(\vx_2) \ge \varphi(\vx_1) + \inner{\nabla \varphi(\vx_1)}{\vx_2 - \vx_1}.
    \end{align*}
\end{definition}

\begin{definition}\label{def:weakly}
    For a constant $\mu$, if the function
    $
        \hat{\varphi}(\vx) = \varphi(\vx) - \frac{\mu}{2} \norm{\vx}^2
    $
    is convex, then $\varphi$ is said to be $\mu$-strongly convex if $\mu > 0$ and $\varphi$ is said to be $\mu$-weakly convex if $\mu < 0$.
\end{definition}
One can check that if $\varphi$ is $L$-smooth, then it is $(-L)$-weakly-convex.

\begin{definition}
For a differentiable function $\varphi(\vx)$ from $\fX$ to $\BR$, we call $\hat{\vx}$ an $\eps$-stationary point of $\varphi$ if 
\[
\norm{ \nabla \varphi (\hat{\vx}) } < \eps.
\]
\end{definition}

\begin{definition}\label{def:convex-concave}
    For a differentiable function $f(\vx, \vy)$ from $\fX\times \fY$ to $\BR$,  
    $f$ is said to be convex-concave, if the function $f(\cdot, \vy)$ is convex for any $\vy \in \fY$ and the function $-f(\vx, \cdot)$ is convex for any $\vx \in \fX$.
    Furthermore, $f$ is said to be $(\mu_x, \mu_y)$-convex-concave, if the function $f(\vx, \vy) - \frac{\mu_x}{2} \norm{\vx}^2 + \frac{\mu_y}{2} \norm{y}^2$ is convex-concave.
\end{definition}

\begin{definition}
    We call a minimax optimization problem $\min_{\vx \in \fX} \max_{\vy \in \fY} f(\vx, \vy)$ satisfying the strong duality condition if
    \[
    \min_{\vx \in \fX} \max_{\vy \in \fY} f(\vx, \vy)
    = \max_{\vy \in \fY} \min_{\vx \in \fX}  f(\vx, \vy).
    \]
\end{definition}
By Sion's minimax theorem, if $\varphi(\vx, \vy)$ is convex-concave and either $\fX$ or $\fY$ is a compact set, then the strong duality condition holds. 

\begin{definition}
We call $({\vx}^*, {\vy}^* ) \in  \fX \times \fY$ the saddle point of $f(\vx, \vy)$ if 
\[
f(\vx^*, \vy) \le f(\vx^*, \vy^*) \le f(\vx, \vy^*)
\]
for all $(\vx, \vy) \in \fX \times \fY$.
\end{definition}

\begin{definition}
Suppose the strong duality of Problem (\ref{prob:main}) holds. We call $(\hat{\vx}, \hat{\vy}) \in \fX \times \fY$ an $\eps$-suboptimal solution to Problem (\ref{prob:main}) if
\[
\max_{\vy \in \fY} f(\hat{\vx}, \vy) - \min_{\vx \in \fX} f(\vx, \hat{\vy}) < \eps.
\]
\end{definition}

\subsection{A Concentration Inequality about Geometric Distributions}
In this subsection, we introduce a concentration inequality about geometric distributions.

\begin{lemma}\label{lem:geo}
    Let $\{Y_i\}_{i=1}^m$ be independent random variables,
    and $Y_i$ follows a geometric distribution with success probability $p_i$.
    Then for $m \ge 2$, we have
    \begin{align*}
        \pr{\sum_{i=1}^m Y_i > \frac{m^2}{4(\sum_{i=1}^m p_i)}} \ge \frac{1}{9}.
    \end{align*}
\end{lemma}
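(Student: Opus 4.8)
The plan is to bound the sum $S_m := \sum_{i=1}^m Y_i$ from below with constant probability, and the natural strategy is to combine a first-moment (Markov-type) upper bound on $S_m$ with a variance or second-moment control, via a Paley--Zygmund-style inequality. Concretely, write $P := \sum_{i=1}^m p_i$ and recall $\E[Y_i] = (1-p_i)/p_i < 1/p_i$ and $\Var(Y_i) = (1-p_i)/p_i^2 < 1/p_i^2$. The threshold $m^2/(4P)$ should be compared with $\E[S_m] = \sum_i (1-p_i)/p_i$; by Cauchy--Schwarz, $\big(\sum_i 1\big)^2 \le \big(\sum_i p_i\big)\big(\sum_i 1/p_i\big)$, so $\sum_i 1/p_i \ge m^2/P$, which says the threshold $m^2/(4P)$ is at most roughly $\tfrac14 \sum_i 1/p_i$, i.e.\ well below the "typical" size of $S_m$ when the $p_i$ are not too spread out. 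This is the quantitative reason the lemma should hold.

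First I would handle the easy regime. If $\E[S_m] = \sum_i (1-p_i)/p_i$ is already comfortably larger than the threshold — say at least twice it — then a lower-tail bound suffices: since the $Y_i$ are independent, $\Var(S_m) = \sum_i (1-p_i)/p_i^2$, and one can apply one-sided Chebyshev (Cantelli): $\pr{S_m \le \E[S_m] - t} \le \Var(S_m)/(\Var(S_m)+t^2)$. The trouble is that $\Var(S_m)$ can be as large as $\sum_i 1/p_i^2$, which is not controlled by $P^2/m^2$ in general (one $p_i$ could be tiny). So a pure Chebyshev argument fails when the $p_i$ are very unequal. The fix is that when some $p_j$ is tiny, $Y_j$ alone is already huge with decent probability: $\pr{Y_j \ge k} = (1-p_j)^k \ge e^{-2 p_j k}$ for $p_j \le 1/2$, so $\pr{Y_j \ge 1/(4p_j)} \ge e^{-1/2} \ge 1/2$-ish. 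Thus I would split into cases according to whether $\max_i (1/p_i)$ dominates the threshold: if $1/p_{j} \ge m^2/P$ for some $j$ (equivalently $p_j \le P/m^2$), then $\pr{Y_j > m^2/(4P)} \ge \pr{Y_j > 1/(4p_j)}$ is bounded below by a constant directly, and since $S_m \ge Y_j$ we are done with probability at least $1/9$. Otherwise every $p_i > P/m^2$, i.e.\ $1/p_i^2 < m^2/(P p_i)$, so $\Var(S_m) < \sum_i m^2/(P p_i) \cdot$ — wait, that still isn't obviously enough; more usefully $1/p_i^2 = (1/p_i)(1/p_i) < (m^2/P)(1/p_i)$, giving $\Var(S_m) < (m^2/P)\sum_i 1/p_i$. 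Meanwhile $\E[S_m] = \sum_i (1-p_i)/p_i$; using $p_i \le 1$ we still need $\sum_i 1/p_i$ large, which again follows from Cauchy--Schwarz: $\sum_i 1/p_i \ge m^2/P$.

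Putting the pieces together in the second case: $\E[S_m] \ge \sum_i 1/p_i - m \ge m^2/P - m$, and $\Var(S_m) < (m^2/P)\sum_i 1/p_i$. I would pick the deviation $t = \E[S_m] - m^2/(4P)$; using $m \ge 2$ and the bound on $\E[S_m]$ one checks $t \ge c\,\E[S_m]$ for an absolute constant $c$ (roughly $1/2$), and then Cantelli's inequality gives $\pr{S_m \le m^2/(4P)} \le \Var(S_m)/t^2 \le \Var(S_m)/(c^2 (\E[S_m])^2)$. The main obstacle — and the step I expect to require the most care — is showing this ratio is at most $8/9$ (so the complementary event has probability $\ge 1/9$) uniformly over all admissible $\{p_i\}$; the ratio $\Var(S_m)/(\E S_m)^2$ need not be small, so one cannot be cavalier, and the constant $4$ in the threshold and the bound $m\ge 2$ must both be used. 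I anticipate that the clean way is to bound $\Var(S_m) \le \sum_i 1/p_i^2$ and $(\E S_m)^2 \ge (\sum_i (1-p_i)/p_i)^2$ and reduce, via a convexity/scaling argument, to the extremal configurations (all $p_i$ equal, or one small and the rest equal to $1$), where the inequality can be verified by hand; the case split above is precisely designed so that neither extremal configuration is bad. Finally I would assemble the two cases into the stated bound $\pr{S_m > m^2/(4P)} \ge 1/9$.
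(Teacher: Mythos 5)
Your strategy is genuinely different from the paper's. They prove a symmetrization lemma (Lemma \ref{lem:geo:aux}) showing that replacing all the $p_i$ by the common average $P/m$ can only shrink the tail probability, and then handle the i.i.d.\ case with Chebyshev. You avoid symmetrization by splitting on whether some $p_j \le P/m^2$ --- in which case the single variable $Y_j$ already exceeds the threshold with constant probability --- and otherwise using a variance bound plus Cantelli. When it works this is more elementary, sidestepping the exchangeability argument the paper relegates to the appendix.

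However, Case~2 of your proposal has a concrete gap: you assert that ``one checks $t \ge c\,\E[S_m]$'' without checking it, and with the formula $\E[Y_i] = (1-p_i)/p_i$ that you use this is false. Taking all $p_i$ close to $1$ (so that $p_i > 1/m \ge P/m^2$ and Case~1 is vacuous), $\E[S_m] = \sum_i (1-p_i)/p_i \to 0$ while the threshold $m^2/(4P) \to m/4$, so $t < 0$ and Cantelli yields nothing. Indeed the statement is literally false for the $\{0,1,2,\dots\}$-supported geometric with $\E Y = (1-p)/p$: with all $p_i = 1$ the sum is $0$ almost surely but the threshold $m/4$ is positive. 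What is actually meant --- and what the paper uses, writing $\E\tau = m/p$ in this proof and $\pr{Y_k = s} = (1-p)^{s-1}p$ in Corollary \ref{coro:convex-concave:stopping-time} --- is the number-of-trials geometric on $\{1,2,\dots\}$ with $\E[Y_i] = 1/p_i$. Under that convention your Case~2 closes: Cauchy--Schwarz gives $\E[S_m] = \sum_i 1/p_i \ge m^2/P$, so $t \ge \tfrac34 \E[S_m] > 0$; since every $p_i > P/m^2$ one has $\Var[S_m] \le \sum_i 1/p_i^2 < (m^2/P)\sum_i 1/p_i \le (\E S_m)^2$, and Cantelli then gives $\pr{S_m \le m^2/(4P)} \le \tfrac{16}{25}$, comfortably below $\tfrac{8}{9}$. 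So your plan is salvageable, but the unexamined deviation step is exactly where the choice of $\E[Y_i]$ matters, and it does not go through with $(1-p_i)/p_i$.
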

Lemma \ref{lem:geo} implies that at least with a constant probability, the sum of geometric random variables is larger than a constant number, which depends on the number of variables and their success probabilities.
Then we can obtain a lower bound of $\E \sum_{i=1}^m Y_i$, which is helpful to the construction in Section \ref{sec:frame}.
The proof is deferred to Appendix \ref{appendix:geo}.

\section{PIFO Algorithms}\label{sec:alg}
In this section, we present our definition of PIFO algorithms.
We first discuss previous definitions in Section \ref{sec:alg:previous} and our formal definition is given in Section \ref{sec:alg:our}.

\subsection{Discussion on Previous Definitions}\label{sec:alg:previous}
In this subsection,
we discuss the definitions of oracles and algorithms in previous work on the minimization problem $\min_{\vx \in \fX} f(\vx) = \frac{1}{n} \sum_{i=1}^n f_i(\vx)$.
With some abuse of notation, we do not distinguish the oracles for minimization problems from those for minimax problems.

\paragraph{IFO and PIFO}
The IFO
is defined as $h^{\mathrm{IFO}}_{f_i} (\vx) \triangleq [f_i(\vx), \nabla f_i (\vx)]$,
which
takes as input a point $\vx \in \fX$ and a component function $f_i$ and returns the function value and the gradient of $f_i$ at $\vx$. Many lower bounds for minimization optimization are based on this oracle, e.g., \citet{agarwal2015lower, lan2017optimal, zhou2019lower}.
They all consider \textit{linear-span randomized first-order algorithms}\footnote{The formal definition is given in Definition 3.3 in \citet{zhou2019lower}. 
Although the results of \citet{agarwal2015lower} do not rely on the linear span assumption, this assumption can be made without loss of generality, as shown in their Appendix A.}.
For these algorithms, the current point lies in the linear span of previous points and gradients returned by earlier IFO calls.

\citet{woodworth2016tight} considers the PIFO 
which is stronger than IFO and is defined as $h^{\mathrm{PIFO}}_{f_i} (\vx, \gamma) \triangleq [f_i(\vx), \nabla f_i(\vx), \prox_{f_i}^\gamma (\vx)] $ with the proximal operator $\prox_{f_i}^\gamma (\vx) \triangleq \argmin_{\vu} \big\{ f_i (\vu) + \frac{1}{2\gamma} \norm{\vx - \vu}^2 \big\}$.
When $f_i$ is convex, any $\gamma > 0$ is feasible.
Different from IFO, PIFO provides global
information about the function. 
To see this, letting $\gamma \rightarrow \infty$ 
yields
the exact minimizer of $f_i$.
Based on PIFO, \citet{woodworth2016tight} consider the class of \textit{any} randomized algorithms, a more general class than \textit{linear-span randomized first-order algorithms}.
We also
emphasize that
when $f_i$ is nonconvex, $\gamma$ should be sufficiently small such that $f_i(\vu) + \frac{1}{2 \gamma} \norm{\vx - \vu}^2 $ is a convex function of $\vu$. Otherwise, it can be pretty hard to calculate
$\prox_{f_i}^\gamma(\vx)$.
Specially, if $f$ is $(-\mu)$-weakly convex, we need to ensure $0 < \gamma < 1/\mu$.

\paragraph{Sampling of the component function}
Note that both IFO and PIFO depend on a specific component $f_i$. Different methods use different ways to choose the index $i$.
Some of them, e.g., SAGA \citep{defazio2014saga}, RPDG \citep{lan2017optimal},
pick $i$ randomly according to some distribution over $[n]$ and the full gradient is calculated only at the initial point.
However, much more methods need to calculate the full gradient periodically, either with a deterministic or random interval.
For multi-loop methods,
e.g., SVRG~\citep{johnson2013accelerating}, Katyusha~\citep{allen2017katyusha} and Spider~\citep{fang2018spider},
the interval is predetermined, while for loopless methods, e.g., KatyushaX~\citep{allen2018katyushax}, L2S~\citep{li2020convergence}, L-SVRG~
\citep{kovalev2020don},
the interval is a geometric random variable.


The lower bound of \citet{lan2017optimal} 
requires that the index $i_t$ at iteration $t$ is sampled from a predetermined distribution over $[n]$. Thus their bound does not apply to methods such as SVRG and L-SVRG.
\citet{woodworth2016tight, zhou2019lower} 
do not specify the way to choose $i_t$. As a result, their class of algorithms does include those multi-loop or loopless methods.

\citet{arjevani2016dimension} and \citet{hannah2018breaking} consider p-CLI algorithms equipped with the generalized first-order oracle, 
where the current point and the gradient can be left-multiplied by preconditioning matrices.
They do not specify the way to choose $i_t$, either. Thus their lower bounds apply to all the methods mentioned above.
Moreover, their framework can also be equipped with the steepest coordinate descent oracle to incorporate methods such as SDCA \citep{shalev2016sdca}.

\subsection{Our Definition}\label{sec:alg:our}
In this subsection,
we come back to the minimax problem (\ref{prob:main}) and 
formally introduce the definition of PIFO algorithms.

Recall that the PIFO has been defined in (\ref{eq:oracle}).
For convenience, we also define the First-order Oracle (FO) as
$h^{\mathrm{FO}}_{f} (\vx, \vy) \triangleq [f(\vx, \vy), \nabla_\vx f(\vx, \vy), -\nabla_\vy f(\vx, \vy)] $, which returns the full gradient information.
Since the feasible set of Problem~(\ref{prob:main}) is not necessarily the whole space, the algorithm should also access the projection operators $\fP_\fX$ and $\fP_\fY$.
Then we can define the PIFO algorithms we focus on in our paper.
\begin{definition}\label{def:alg}
Consider a randomized PIFO algorithm $\fA$ to solve Problem (\ref{prob:main}). Denote the point obtained by $\fA$ after step $t$ by $(\vx_{t}, \vy_{t}) $, which is generated by the following procedure.
\begin{enumerate}
    \item Initialize the set $\fH$ as $\{ (\vx_0, \vy_0) \}$, the distribution $\fD$ over $[n]$, a positive number $q \le c_0 / n 
    $ and set $t=1$.
    \item\label{item:pifo_query} Sample $i_t \sim \fD$ and query the oracle $h_{f_i}^{\mathrm{PIFO}}$. \label{alg:minimax:step2}
    \item Sample a Bernoulli random variable $a_t$ with expectation equal to $q$. If $a_t=1$, query the FO $h_{f}^{\mathrm{FO}}$ and add $ (\vx_{t-1}, \vy_{t-1}) $ to $\fH$. 
    \item Obtain $(\tilde{\vx}_{t}, \tilde{\vy}_{t})$ following the linear-span protocol
    \begin{align*}
        (\tilde{\vx}_{t}, \tilde{\vy}_{t} ) & \in \spn \big\{ (\vx_{0}, \vy_{0}), \dots, (\vx_{t-1}, \vy_{t-1}), \prox_{f_{i_j}}^{\gamma_j} (\vx_l, \vy_l) \text{ for } l < j \le t,\\
        & \qquad \qquad (\nabla_\vx f_{i_j} (\vx_{l}, \vy_{l} ), \vzero_{d_y} ),
        (\vzero_{d_x}, - \nabla_\vy f_{i_j} (\vx_{l}, \vy_{l} ) )
        \text{ for } l < j \le t, \\
        & \qquad \qquad 
        (\nabla_\vx f(\vu, \vv), \vzero_{d_y} ),
        (\vzero_{d_x}, - \nabla_\vy f(\vu, \vv) )  \text{ for } (\vu, \vv) \in \fH \big\}.
    \end{align*}
    \item Projection step: $\vx_{t} = \fP_{\fX} (\tilde{\vx}_{t}), \vy_{t} = \fP_{\fY} (\tilde{\vy}_{t} )$.
    \item 
    Output $(\vx_{t},
    \vy_{t})$,
    or
    set $t+1$ to $t$ and go back to step \ref{alg:minimax:step2}.
\end{enumerate}
Let $\mathscr{A}$ be the class of all such PIFO algorithms. 
A PIFO algorithm becomes an IFO algorithm if it queries the IFO at step \ref{item:pifo_query}. 
\end{definition}

\begin{remark}
We remark on some details in our definition of PIFO algorithms.
\begin{enumerate}
    \item The random vector sequence $\{ (i_t, a_t) \}_{t \ge 1}$ are mutually independent and each $i_t$ is also independent of $a_t$.
    \item $\fH$ is the set of points where FO is called and
    simultaneous PIFO queries \citep{fang2018spider, zhou2018stochastic, luo2020stochastic} are allowed.
    Previous PIFO or FO queries can be reused.
    At step $t$, the algorithm has access to  $(\nabla_\vx f_{i_t}(\vx_{0}, \vy_{0}), - \nabla_\vy f_{i_t}(\vx_{0}, \vy_{0}) ), \dots, (\nabla_\vx f_{i_t}(\vx_{t-1}, \vy_{t-1}), - \nabla_\vy f_{i_t}(\vx_{t-1}, \vy_{t-1}) ) $ with shared $i_t$ as well as
    gradient information obtained at previous steps,
    i.e., 
    $(\nabla_\vx f_{i_j} (\vx_l, \vy_l), - \nabla_\vy f_{i_j} (\vx_l, \vy_l) )$ for $l < j < t$. 
    \item When $f_i$ is not convex-concave, $\gamma$ should be chosen such that $f_i(\vu, \vv) + \frac{1}{2 \gamma} \norm{\vx - \vu}^2 - \frac{1}{2 \gamma} \norm{\vy - \vv}^2 $ is convex-concave w.r.t. to $(\vu, \vv)$.
    \item Without loss of generality, we assume that the PIFO algorithm $\fA$ starts from $(\vx_0, \vy_0) = (\vzero_{d_x}, \vzero_{d_y})$ to simplify our analysis. Otherwise, we can take $\{{\tilde f}_i(\vx, \vy) = f_i(\vx + \vx_0, \vy + \vy_0)\}_{i=1}^n$ into consideration.
    \item Let $p_i = \pr[Z \sim \fD]{Z = i}$ for $i \in [n]$. 
    The distribution 
    $\fD$ 
    can be the uniform distribution or
    based on the smoothness of the component functions, e.g., 
    $ p_i
    \propto L_i$ \citep{xiao2014proximal} or $ p_i
    \propto L_i^2$ \citep{allen2018katyushax} for $\, i \in [n]$, 
    where $L_i$ is the smoothness parameter of $f_i$.
    We can assume that $p_1 \le p_2 \le \cdots \le p_n$ by rearranging the component functions $\{f_i\}_{i=1}^n$.
          Suppose that $p_{s_1} \le p_{s_2} \le \cdots \le p_{s_n}$ where $\{s_i\}_{i=1}^n$ is a permutation of $[n]$. We can consider $\{\hat{f}_i\}_{i=1}^n$ and categorical distribution $\fD'$ such that the algorithm draws $\hat{f}_{i} \triangleq f_{s_i}$ with probability $p_{s_i}$ instead.
\end{enumerate}
\end{remark}
Recall that by setting $\fY$ as a singleton, we can obtain the definition of IFO and PIFO algorithms for finite-sum minimization problems.

We emphasize that only the proximal operator of the individual component function $f_i$ is allowed.
The algorithm is not accessible to the proximal operator of the averaged function $f$.
In practice, each $f_i$ usually depends on a single sample and enjoys a simple form~\citep{zhang2017stochastic, du2017stochastic,lan2017optimal, carmon2019variance}.
Then $\prox_{f_i}^{\gamma} (\vx, \vy)$ is easy to calculate.
However, computing $\prox_{f}^{\gamma} (\vx, \vy)$ is as hard as
solving the original problem~(\ref{prob:main}).
To see this, just let $\gamma \rightarrow \infty$.

\paragraph{Methods for minimization problems}
Clearly, methods such as SAGA~\citep{defazio2014saga} and PointSAGA~\citep{defazio2016simple} belong to PIFO algorithms, since these methods only calculate the full gradient at the first iteration.
Other methods such as SVRG~\citep{johnson2013accelerating} and Katyusha~\citep{allen2017katyusha} have two loops and
the full gradient needs to be calculated periodically at the beginning of the outer loop.
Although these two-loop methods do not satisfy our definition, their loopless variants do.
These loopless variants only have one loop and whether to compute the full gradient depends on a coin toss with a small head probability, i.e., $q$ in Definition \ref{def:alg}.
\citet{kovalev2020don} have shown that L-SVRG and L-Katyusha enjoy the same theoretical properties as the original methods.
With a constant $q$, these loopless methods can also be viewed as two-loop methods with a random inner-loop size that obeys the geometric distribution with success probability $q$.
Other loopless methods that satisfy our definition include KatyushaX~\citep{allen2018katyushax}, L2S~\citep{li2020convergence}, PAGE~\citep{li2021page}, Anita~\citep{li2021anita} and so on. For these methods, the order of $q$ is usually $\Theta(1/n)$. And it suffices to set $c_0 = 2$.

Now we consider catalyst accelerated methods. It looks like these methods do not satisfy our definition, since they have two loops and the full gradient needs to be calculated at the beginning of the outer loop. 
Nevertheless, we can slightly change them without affecting the convergence rate.
Firstly, we can replace the algorithm used to solve the inner-loop subproblem, e.g., SVRG, with its loopless variant.
Secondly, the complexity of the inner-loop is at least of the order $\Omega(n)$ (all the components need to be sampled at least once). 
Now we remove the full gradient step at the beginning of the outer loop and do not update the current point until the FO is called.
In expectation, we need $\Theta(1/q)$ more steps.
Thus, if we choose $q = \Theta (1/n)$, such a change makes no difference to the order of the complexity.

\paragraph{Methods for minimax problems}

One can check SAGA~\citep{palaniappan2016stochastic} and PointSAGA~\citep{defazio2016simple} are PIFO algorithms. Although SVRG~\citep{palaniappan2016stochastic} does not satisfy our definition, we believe a loopless variant of it can share the same convergence properties. 
Existing loopless methods that belong to PIFO algorithms include L-SVRHG~\citep{loizou2020stochastic} and L-SVRE\footnote{The method was renamed by \citet{luo2021near} and we adopt the new name.}~\citep{alacaoglu2021stochastic}.
Moreover, similar to the analysis above, the catalyst accelerated methods in \citet{luo2021near,zhang2021complexity} also satisfy our definition. For these methods, the order of $q$ is still $\Theta(1/n)$ and we can set $c_0=2$.

Finally, we emphasize that all the methods analyzed above except  PointSAGA~\citep{defazio2014saga, defazio2016simple} are also IFO algorithms.
From the results in Table~\ref{table-minimax-average} and the analysis in Section~\ref{sec:min},
we find that IFO algorithms are powerful enough for smooth functions.

\section{Framework of Construction}\label{sec:frame}
In this section, we introduce the framework of our construction to prove the lower bound for Problem~(\ref{prob:main}).
In Section~\ref{sec:frame:def},
we give the definition of the optimization complexity.
In Section~\ref{sec:frame:instance}, we construct the hard instances used to prove the lower bound and present some fundamental lemmas. Now we first highlight the key idea of our construction.

\paragraph{Key idea}
To construct the hard instance,
we partition the tridiagonal matrix in \citet{nesterov2013introductory} into $n$ groups and each component function is defined in terms of only one group.
Then the hard instance satisfies a variant of \textit{zero-chain} property: starting from the origin, only when a specific component is drawn, can we increase the nonzero elements of the current point by at most $2$.
And the number of PIFO calls required to draw this component obeys the geometric distribution.
Once we prove that we cannot obtain any $\eps$-suboptimal solution or $\eps$-stationary point unless we span all the dimensions, the complexity can be lower bounded by the concentration inequality of geometric distributions, i.e., Lemma~\ref{lem:geo}.
As a comparison, previous span-based constructions~\citep{lan2017optimal, zhou2019lower} partition the variable and the number of nonzero elements of the current point can increase no matter which component is drawn.
A more detailed analysis is deferred to Section~\ref{sec:min:instance}.

\subsection{Optimization Complexity}\label{sec:frame:def}
Before presenting the definition of the optimization complexity,
we first introduce the function class we consider.
Define
the primal function as $\phi_f(\vx) = \max_{\vy \in \fY} f(\vx, \vy)$ and the dual function as $\psi_f(\vy) = \min_{\vx \in \fX} f(\vx, \vy)$.

\paragraph{Function class} We develop lower bounds for PIFO algorithms that find a suboptimal solution or near stationary point of Problem~(\ref{prob:main}) in the following sets.

{ 
\begin{align*}
    {\fF}_{\mathrm{CC}}(R_x, R_y, L, \mu_x, \mu_y) = \bigg\{f(\vx, \vy) = \frac{1}{n} \sum_{i=1}^n f_i(\vx, \vy) ~\Big\vert~ f\colon \fX \times \fY \to \BR, \, \diam (\fX) \le 2 R_x,, \\
    \diam (\fY) \le 2 R_y, \, \{f_i\}_{i=1}^n \text{ is } L\text{-average smooth}, f \text{ is } (\mu_x, \mu_y) \text{-convex-concave } \bigg\}.
    \\
    {\fF}_{\mathrm{NCC}}(\Delta, L, \mu_x, \mu_y) = \bigg\{f(\vx, \vy) = \frac{1}{n} \sum_{i=1}^n f_i(\vx, \vy) ~\Big\vert~ f\colon \fX \times \fY \to \BR, \,  \phi(\vzero) - \inf_{\vx \in \fX} \phi(\vx) \le \Delta,\\
    \{f_i\}_{i=1}^n \text{ is } L\text{-average smooth}, f \text{ is } (-\mu_x, \mu_y)\text{-convex-concave } \bigg\}. 
\end{align*} }
We remark that for the second class, $\mu_x$ measures how nonconvex the function is. A natural upper bound of $\mu_x$ is $L$.
Moreover, we do not specify the dimensions of the feasible set.
That is to say, the two classes include  functions defined on $\fX \times \fY \subseteq \BR^{d_x} \times \BR^{d_y}$ with any positive integers $d_x$ and $d_y$. 

\paragraph{Optimization complexity}

Then we formally define the optimization complexity.

\begin{definition}\label{defn:complexity}
    For a function $f$, a PIFO algorithm $\fA$ and a tolerance $\eps > 0$, the number of queries to PIFO needed by $\fA$ to find an  $\eps$-suboptimal solution to Problem~(\ref{prob:main}) or an $\eps$-stationary point of $\phi_f(\vx)$ is defined as
    \begin{align*}
        T(\fA, f, \eps) = \begin{cases}
         \inf \left\{ T \in \BN ~\vert~ \E \phi_f(\vx_{\fA, T}) -  \E \psi_f(\vy_{\fA, T}) < \eps \right\}, 
        & \text{if } f \in \fF_{\mathrm{CC}}(R_x, R_y, L, \mu_x, \mu_y), \\
        \inf \left\{ T \in \BN ~\vert~ \E \norm{\nabla \phi_f(\vx_{\fA, T})} < \eps \right\}, 
        & \text{if } f \in \fF_{\mathrm{NCC}}(\Delta, L, \mu_x, \mu_y),
        \end{cases}
    \end{align*}
    where $(\vx_{\fA, T}, \vy_{\fA, T})$ is the point obtained by the algorithm $\fA$ at time-step $T-1$.
    The optimization complexity with respect to 
    the two function classes 
    is defined as \footnote{Our definition follows from \citet{carmon2017lower}.}
    \begin{align*}
        {\mathfrak{m}}^{\mathrm{CC}}(\eps, R_x, R_y, L, \mu_x, \mu_y) & \triangleq \inf_{\fA \in \mathscr{A}} \sup_{f \in 
        {\fF}_{\mathrm{CC}}(R_x, R_y, L, \mu_x, \mu_y)}  T(\fA, f, \eps). \\
        {\mathfrak{m}}^{\mathrm{NCC}}(\eps, \Delta, L, \mu_x, \mu_y) & \triangleq \inf_{\fA \in \mathscr{A}} \sup_{f \in 
        {\fF}_{\mathrm{NCC}}(\Delta, L, \mu_x, \mu_y)}  T(\fA, f, \eps).
    \end{align*}

\end{definition}
When $f$ is convex-concave, the functions we consider have a bounded feasible set and $L$-average smooth components.
By Sion's minimax theorem, the strong duality condition holds.
Then the primal-dual gap is a natural measurement of the optimality\footnote{When $f$ is strongly-convex-strongly-concave, the boundness of the feasible set is not necessary and we can also use the prima-dual gap at the initial point as the parameter to define the function class.
}.
Specially, if $f$ is strongly-convex-strongly-concave,
the saddle point $(\vx^*, \vy^*) $ is unique and the distance to the saddle point is also a measurement of the optimality.
And we have $\frac{\mu_x}{2} \norm{\vx - \vx^*}^2 + \frac{\mu_y}{2} \norm{\vy - \vy^*}^2 \le \phi_f (\vx) - \psi_f (\vy) \le \frac{L}{2} \norm{\vx - \vx^*}^2 + \frac{L}{2} \norm{\vy - \vy^*}^2$.
The results in Section~\ref{sec:minimax} show that the optimal methods have linear convergence rates in this case. Thus, the complexities w.r.t. the two measurements are equivalent up to log factors.
As for the nonconvex-strongly-concave case,
we aim to find the stationary point of the primal function and use the norm of the gradient of the primal function as the measurement.

Note that we use the number of PIFO calls to measure the complexity. 
We claim that the infrequent FO calls do not influence the order of this complexity. At each step, the FO is called with probability $q = \fO \left( \frac{1}{n} \right)$. Since the computation cost of each FO call is no larger than that of $n$ PIFO calls,  the total cost of PO calls is no larger than the order of the number of PIFO calls in expectation. Thus our definition of complexity is reasonable, due to that we usually 
ignore the influence of constants.

\subsection{
The Hard Instances
}\label{sec:frame:instance}
In this subsection, we construct the (unscaled) hard instances used to prove the lower bound.
The constructions for convex-concave case and the nonconvex-strongly-concave case are slightly different and presented in Sections \ref{sec:frame:instance:cc} and \ref{sec:frame:instance:ncc} respectively.
However, they are both based on the following class of matrices, which is also used in the proof of lower bounds in deterministic minimax optimization \citep{ouyang2018lower, zhang2019lower}:
\begin{align}\label{eq:mat:B}
    \mB(m, \omega, \zeta) =
    \begin{bmatrix}
        \omega  &       &       &       &       \\
        1       & -1    &       &       &       \\
                & 1     & -1    &       &       \\
                &       & \ddots&\ddots &       \\
                &       &       & 1     & -1    \\
                &       &       &       & \zeta
    \end{bmatrix}
    \in \BR^{(m+1) \times m}.
\end{align}
In fact, $\mB(m, \omega, \zeta)^\top \mB(m, \omega, \zeta)$ is the widely-used tridiagonal matrix in the analysis of lower bounds for convex optimization~\citep{nesterov2013introductory, lan2017optimal, zhou2019lower}.

For convenience, we denote the $l$-th row of the matrix $\mB(m, \omega, \zeta)$ by $\vb_{l-1}(m, \omega, \zeta)^{\top}$.
To construct a hard instance for the finite-sum optimization problem, we partition the row vectors of $\mB(m, \omega, \zeta)$ according to the index sets $\fL_i = \big\{ l: 0 \le l \le m, l \equiv i - 1~(\bmod~n) \big\}$.
The $i$-th component is constructed in terms of $\{ \vb_{l} (m, \omega, \zeta): l \in \fL_i \} $.
This way of partition is different from 
those used in 
\citet{lan2017optimal} and \citet{zhou2019lower} (a detailed comparison is deferred to Section~\ref{sec:min:instance}).
We find that the $\vb_{l} (m, \omega, \zeta)$ have at most two nonzero elements and the vectors whose indices lie in the same index sets are mutually orthogonal, as long as $n \ge 2$.

\subsubsection{Convex-Concave Case}\label{sec:frame:instance:cc}
The hard instance for the convex-concave case is constructed as
\begin{align}\label{prob:tilde:r}
    \min_{\vx \in \fX} \max_{\vy \in \fY} \, r^{\mathrm{CC}}(\vx, \vy; m, \zeta, {\vc}^{\mathrm{CC}})
    \triangleq \frac{1}{n} \sum_{i=1}^n r^{\mathrm{CC}}_i(\vx, \vy; m, \zeta, {\vc}^{\mathrm{CC}}),
\end{align}
where ${\vc}^{\mathrm{CC}} = (c^{\mathrm{CC}}_1, c^{\mathrm{CC}}_2)$, $\fX = \{\vx \in \BR^m: \norm{\vx} \le R_x\}, \fY = \{\vy \in \BR^m: \norm{\vy} \le R_y\}$ and
\begin{align*}
     & r^{\mathrm{CC}}_i(\vx, \vy; m, \zeta, {\vc}^{\mathrm{CC}}) \\
     & =
    \begin{cases}
        n \sum\limits_{l \in \fL_i} \vy^\top \ve_{l} \vb_{l}(m, 0, \zeta)^{\top} \vx
        + \frac{c^{\mathrm{CC}}_1}{2} \norm{\vx}^2
        - \frac{c^{\mathrm{CC}}_2}{2} \norm{\vy}^2
        - n \inner{\ve_1}{\vx},
        & \text{ for } i = 1, \\
        n \sum\limits_{l \in \fL_i} \vy^\top \ve_{l} \vb_{l}(m, 0, \zeta)^{\top} \vx
        + \frac{c^{\mathrm{CC}}_1}{2} \norm{\vx}^2
        - \frac{c^{\mathrm{CC}}_2}{2} \norm{\vy}^2,
        & \text{ for } i = 2, 3, \dots, n.
    \end{cases}
\end{align*}
Note that
$\vb_0(m, 0, \zeta) = \vzero$,
which implies that this hard instance is based on the last $m$ rows of $\mB(m, \omega, \zeta)$.
Then we can determine the smoothness and strong convexity coefficients of $r^{\mathrm{CC}}_i$ as follows.

\begin{proposition}\label{prop:convex-concave:base}
For $c^{\mathrm{CC}}_1, c^{\mathrm{CC}}_2 \ge 0$ and $0 \le \zeta \le \sqrt{2}$, we have that $r^{\mathrm{CC}}_i$ is $L$-smooth and $(c^{\mathrm{CC}}_1, c^{\mathrm{CC}}_2)$-convex-concave, and $\{ r^{\mathrm{CC}}_i \}_{i=1}^n$ is $L'$-average smooth,
where 
\[
L = \sqrt{4 n^2 + 2 \max \{ c^{\mathrm{CC}}_1, c^{\mathrm{CC}}_2 \}^2 }
\;\text{ and }
\; L' = \sqrt{8n + 2 \max \{ c^{\mathrm{CC}}_1, c^{\mathrm{CC}}_2 \}^2 }.
\]
\end{proposition}
We find if $\max\{c_1^\mathrm{CC}, c_2^{\mathrm{CC}} \} = \fO(\sqrt{n})$, then $L / L' = \Theta(\sqrt{n})$.

Define the subspaces $\{\fF_k\}_{k=0}^m$ as 
\begin{align}\label{eq:subspace}
\fF_k = \begin{cases}
\spn \{ \ve_1, \ve_{2}, \dots, \ve_{k}\}, & \text{for } 1 \le k \le m, \\
 \{\vzero\}, & \text{for } k=0.
\end{cases}
\end{align}
Now we show that the hard instance satisfies a variant of the \textit{zero-chain} property \citep{carmon2017lower}.

\begin{lemma}\label{lem:convex-concave:jump}
    Suppose that $n \ge 2$ and $\fF_{-1} = \fF_0$.
    Then for $(\vx, \vy) \in \fF_{k} \times \fF_{k-1}$ and $0 \le k < m$, we have that
    \begin{align*}
        \begin{pmatrix}
            \nabla_\vx r^{\mathrm{CC}}_i(\vx, \vy) \\
            - \nabla_\vy r^{\mathrm{CC}}_i(\vx, \vy)
        \end{pmatrix},
        \prox_{r^{\mathrm{CC}}_i}^{\gamma}(\vx, \vy) \in 
        \begin{cases}
            \fF_{k+1} \times \fF_{k}, & \text{ if } i \equiv k + 1\, (\bmod ~n), \\
            \fF_{k} \times \fF_{k-1}, & \text{ otherwise},
        \end{cases}
    \end{align*}
    where we omit the parameters of $r^{\mathrm{CC}}_i$ to simplify the presentation.
\end{lemma}
If the current point is $(\vx, \vy)$, the information brought by the PIFO call at $(\vx, \vy)$ will not increase 
the nonzero elements of $(\vx, \vy)$
unless a specific component function is drawn.
Moreover, if such a specific component is drawn, the increase 
is at most $2$.
This variant of \textit{zero-chain} property is also different from the conventional \textit{zero-chain} property in finite-sum minimization problems \citep{lan2017optimal, zhou2019lower}, where regardless of which component is drawn, the nonzero elements of the current point can increase. 
Such a difference comes from different ways of partitioning and ensures that our construction requires a lower dimension (see the analysis in Section \ref{sec:min:results}).
The proofs of Proposition \ref{prop:convex-concave:base} and Lemma \ref{lem:convex-concave:jump} are given in Appendix \ref{appendix:frame:cc}.

When we apply a PIFO algorithm $\fA$ to solve Problem~(\ref{prob:tilde:r}), Lemma \ref{lem:convex-concave:jump} implies that $\vx_t = \vy_t = \vzero$ will hold until algorithm $\fA$ draws the component $f_1$ or calls the FO. 
Then, for any $t < T_1 = \min_t \{t: i_t = 1 \mbox{ or } a_t = 1 \}$, we have $\vx_t, \vy_t \in \fF_0$ while $\vx_{T_1} \in \fF_1$ and $\vy_{T_1} \in \fF_0$. The value of $T_1$ can be regarded as the smallest integer such that $\vx_{T_1} \in \fF_1 \setminus \fF_0$ could hold. 
Similarly, for $T_1 \le t < T_2 = \min_t \{t > T_1: i_t = 2 \mbox{ or } a_t = 1 \}$ it holds that $\vx_{t} \in \fF_1$ and $\vy_{t} \in \fF_0$ while we can ensure that $\vx_{T_2} \in \fF_2$ and $\vy_{T_2} \in \fF_1$. Figure \ref{fig:jump} illustrates this optimization process. 

\setlength{\tabcolsep}{2pt}
\begin{figure}[ht]
    \centering
    \begin{tikzpicture}[node distance=70pt]
        \node[draw, rounded corners]                        (start)   { \footnotesize \begin{tabular}{cc}
        $\vx_{0}\in \fF_0$\\
        $\vy_{0}\in\fF_{0}$
        \end{tabular}};
        \node[draw, rounded corners, right=of start]        (middle)  {\footnotesize\begin{tabular}{cc}
        $\vx_{T_1}\in \fF_{1}$\\
        $\vy_{T_1}\in \fF_{0}$
        \end{tabular}};
        \node[draw, rounded corners, right=of middle]        (end)  {\footnotesize\begin{tabular}{cc}
        $\vx_{T_2}\in \fF_{2}$\\
        $\vy_{T_2}\in \fF_{1}$
        \end{tabular}};
        \node[draw, rounded corners, right=of end]        (t3)  {\footnotesize\begin{tabular}{cc}
        $\vx_{T_3}\in \fF_{3}$\\
        $\vy_{T_3}\in \fF_{2}$
        \end{tabular}};
        \coordinate[right=of t3, xshift=-30pt] (d1);
    
        \draw[->] (start) -> node{\begin{tiny}\begin{tabular}{cc}
        $\fA$ draws $r^{\mathrm{CC}}_1$ or \\ calls $h_f^{\mathrm{FO}}$ at step $T_1$
        \end{tabular}\end{tiny}} (middle);
        \draw[->] (middle) -> node{\begin{tiny}\begin{tabular}{cc}
        $\fA$ draws $r^{\mathrm{CC}}_2$ or \\ calls $h_f^{\mathrm{FO}}$ at step $T_2$
        \end{tabular}\end{tiny}} (end);
        \draw[->] (end) -> node{\begin{tiny}\begin{tabular}{cc}
        $\fA$ draws $r^{\mathrm{CC}}_3$ or \\ calls $h_f^{\mathrm{FO}}$ at step $T_3$
        \end{tabular}\end{tiny}} (t3);
        \draw[->] (t3) -> node[anchor=south]{$\dots$}(d1);
    \end{tikzpicture}\\[0.4cm]
    \caption{An illustration of the process of solving the Problem (\ref{prob:tilde:r}) with a PIFO algorithm $\fA$.}
    \label{fig:jump}
\end{figure}
We can define $T_k$ to be the smallest integer such that $\vx_{T_k} \in \fF_k \setminus \fF_{k-1}$ and $\vy_{T_k} \in \fF_{k-1} \setminus \fF_{k-2}$ could hold.
The following corollary demonstrates that we can connect $T_k$
to geometrically distributed random variables.

\begin{corollary}
\label{coro:convex-concave:stopping-time}
Assume we employ a PIFO algorithm $\fA$ to solve Problem (\ref{prob:tilde:r}). 
Let
\begin{align}
\label{def:convex-concave:stopping-time}
    T_0 = 0,~\text{ and } 
    ~T_k = \min_t \{t: t > T_{k-1}, i_t \equiv k~(\bmod ~n) \mbox{ or } a_t = 1 \}~\text{ for } k \ge 1.
\end{align}
Then we have
\begin{align*}
    (\vx_t, \vy_t) \in \fF_{k-1} \times \fF_{k-2}, ~~~ \text{ for } t < T_k, k \ge 1.
\end{align*}
Moreover, the random variables $\{Y_k\}_{k \ge 1}$ such that $Y_k \triangleq T_k - T_{k-1}$ are mutually independent and $Y_k$ follows a geometric distribution with success probability $p_{k'} + q - p_{k'
} q$ where $k' \equiv k~(\bmod~n)$ and $l \in [n]$. 
\end{corollary}

The basic idea of our analysis is that we guarantee that the $\eps$-suboptimal solution 
of Problem (\ref{prob:tilde:r}) does not lie in $\fF_k  \times \fF_k $ for $k < m$
and assure that the PIFO algorithm extends the space $\spn\{ (\vx_0, \vy_0), (\vx_1, \vy_1),\dots, (\vx_t, \vy_t) \}$ slowly with $t$ increasing. 
By Corollary \ref{coro:convex-concave:stopping-time},
we know that 
$\spn\{ (\vx_0, \vy_0), (\vx_1, \vy_1), \dots, (\vx_{T_k-1}, \vy_{T_k-1}) \} \subseteq \fF_{k-1} \times \fF_{k-1}$.   
Hence, $T_k$ is the quantity that measures how  $\spn\{ (\vx_0, \vy_0), (\vx_1, \vy_1),\dots, (\vx_t, \vy_t) \}$ expands. 
Note that $T_k$ can be written as the sum of geometrically distributed random variables.
Recalling Lemma \ref{lem:geo}, 
we can obtain how many PIFO calls we need.

\begin{lemma}\label{lem:minimiax:base}
If $M$ satisfies $1 \le M < m$,
\begin{align}\label{eq:lem:base:condition}
\min_{\substack{
     \vx \in \fX \cap \fF_M  \\
     \vy \in \fY \cap \fF_M
}} \left( \max_{\vv \in \fY} r^{\mathrm{CC}}(\vx, \vv) - \min_{\vu \in \fX} r^{\mathrm{CC}}(\vu, \vy)\right) \ge 9\eps
\end{align}
and $N = \frac{n(M+1)}{4(1 + c_0)}$, then we have
\begin{align*}
    \min_{t \le N} \E \left(\max_{\vv \in \fY} r^{\mathrm{CC}}(\vx_t, \vv) - \min_{\vu \in \fX} r^{\mathrm{CC}}(\vu, \vy_t)\right)  \ge \eps.
\end{align*}
\end{lemma}
Note that rescaling will not influence the \textit{zero-chain} property. Thus Lemma \ref{lem:minimiax:base} still holds for any rescaled version of $r^{\mathrm{CC}}$. 
It remains to pick up the parameters carefully, obtain a condition of the form (\ref{eq:lem:base:condition}) and then estimate the order of $N$.
These steps depend on the specific problem and are deferred to Sections~\ref{sec:minimax:scsc} to \ref{sec:minimax:cc} and Appendices~\ref{appendix:minimax:scsc} to \ref{appendix:minimax:cc}.

The proofs of Corollary \ref{coro:convex-concave:stopping-time} and Lemma \ref{lem:minimiax:base} are given in Appendix \ref{appendix:frame:other}.

\subsubsection{Nonconvex-Strongly-Concave Case}\label{sec:frame:instance:ncc}

For the nonconvex-strongly-concave case, the hard instance is constructed as
\begin{align}\label{prob:hat:r}
    \min_{\vx \in \BR^m} \max_{\vy \in \BR^m} \, r^{\mathrm{NCC}}(\vx, \vy; m, \omega, {\vc}^{\mathrm{NCC}})
    \triangleq \frac{1}{n} \sum_{i=1}^n r^{\mathrm{NCC}}_i(\vx, \vy; m, \omega, {\vc}^{\mathrm{NCC}})
\end{align}
where ${\vc}^{\mathrm{NCC}} = (c^{\mathrm{NCC}}_1, c^{\mathrm{NCC}}_2, c^{\mathrm{NCC}}_3)$ and
\begin{align*}
     & r^{\mathrm{NCC}}_i(\vx, \vy; m, \omega, {\vc}^{\mathrm{NCC}}) \\
     & =
    \begin{cases}
        n \sum\limits_{l \in \fL_i} \vy^\top \ve_{l+1} \vb_{l}(m, \omega, 0)^{\top} \vx
        - \frac{c^{\mathrm{NCC}}_1}{2} \norm{\vy}^2  
        +  c^{\mathrm{NCC}}_2 \sum\limits_{i=1}^{m-1} \Gamma (c^{\mathrm{NCC}}_3  x_i)
        - n \inner{\ve_1}{\vy},
        \,
         \text{ for } i = 1, \\
        n \sum\limits_{l \in \fL_i} \vy^\top \ve_{l+1} \vb_{l}(m, \omega, 0)^{\top} \vx
        - \frac{c^{\mathrm{NCC}}_1}{2} \norm{\vy}^2  
        + c^{\mathrm{NCC}}_2 \sum\limits_{i=1}^{m-1} \Gamma (c^{\mathrm{NCC}}_3  x_i),
        \; \, \quad
        \text{ for } i = 2, 3, \dots, n.
    \end{cases}
\end{align*}
The nonconvex function $\Gamma: \BR \rightarrow \BR$ is
\begin{align*}
    \Gamma(x) \triangleq 120 \int_1^{x} \frac{t^2(t-1)}{1 + t^2} d t,
\end{align*}
which was introduced by \citet{carmon1711lower}.
Since $\vb_m(m, \omega, 0) = \vzero_m$, the vector $\ve_{m+1}$ will not appear in the definition of $r^{\mathrm{NCC}}$. Thus $r^{\mathrm{NCC}}$ is well-defined and only depends on the first $m$ rows of $\mB(m, \omega, \zeta)$.
We can determine the smoothness and strong convexity coefficients of $r^{\mathrm{NCC}}_i$ as follows.

\begin{proposition}\label{prop:nonconvex-strongly:base}
For $c^{\mathrm{NCC}}_1 \ge 0$, $c^{\mathrm{NCC}}_2, c^{\mathrm{NCC}}_3 > 0$ and $0 \le \omega \le \sqrt{2}$, we have that $r^{\mathrm{NCC}}_i$ is $L$-smooth and $\left( -45(\sqrt{3}-1) c^{\mathrm{NCC}}_2 (c^{\mathrm{NCC}}_3)^2, c^{\mathrm{NCC}}_1 \right)$-convex-concave, and $\{ r^{\mathrm{NCC}}_i \}_{i=1}^n$ is $L'$-average smooth, where
\[
L =  \sqrt{4n^2 + 2 (c^{\mathrm{NCC}}_1)^2} + 180 c^{\mathrm{NCC}}_2 (c^{\mathrm{NCC}}_3)^2 \;
\text{ and }\;
L' = 2\sqrt{4n + (c^{\mathrm{NCC}}_1)^2 + 16200 (c^{\mathrm{NCC}}_2)^2 (c^{\mathrm{NCC}}_3)^4}.
\]
\end{proposition}
We find if $\max\{c_1^\mathrm{NCC}, c_2^{\mathrm{NCC}} (c_3^\mathrm{NCC})^2 \} = \fO(\sqrt{n})$, then $L / L' = \Theta(\sqrt{n})$.

The next lemma shows that
the $r_i^{\mathrm{NCC}}$ share the similar \textit{zero-chain} property as Lemma~\ref{lem:convex-concave:jump}.
\begin{lemma}\label{lem:nonconvex-strongly:jump}
Suppose that $n \ge 2$, $c^{\mathrm{NCC}}_2, c^{\mathrm{NCC}}_3 > 0$ and $\gamma <  \frac{\sqrt{2} + 1 }{60 c^{\mathrm{NCC}}_2 (c^{\mathrm{NCC}}_3)^2}$. 
If $(\vx, \vy) \in \fF_k \times \fF_k$ and $0 \le k < m-1$, we have that
\begin{align*}
    \begin{pmatrix}
        \nabla_x r^{\mathrm{NCC}}_i(\vx, \vy) \\
        - \nabla_y r^{\mathrm{NCC}}_i(\vx, \vy) 
    \end{pmatrix},
    \prox_{r^{\mathrm{NCC}}_i}^{\gamma}(\vx, \vy) \in \begin{cases}
        \fF_{k+1} \times \fF_{k+1}, & \text{ if } i \equiv k + 1\, (\bmod ~n), \\
        \fF_{k} \times \fF_{k}, & \text{ otherwise},
    \end{cases}
\end{align*}
where we omit the parameters of $r^{\mathrm{NCC}}_i$ to simplify the presentation. 
\end{lemma}
The proofs of Proposition \ref{prop:nonconvex-strongly:base} and Lemma \ref{lem:nonconvex-strongly:jump} are given in Appendix~\ref{appendix:frame:ncc}.

It is worth emphasizing that the assumption on $\gamma$ naturally holds. 
Recall that the choice of $\gamma$ should satisfy that $r_i(\vu, \vy) + \frac{1}{2 \gamma} \norm{\vx - \vu}^2 - \frac{1}{2 \gamma} \norm{\vy - \vv}^2$ is  convex-concave in $(\vu, \vv)$.
Proposition \ref{prop:nonconvex-strongly:base} implies that we must have $\gamma \le \frac{1}{ 45 (\sqrt{3} - 1) c_2^{\mathrm{NCC}} (c_3^\mathrm{NCC} )^2 } \le \frac{\sqrt{2} + 1}{ 60 c_2^{\mathrm{NCC}} (c_3^\mathrm{NCC} )^2 }$.

When we apply a PIFO algorithm to solve Problem~(\ref{prob:hat:r}), the optimization process is similar to the process related to Problem~(\ref{prob:tilde:r}). We demonstrate the optimization process in Figure \ref{fig:nonconvex-strongly:jump} and present a formal statement in Corollary \ref{coro:nonconvex-strongly:stopping-time}.

\begin{figure}[ht]
    \centering
    \begin{tikzpicture}[node distance=70pt]
        \node[draw, rounded corners]                        (start)   { \footnotesize \begin{tabular}{cc}
        $\vx_{0}\in \fF_0$\\
        $\vy_{0}\in\fF_{0}$
        \end{tabular}};
        \node[draw, rounded corners, right=of start]        (middle)  { \footnotesize \begin{tabular}{cc}
        $\vx_{T_1}\in \fF_{1}$\\
        $\vy_{T_1}\in \fF_{1}$
        \end{tabular}};
        \node[draw, rounded corners, right=of middle]        (end)  { \footnotesize \begin{tabular}{cc}
        $\vx_{T_2}\in \fF_{2}$\\
        $\vy_{T_2}\in \fF_{2}$
        \end{tabular}};
        \node[draw, rounded corners, right=of end]        (t3)  {\footnotesize\begin{tabular}{cc}
        $\vx_{T_3}\in \fF_{3}$\\
        $\vy_{T_3}\in \fF_{3}$
        \end{tabular}};
        \coordinate[right=of t3, xshift=-30pt] (d1);
    
        \draw[->] (start) -> node{\begin{tiny}\begin{tabular}{cc}
        $\fA$ draws $r^{\mathrm{NCC}}_1$ or \\ calls $h_f^{\mathrm{FO}}$ at step $T_1$
        \end{tabular}\end{tiny}} (middle);
        \draw[->] (middle) -> node{\begin{tiny}\begin{tabular}{cc}
        $\fA$ draws $r^{\mathrm{NCC}}_2$ or \\ calls $h_f^{\mathrm{FO}}$ at step $T_2$
        \end{tabular}\end{tiny}} (end);
        \draw[->] (end) -> node{\begin{tiny}\begin{tabular}{cc}
        $\fA$ draws $r^{\mathrm{NCC}}_3$ or \\ calls $h_f^{\mathrm{FO}}$ at step $T_3$
        \end{tabular}\end{tiny}} (t3);
        \draw[->] (t3) -> node[anchor=south]{$\dots$}(d1);
    \end{tikzpicture}\\[0.4cm]
    \caption{An illustration of the process of solving the Problem (\ref{prob:hat:r}) with a PIFO algorithm $\fA$.}
    \label{fig:nonconvex-strongly:jump}
\end{figure}
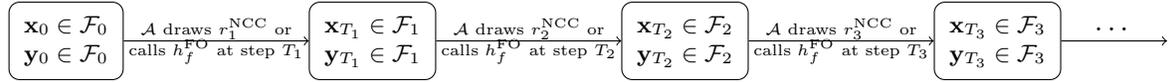

\begin{corollary}
\label{coro:nonconvex-strongly:stopping-time}
Assume we employ a PIFO algorithm $\fA$ to solve Problem (\ref{prob:hat:r}). 
Let
\begin{align*}
    T_0 = 0,~\text{ and } 
    ~T_k = \min_t \{t: t > T_{k-1}, i_t \equiv k~(\bmod ~n) \mbox{ or } a_t = 1 \}~\text{ for } k \ge 1.
\end{align*}
Then we have
\begin{align*}
    (\vx_t, \vy_t) \in \fF_{k-1} \times \fF_{k-1}, ~~~ \text{ for } t < T_k, k \ge 1.
\end{align*}
Moreover, the random variables $\{Y_k\}_{k \ge 1}$ such that $Y_k \triangleq T_k - T_{k-1}$ are mutual independent and $Y_k$ follows a geometric distribution with success probability $p_{k'} + q - p_{k'} q$ where $k' \equiv k~(\bmod~n)$ and $l \in [n]$. 
\end{corollary}
The proof of Corollary \ref{coro:nonconvex-strongly:stopping-time} is similar to that of Corollary \ref{coro:convex-concave:stopping-time}.
Furthermore, the prime-dual gap in Lemma \ref{lem:minimiax:base} can be replaced with the gradient norm of the primal function in the nonconvex-strongly-concave case. 
\begin{lemma}\label{lem:minimax:ncsc:base}
Let $\phi_{r^{\mathrm{NCC}}}(\vx) \triangleq \max_{\vy \in \BR^m} r^{\mathrm{NCC}}(\vx, \vy)$. 
If $M$ satisfies $1 \le M < m$ and 
\begin{align}\label{eq:lem:ncsc:base:condition}
\min_{\vx \in \fF_{M}} \norm{ \nabla  \phi_{r^{\mathrm{NCC}}}(\vx) } \ge 9\eps
\end{align}
and $N = \frac{n(M+1)}{4(1 + c_0)}$, then we have
\begin{align*}
    \min_{t \le N} \E \norm{ \nabla \phi_{r^{\mathrm{NCC}}}(\vx_t) } \ge \eps.
\end{align*}
\end{lemma}
Lemma \ref{lem:minimax:ncsc:base} also holds for any rescaled version of $r^{\mathrm{NCC}}$. 
It remains to pick up the parameters carefully, obtain a condition of the form (\ref{eq:lem:ncsc:base:condition}) and then estimate the order of $N$.
The details are deferred to Section~\ref{sec:minimax:ncsc} and Appendix~\ref{appendix:minimax:ncsc}.
\section{Lower Complexity Bounds for the Minimax Problems}
\label{sec:minimax}
In this section, 
we focus on the minimax problem~(\ref{prob:main}), which is restated as follows.
\begin{align*}\label{prob:minimax}
    \min_{\vx \in \fX} \max_{\vy \in \fY} f(\vx, \vy) = \frac{1}{n}\sum_{i=1}^n f_i(\vx,\vy).
\end{align*}
We assume that the function class $\{ f_i(\vx, \vy)\}_{i=1}^n$ is $L$-average smooth, and the feasible sets $\fX$ and $\fY$ are closed and convex.
In addition, $f(\vx,\vy)$ is convex in $\vx$ and concave in $\vy$ or  $f(\vx,\vy)$ is nonconvex in $\vx$ and strongly-concave in $\vy$.
The lower bound results are shown in Section \ref{sec:minimax:mainresults}.
The detailed constructions for different cases are shown in Sections \ref{sec:minimax:scsc} to \ref{sec:minimax:ncsc}.
Finally, in Section \ref{sec:minimax:smooth},
we consider the more constrained case where each $f_i$ is $L$-smooth and briefly introduce the results.

\subsection{Main Results}\label{sec:minimax:mainresults}
Recall that the comparison of the upper and lower bounds is already shown in Table~\ref{table-minimax-average}.
In this subsection, we present the formal statements of our lower bounds and give some interpretation.
We emphasize that the methods in \citet{luo2021near, zhang2021complexity} are just IFO algorithms from the analysis in Section~\ref{sec:alg:our}, which implies PIFO oracles are not much more powerful than IFO oracles.

We start with the case where the objective function $f$ is $\mu_x$-strongly-convex in $\vx$ and $\mu_y$-strongly-concave in $\vy$. Define the condition numbers $\kappa_x \triangleq L / \mu_x $ and $\kappa_y \triangleq L / \mu_y $. Without loss of generality, we assume $\mu_x \le \mu_y$. According to the relationship between $\kappa_x, \kappa_y$ and $n$, we can classify the problem into three cases: (a) $f$ is extremely ill-conditioned w.r.t. both $\vx$ and $\vy$, i.e., $\kappa_x, \kappa_y = \Omega (\sqrt{n})$; (b) $f$ is only extremely ill-conditioned w.r.t. $\vx$, i.e., $\kappa_x = \Omega(\sqrt{n}), \kappa_y = \fO(\sqrt{n})$;
(c) $f$ is relatively well-conditioned w.r.t. both $\vx$ and $\vy$, i.e., $\kappa_x, \kappa_y = \fO (\sqrt{n})$.
For the three cases, we can prove different lower bounds as follows.
\begin{theorem}\label{thm:average:strongly-strongly}
Let $n \ge 4$ be a positive integer and 
$L, \mu_x, \mu_y, R_x, R_y, \eps$ be positive parameters.
Assume additionally that
$\kappa_x \ge \kappa_y \ge 2$
and $\eps \le  \min \left\{ \frac{ n \mu_x R_x^2}{800 \kappa_x \kappa_y}, \frac{\mu_x R_x^2}{720}, \frac{\mu_y R_y^2}{800} \right\}$. Then we have
    \begin{align*}
        {\mathfrak{m}}^{\mathrm{CC}}(\eps, R_x, R_y, L, \mu_x, \mu_y) =
        \begin{cases}
            \Omega\left(\left(n {+} \sqrt{ \kappa_x \kappa_y n } \right)\log\left(1/\eps \right)\right), &\text{ for } \kappa_x, \kappa_y = \Omega(\sqrt{n}), \\
            \Omega\left(\left(n {+} n^{3/4} \sqrt{\kappa_x }\right)\log\left(1/\eps \right)\right), &\text{ for } \kappa_x = \Omega(\sqrt{n}), \kappa_y = \fO(\sqrt{n}), \\
            \Omega\left( n \right), &\text{ for } \kappa_x, \kappa_y = \fO(\sqrt{n}).
        \end{cases}
    \end{align*}
\end{theorem}
We mainly focus on the first two cases where at least one condition number is of the order $\Omega(\sqrt{n})$. Then the lower bound can be summarized as $\Omega ( \sqrt{n (\sqrt{n} + \kappa_x) (\sqrt{n} + \kappa_y) }  \log (1 / \eps) ) $, as shown in Table \ref{table-minimax-average}.

Some works focus on the balanced case $\kappa_x = \kappa_y$.
For example,
the upper bound of Accelerated SVRG/SAGA \citep{palaniappan2016stochastic} is $\fO\left(\left(n +  \frac{\sqrt{n} L}{\min\{\mu_x, \mu_y\}}\right) \log(1/\eps) \right)$. 
L-SVRE~\citep{alacaoglu2021stochastic}
also achieves the same upper bound\footnote{The setting in Section 4.3 of \citet{alacaoglu2021stochastic} is slightly different from ours here. However, the proof of their result can be adapted to strongly-convex-strongly-concave cases.}.
At least for the balanced case, their upper bounds nearly match our lower bound.
However, for the unbalanced case, there still exists a gap.
\citet{luo2021near}
focus on the unbalanced case.
They employ the catalyst technique to accelate L-SVRE and propose the method AL-SVRE, which achieves the upper bound $\tilde{\fO} ( \sqrt{ n (\sqrt{n} + \kappa_x) (\sqrt{n} + \kappa_y) } \log ({1}/{\eps} )  )$.
This bound nearly matches our lower bound for the unbalanced case up to log factors.

Then we consider the lower bound when the objective function is not strongly-convex in $\vx$, i.e., $\mu_x=0$. In this case, only the condition number w.r.t. $\vy$ is well-defined.
According to the relationship between $\kappa_y$ and $\sqrt{n}$, we can also split the problem into two cases: (a) $f$ is extremely ill-conditioned w.r.t. $\vy$, i.e., $\kappa_y = \Omega(\sqrt{n})$;
(b) $f$ is relatively well-conditioned w.r.t. $\vy$, i.e., $\kappa_y = \fO (\sqrt{n})$. We can prove the lower bounds as follows.
\begin{theorem}\label{thm:average:convex-strongly}
Let $n \ge 4$ be a positive integer and 
$L, \mu_y, R_x, R_y, \eps$ be positive parameters. Assume additionally that
$
\kappa_y
\ge 2$ and $\eps \le \min \left\{ \frac{L R_x^2}{4}, \frac{\mu_y R_y^2}{36} \right\}$.
Then we have
\begin{align*}
    {\mathfrak{m}}^{\mathrm{CC}}(\eps, R_x, R_y, L, 0, \mu_y)= \!
    \begin{cases}
    \Omega\Big(n {+} R_x n^{3/4}\! \sqrt{ \frac{L}{\eps} } {+} R_x \sqrt{ \frac{n L \kappa_y }{\eps} } {+} n^{3/4}\! \sqrt{ \kappa_y } \log ( \frac{1}{\eps} ) \Big), \  \text{for } \kappa_y {=} \Omega(\sqrt{n}), \\
    \Omega\Big(n {+} R_x n^{3/4}\! \sqrt{ \frac{L}{\eps} }  {+} R_x \sqrt{ \frac{n L \kappa_y}{\eps} } \Big), 
    \qquad \qquad \qquad \quad \ 
    \text{for } \kappa_y {=} \fO(\sqrt{n}).
    \end{cases}
    \end{align*}
\end{theorem}
For both cases, the leading term w.r.t. $\eps$ is of the order $\Omega(\sqrt{1 / \eps})$ and
the only difference between the two bounds
is the term $\Omega (n^{3/4} \sqrt{\kappa_y} \log (\frac{1}{\eps}) ) $, which is usually much smaller than the $\Omega(\sqrt{1 / \eps})$ term,
especially when $\eps$ is small. The upper bound of AL-SVRE~\citep{luo2021near} for this case is $\fO \Big(  \big( n + R_x n^{3/4} \sqrt{ \frac{L}{\eps} } + R_x \sqrt{ \frac{n L \kappa_y}{\eps} } + n^{3/4} \sqrt{\kappa_y} \big) \log (\frac{1}{\eps}) \Big)$, which nearly matches our lower bound up to log factors.

For the general convex-concave case where $\mu_x = \mu_y = 0$, we have the following lower bound.
\begin{theorem}\label{thm:average:convex-concave}
Let $n \ge 2$ be a positive integer and 
$L, R_x, R_y, \eps$ be positive parameters. Assume additionally that $\eps \le \frac{L}{4} \min\{ R_x^2, R_y^2 \}$. Then we have
    \begin{align*}
        {\mathfrak{m}}^{\mathrm{CC}}(\eps, R_x, R_y, L, 0, 0)=
        \Omega\left(n {+} \frac{ \sqrt{n} L R_x R_y }{\eps} {+} (R_x + R_y) n^{3/4} \sqrt{ \frac{L}{\eps} } \right).
    \end{align*}
\end{theorem}
The leading term w.r.t. $\eps$ is of the order $\Omega(1/\eps)$.
If $\eps = \fO \left( \frac{ L R_x^2 R_y^2 }{ \sqrt{n} (R_x + R_y)^2 } \right)$, our lower bound is $\Omega \left( n + \frac{\sqrt{n} L R_x R_y }{\eps} \right) $, which matches the upper bound $\fO \left( n + \frac{ \sqrt{n} L (R_x^2 + R_y^2) }{\eps} \right) $ of \citet{alacaoglu2021stochastic} in terms of $n$, $L$ and $\eps$. The upper bound of AL-SVRE~\citep{luo2021near} for this case is $\fO \Big( \big( n + \frac{\sqrt{n} L R_x R_y}{\eps} + (R_x + R_y) n^{3/4} \sqrt{\frac{L}{\eps}} \big)   \log (\frac{1}{\eps}) \Big) $, which nearly matches our lower bound up to log factors.

Finally, we give the lower bound when the objective function is nonconvex in $\vx$ but strongly-concave in $\vy$.

\begin{theorem}\label{thm:average:nonconvex-strongly}
Let $n \ge 2$ be a positive integer and 
$L, \mu_x, \mu_y, R_x, R_y, \eps$ be positive parameters. Assume additionally that $\eps^2 \le \frac{\Delta L^2 \alpha}{435456 n \mu_y}$, where $\alpha = \min \left\{1, \frac{128(\sqrt{3} + 1)n\mu_x \mu_y}{45 L^2}, \frac{32n\mu_y}{135 L} \right\}$. Then we have
    \begin{align*}
        {\mathfrak{m}}^{\mathrm{NCC}}(\eps, \Delta, L, \mu_x, \mu_y)=
        \Omega \left( n + \frac{\Delta L^2 \sqrt{\alpha}}{\mu_y \eps^2}\right).
    \end{align*}
For $\kappa_y = L / \mu_y \ge 32n / 135$, we have
\[
\Omega \left( n + \frac{\Delta L^2 \sqrt{\alpha}}{\mu_y \eps^2}\right)
= \Omega \left( n + \frac{\Delta L \sqrt{n}
}{\eps^2} \min \left\{ \sqrt{\kappa_y}, \sqrt{ \frac{\mu_x}{\mu_y} } \right\} \right).
\]
\end{theorem}
We mainly focus on the ill-conditioned setting $\kappa_y = \Omega(n)$, where the lower bound has a more concise expression.
Recall that $\mu_x$ measures the nonconvexity of function. When $L$ is fixed, we must have $\mu_x \le L$. If we are uninterested in the dependence of the lower bound on $\mu_x$, then we can consider the largest function class $\fF_{\mathrm{NCC}} (\Delta, L, L, \mu_y) $. which corresponds to the complexity
$  {\mathfrak{m}}^{\mathrm{NCC}}(\eps, \Delta, L, L, \mu_y)=
\Omega \left( n + \frac{\Delta L \sqrt{ n \kappa_y }}{ \eps^2}\right)$\footnote{A concurrent work by \citet{zhang2021complexity} obtains a similar lower bound. }, as shown in Table~\ref{table-minimax-average}.

As for the upper bound, \citet{luo2020stochastic} propose the method SREDA and establish the upper bound  
$\fO \left(n \log (\kappa_y / \eps) +  L \kappa_y^2 \sqrt{n} \eps^{-2} \right) $ for $n \ge \kappa_y^2 $ and $ \fO \left( (\kappa_y^2 + \kappa_y n) L \eps^{-2} \right)$ for $n < \kappa_y^2 $.
\citet{zhang2021complexity} propose Catalyst-SVRG/SAGA and obtain the upper bound $\tilde{\fO} \left( (n + n^{3/4} \sqrt{\kappa_y} ) \Delta L \eps^{-2} \right) $.
When $n \le \kappa^4$, the upper bound of \citet{zhang2021complexity} is better; otherwise, the upper bound of \citet{luo2020stochastic} is better.
Since we focus on the ill-conditioned setting, the upper and lower bounds nearly match in terms of $\kappa_y$. And there is still a $n^{1/4}$ gap in terms of $n$.
\subsection{Construction for the Strongly-Convex-Strongly-Concave Case}\label{sec:minimax:scsc}
In this subsection, 
we give the exact forms of the hard instance 
when the objective function is strongly-convex in $\vx$ and strongly-concave in $\vy$.
We still assume $\mu_x \le \mu_y$.
Then we have $\kappa_y \le \kappa_x$.
This means that the max part has a smaller condition number and is easier to solve.
According to the magnitude of $\kappa_x$ and $\kappa_y$,
the construction can be divided into three cases.

\paragraph{Case 1: $\kappa_x, \kappa_y = \Omega(\sqrt{n})$.}

When both condition numbers are
no smaller than $\Theta (\sqrt{n})$, 
the analysis depends on the following construction.

\begin{definition}\label{defn:scsc}
For fixed $L, \mu_x, \mu_y, R_x, R_y$ and $n$ such that $\mu_x \le \mu_y$, 
$\kappa_x \ge \kappa_y \ge 2$
we define $f_{\mathrm{SCSC}, i} : \BR^m \times \BR^m \rightarrow \BR$ as follows
\begin{align*}
    f_{\mathrm{SCSC}, i} (\vx, \vy) = \lambda\, r^{\mathrm{CC}}_i\left(\vx/\beta, \vy/\beta; m, \sqrt{\frac{2}{\alpha + 1}}, {\vc}^{\mathrm{SCSC}} \right), \text{ for } 1 \le i \le n,
\end{align*}
where 
{ \small
\begin{align*}
    \alpha & = 
    \sqrt{ \frac{ \left( \kappa_y - 2 / \kappa_y \right) \kappa_x  }{ 2n } + 1}, \quad
    {\vc}^{\mathrm{SCSC}}  = \left( 
    \frac{2 \kappa_y }{ \kappa_x } \sqrt{ \frac{ 2n }{ \kappa_y^2 - 2 } },\,
    2 \sqrt { \frac{ 2 n }{ \kappa_y^2 - 2 } } 
    \right), \\
    \beta & = \min \left\{ 
    2 R_x \sqrt{ \frac{ 2 \alpha n }{ \kappa_x^2 (1 - 2 / \kappa_y^2)  }}
    ,\, 
    \frac{ 4 R_x }{ \alpha + 1 } \sqrt{ \frac{ \alpha n }{ \kappa_x^2 (1 - 2 / \kappa_y^2) } }
    ,\,
    \frac{ \sqrt{ 2 \alpha} R_y  }{ \alpha - 1 }
    \right\} \text{  and  }
    \lambda  = 
    \frac{ \beta^2 }{ 2 } \sqrt{ \frac{ L^2 - 2 \mu_y^2 }{2n} }.
\end{align*} }
Consider the minimax problem 
\begin{align}
    \min_{\vx \in \fX}\max_{\vy \in \fY} f_{\mathrm{SCSC}}(\vx, \vy) \triangleq \frac{1}{n}\sum_{i=1}^n f_{\mathrm{SCSC},i}(\vx, \vy). \label{prob:scsc}\\[-0.8cm]\nonumber
\end{align}
where $\fX = \{ \vx \in \BR^m : \norm{\vx} \le R_x  \}$ and
$\fY = \{ \vy \in \BR^m : \norm{\vy} \le R_y  \}$.
Define $\phi_{ \mathrm{SCSC} } (\vx) = \max_{\vy \in \fY} f_{\mathrm{SCSC}} (\vx, \vy)$ and
$\psi_{ \mathrm{SCSC} } (\vy) = \min_{\vx \in \fX} f_{\mathrm{SCSC}} (\vx, \vy) $.
\end{definition}
One can check that $f_{\mathrm{SCSC}}$ belongs to $\fF_{\mathrm{CC}} (R_x, R_y, L, \mu_x, \mu_y)$ and satisfies a condition of the form (\ref{eq:lem:base:condition}) (please see Proposition \ref{prop:strongly-strongly} in Appendix \ref{appendix:minimax:scsc}).
Then we can prove the lower bound of the complexity for finding 
$\eps$-suboptimal point
of Problem~\ref{prob:scsc}) by PIFO algorithms.

\begin{theorem}\label{thm:scsc:example}
Consider the minimax problem~\ref{prob:scsc}) and $\eps>0$.
Let 
$\alpha = \sqrt{ \frac{ \left( \kappa_y - 2 / \kappa_y \right) \kappa_x  }{ 2n } + 1}$.
Suppose that 
{ \small
\begin{align*}
    & n \ge 2, \, \kappa_x \ge \kappa_y \ge \sqrt{2n + 2},\ 
    \eps \le \frac{1}{800} \min \left\{ \frac{ n \mu_x R_x^2}{ \kappa_x \kappa_y}, \mu_y R_y^2 \right\} , \\
    & \text{and } 
    m = \left\lfloor \frac{\alpha}{4}\log \left(\frac{ \max  \left\{ \mu_x R_x^2, \mu_y R_y^2 \right\} }{9\eps}\right) \right\rfloor + 1.
\end{align*} }
In order to find $(\hat{\vx}, \hat{\vy}) \in \fX \times \fY$ such that $ \E \phi_{\mathrm{SCSC}} (\hat{\vx}) - \E \psi_{\mathrm{SCSC}} (\hat{\vy}) < \eps$, PIFO algorithm $\fA$ needs at least $N$ queries, where
\begin{align*}
    N =
    \Omega\left(\left(n + \sqrt{n \kappa_x \kappa_y}\right)\log\left( \frac{1}{\eps} \right)\right).
\end{align*}
\end{theorem}
The proof of Theorem \ref{thm:scsc:example} is deferred to Appendix \ref{appendix:minimax:scsc}.

\paragraph{Case 2: $\kappa_x = \Omega(\sqrt{n})$, $\kappa_y = \fO(\sqrt{n})$.}
When only $\kappa_y$ is 
no smaller than $\Theta(\sqrt{n})$,
the lower bound is characterized by the following theorem.

\begin{theorem}\label{thm:scsc:example:2}
For any $L, \mu_x, \mu_y, n, R_x, R_y, \eps$ such that $n \ge 4 $,
{ \small
\begin{align*}
    & n \ge 4, \, \kappa_x  \ge \sqrt{2n + 2} \ge \kappa_y \ge 2,\ 
    \eps \le \frac{1}{720} \mu_x R_x^2 , \ 
    \tilde{L} = \sqrt{ n (L^2 - \mu_x^2) / 2 - \mu_x^2 },
    \\ 
    & \text{and } 
    m = \left\lfloor \frac{1}{4} \left( \sqrt{ \frac{ 2( \tilde{L} / \mu_x -1) }{n} + 1 } \right) \log \left(\frac{\mu_x R_x^2  }{9\eps}\right) \right\rfloor + 1,
\end{align*} }
there exist n functions $\{ f_i : \BR^m \times \BR^m \rightarrow \BR \}_{i=1}^n$ such that 
$f = \frac{1}{n} \sum_{i=1}^n f_i \in \fF_{\mathrm{CC}} (R_x, R_y, L, \mu_x, \mu_y)$.
Let $\fX = \{ \vx \in \BR^m : \norm{\vx} \le R_x  \}$ and
$\fY = \{ \vy \in \BR^m : \norm{\vy} \le R_y  \}$. 
In order to find $(\hat{\vx}, \hat{\vy}) \in \fX \times \fY$ such that $ \E \max_{\vy \in \fY} f (\hat{\vx}, \vy) - \E \min_{\vx \in \fX} f (\vx ,\hat{\vy}) < \eps$, PIFO algorithm $\fA$ needs at least $N$ queries, where
    $N =
    \Omega\left(\left(n + n^{3/4} \sqrt{ \kappa_x}\right)\log\left( \frac{1}{\eps} \right)\right).$
\end{theorem}
We find that $\kappa_y$ does not appear in the lower bound.
In fact, since $\kappa_y$ is relatively small,
the max part is easier to solve than the min part and the min part becomes the main obstacle.
To construct the hard instance, it suffices to consider the separable function of the form $f(\vx, \vy) = f_x(\vx) - f_y(\vy) $ where $f_x$ is the hard instance used for finite-sum minimization problems and $f_y (\vy) = \frac{\mu_y}{2} \norm{\vy}^2$.
For the details, see Appendix \ref{appendix:minimax:scsc}.

\paragraph{Case 3: $\kappa_x, \kappa_y = \fO(\sqrt{n})$.}

When both the condition numbers are relatively small, the lower bound is $\Omega (n)$, which means that the number of component functions becomes the main obstacle.
\begin{lemma}\label{lem:scsc:example:3}
For any $L, \mu_x, \mu_y, n, R_x, R_y, \eps$ such that $n \ge 2$, $L \ge \mu_x$, $L \ge \mu_y$ and $\eps \le \frac{1}{4} L R_x^2$,
there exist n functions $\{ f_i : \BR \times \BR \rightarrow \BR \}_{i=1}^n$ such that 
$f = \frac{1}{n} \sum_{i=1}^n f_i \in \fF_{\mathrm{CC}} (R_x, R_y, L, \mu_x, \mu_y)$.
Let $\fX = \{ x \in \BR : |x| \le R_x  \}$ and
$\fY = \{ y \in \BR : |y| \le R_y  \}$. 
In order to find $(\hat{x}, \hat{y}) \in \fX \times \fY$ such that $ \E \max_{y \in \fY} f (\hat{x}, y) - \E \min_{x \in \fX} f (x ,\hat{y}) < \eps$, PIFO algorithm $\fA$ needs at least $N = \Omega(n)$ queries.
\end{lemma}
This bound is trivial in some sense, since
we usually need to compute the full gradient at least once, whose complexity is of the order $\Omega(n)$.
The proof is also deferred to Appendix~\ref{appendix:minimax:scsc}.
Combining Theorems \ref{thm:scsc:example}, \ref{thm:scsc:example:2} and Lemma \ref{lem:scsc:example:3}, we can obtain Theorem \ref{thm:average:strongly-strongly}.
\subsection{Construction for the Convex-Strongly-Concave Case}\label{sec:minimax:csc}
In this subsection, we construct the hard instance when $f$ is convex in $\vx$ and strongly-concave in $\vy$.
The condition number $\kappa_y$ is still well-defined.
Our analysis is based on the following functions.
\begin{definition}\label{defn:csc}
For fixed $L, \mu_y, n, R_x, R_y$ such that $
\kappa_y \ge 2$, 
we define $f_{\mathrm{CSC}, i} : \BR^m \times \BR^m \rightarrow \BR$ as follows
\begin{align*}
    f_{\mathrm{CSC}, i} (\vx, \vy) = \lambda \, r^{\mathrm{CC}}_i \left( \vx / \beta, \vy / \beta;  m, 1, {\vc}^{\mathrm{CSC}} \right),
\end{align*}
where
{ \small
\begin{align*}
    {\vc}^{\mathrm{CSC}}  = \left( 0, 
    2 \sqrt{ \frac{2n}{ 
    \kappa_y^2
    - 2 } }
    \right), \ 
    \beta  = \min \left\{ 
    \frac{R_x \sqrt{ \frac{
    \kappa_y^2
    - 2}{2n} } }{2 (m+1)^{3/2} } 
    , \frac{R_y}{\sqrt{m}} \right\} \  \text{and} \ \,
    \lambda  = 
    \frac{\beta^2}{2} \sqrt{ \frac{L^2 - 2 \mu_y^2}{2n} }
    .
\end{align*} }
Consider the minimax problem
\begin{equation}
    \min_{\vx \in \fX}\max_{\vy \in \fY} f_{\mathrm{CSC}}(\vx, \vy) \triangleq \frac{1}{n}\sum_{i=1}^n f_{\mathrm{CSC},i}(\vx, \vy), \label{prob:csc}
\end{equation}
where $\fX = \{ \vx \in \BR^m : \norm{\vx} \le R_x  \}$ and
$\fY = \{ \vy \in \BR^m : \norm{\vy} \le R_y  \}$.
Define $\phi_{ \mathrm{CSC} } (\vx) = \max_{\vy \in \fY} f_{\mathrm{CSC}} (\vx, \vy)$ and
$\psi_{ \mathrm{CSC} } (\vy) = \min_{\vx \in \fX} f_{\mathrm{CSC}} (\vx, \vy) $.
\end{definition}
One can check that $f_{\mathrm{CSC}}$ belongs to $\fF_{\mathrm{CC}} (R_x, R_y, L, 0, \mu_y)$ and satisfies a condition of the form (\ref{eq:lem:base:condition}) (please see Proposition \ref{prop:convex-strongly} in Appendix \ref{appendix:minimax:csc}).
Then we can prove the lower bound of the complexity for finding 
$\eps$-suboptimal point
of Problem (\ref{prob:csc}) by PIFO algorithms.

\begin{theorem}\label{thm:csc:example}
Consider the minimax problem (\ref{prob:csc}) and $\eps>0$.
Suppose that
{ \small
\begin{align*}
    n \ge 2, \, 
    \kappa_y
    \ge 2, \,
    \eps \le \min \left\{ \frac{L^2 R_x^2}{5184\, n \mu_y}, \frac{\mu_y R_y^2}{36} \right\} \text{ and } \ m = \floor{\frac{R_x}{6} \sqrt{\frac{L^2 - 2 \mu_y^2}{2 n \mu_y \eps}}} - 2.
\end{align*} }
In order to find $(\hat{\vx}, \hat{\vy}) \in \fX \times \fY$ such that $ \E \phi_{\mathrm{CSC}} (\hat{\vx}) - \E \psi_{\mathrm{CSC}} (\hat{\vy}) < \eps$, PIFO algorithm $\fA$ needs at least $N$ queries, where
    $N =
     \Omega\left( n + 
    R_x \sqrt{ 
    {n L \kappa_y}/{ \eps}}
     \right).$
\end{theorem}
When $\kappa_y$ is small, the second term of $N$
is also small. In fact, when $ \kappa_y = \fO(\sqrt{n})$, we can provide a better lower bound as follows.
\begin{theorem}\label{thm:csc:example:2}
For any $L, \mu_y, n, R_x, R_y, \eps$ such that $n \ge 2$, $L \ge \mu_y$, 
$\eps \le \frac{ \sqrt{2} R_x^2 L}{768 \sqrt{n}}$ 
and 
$m = \floor{
\frac{ \sqrt[4]{18} }{12} R_x n^{-1/4} \sqrt{ \frac{L}{\eps} }
} - 1 $,
there exist n functions $\{ f_i : \BR^m \times \BR^m \rightarrow \BR \}_{i=1}^n$ such that 
$f = \frac{1}{n} \sum_{i=1}^n f_i \in \fF_{\mathrm{CC}} (R_x, R_y, L, 0, \mu_y)$
Let $\fX = \{ \vx \in \BR^m : \norm{\vx} \le R_x  \}$ and
$\fY = \{ \vy \in \BR^m : \norm{\vy} \le R_y  \}$. 
In order to find $(\hat{\vx}, \hat{\vy}) \in \fX \times \fY$ such that $ \E \max_{\vy \in \fY} f (\hat{\vx}, \vy) - \E \min_{\vx \in \fX} f (\vx ,\hat{\vy}) < \eps$, PIFO algorithm $\fA$ needs at least $N = \Omega\left(n + R_x n^{3/4} \sqrt{ 
{L}/{\eps} }\right)$ queries.
\end{theorem}
The construction of Theorem \ref{thm:csc:example:2} is similar to that of Theorem \ref{thm:scsc:example:2}.
We still consider the separable function $f(\vx, \vy) = f_x(\vx) - f_y(\vy)$ where $f_x$ is the hard instance used for finite-sum minimization problems and $f_y(\vy) = \frac{\mu_y}{2}\norm{\vy}^2$.
The proofs of Theorems \ref{thm:csc:example} and \ref{thm:csc:example:2} are deferred to Appendix \ref{appendix:minimax:csc}.

Now we give the proof of Theorem \ref{thm:average:convex-strongly}.
\begin{proof}[Proof of Theorem \ref{thm:average:convex-strongly}]
By Lemma \ref{lem:scsc:example:3}, we have the lower bound $\Omega(n)$ if $\eps \le LR_x^2/4$.
Note that if $\eps \ge \frac{L^2 R_x^2}{5184 n \mu_y}$, $\Omega(n) = \Omega\left( n + 
R_x \sqrt{ \frac{ n L \kappa_y }{ \eps } }
\right)$. 
And if $\eps \ge \frac{ \sqrt{2} R_x^2 L}{768 \sqrt{n} }$, $\Omega(n) = \Omega\left(n + R_x n^{3/4} \sqrt{ \frac{L}{\eps} }\right)$.
Then 
for $\eps \le \min \left\{ \frac{L R_x^2}{4}, \frac{\mu_y R_y^2}{36} \right\}$, we have $\mathfrak{m}_{}^{\mathrm{CC}}(\eps, R_x, R_y, L, 0, \mu_y) = \Omega\left(n {+} R_x \sqrt{ \frac{n L}{\eps} }  {+} \frac{R_x L}{ \sqrt{\mu_y \eps} }  \right).$
It remains to add the term $\Omega ( n^{3/4} \sqrt{\kappa_y} \log (\frac{1}{\eps}) ) $ for $\kappa_y = \Omega(\sqrt{n})$.

Now we construct $\{H_{\mathrm{CSC}, i}\}_{i=1}^n, H_{\mathrm{CSC}}: \BR^m \times \BR^m \rightarrow \BR$ as follows.
\begin{align*}
    H_{\mathrm{CSC}, i} (\vx, \vy) & = \frac{L}{2} \norm{\vx}^2 - g_{\text{SC}, i} (\vy) , \\
    H_{\mathrm{CSC}} (\vx, \vy) & =  \frac{1}{n} \sum_{i=1}^n H_{\mathrm{CSC}, i} (\vx, \vy) = \frac{L}{2} \norm{\vx}^2 - g_{\text{SC}} (\vy),
\end{align*}
where $g_{\text{SC}}(\vy)$ is $\mu_y$-convex and $\{ g_{\text{SC}, i} (\vy) \}_{i=1}^n$ is $L$-average smooth.
It is easy to check $H_{\mathrm{CSC}} \in \fF_{\mathrm{CC}} (R_x, R_y, L, 0, \mu_y)$,
\[
\min_{\vx \in \fX} H_{\mathrm{CSC}}(\vx, \vy) = - g_{\text{SC}}(\vy) 
\quad \text{and} \quad
\max_{\vy \in \fY} H_{\mathrm{CSC}}(\vx, \vy) = \frac{1}{2} \norm{\vx}^2  - \min_{\vy \in \fY} g_{\text{SC}}(\vy).
\]
It follows that for any $(\hat{\vx}, \hat{\vy}) \in \fX \times \fY$, we have
\[
\max_{\vy \in \fY} H_{\mathrm{CSC}}(\hat{\vx}, \vy) - \min_{\vx \in \fX} H_{\mathrm{CSC}}(\vx, \hat{\vy})
\ge g_{\text{SC}}(\hat{\vy}) -  \min_{\vy \in \fY} g_{\text{SC}} (\vy).
\]
By Theorem \ref{thm:average:strongly}, for $\eps \le LR_y^2 / 4$ and $
\kappa_y
= \Omega(\sqrt{n} )$, we have $\mathfrak{m}_{\eps}^{\mathrm{CC}}(R_x, R_y, L, 0, \mu_y) = n^{3/4} \sqrt{ 
\kappa_y
} \log \left( \frac{1}{\eps} \right)$. This completes the proof.
\end{proof}
\subsection{Construction for the Convex-Concave Case}\label{sec:minimax:cc}
For the general convex-concave case, the hard instance is constructed as follows.
\begin{definition}\label{defn:cc}
For fixed $L, n, R_x, R_y$ such that $n \ge 2$,
we define $f_{\mathrm{CC}, i} : \BR^m \times \BR^m \rightarrow \BR$ as follows
\begin{align*}
    f_{\mathrm{CC}, i} (\vx, \vy) = \lambda\, {r}^{\mathrm{CC}}_i \left( \vx / \beta, \vy / \beta;  m, 1, \vzero \right).
\end{align*}
where $\lambda = \frac{L R_y^2}{ m \sqrt{8n} }$ and $\beta = \frac{R_y}{\sqrt{m}}$.
Consider the minimax problem
\begin{equation}
    \min_{\vx \in \fX}\max_{\vy \in \fY} f_{\mathrm{CC}}(\vx, \vy) \triangleq \frac{1}{n}\sum_{i=1}^n f_{\mathrm{CC},i}(\vx, \vy), \label{prob:cc}
\end{equation}
where $\fX = \{ \vx \in \BR^m : \norm{\vx} \le R_x  \}$ and
$\fY = \{ \vy \in \BR^m : \norm{\vy} \le R_y  \}$.
Define $\phi_{ \mathrm{CC} } (\vx) = \max_{\vy \in \fY} f_{\mathrm{CC}} (\vx, \vy)$ and
$\psi_{ \mathrm{CC} } (\vy) = \min_{\vx \in \fX} f_{\mathrm{CC}} (\vx, \vy) $.
\end{definition}
One can check that $f_{\mathrm{CC}}$ belongs to $\fF_{\mathrm{CC}} (R_x, R_y, L, 0, 0)$ and satisfies a condition of the form~(\ref{eq:lem:base:condition}) (please see Proposition~\ref{prop:convex-concave} in Appendix~\ref{appendix:minimax:cc}).
Then, we can obtain a PIFO lower bound complexity for the general finite-sum convex-concave minimax problem.

\begin{theorem}\label{thm:cc:example}
Consider minimax problem (\ref{prob:cc}) and $\eps>0$. Suppose that
{ \small
\begin{align*}
    n \ge 2, \,
   \eps \le \frac{L R_x R_y}{72\sqrt{n} }, \text{ and } \ m = \floor{\frac{L R_x R_y}{ 18 \eps \sqrt{n}  }} - 1.
\end{align*} }
In order to find $(\hat{\vx}, \hat{\vy}) \in \fX \times \fY$ such that $ \E \phi_{\mathrm{CC}} (\hat{\vx}) - \E \psi_{\mathrm{CC}} (\hat{\vy}) < \eps$, PIFO algorithm $\fA$ needs at least $N = \Omega\left( n + 
{ \sqrt{n} L R_x R_y}/{\eps} \right)$ queries.
\end{theorem}
Note that Theorem \ref{thm:csc:example} requires the condition $\eps\leq\fO(L/\sqrt{n})$ to obtain the desired lower bound. 
For large $\eps$, we can apply the following lemma.
\begin{lemma}\label{lem:cc:example:2} 
For any positive $L, n, R_x, R_y, \eps$ such that $n \ge 2$ and
    $\eps \le \frac{1}{4} L R_x R_y$
there exist n functions $\{ f_i : \BR \times \BR \rightarrow \BR \}_{i=1}^n$ 
such that 
 $f = \frac{1}{n} \sum_{i=1}^n f_i \in \fF_{\mathrm{CC}} (R_x, R_y, L, 0, 0)$.
Let $\fX = \{ x \in \BR : |x| \le R_x  \}$ and
$\fY = \{ y \in \BR : |y| \le R_y  \}$. 
In order to find $(\hat{x}, \hat{y}) \in \fX \times \fY$ such that
$ \E \max_{y \in \fY} f (\hat{x}, y) - \E \min_{x \in \fX} f (x ,\hat{y}) < \eps$, PIFO algorithm $\fA$ needs at least $N = \Omega(n)$ queries.
\end{lemma}
This Lemma is similar to Lemma \ref{lem:scsc:example:3}.
The proofs of Theorem \ref{thm:cc:example} and Lemma \ref{lem:cc:example:2} are deferred to Appendix \ref{appendix:minimax:cc}.

Now we can give the proof of Theorem \ref{thm:average:convex-concave}.
\begin{proof}[Proof of Theorem \ref{thm:average:convex-concave}]
Note that for $\eps \ge \frac{L R_x R_y}{ 72 \sqrt{n} }$, we have $\Omega \left( n + \frac{ \sqrt{n} L R_x R_y}{\eps} \right) = \Omega(n)$.
Combining Theorem \ref{thm:cc:example} and Lemma \ref{thm:csc:example:2},
we obtain the lower bound $\Omega \left( n + \frac{ \sqrt{n} L R_x R_y}{\eps} \right) $ for $\eps \le L R_x R_y / 4$.
On the other hand,
$G_{\mathrm{CSC}} $ defined in the proof of Theorem \ref{thm:csc:example:2} and $H_{\text{SCSC}}$ defined in the proof of Lemma \ref{lem:scsc:example:3} are also convex-concave and $\eps \ge \frac{ \sqrt{2} R_x^2 L }{ 768 \sqrt{n} }$ implies $\Omega \left( n + R_x n^{3/4} \sqrt{ \frac{L}{\eps} } \right) = \Omega(n) $. Thus, 
we have the lower bound $\Omega \left( n + R_x n^{3/4} \sqrt{ \frac{L}{\eps} } \right)$ for $\eps \le L R_x^2 / 4$.
It is also worth noting that if $f(\vx, \vy)$ is convex in $\vx$ and concave in $\vy$, then $-f(\vx, \vy)$ is convex in $\vy$ and concave in $\vx$. This implies the symmetry of $\vx$ and $\vy$. Thus, we can also obtain the lower bound $\Omega \left( n + R_y n^{3/4} \sqrt{ \frac{L}{\eps} } \right)$ for $\eps \le L R_y^2 / 4$.
In summary, for $\eps \le \frac{LR_x R_y}{4}$, the lower bound is $\Omega \left( n + \frac{L R_x R_y}{\eps} + (R_x + R_y) n^{3/4} \sqrt{ \frac{L}{\eps} } \right)$.
\end{proof}
\subsection{Construction for the Nonconvex-Strongly-Concave Case}\label{sec:minimax:ncsc}

In this subsection, we consider the finite-sum minimax problem where the objective function is strongly-concave in $\vy$ but nonconvex in $\vx$.
The analysis is based on the following construction.

\begin{definition}\label{defn:ncsc}
For fixed $L, \mu_x, \mu_y, \Delta, n$, we define $f_{\mathrm{NCSC}, i}: \BR^{m+1} \times \BR^{m+1}  \rightarrow \BR$ as follows
\begin{align*}
    f_{\mathrm{NCSC}, i}(\vx, \vy) = \lambda\, {r}^{\mathrm{NCC}}_i\left(\vx / \beta, \vy / \beta; 
    m + 1,
    \sqrt[4]{\alpha}, {\vc}^{\mathrm{NCSC}} \right), \text{ for } 1 \le i \le n,
\end{align*}
where
{ \small
\begin{align*}
    \alpha & = \min \left\{1, \frac{32n \mu_y}{135 L}, \frac{128 (\sqrt{3} + 1) n \mu_x \mu_y}{45 L^2} \right\}, \;
    {\vc}^{\mathrm{NCSC}} = \left( \frac{16 \sqrt{n} \mu_y}{L}, \frac{\sqrt{\alpha} L }{16 \sqrt{n} \mu_y }, \sqrt[4]{\alpha}  \right) ,\; \\
    \lambda & = \frac{5308416 n^{3/2} \mu_y^2 \eps^2}{L^3 \alpha},\;
    \beta = 4 \sqrt{\lambda \sqrt{n} / L}
    \; \mbox{ and } \; 
    m = \floor{\frac{\Delta L^2 \sqrt{\alpha}}{3483648 n \eps^2 \mu_y}}.
\end{align*} }
Define $\phi_{\mathrm{NCSC}} (\vx) = \max_{\vy \in \BR^{m+1}} f_{\mathrm{NCSC}} (\vx, \vy)$. Consider the minimax problem
\begin{equation}
    \min_{\vx \in \BR^{m+1}}\max_{\vy \in \BR^{m+1}} f_{\mathrm{NCSC}}(\vx, \vy) \triangleq \frac{1}{n}\sum_{i=1}^n f_{\mathrm{NCSC},i}(\vx, \vy). \label{prob:ncsc}
\end{equation}
\end{definition}
One can check that $f_{\mathrm{NCSC}}$ belongs to $\fF_{\mathrm{NCC}} (\Delta, L, \mu_x, \mu_y)$ and satisfies a condition of the form (\ref{eq:lem:ncsc:base:condition}) (please see Proposition \ref{prop:nonconvex-strongly} in Appendix \ref{appendix:minimax:ncsc}).
With Proposition \ref{prop:nonconvex-strongly}, we
can give the proof of Theorem \ref{thm:average:nonconvex-strongly}.
\begin{proof}[Proof of Theorem \ref{thm:average:nonconvex-strongly}]
Combining Lemma \ref{lem:minimax:ncsc:base} and the third property of Proposition \ref{prop:nonconvex-strongly}, for $N = \frac{ n m}{4(1+c_0)}$, we have 
$
\min_{t \le N} \E \norm{ \nabla \phi_{\mathrm{NCSC} } (\vx_t) } \ge \eps.
$
Thus, in order to find $(\hat{\vx}, \hat{\vy})$ such that $\E \norm{ \nabla \phi_{\mathrm{NCSC} } (\hat{\vx}) } < \eps$, $\fA$ needs at least $N$ PIFO queries, where
$
    N = \frac{nm}{4(1+c_0)} = \Omega \left( \frac{\Delta L^2 \sqrt{\alpha} }{ \eps^2 \mu_y} \right).
    $
Since $\eps^2 \le \frac{\Delta L^2 \alpha}{6767296 n \mu_y}$ and $\alpha \le 1$, we have $\Omega \left( \frac{\Delta L^2 \sqrt{\alpha} }{ \eps^2 \mu_y} \right) = \Omega \left( n + \frac{\Delta L^2 \sqrt{\alpha} }{  \eps^2 \mu_y} \right)$.
\end{proof}
\subsection{Smooth Cases}\label{sec:minimax:smooth}
In this subsection, we focus on the more constrained function classes where each component $f_i$ is $L$-smooth.
The results are summarized in Table~\ref{table-minimax}.
We defer the definitions of the function class and optimization complexity and the formal statements of our lower bounds to Appendix~\ref{appendix:minimax:smooth}.
\renewcommand{\arraystretch}{1.8}
\begin{table}[h]
\caption{
Upper and lower bounds with the assumption that $f_i$ is $L$-smooth and $f$ is $(\mu_x, \mu_y)$-convex-concave.
The condition numbers are defined as $\kappa_x = L / \mu_x$ and $\kappa_y = L / \mu_y$ when $\mu_x, \mu_y > 0$. The definitions of $R_x, R_y$ and $\Delta$ are given in Table~\ref{table-minimax-average}.}
\label{table-minimax}
\footnotesize
\setlength{\tabcolsep}{4pt}
\begin{center}
 \begin{tabular}{|c|c|c|} 
 \hline
 Cases & Upper or Lower Bounds & References \\ [0.1cm] 
  \hline
 \multirow{2}{*}{$\mu_x > 0, \mu_y > 0$} & $\tilde{\fO}\left(\left(n +  \frac{\sqrt{n} L}{\min\{\mu_x, \mu_y\}}\right) \log(1/\eps) \right)$ & \makecell{\citet{carmon2019variance}; \\ \citet{luo2019stochastic} } \\
  \cline{2-3}
 & $\Omega\left(\sqrt{\left(n + \kappa_x \right)\left(n + \kappa_y \right)} \log(1/\eps)\right)$ & Theorem \ref{thm:strongly-strongly} \\[0.15cm]
 \cline{1-3}
 $\mu_x = 0, \mu_y > 0$ & $\Omega\left(n + R_x \sqrt{\frac{n L}{\eps}} + 
 R_x \sqrt{ \frac{L \kappa_y}{\eps} }  
 + \sqrt{ n \kappa_y } \log \left( \frac{1}{\eps} \right) \right)$ & Theorem \ref{thm:convex-strongly}  \\[0.15cm]
 \cline{1-3}
 \multirow{2}{*}{ $\mu_x = 0, \mu_y = 0$ } & $\tilde{\fO} \left(n + 
 \frac{\sqrt{n} L (R_x^2 + R_y^2)}{\eps} \right) 
 $ &  \citet{carmon2019variance} \\
 \cline{2-3}
 & $\Omega\left(n + \frac{L R_x R_y}{\eps} + (R_x + R_y) \sqrt{\frac{n L}{\eps}}\right)$ & Theorem \ref{thm:convex-concave}  \\[0.15cm]
 \hline
 \makecell{ $\mu_x < 0, \mu_y > 0$ \\ $\kappa_y = \Omega(\sqrt{n})$ } & $\Omega \left( n + \frac{\Delta L \sqrt{\kappa_y} }{\eps^2} 
 \right)$ & Theorem \ref{thm:nonconvex-strongly} \\[0.15cm]
 \hline
\end{tabular}
\end{center}
\end{table}
\renewcommand{\arraystretch}{1}

In Table~\ref{table-minimax}, we only present the upper bounds of some methods designed for the smoothness case\footnote{Although the method in \citet{carmon2019variance} has two loops and does not satisfy our definition, we list it here for a better comparison.}.
Methods designed for the average smoothness functions also apply here and thus the upper bounds in Table~\ref{table-minimax-average} are still valid.
However, there exists some gap in all cases.

Compared to the lower bounds in Table~\ref{table-minimax-average}, the lower bounds in Table~\ref{table-minimax} have the same dependence on $L, \kappa_x, \kappa_y, \eps$, but with a weaker dependence on $n$.
Specially. if we replace $L$, $\kappa_x$ and $\kappa_y$ in Table~\ref{table-minimax} by $\sqrt{n} L$, $\sqrt{n} \kappa_x$ and $\sqrt{n} \kappa_y$ respectively\footnote{For the nonconvex-strongly-concave case, we just need to replace $L$ by $\sqrt{n}L$.}, we can obtain the lower bounds in Table~\ref{table-minimax-average}.
This is due to the way of partitioning the matrix $\mB(m, \omega, \zeta)$ in Section~\ref{sec:frame:instance}.
Intuitively, we partition the Hessian matrix of the coupling term between $\vx$ and $\vy$ and each component only gets a low-rank part.
Propositions~\ref{prop:convex-concave:base} and \ref{prop:nonconvex-strongly:base}
have shown the $\sqrt{n}$ gap between the smoothness and average smoothness parameters as long as the non-coupling term is not too large.

\paragraph{Convex-concave cases}
We speculate that when $f$ is convex-concave, the lower bounds in Table~\ref{table-minimax} are the best ones our framework can obtain, because the corresponding lower bounds under the average smoothness assumption have been nearly matched by existing upper bounds.
To further improve the lower bounds, one may have to resort to new constructions.

As for the upper bounds, we notice that most work only uses the average smoothness condition.
We guess that the smoothness property of each component function needs to be better employed,
because the upper and lower bounds for convex minimization problems under the two smoothness conditions nearly match (see Tables~\ref{table-min} and \ref{table-min-average}),

\paragraph{Nonconvex-strongly-concave case}
When $f$ is nonconvex-strongly-concave, there exists a gap between the upper and lower bounds under both smoothness and average smoothness assumptions.
Since the nonconvexity poses more difficulty to the problem,
it remains an open problem whether the upper bounds, the lower bounds, or both can be further tightened. 
\section{Lower Complexity Bounds for the Minimization Problems}\label{sec:min}
In this section, we focus on 
the minimization problem
\begin{align}\label{prob:min}
    \min_{\vx \in \fX} f(\vx) = \frac{1}{n} \sum_{i=1}^n f_i(\vx),
\end{align}
where each individual component $f_i(\vx)$ is $L$-smooth or the function class $\{ f_i(\vx)\}_{i=1}^n$ is $L$-average smooth, the feasible set $\fX$ is closed and convex such that $\fX \subseteq \BR^{d}$.
We show that
we can obtain similar lower bounds as those in \citet{woodworth2016tight, hannah2018breaking, zhou2019lower}.

Recall that Problem (\ref{prob:main}) becomes Problem (\ref{prob:min}) if we set $\fY$ as a singleton.
Then the definitions of function classes and optimization complexity come directly from their counterparts in Sections~\ref{sec:frame:def}.
The details are deferred to Appendix~\ref{sec:min:setup:appendix}.

In Section~\ref{sec:min:instance}, we construct the hard instances for Problem~(\ref{prob:min}).
In Section~\ref{sec:min:results}, we summarize our results and compare them with previous work.

\subsection{The Hard Instances
}\label{sec:min:instance}
In this subsection, we present the construction of hard instances for Problem (\ref{prob:min}) and compare our construction with some related work.

The construction is also based on the class of matrices $\mB(m, \omega, \zeta)$ define in Equation~(\ref{eq:mat:B}).
We still use $\vb_{l-1}(m,\omega,\zeta)^\top$ to denote the $l$-th row of $\mB(m, \omega, \zeta)$
and defined the index sets
 $\fL_1,\dots,\fL_n$ as $\fL_i = \big\{ l: 0 \le l \le m, l \equiv i - 1~(\bmod~n) \big\}$.
Then the hard instance is constructed as
\begin{align}\label{prob:r}
    \min_{\vx \in \fX} r(\vx; m, \omega, \zeta, \vc) \triangleq \frac{1}{n} \sum_{i=1}^n r_i(\vx; m, \omega, \zeta, \vc),
\end{align}
where 
$\vc = (c_1, c_2, c_3)$, $\fX = \{\vx \in \BR^m: \norm{\vx} \le R_x \}$ or $\BR^m$, and
\begin{align*}
     & r_i(\vx; m, \omega, \zeta, \vc) \\
     & =
    \begin{cases}
        \frac{n}{2} \sum\limits_{l \in \fL_i} \norm{\vb_{l}(m, \omega, \zeta)^{\top}\vx}^2
        + \frac{c_1}{2} \norm{\vx}^2 + c_2 \sum\limits_{i=1}^{m-1} \Gamma (x_i) - c_3 n \inner{\ve_1}{\vx}, 
        \quad \text{ for } i = 1, \\
        \frac{n}{2} \sum\limits_{l \in \fL_i} \norm{\vb_{l}(m, \omega, \zeta)^{\top}\vx}^2
        + \frac{c_1}{2} \norm{\vx}^2 + c_2 \sum\limits_{i=1}^{m-1} \Gamma (x_i) , 
        \qquad \quad \, \text{ for } i = 2, 3, \dots, n.
    \end{cases}
\end{align*}
The nonconvex function $\Gamma: \BR \rightarrow \BR$ is
$
    \Gamma(x) \triangleq 120 \int_1^{x} \frac{t^2(t-1)}{1 + t^2} d t. $
We can determine the smoothness and strong convexity parameters 
of $r_i$ 
similar to Propositions \ref{prop:convex-concave:base} and \ref{prop:nonconvex-strongly:base}. 
The details are deferred to Proposition~\ref{prop:base} in Appendix~\ref{sec:min:instance:appendix}.

One can check that $r(\vx; m, \omega, \zeta, \vc) = \frac{1}{2} \vx^\top \mA(m, \omega, \zeta)\, \vx + \frac{c_1}{2} \norm{\vx}^2 + c_2 \sum_{i=1}^{m-1} \Gamma(\vx_i) - c_3 \inner{\ve_1}{\vx} $, where
{ \small
\begin{align*}
    \mA(m, \omega, \zeta) \triangleq \mB(m, \omega, \zeta)^{\top} \mB(m, \omega, \zeta) =
    \begin{bmatrix}
        \omega^2 + 1 & -1     &        &    &    \\
        -1           & 2      & -1     &    &    \\
                     & \ddots & \ddots &\ddots&    \\
                     &        & -1     & 2  & -1 \\
                     &        &        & -1 & \zeta^2+1
    \end{bmatrix}.
\end{align*} }
The matrix $\mA(m, \omega, \zeta)$ is widely-used in the analysis of lower bounds for convex optimization \citep{nesterov2013introductory,agarwal2015lower,lan2017optimal,carmon2017lower,zhou2019lower}.

Now we compare our construction with \citet{lan2017optimal} and \citet{zhou2019lower}.
In our construction, we partition the row vectors of $\mB (m, \omega, \zeta)$ into $n$ parts and each component function is defined in terms of only one part. All the component functions share the same $\vx$.
However, in \citet{lan2017optimal}, different component functions
share the same form except that they are based on different subvectors of the high-dimensional $\vx$.
Intuitively speaking,
we partition the Hessian matrix while \citet{lan2017optimal} partition the variable.
The construction of \citet{zhou2019lower} is more complex than \citet{lan2017optimal} but the basic idea is the same.

Recall the subspaces $\{\fF_k\}_{k=0}^m$ 
defined in (\ref{eq:subspace}).
The next lemma shows that the hard instance also satisfies a variant of the \textit{zero-chain} property.
\begin{lemma}\label{lem:jump}
Suppose that $n \ge 2$, $c_1 \ge 0$ and  $\vx \in \fF_k$, $0 \le k < m$. 
If 
(i) (convex case) $c_2=0$ and $\omega=0$, or
(ii) (nonconvex case) $c_1 = 0$, $c_2 > 0$, $\zeta=0$ and $\gamma < \frac{\sqrt{2} + 1}{60 c_2}$, we have
\begin{align*}
        \nabla r_i(\vx),\, \prox_{r_{i}}^{\gamma} (\vx)\in 
        \begin{cases}
        \fF_{k+1}, & \text{ if } i \equiv k + 1 (\bmod ~n), \\
        \fF_{k}, & \text{ otherwise}.
        \end{cases}
    \end{align*}
We omit the parameters of ${r}_i$ to simplify the presentation.
\end{lemma}
The proof of Lemma \ref{lem:jump} are given in Appendix~\ref{appendix:min:proof:base}.

We emphasize that the assumption on $\gamma$ naturally holds. 
Recall that the choice of $\gamma$ should satisfy that $r_i(\vu) + \frac{1}{2 \gamma} \norm{\vx - \vu}^2$ is a convex function of $\vu$ for a fixed $\vx$. Proposition~\ref{prop:base} implies that we must have $\gamma \le \frac{1}{ 45 (\sqrt{3} - 1) c_2 } \le \frac{\sqrt{2} + 1}{ 60 c_2 }$.

In short, if $\vx \in \fF_k$, 
then there exists only one $i\in\{1,\dots,n\}$ such that 
$h_{f_i}^{\mathrm{PIFO}}$
could provide additional information in $\fF_{k+1}$.
This property is
the main difference between
the constructions in \citet{lan2017optimal, zhou2019lower} and ours.
In \citet{lan2017optimal, zhou2019lower}, no matter which component is drawn, the number of the nonzero elements of the current point can increase. 
Such a difference results from the different ways of partitioning.
As a consequence, their hard instances need to be constructed in a space with a higher dimension than ours. 
Moreover, our construction also works for PIFO oracles while the constructions of \citet{lan2017optimal} and \citet{zhou2019lower} only apply to IFO oracles.

With Lemma \ref{lem:jump}, we can obtain how many PIFO calls we need as what we did in Section~\ref{sec:frame:instance}. The details are deferred to Appendix~\ref{sec:min:instance:appendix}.

\subsection{Results}\label{sec:min:results}
In this subsection, we present our lower bounds in Tables~\ref{table-min} and \ref{table-min-average}, and compare them with previous upper and lower bounds.
It is worth emphasizing that we are not trying to list all the upper bounds, just to provide a few algorithms that could match our lower bounds.
The formal statements of our lower bounds are deferred to Appendix~\ref{sec:min:result:appendix}.

\renewcommand{\arraystretch}{1.8}
\begin{table}[ht]
\caption{The upper and lower bounds with the assumption that $f_i$ is $L$-smooth and $f$ is $\mu$-strongly convex, convex or $\mu$-weakly convex. 
$\kappa = L / \mu$ for $\mu > 0$.
The definitions of $R$, $\Delta$ and  optimization complexity are given in Appendix~\ref{sec:min:setup:appendix}.
}
\label{table-min}
\footnotesize
\setlength{\tabcolsep}{4pt}
\begin{center}
 \begin{tabular}{|c|c|c|} 
 \hline
 Cases & Upper or Lower Bounds & References \\
  \hline
 \multirow{4}{*}{$\mu > 0$} & $\fO\left(\left(n {+} \sqrt{\kappa n}\right)\log\left(1/\eps \right)\right)$  & \citet{defazio2016simple, li2021anita} \\
  \cline{2-3}
  & $\fO \left( n +
  \frac{ n  \log (1 / \eps) }{ 1 + (\log(n/\kappa))_{+} }
    \right),\ \  \kappa = \fO (n)$  & \citet{hannah2018breaking} \\
  \cline{2-3}
 &  \multirow{2}{*}{ 
 $
 \left\{
 \begin{aligned}
    & \Omega\left(\left(n {+} \sqrt{\kappa n}\right)\log\left(1/\eps \right)\right), &
    \kappa = \Omega(n), \\
    & 
    { 
    \Omega\left( n + 
    \frac{ n  \log (1 / \eps) }{ 1 + (\log(n/\kappa))_{+} }
    \right) 
    }, &
    \kappa = \fO(n).
\end{aligned} \right.
 $ 
 } & \multirow{2}{*}{ 
 \citet{hannah2018breaking};
 Theorem \ref{thm:strongly} 
 } \\
 & & \\
 \cline{1-3}
 \multirow{2}{*}{ $\mu = 0$ } & $\tilde{\fO} \left( n + R \sqrt{nL / \eps}   \right) $ &  \citet{li2021anita} \\
 \cline{2-3}
 & $\Omega\left(n {+} R\sqrt{nL/\eps} \right)$ & \citet{woodworth2016tight}; Theorem \ref{thm:convex}  \\
 \hline
 \multirow{2}{*}{$\mu < 0$} & \multirow{1}{*}{ $ \tilde{\fO} \left( n + \frac{\Delta}{\eps^2} \min\{ \sqrt{n}L, n |\mu| + \sqrt{n |\mu| L}\} \right)$ } & \multirow{1}{*}{ 
 \citet{lan2019accelerated, li2020convergence}
 } \\
 \cline{2-3}
 & $\Omega \left( n + \frac{\Delta}{\eps^2} \min\{L, \sqrt{n |\mu| L}\} \right)$ & \citet{zhou2019lower}; Theorem \ref{thm:nonconvex} \\
 \hline
\end{tabular}
\end{center}
\end{table}
\renewcommand{\arraystretch}{1}

\paragraph{Smooth cases}
Table~\ref{table-min} shows the upper and lower bounds when each $f_i$ is $L$-smooth\footnote{ The lower bound of \citet{hannah2018breaking} for $\kappa = \Omega(n)$ uses the lower bound in \citet{woodworth2016tight}.  }.
For the strongly-convex and convex cases,
the upper bounds and lower bounds nearly match up to log factors, while for the nonconvex case, there is still a $\sqrt{n}$ gap.
Specially, when $\kappa = \Omega(n)$, the lower bound is $\Omega(n + \Delta \sqrt{n |\mu| L} / \eps^2 )$ and has been achieved by \citet{lan2017optimal} up to log factors. When $\kappa = \fO(n)$, the lower bound is $\Omega(n + \Delta / \eps^2 )$, while the upper bound by \citet{li2020convergence} is $\Omega(n + \sqrt{n} \Delta / \eps^2 )$.
From the analysis in Section~\ref{sec:alg:our},
the algorithms in \citet{defazio2016simple, hannah2018breaking, li2021anita, lan2019accelerated, li2020convergence} all belong to PIFO algorithms.
In fact, except the one in \citet{defazio2016simple}, others are also IFO algorithms.

As for the lower bounds,
\citet{hannah2018breaking} consider the class of
p-CLI oblivious algorithms introduced in \citet{arjevani2016dimension}.
For these algorithms, we can left-multiply the gradient by a preconditioning matrix. 
Thus, the linear-span assumption can be violated.
However, proximal operators are still not taken into account.
\citet{woodworth2016tight} prove the lower bounds for arbitrary randomized algorithms with access to PIFO oracles.
Although 
smaller than that in \citet{woodworth2016tight}, 
our class of algorithms is large enough to include many near-optimal algorithms.
Moreover, our construction is simpler than \citet{woodworth2016tight}.
As a result, such a construction can not only provide more intuition about the optimization process, but also requires fewer dimensions to construct the hard instances.
Specially, for the convex case, our construction only requires the dimension to be $\fO \left( 1 + R \sqrt{L/(n\eps)} \right)$ (see Appendix~\ref{sec:min:convex}), which is much smaller than 
$\fO \left(\frac{L^2R^4}{\eps^2}\log\left(\frac{nLR^2}{\eps}\right)\right)$ in
\citet{woodworth2016tight}.

\citet{zhou2019lower} only consider the class of IFO algorithms, which is
only a subset of PIFO algorithms.
Moreover, our construction still requires fewer dimensions.
For the nonconvex case,
our construction only requires the dimension to be $\fO \left(1 + \frac{\Delta}{\eps^2} \min \{L/n, \sqrt{\mu L /n}\} \right)$ (see Appendix \ref{sec:min:nonconvex}), which is much smaller than 
$\fO \left( \frac{\Delta}{\eps^2} \min \{L, \sqrt{n \mu L}\} \right) $ in \citet{zhou2019lower}.

\renewcommand{\arraystretch}{1.8}
\begin{table}[ht]
\caption{
The upper and lower bounds with the assumption that $\{f_i\}_{i=1}^n$ is $L$-average smooth and $f$ is $\mu$-strongly convex, convex or $\mu$-weakly convex. $\kappa = L / \mu$ for $\mu > 0$.
The definitions of $R$, $\Delta$ and  optimization complexity are given in Appendix~\ref{sec:min:setup:appendix}.
}
\label{table-min-average}
\footnotesize
\setlength{\tabcolsep}{4pt}
\begin{center}
 \begin{tabular}{|c|c|c|} 
 \hline
 Cases & Upper or Lower Bounds & References \\
  \hline
 \multirow{2}{*}{
 \shortstack{ $\mu > 0$,\\ $\kappa = \Omega (\sqrt{n}) $ } } & $\fO\left(\left(n {+} n^{3/4} \sqrt{\kappa }\right)\log\left(1/\eps \right)\right)$  & \citet{allen2018katyushax} \\
  \cline{2-3}
 &  
 { $
 \left.
 \begin{aligned}
    & \Omega\left(\left(n {+} n^{3/4} \sqrt{\kappa }\right)\log\left(1/\eps \right)\right)
\end{aligned} \right.
 $ } & 
 { \citet{zhou2019lower}; Theorem \ref{thm:average:strongly}  } \\
 \cline{1-3}
 \multirow{2}{*}{ $\mu = 0$ } & $\fO \left( n \log (1/\eps) + R n^{3/4} \sqrt{L / \eps}   \right) $ &  \citet{allen2018katyushax} \\
 \cline{2-3}
 & $\Omega\left(n {+} R n^{3/4} \sqrt{L/\eps} \right)$ & \citet{zhou2019lower}; Theorem \ref{thm:average:convex}  \\
 \hline
 \multirow{2}{*}{$\mu < 0$} &  
 { $\tilde{\fO} \left( n + \frac{\Delta}{\eps^2} \min\{ \sqrt{n} L, n^{3/4} \sqrt{ |\mu| L}\} \right)$ } & 
 \citet{allen2017natasha, li2021page}
 \\
 \cline{2-3}
 & $\Omega \left( n + \frac{\Delta}{\eps^2} \min\{ \sqrt{n} L, n^{3/4} \sqrt{ |\mu| L}\} \right)$ & \citet{zhou2019lower}; Theorem \ref{thm:average:nonconvex} \\
 \hline
\end{tabular}
\end{center}
\end{table}
\renewcommand{\arraystretch}{1}

\paragraph{Average smooth cases}
For the average smooth cases, the upper and lower bounds nearly match up to log factors for all three cases.
Specially, for the nonconvex case, when $\kappa = \Omega(\sqrt{n})$, the lower bound is $\Omega(n + \Delta n^{3/4} \sqrt{ |\mu| L } / \eps^2 )$ and has been achieved by  repeatedSVRG in \citet{agarwal2017finding, carmon2018accelerated, allen2017natasha}\footnote{This method was implicitly proposed in \citet{agarwal2017finding, carmon2018accelerated} and formally named as repeatedSVRG in \citet{allen2017natasha}. } up to log factors. When $\kappa = \fO (\sqrt{n})$, the lower bound is $\Omega(n + \Delta L \sqrt{n} / \eps^2)$ and has been achieved by \citet{li2021page}.
One can check that the algorithms in \citet{allen2018katyushax, li2021page} are both IFO algorithms.
The method repeatedSVRG in   \citet{allen2017natasha}
can also be modified into 
IFO algorithms\footnote{
Similar to the analysis for catalyst accelerated methods in Section~\ref{sec:alg:our}.
}.
As for the lower bounds, our results have the same orders as those in \citet{zhou2019lower} and can apply to PIFO algorithms. And our constructions also require fewer dimensions than \citet{zhou2019lower}. The details are deferred to Appendix~\ref{sec:min:result:appendix}.

\paragraph{IFO and PIFO algorithms}
From the above analysis, we find that PIFO oracles are no more powerful than IFO oracles in terms of the complexity for smooth functions.
The PIFO lower bounds have been nearly matched by many IFO algorithms.
This is consistent with the observation in \citet{woodworth2016tight}.
From the results in Table~\ref{table-minimax-average},
this phenomenon also appears in finite-sum minimax problems under the average smoothness assumption.
As a comparison, \citet{woodworth2016tight} shows that
for Lipschitz but nonsmooth functions, having access to proximal oracles does reduce the complexity.

\section{Concluding Remarks}\label{sec:conclusion}

In this paper, focusing on finite-sum minimax and minimization optimization problems, 
we have given a new definition of PIFO algorithms, which have access to proximal and gradient oracles for each component function and can obtain the full gradient infrequently.
This class of PIFO algorithms are large enough to include many near-optimal methods.
We have developed a novel approach to constructing the hard instance.
Instead of partitioning the variable~\citep{lan2017optimal, zhou2019lower}, we partition the classical tridiagonal matrix in \citet{nesterov2013introductory}  into $n$ groups.
Such a construction is friendly to the analysis of both IFO and PIFO algorithms, providing some intuition of the optimization process and requiring fewer dimensions than those in \citet{woodworth2016tight, zhou2019lower}.

Based on our approach, we have established the lower bounds for finite-sum minimax problems when $f$ is convex-concave or nonconvex-strongly-concave and $\{f_i\}_i^n$ is $L$-average smooth. Most of the lower bounds are nearly matched by existing upper bounds up to log factors.
For minimization problems, we have derived similar lower bounds as in \citet{woodworth2016tight, hannah2018breaking, zhou2019lower}.
The comparison of upper and lower bounds shows that for smooth functions, the proximal oracles are not much more powerful than gradient oracles.

Finally, we propose several future research directions.
\begin{itemize}
    \item When $f$ is nonconvex-strongly-concave or each $f_i$ is $L$-smooth, there still exists some gap between the upper and lower bounds. It remains open to design faster algorithms or tighten the lower bound to close the gap. 
    \item It would be interesting to apply our construction framework to prove the lower bounds for nonconvex-concave cases.
    \item The definition of PIFO algorithms can be further extended to include more methods.
    For example, the distribution $\fD$ over $[n]$ and the expectation $q$ of the Bernoulli random variable need not be stationary over time. Sampling without replacement and methods that break the linear-span protocol are also worth considering.
\end{itemize}

\vskip 0.2in
\bibliographystyle{plainnat}
\bibliography{reference}

\newpage
\appendix
\section{Results of the Sum of Geometric Distributions}\label{appendix:geo}
In this section, we present the approach to proving Lemma \ref{lem:geo}.
We can view the probability $\pr{\sum_{i=1}^m Y_i > j}$ as a function of $m$ variables $p_1, p_2, \dots, p_m$:
\begin{align}\label{def:fmj}
    f_{m, j}(p_1, p_2, \dots, p_m) \triangleq \pr{\sum_{i=1}^m Y_i > j}.
\end{align}
We first provide the following useful result about the function $f_{m, j}$.
    \begin{lemma}\label{lem:geo:aux}
        For $m \ge 2$ and $j \ge 1$, we have that
        \begin{align*}
            f_{m, j}(p_1, p_2, \dots, p_m)
            \ge f_{m, j}\left( \frac{\sum_{i=1}^m p_i}{m}, \dots, \frac{\sum_{i=1}^m p_i}{m} \right).
        \end{align*}
    \end{lemma}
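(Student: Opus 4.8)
The plan is to prove Lemma~\ref{lem:geo:aux} by a symmetrization/convexity argument: since the right-hand side is the value of $f_{m,j}$ at the average of the coordinates, it suffices to show that $f_{m,j}$, as a function of $(p_1,\dots,p_m)$ on $(0,1]^m$, is \emph{Schur-concave}, or at least that it is minimized over any line segment obtained by averaging two coordinates and fixing the rest. By symmetry of $f_{m,j}$ in its arguments and a standard smoothing (Hardy--Littlewood--P\'olya) argument, it is enough to fix coordinates $3,\dots,m$ and all other data, regard $f_{m,j}$ as a function of $(p_1,p_2)$ with $p_1+p_2 = s$ held fixed, and show this function is minimized when $p_1=p_2=s/2$. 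Equivalently, writing $p_1 = s/2 + t$, $p_2 = s/2 - t$, I want to show $t \mapsto f_{m,j}$ is even and nondecreasing in $|t|$ (i.e. it is a ``convex-like'' function symmetric about $t=0$), so that the minimum is at $t=0$.

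First I would condition on $Z \triangleq \sum_{i=3}^m Y_i$, which is independent of $(Y_1,Y_2)$; then
\[
f_{m,j}(p_1,\dots,p_m) = \sum_{k\ge 0} \pr{Z = k}\, \pr{Y_1 + Y_2 > j - k},
\]
so it suffices to prove the two-variable statement: for each fixed integer $\ell$ (playing the role of $j-k$), the map $(p_1,p_2)\mapsto \pr{Y_1+Y_2 > \ell}$ with $Y_1\sim\geo(p_1)$, $Y_2\sim\geo(p_2)$ is Schur-concave, i.e. minimized on $p_1+p_2=s$ at $p_1=p_2$. (For $\ell \le 0$ the probability is $1$ and there is nothing to prove, so assume $\ell\ge 1$.) For this I would write out $\pr{Y_1+Y_2 > \ell} = 1 - \sum_{a=0}^{\ell}\pr{Y_1 = a}\pr{Y_2 = \ell - a} - \sum \dots$; more cleanly, $\pr{Y_1 + Y_2 \le \ell} = \sum_{a+b\le \ell}(1-p_1)^a p_1 (1-p_2)^b p_2$, and I want to show this is \emph{Schur-convex} in $(p_1,p_2)$. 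Summing the geometric-type sums in closed form, $\pr{Y_1 + Y_2 \le \ell}$ becomes a symmetric rational function of $q_1 = 1-p_1$, $q_2 = 1-p_2$; with $q_1 + q_2$ fixed (equivalently $p_1+p_2$ fixed), one checks it is monotone in $(q_1 - q_2)^2$ by a direct derivative computation, which reduces to verifying the sign of a polynomial in $q_1,q_2,\ell$.

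The main obstacle I expect is precisely this last elementary-but-fiddly step: establishing the correct monotonicity of the explicit two-variable cdf $\pr{Y_1+Y_2\le\ell}$ along the line $p_1+p_2 = s$. One has to be careful because $\geo(p)$ is more concentrated near $0$ when $p$ is large, so the effect of spreading $p_1,p_2$ apart is not obvious at the level of intuition; the cleanest route is probably to compute $\partial^2/\partial t^2$ of $g(t) \triangleq \pr{Y_1 + Y_2 \le \ell}$ at $p_i = s/2 \pm t$ and show it is $\ge 0$ for all $t$ in the admissible range (or show $g'(t)$ has the sign of $t$), using the explicit formula
\[
\pr{Y_1 + Y_2 \le \ell} \;=\; 1 - \frac{p_2(1-p_1)^{\ell+1} - p_1(1-p_2)^{\ell+1}}{p_2 - p_1} \quad (p_1 \neq p_2),
\]
with the $p_1 = p_2$ case obtained by taking the limit. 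After that, undoing the conditioning on $Z$ (a nonnegative combination of the two-variable inequalities) and then chaining the coordinate-averaging steps via the Hardy--Littlewood--P\'olya majorization argument yields the full lemma. I would also double-check the boundary behavior at $p_i \to 0$ and $p_i = 1$ to make sure the averaging operations stay inside $(0,1]^m$, which they do since averaging cannot push a coordinate outside the interval spanned by the originals.
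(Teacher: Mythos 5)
Your proposal is correct in outline but chains the two-variable reduction differently from the paper. Both you and the paper condition on $Z=\sum_{i\ge 3}Y_i$ to reduce to the two-variable statement, and both rest on the same technical core: showing that $\pr{Y_1+Y_2>\ell}$, restricted to $p_1+p_2=s$, is smallest at $p_1=p_2=s/2$, via a derivative computation on the explicit closed form. The paper carries this out in Lemma~\ref{lem:two-geo}, proving $h'(p_1)<0$ on $(0,c/2)$, which is exactly the ``nondecreasing in $|t|$'' monotonicity you flag as the main obstacle, so that step is fiddly but doable. Where you diverge is in promoting the two-variable statement to $m$ variables: you propose Schur-concavity via a Hardy--Littlewood--P\'olya T-transform chain, whereas the paper argues by compactness and contradiction --- it first proves the tail bound $1-f_{m,j}\le j\min_i p_i$ together with a Lipschitz estimate to show a global minimizer exists on a compact subdomain $\{p_i\ge\delta\}$, then observes that a minimizer with two unequal coordinates would be strictly improved by fully averaging them, a contradiction. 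The paper's route avoids majorization machinery entirely; yours avoids the compactness and boundary bookkeeping, since T-transforms keep you inside $(0,1]^m$ automatically. One point to make explicit in your write-up: a generic $(p_1,\dots,p_m)$ cannot be transformed to $(\bar p,\dots,\bar p)$ by finitely many \emph{full} pairwise averages, so the HLP chaining genuinely needs T-transforms (partial averages), which in turn genuinely needs the monotone-in-$|t|$ form of the two-variable claim rather than merely ``the midpoint is a minimizer along the segment.'' You do state the right monotone form, so this is an expository caveat rather than a gap.
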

This lemma implies that with the sum of the $p_i$ unchanged, the uniform case (all the $p_i$ are equal) is the least heavy-tailed.
Since we aim to give a lower bound,
it suffices to only focus on the uniform case.
The proof of Lemma \ref{lem:geo:aux} is given in Appendix \ref{appendix:geo:aux}.
With Lemma \ref{lem:geo:aux} in hand, we give the proof of Lemma \ref{lem:geo}.

\begin{proof} [Proof of Lemma \ref{lem:geo}].
    Let $p = \frac{\sum_{i=1}^m p_i}{m}$ and $\left\{Z_i \sim \geo\left( p \right)\right\}_{i=1}^m$ be independent geometric random variables.
    Then we have
    \begin{align*}
        \pr{\sum_{i=1}^m Y_i > \frac{m^2}{4(\sum_{i=1}^m p_i)}} > \pr{\sum_{i=1}^m Z_i > \frac{m}{4 p}}.
    \end{align*}
    Denote $\sum_{i=1}^m Z_i$ by $\tau$. It is easily checked that $
        \E [\tau] = \frac{m}{p} \; \mbox{ and } \; \Var (\tau) = \frac{m(1-p)}{p^2}$.
    Hence, we have
    \begin{align*}
          \quad \pr{\tau > \frac{1}{4} \E \tau} & = \pr{\tau - \E \tau > -\frac{3}{4} \E \tau} \\
         & = 1 - \pr{\tau - \E \tau \le -\frac{3}{4} \E \tau}
        \ge 1 - \pr{|\tau - \E \tau| \ge \frac{3}{4} \E \tau}                                 \\
         & \ge 1 - \frac{16\Var(\tau)}{9(\E \tau)^2}
        = 1 - \frac{16m(1-p)}{9m^2} \ge 1 - \frac{16}{9m} \ge \frac{1}{9},
    \end{align*}
which completes the proof.
\end{proof}
\subsection{Proof of Lemma \ref{appendix:geo:aux}}\label{appendix:geo:aux}
Before giving the proof of Lemma \ref{lem:geo:aux}, 
we first present some results about $f_{2, j}$.
which is defined in Equation (\ref{def:fmj}).
\begin{lemma}\label{lem:two-geo}
    The following properties hold for the function $f_{2, j}$.
    \begin{enumerate}
        \item For $j \ge 1$, $p_1, p_2 \in (0, 1]$, it holds that
              \begin{align*}
                  f_{2, j}(p_1, p_2) = \begin{cases}
                      j p_1(1 - p_1)^{j-1} + (1 - p_1)^j, ~~~                  & \text{if  } p_1 = p_2, \\
                      \frac{p_2 (1 - p_1)^j - p_1 (1 - p_2)^j}{p_2 - p_1}, ~~~ & \text{otherwise}.
                  \end{cases}
              \end{align*}
        \item For $j \ge 2, p_1 \neq p_2$, we have
              \[f_{2, j}(p_1, p_2) > f_{2, j} \left( \frac{p_1 + p_2}{2}, \frac{p_1 + p_2}{2} \right).\]
    \end{enumerate}

\end{lemma}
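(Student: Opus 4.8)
The plan is to prove Lemma~\ref{lem:two-geo} in two parts, first establishing the closed-form expression for $f_{2,j}(p_1,p_2)$ and then using it to derive the inequality.

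For the closed form, I would compute $f_{2,j}(p_1,p_2) = \pr{Y_1 + Y_2 > j}$ where $Y_1 \sim \geo(p_1)$ and $Y_2 \sim \geo(p_2)$ are independent. The cleanest route is to write $\pr{Y_1 + Y_2 > j} = \sum_{k=0}^{j} \pr{Y_1 = k} \pr{Y_2 > j-k} + \pr{Y_1 > j}$, or alternatively to compute $\pr{Y_1+Y_2 \le j}$ by convolution. Using $\pr{Y_i = k} = (1-p_i)^k p_i$ and $\pr{Y_i > m} = (1-p_i)^{m+1}$, the sum $\sum_{k=0}^{j} (1-p_1)^k p_1 (1-p_2)^{j-k}$ is a finite geometric series; when $p_1 = p_2$ each term is $p_1(1-p_1)^j$ and there are $j+1$ of them relevant, and when $p_1 \ne p_2$ the ratio $(1-p_1)/(1-p_2) \ne 1$ gives the telescoping form $\frac{p_2(1-p_1)^j - p_1(1-p_2)^j}{p_2 - p_1}$ after simplification. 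I would carry out this algebra carefully, checking the boundary term $\pr{Y_1 > j} = (1-p_1)^{j+1}$ combines correctly, and verify the two cases agree in the limit $p_2 \to p_1$ (L'Hôpital or Taylor expansion) as a sanity check.

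For the inequality in part~2, fix $s = p_1 + p_2$ and define $g(p_1) = f_{2,j}(p_1, s - p_1)$ for $p_1 \in (0, s)$ with $p_1 \ne s/2$; the claim is $g(p_1) > g(s/2)$. The natural approach is to show $g$ is symmetric about $p_1 = s/2$ (clear from symmetry of $f_{2,j}$ in its arguments) and strictly convex, or equivalently that $g(s/2)$ is a strict global minimum. One way: write $f_{2,j}(p_1,p_2)$ as the expectation $\E[(1-p_1)^{Z}]$-type object, or more directly differentiate $g$ twice. Another approach, likely cleaner, is to expand $f_{2,j}$ as a polynomial: $\pr{Y_1+Y_2 > j} = \sum_{k > j} \pr{Y_1+Y_2 = k}$ and $\pr{Y_1+Y_2=k} = \sum_{a+b=k}(1-p_1)^a p_1 (1-p_2)^b p_2$, so each probability is $p_1 p_2 \sum_{a+b=k}(1-p_1)^a(1-p_2)^b$. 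Then one analyzes $\sum_{k>j}$ of these terms as a function of $(p_1,p_2)$ with $p_1+p_2$ fixed. I expect the main obstacle to be this part: showing the strict inequality requires either a somewhat delicate convexity computation or a clever symmetrization/coupling argument, and the finite truncation at $k > j$ (rather than a clean infinite sum) makes direct manipulation messier. A promising concrete tactic is to fix $p_1 + p_2 = s$ and show $\frac{\partial}{\partial p_1} f_{2,j}(p_1, s-p_1)$ has the sign of $(s/2 - p_1)$, i.e., vanishes only at the midpoint and is negative for $p_1 < s/2$; this can be extracted from the closed form by differentiating the rational expression and showing the numerator is a definite-sign polynomial in $(1-p_1), (1-p_2)$, using $j \ge 2$ to get strictness.

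Once Lemma~\ref{lem:two-geo} is established, the passage to Lemma~\ref{lem:geo:aux} (the general $m$ case) would proceed by a coordinate-wise averaging argument: given $(p_1,\dots,p_m)$, repeatedly replace a pair $(p_i, p_j)$ by $(\frac{p_i+p_j}{2}, \frac{p_i+p_j}{2})$, each step not decreasing $f_{m,j}$ by conditioning on the other $Y_\ell$'s and applying part~2 of Lemma~\ref{lem:two-geo} to the two-variable marginal, and then note that such averaging operations drive the vector toward the constant vector $(\bar p, \dots, \bar p)$. That reduction is routine once the two-variable inequality is in hand, so the real work is the two-variable convexity statement above.
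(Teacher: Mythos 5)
Your high-level plan is the same as the paper's: for part~1, expand $\pr{Y_1+Y_2>j}$ as a finite geometric series; for part~2, fix $s=p_1+p_2$, set $g(p_1)=f_{2,j}(p_1,s-p_1)$, and show $g'(p_1)<0$ on $(0,s/2)$. However, there are two concrete issues.

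First, your convention does not reproduce the stated closed form. You take $\pr{Y=k}=(1-p)^kp$ with $k\ge 0$ (matching the paper's Definition) and $\pr{Y>m}=(1-p)^{m+1}$. With that, $\pr{Y_1=k}\pr{Y_2>j-k}=(1-p_1)^kp_1(1-p_2)^{j-k+1}$, and carrying the algebra through gives
\[
\pr{Y_1+Y_2>j}=\frac{p_2(1-p_1)^{j+2}-p_1(1-p_2)^{j+2}}{p_2-p_1},
\]
not $\frac{p_2(1-p_1)^j-p_1(1-p_2)^j}{p_2-p_1}$. The sum you actually write, $\sum_{k=0}^j(1-p_1)^kp_1(1-p_2)^{j-k}$, silently drops the $+1$ and so still yields an off-by-one result $\frac{p_2(1-p_1)^{j+1}-p_1(1-p_2)^{j+1}}{p_2-p_1}$. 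The lemma's formula is the one obtained under the ``trials until first success'' convention $\pr{Y=k}=(1-p)^{k-1}p$ with $k\ge 1$ (so $\pr{Y>m}=(1-p)^m$, sum from $k=1$ to $j$), and in fact the paper's own proof and the geometric-variable computations elsewhere (Corollary~\ref{coro:convex-concave:stopping-time}, $\pr{T_k-T_{k-1}=s}=(1-p_{k'})^{s-1}p_{k'}$) use that convention despite the Definition saying otherwise. You should pin the convention down explicitly; otherwise your ``careful algebra'' will produce a formula that contradicts the statement and you will not notice because the discrepancy is a shift in $j$.

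Second and more substantially, you leave the heart of part~2 unproved. You correctly reduce to showing $\partial_{p_1}f_{2,j}(p_1,s-p_1)$ has the sign of $s/2-p_1$, but you stop at ``showing the numerator is a definite-sign polynomial,'' which is not where the difficulty lies. After differentiating the rational closed form, clearing denominators, and substituting $x=\frac{1+p_1-c}{c-2p_1}$, the condition $h'(p_1)<0$ becomes the inequality $\left(\frac{x}{x+1}\right)^{j-1}>1-\frac{j-1}{x+jp_1/c}$, and establishing this (the paper proves the stronger $\left(\frac{x}{x+1}\right)^{j-1}\ge 1-\frac{j-1}{x+j/2}$) requires the binomial-coefficient estimate
\[
(x+1)^j-\tfrac{j}{2}(x+1)^{j-1}\;\le\;x^{j-1}\Bigl(x+\tfrac{j}{2}\Bigr),
\]
which follows from the observation that $\binom{j}{l}-\tfrac{j}{2}\binom{j-1}{l}=\bigl(\tfrac{j}{j-l}-\tfrac{j}{2}\bigr)\binom{j-1}{l}\le 0$ for $l\le j-2$. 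This substitution and binomial bound are the genuinely nontrivial content of the lemma, and asserting that the derivative ``has the sign of $(s/2-p_1)$'' without producing such an argument leaves the proof incomplete.

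Your remark about passing to Lemma~\ref{lem:geo:aux} by pairwise averaging is accurate in spirit, but note the paper does \emph{not} iterate averaging to convergence: it first proves $f_{m,j}$ is $j$-Lipschitz and bounded below by $1-jp_i$ near $p_i\to 0$, restricts to a compact slab to get an exact minimizer, and then uses the two-variable inequality once to show the minimizer must have all coordinates equal. A naive infinite-iteration argument needs a continuity statement anyway, so this compactness step is not optional.
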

\begin{proof}
    \textit{1.} Let $Y_1 \sim \geo(p_1), Y_2 \sim \geo(p_2)$ be two independent random variables. Then
    \begin{align*}
        \pr{Y_1 + Y_2 > j}
         & = \sum_{l=1}^{j} \pr{Y_1 = l} \pr{Y_2 > j - l}
        + \pr{Y_1 > j}                                                                 \\
         & = \sum_{l=1}^{j} (1-p_1)^{l-1} p_1 (1-p_2)^{j-l} + (1-p_1)^j                \\
         & = p_1 (1-p_2)^{j-1} \sum_{l=1}^{j} \left( \frac{1-p_1}{1-p_2} \right)^{l-1}
        + (1-p_1)^j.
    \end{align*}
    If $p_1 = p_2$, Then $\pr{Y_1 + Y_2 > j} = j p_1(1 - p_1)^{j-1} + (1 - p_1)^j$; if $p_1 < p_2$, we have
    \begin{align*}
        \pr{Y_1 + Y_2 > j}
         & = p_1 \frac{(1 - p_1)^j - (1 - p_2)^j}{p_2 - p_1} + (1-p_1)^j 
         = \frac{p_2 (1 - p_1)^j - p_1 (1 - p_2)^j}{p_2 - p_1}.
    \end{align*}

    \textit{2.} Now we suppose that $p_1 + p_2 = c$ and $p_1 < p_2$. Consider
    \begin{align*}
        h(p_1) \triangleq f_{2, j}(p_1, c - p_1) = \frac{(c - p_1)(1 - p_1)^j - p_1(1 + p_1 - c)^j}{c - 2p_1},
    \end{align*}
    where $p_1 \in (0, c/2)$.
    It is clear that
    \[
    h(c/2) \triangleq \lim_{p_1 \rightarrow c/2} h(p_1) = f_{2, j}\left(c/2, \, c/2\right).
    \]
    If $h'(p_1) < 0$ for $p_1 \in (0, c/2)$, then there holds $h(p_1) > h(c/2)$, i.e.,
    \[f_{2, j}(p_1, p_2) > f_{2, j} \left( \frac{p_1 + p_2}{2}, \frac{p_1 + p_2}{2} \right).\]

    Note that
    \begin{align*}
        h'(p_1) & = \frac{-(1 - p_1)^j - j(c - p_1)(1 - p_1)^{j-1}
        - (1 + p_1 - c)^j - jp_1(1 + p_1 - c)^{j-1}}{c - 2p_1}                      \\
                & \quad + 2\frac{(c - p_1)(1 - p_1)^j - p_1(1 + p_1 - c)^j}{(c - 2p_1)^2} \\
                & = \frac{[c(1 - p_1) - j (c - p_1)(c - 2p_1)](1 - p_1)^{j-1}
        - [c(1 + p_1 - c) + jp_1(c - 2p_1)](1 + p_1 - c)^{j-1}}{(c - 2p_1)^2}.
    \end{align*}
    Hence $h'(p_1) < 0$ is equivalent to
    \begin{align}\label{lem:two-geo:eq}
        \frac{c(1 - p_1) - j (c - p_1)(c - 2p_1)}{c(1 + p_1 - c) + jp_1(c - 2p_1)}
        < \left( \frac{1 + p_1 - c}{1 - p_1} \right)^{j-1}.
    \end{align}
    Observe that
    \begin{align*}
        \quad \frac{c(1 - p_1) - j (c - p_1)(c - 2p_1)}{c(1 + p_1 - c) + jp_1(c - 2p_1)}
        = 1 - \frac{(j-1)c(c - 2p_1)}{c(1 + p_1 - c) + jp_1(c - 2p_1)}
        = 1 - \frac{j-1}{\frac{1 + p_1 - c}{c - 2p_1} + j\frac{p_1}{c}}.
    \end{align*}
    Letting $x = \frac{1 + p_1 - c}{c - 2p_1}$, inequality (\ref{lem:two-geo:eq}) can be written as
        $ 1 - \frac{j-1}{x + j p_1 / c} < \left(\frac{x}{x+1}\right)^{j-1} $.
    Note that
    \begin{align*}
         & \quad (x+1)^j - j/2 (x+1)^{j-1} \\
         & = x^j + \sum_{l=0}^{j-1} \left[ \binom{j}{l} - \frac{j}{2} \binom{j-1}{l} \right] x^l                 \\
         & = x^j + \sum_{l=0}^{j-1} \left[ \left(\frac{j}{j - l} - \frac{j}{2} \right)\binom{j-1}{l} \right] x^l \\
         & \le x^j + j/2 x^{j-1} = x^{j-1} (x + j/2).
    \end{align*}
    That is
        $ (x + 1)^{j - 1}(x + j/2) - (j-1)(x+1)^{j-1} \le x^{j-1}(x + j/2) $.
    Consequently, we have
    \begin{align*}
        \left(\frac{x}{x+1}\right)^{j-1} \ge 1 - \frac{j-1}{x + j/2} > 1 - \frac{j-1}{x + j p_1 / c},
    \end{align*}
    which is the result we desired.
\end{proof}
Now we can give the proof of Lemma \ref{lem:geo:aux}.
\begin{proof}[Lemma \ref{lem:geo:aux}]
    We first prove the continuity of the function $f_{m, j}$. Actually, we can prove that
    \begin{align}\label{lem:geo:aux:continuous}
        |f_{m, j}(p_1, p_2, \dots, p_m) - f_{m, j}(p'_1, p_2, \dots, p_m)| \le j |p_1 - p'_1|.
    \end{align}
    Recall that
        $ f_{m, j}(p_1, p_2, \dots, p_m) \triangleq \pr{\sum_{i=1}^m Y_i > j}, $
    where $\{Y_i \sim \geo(p_i)\}_{i=1}^m$ are independent geometric random variables.
    Let $Y'_1 \sim \geo(p'_1)$ be independent of the $Y_i 
    $, then by mean value theorem for $1 \le l \le j - 1$, there holds
    \begin{align*}
        \left|\pr{Y_1 > l} - \pr{Y'_1 > l}\right|
         & = \left| (1 - p_1)^l - (1 - p_1')^l \right|                      \\
         & = \left| l(1 - \xi)^{l - 1} \right| \left| p_1 - p'_1 \right|    \\
         & \le l \left| p_1 - p'_1 \right| \le j \left| p_1 - p'_1 \right|,
    \end{align*}
    where $\xi$ lies on the interval $[p_1, p'_1]$.
    Consequently, with $Z \triangleq \sum_{i=2}^m Y_i$, we conclude that
    \begin{align*}
         & \quad \left| f_{m, j}(p_1, p_2, \dots, p_m) - f_{m, j}(p'_1, p_2, \dots, p_m) \right| \\
         & = \left|\pr{Y_1 + Z > j} - \pr{Y'_1 + Z > j} \right|                                    \\
         & = \bigg|\sum_{l = 1}^{j-1} \pr{Z = l} \pr{Y_1 > j - l} + \pr{Z > j - 1}
        - \sum_{l = 1}^{j-1} \pr{Z = l} \pr{Y'_1 > j - l} + \pr{Z > j - 1}\bigg|                   \\
         & \le \sum_{l = 1}^{j-1} \pr{Z = l} \bigg| \pr{Y_1 > j - l} - \pr{Y'_1 > j - l} \bigg|    \\
         & \le j |p_1 - p'_1| \sum_{l = 1}^{j-1} \pr{Z = l}                                        \\
         & = j |p_1 - p'_1| \pr{1 \le Z \le j - 1} \le j |p_1 - p'_1|,
    \end{align*}
    where we have used $\pr{Y_1 > 0} = 1$ in the second equality.

    Following from Equation (\ref{lem:geo:aux:continuous}) and the symmetry of the function $f_{m, j}$, we know that
    \begin{align*}
        |f_{m, j}(p_1, p_2, \dots, p_m) - f_{m, j}(p'_1, p'_2, \dots, p'_m)| \le j \sum_{i=1}^m |p_i - p'_i|,
    \end{align*}
    which implies that $f_{m, j}$ is a continuous function.

    Furthermore, following the way we obtain the Equation (\ref{lem:geo:aux:continuous}) and the fact that
    \begin{align*}
        | (1 - p_1)^{l} - 1 | \le l p_1, ~~ l = 1, 2, \cdots, j - 1,
    \end{align*}
    we have
        $|f_{m, j}(p_1, p_2, \dots, p_m) - 1| \le j p_1.$
    Moreover, by symmetry of the function $f_{m, j}$, it holds that
    \begin{align}\label{lem:geo:aux:limit}
        1 - f_{m, j}(p_1, p_2, \dots, p_m) \le j \min\{p_1, p_2, \dots, p_m\}.
    \end{align}

    For $1 \le j \le m-1$, we have $f_{m, j} (p_1, p_2, \dots, p_m) \equiv 1$ and the desired result is apparent. Then Lemma \ref{lem:two-geo} implies the desired result holds for $m = 2$.

    For $m \ge 3$, $j \ge m$ and $c \in (0, m)$, our goal is to find the minimal value of $f_{m, j} (p_1, p_2, \dots, p_m)$ with the domain
    \begin{align*}
        \fB = \left\{ (p_1, p_2, \dots, p_m) \bigg\vert \sum_{i=1}^m p_m = c,~ p_i \in (0, 1] \text{ for } i \in [m] \right\}.
    \end{align*}
    For $j \ge m$, note that
    \begin{align*}
         & \quad f_{m, j} (c/m, c/m, \dots, c/m) = \pr{\sum_{i=1}^m Z_i > j} \le \pr{\sum_{i=1}^m Z_i > m} \\
         & = 1 - \pr{\sum_{i=1}^m Z_i \le m} = 1 - \pr{Z_1 = 1, Z_2 = 1, \cdots, Z_m = 1}                   \\
         & = 1 - \left(\frac{c}{m}\right)^m < 1,
    \end{align*}
    where $\{Z_i \sim \geo(c/m)\}_{i=1}^m$ are independent random variables, and we have used that $\pr{Z_i \ge 1} = 1$ for $i \in [m]$.

    By Equation (\ref{lem:geo:aux:limit}), if there is an index $i$ satisfies $p_i < \delta \triangleq \frac{1 - f_{m, j} (c/m, c/m, \dots, c/m)}{j} > 0$, then we have
    \begin{align*}
        f_{m, j}(p_1, p_2, \dots, p_m) \ge 1 - j p_i > f_{m, j} (c/m, c/m, \dots, c/m).
    \end{align*}
    Therefore, we just need to find the minimal value of $f_{m, j} (p_1, p_2, \dots, p_m)$ with the domain
    \begin{align*}
        \fB' = \left\{ (p_1, p_2, \dots, p_m) \bigg\vert \sum_{i=1}^m p_m = c,~ p_i \in [\delta, 1] \text{ for } i \in [m] \right\},
    \end{align*}
    which is a compact set.
    Hence, by continuity of $f_{m, j}$, we know that there exists $(q_1, q_2, \dots, q_m) \in \fB'$ such that
    \begin{align*}
        \min_{(p_1, p_2, \dots, p_m) \in \fB'} f_{m, j} (p_1, p_2, \dots, p_m) = f_{m, j} (q_1, q_2, \dots, q_m).
    \end{align*}
    Suppose that there are indexes $k, l \in [m]$ such that $q_k < q_l$. By symmetry of the function $f_{m, j}$, we assume that $q_1 < q_2$. \\[0.3cm]
    Let $\{X'_1, X'_2\} \cup \{X_i\}_{i=1}^m$ be independent geometric random variables and $X'_1, X'_2 \sim \geo\left( \frac{q_1 + q_2}{2} \right)$, $X_i \sim \geo(q_i)$ for $i \in [m]$.
    Denoting $Z' = \sum_{i=3}^m X_i$, we have
    \begin{align*}
         & \quad f_{m, j} (q_1, q_2, \dots, q_m)                                         \\
         & = \pr{X_1 + X_2 + Z' > j}                                                      \\
         & = \sum_{l = 1}^{j-1} \pr{Z' = l} \pr{X_1 + X_2 > j - l} + \pr{Z' > j - 1}      \\
         & \ge \sum_{l = 1}^{j-1} \pr{Z' = l} \pr{X'_1 + X'_2 > j - l} + \pr{Z' > j - 1}  \\
         & =  \pr{X'_1 + X'_2 + Z' > j}                                                   \\
         & = f_{m, j} \left(\frac{q_1 + q_2}{2}, \frac{q_1 + q_2}{2}, \dots, q_m\right),
    \end{align*}
    where the inequality is according to Lemma \ref{lem:two-geo}. \\[0.3cm]
    However, for $l = m - 2$, it holds that $\pr{Z' = m - 2} = 1 - \prod_{i=2}^m q_i > 0 $ and $\pr{X_1 + X_2 > j - m + 2} > \pr{X'_1 + X'_2 > j - m + 2}$ by Lemma \ref{lem:two-geo}, which implies that
    \begin{align*}
        f_{m, j} (q_1, q_2, \dots, q_m)
        > f_{m, j} \left(\frac{q_1 + q_2}{2}, \frac{q_1 + q_2}{2}, \dots, q_m\right).
    \end{align*}
    Note that $\frac{q_1 + q_2}{2} + \frac{q_1 + q_2}{2} + \sum_{i=2}^m q_i = c$ and $\frac{q_1 + q_2}{2} \in [\delta, 1]$. Hence we have
    \begin{align*}
        \left(\frac{q_1 + q_2}{2}, \frac{q_1 + q_2}{2}, \dots, q_m\right) \in \fB',
    \end{align*}
    which contradicts the fact that $(q_1, q_2, \dots, q_m)$ is the optimal point in $\fB'$.

    Therefore, we can conclude that
    \begin{align*}
        f_{m, j} (p_1, p_2, \dots, p_m) \ge f_{m, j} \left( \frac{\sum_{i=1}^m p_i}{m}, \frac{\sum_{i=1}^m p_i}{m}, \dots, \frac{\sum_{i=1}^m p_i}{m} \right).
    \end{align*}
This completes the proof.
\end{proof}

\section{Technical Lemmas}
In this section, we present some technical lemmas.
\begin{lemma}\label{lem:scale}
    Suppose $f(\vx, \vy)$ is $(\mu_x, \mu_y)$-convex-concave and $L$-smooth, then the function $\hat{f}(\vx, \vy) = \lambda f(\vx/\beta, \vy/\beta)$ is $\left( \frac{\lambda \mu_x}{\beta^2}, \frac{\lambda \mu_y}{\beta^2} \right)$-convex-concave and $\frac{\lambda L}{\beta^2}$-smooth. 
    Moreover, if $\{f_i(\vx, \vy)\}_{i=1}^n$ is $L'$-average smooth, then the function class $\{\hat{f}_i(\vx, \vy) \triangleq \lambda f_i(\vx/\beta, \vy/\beta)\}_{i=1}^n$ is $\frac{\lambda L'}{\beta^2}$-average smooth.
\end{lemma}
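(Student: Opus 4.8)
\textbf{Proof proposal for Lemma \ref{lem:scale}.}

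The plan is to verify each of the three claims directly from the relevant definitions, using the chain rule to relate the gradients of $\hat f$ (resp.\ $\hat f_i$) to those of $f$ (resp.\ $f_i$). First I would handle the smoothness claim: for $\hat f(\vx,\vy) = \lambda f(\vx/\beta,\vy/\beta)$, the chain rule gives $\nabla \hat f(\vx,\vy) = \frac{\lambda}{\beta}\nabla f(\vx/\beta,\vy/\beta)$ (with the gradient taken jointly in $(\vx,\vy)$). Hence for any $(\vx_1,\vy_1),(\vx_2,\vy_2)$,
\begin{align*}
\norm{\nabla \hat f(\vx_1,\vy_1) - \nabla \hat f(\vx_2,\vy_2)}
= \frac{\lambda}{\beta}\norm{\nabla f(\vx_1/\beta,\vy_1/\beta) - \nabla f(\vx_2/\beta,\vy_2/\beta)}
\le \frac{\lambda}{\beta}\cdot L\cdot \frac{1}{\beta}\norm{(\vx_1,\vy_1) - (\vx_2,\vy_2)},
\end{align*}
which is exactly $\frac{\lambda L}{\beta^2}$-smoothness. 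The same computation with $f$ replaced by $f_i$, squared and averaged over $i$, gives the $\frac{\lambda L'}{\beta^2}$-average-smoothness claim, since $\frac1n\sum_i \norm{\nabla f_i(\vx_1/\beta,\vy_1/\beta)-\nabla f_i(\vx_2/\beta,\vy_2/\beta)}^2 \le (L')^2\cdot\frac{1}{\beta^2}\norm{(\vx_1,\vy_1)-(\vx_2,\vy_2)}^2$.

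For the convex-concave claim I would work from Definition \ref{def:convex-concave}: $f$ being $(\mu_x,\mu_y)$-convex-concave means $f(\vx,\vy) - \frac{\mu_x}{2}\norm{\vx}^2 + \frac{\mu_y}{2}\norm{\vy}^2$ is convex-concave, i.e.\ convex in $\vx$ for fixed $\vy$ and concave in $\vy$ for fixed $\vx$. I want to show $g(\vx,\vy) \triangleq \hat f(\vx,\vy) - \frac{\lambda\mu_x}{2\beta^2}\norm{\vx}^2 + \frac{\lambda\mu_y}{2\beta^2}\norm{\vy}^2$ is convex-concave. Observe that $g(\vx,\vy) = \lambda\left[ f(\vx/\beta,\vy/\beta) - \frac{\mu_x}{2}\norm{\vx/\beta}^2 + \frac{\mu_y}{2}\norm{\vy/\beta}^2 \right]$, which is a positive scalar multiple of the composition of the convex-concave function $(\vu,\vv)\mapsto f(\vu,\vv) - \frac{\mu_x}{2}\norm{\vu}^2 + \frac{\mu_y}{2}\norm{\vv}^2$ with the linear rescaling $(\vx,\vy)\mapsto(\vx/\beta,\vy/\beta)$. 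Since convexity in $\vx$ (for each fixed $\vy$) and concavity in $\vy$ (for each fixed $\vx$) are both preserved under composition with an invertible linear map acting blockwise and under multiplication by $\lambda>0$, $g$ is convex-concave, giving the stated coefficients for $\hat f$. Applying this to each $\hat f_i$ and then averaging shows $\hat f = \frac1n\sum_i\hat f_i$ inherits the same property (the average of convex-concave functions is convex-concave).

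There is no real obstacle here; the only points requiring a line of care are (i) being explicit that the joint gradient scales by $\lambda/\beta$ (not $\lambda/\beta^2$) while the Lipschitz modulus of that gradient picks up a further $1/\beta$ from differentiating the argument, so the net factor is $\lambda/\beta^2$; and (ii) noting that the definitions of convex-concave and (average-)smoothness are stated for differentiable functions, which $\hat f_i$ is whenever $f_i$ is, so no regularity issue arises. I would keep the write-up to the three short displays above.
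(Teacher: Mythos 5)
Your proof is correct. The paper states Lemma~\ref{lem:scale} without proof in the appendix (it is treated as a routine rescaling fact), so there is no paper argument to compare against; your direct verification via the chain rule for the gradient scaling and via factoring out the positive constant $\lambda$ together with the blockwise linear map $(\vx,\vy)\mapsto(\vx/\beta,\vy/\beta)$ for the convex-concave property is exactly the standard route and is complete. One small cleanup worth making: the final sentence about ``applying this to each $\hat f_i$ and averaging'' is not needed and slightly misreads the statement --- the convex-concave hypothesis and conclusion concern the single function $f$ (and $\hat f$), while the $\{f_i\}$ appear only in the separate average-smoothness clause, which you already handle correctly; you proved $\hat f$ is $(\lambda\mu_x/\beta^2,\lambda\mu_y/\beta^2)$-convex-concave directly, so the averaging remark is superfluous.
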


\begin{lemma}\label{lem:proj}
    Suppose that $\fX = \{\vx \in \BR^d: \norm{\vx} \le R_x \}$, then we have
    \begin{align*}
        \fP_{\fX}(\vx) = \begin{cases}
            \vx, ~~~&\text{ if } \vx \in \fX, \\
            \frac{R_X}{\norm{\vx}} \vx, ~~~&\text{ otherwise. }
        \end{cases}
    \end{align*}
\end{lemma}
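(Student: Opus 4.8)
The plan is to unwind the definition $\fP_{\fX}(\vx) = \argmin_{\vu \in \fX} \norm{\vu - \vx}$ and verify the claimed formula by a case split on whether $\vx$ already lies in the ball. Since $\fX = \{\vu : \norm{\vu} \le R_x\}$ is closed and convex and the map $\vu \mapsto \norm{\vu - \vx}^2$ is strongly convex (hence strictly convex) with a coercive sublevel structure, the minimizer exists and is unique; so it suffices to exhibit a single feasible point attaining the smallest possible value of $\norm{\vu - \vx}$, and uniqueness then guarantees that the displayed expression is the entire map rather than one arbitrary selection.

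First I would dispose of the easy case $\vx \in \fX$: then $\norm{\vu - \vx} \ge 0$ for every $\vu$, with equality exactly at $\vu = \vx$, and $\vx$ is feasible, so $\fP_{\fX}(\vx) = \vx$. For the remaining case $\norm{\vx} > R_x$, I would propose the candidate $\vu^* = \tfrac{R_x}{\norm{\vx}}\vx$. It satisfies $\norm{\vu^*} = R_x$, so $\vu^* \in \fX$, and $\norm{\vu^* - \vx} = \bigl(1 - \tfrac{R_x}{\norm{\vx}}\bigr)\norm{\vx} = \norm{\vx} - R_x$. On the other hand, for any feasible $\vu$ the triangle inequality gives $\norm{\vu - \vx} \ge \norm{\vx} - \norm{\vu} \ge \norm{\vx} - R_x$. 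Hence $\vu^*$ attains the minimum, and by uniqueness of the minimizer $\fP_{\fX}(\vx) = \vu^*$, which is the second branch of the stated formula (up to the obvious typographical identification of $R_X$ with $R_x$). As an alternative to invoking uniqueness directly, one can appeal to the standard variational characterization: $\vu^*$ is the projection of $\vx$ onto a closed convex set iff $\vu^* \in \fX$ and $\inner{\vx - \vu^*}{\vu - \vu^*} \le 0$ for all $\vu \in \fX$; with $\vu^* = \tfrac{R_x}{\norm{\vx}}\vx$ this reduces to $\inner{\vx}{\vu} \le R_x\norm{\vx}$, which holds by Cauchy--Schwarz since $\norm{\vu} \le R_x$.

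There is no real obstacle here: the only point requiring a moment of care is making sure the two displayed formulas describe the \emph{full} projection operator, i.e. that the argmin is a singleton. This is immediate from strict convexity of $\norm{\cdot - \vx}^2$, so the proof is entirely routine and short.
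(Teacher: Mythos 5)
Your proof is correct and complete. The paper itself states Lemma~\ref{lem:proj} without proof, treating it as a standard fact about Euclidean projection onto a ball; your argument supplies exactly the routine verification one would expect (explicit candidate, reverse triangle inequality for optimality, strict convexity for uniqueness, with the variational characterization offered as a sanity check), so there is nothing to compare against and nothing to fix.
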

\begin{remark}
    By Lemma \ref{lem:proj}, vectors $\fP_{\fX}(\vx)$ and $\vx$ are always collinear.
\end{remark}

\begin{proposition}[Lemmas 2,3,4, \cite{carmon1711lower}]\label{prop:nonconvex:prop:base}
Let $G_{\text{NC}}: \BR^{m+1} \rightarrow \BR$ be
\begin{align*}
    G_{\text{NC}}(\vx; \omega, m+1) = \frac{1}{2} \norm{\mB(m+1, {\omega}, 0) \vx}^2 - {\omega^2} \inner{\ve_1}{\vx} + \omega^4 \sum_{i=1}^{m} \Gamma (x_i).
\end{align*}
For any $0 < \omega \le 1$, 
it holds that
\begin{enumerate}
    \item $\Gamma(x)$ is $180$-smooth and  $[-45 (\sqrt{3} - 1)]$-weakly convex.
    \item $G_{\text{NC}}(\vzero_{m+1}; \omega, m+1) - \min_{\vx \in \BR^{m+1}} G_{\text{NC}}(\vx; \omega, m+1) \le {\omega^2} / 2 + 10 \omega^4 m$.
    \item For any $\vx \in \BR^{m+1}$ such that $x_{m} = x_{m+1} = 0$, $G_{\text{NC}}(\vx; \omega, m)$ is $(4 + 180 \omega^4)$-smooth and $[-45 (\sqrt{3} - 1) \omega^4]$-weakly convex and
    \[\norm{\nabla G_{\text{NC}}(\vx; \omega, m)} \ge \omega^{3}/4.\]
\end{enumerate}
\end{proposition}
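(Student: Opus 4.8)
The plan is to obtain the three parts by specializing and rescaling Lemmas~2--4 of \citet{carmon1711lower}; none of them needs a genuinely new idea, so the work is mostly bookkeeping, with a single delicate step (the gradient lower bound in part~3) that is precisely Carmon et al.'s ``hard instance'' argument. For part~1 I would differentiate $\Gamma$ twice, getting $\Gamma'(x)=120\,x^2(x-1)/(1+x^2)$ and $\Gamma''(x)=120\,(x^4+3x^2-2x)/(1+x^2)^2$, then locate the extrema of $\Gamma''$ by solving $\Gamma'''(x)=0$ (a rational equation whose numerator is a low-degree polynomial), evaluate $\Gamma''$ at those roots and at $\pm\infty$, and read off $\max_x|\Gamma''(x)|\le 180$ and $\min_x\Gamma''(x)=-45(\sqrt3-1)$; these are exactly the $180$-smoothness and the $[-45(\sqrt3-1)]$-weak-convexity of $\Gamma$.

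For part~2, first note from the sign of $\Gamma'$ (negative on $(-\infty,1)$, positive on $(1,\infty)$) that $\Gamma$ attains its global minimum at $x=1$ with $\Gamma(1)=0$, hence $\Gamma\ge 0$ pointwise. Therefore $G_{\text{NC}}(\vx;\omega,m{+}1)\ge \tfrac12\norm{\mB(m{+}1,\omega,0)\vx}^2-\omega^2\inner{\ve_1}{\vx}$, a convex quadratic whose minimum value is $-\tfrac{\omega^4}{2}\inner{\ve_1}{\mA(m{+}1,\omega,0)^{-1}\ve_1}$. Since $\norm{\mB(m{+}1,\omega,0)\vv}^2=\omega^2 v_1^2+\sum_j (v_j-v_{j+1})^2\ge \omega^2 v_1^2$, the variational characterization of $\inner{\ve_1}{\mA^{-1}\ve_1}=\sup_{\vv}v_1^2/\norm{\mB\vv}^2$ gives $\inner{\ve_1}{\mA^{-1}\ve_1}\le \omega^{-2}$, so $\inf G_{\text{NC}}\ge -\omega^2/2$. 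On the other hand $G_{\text{NC}}(\vzero_{m+1};\omega,m{+}1)=\omega^4 m\,\Gamma(0)$, and a direct integration gives $\Gamma(0)=120\int_0^1\frac{t^2(1-t)}{1+t^2}\,dt=120\big(\tfrac12-\tfrac{\pi}{4}+\tfrac{\ln 2}{2}\big)<10$; combining the two bounds yields $G_{\text{NC}}(\vzero)-\inf G_{\text{NC}}\le \omega^2/2+10\omega^4 m$.

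For part~3, the smoothness and weak-convexity bounds are immediate from $\nabla^2 G_{\text{NC}}(\vx;\omega,m)=\mA(m,\omega,0)+\omega^4\,\diag\!\big(\Gamma''(x_1),\dots,\Gamma''(x_{m-1}),0\big)$ together with $0\preceq \mA(m,\omega,0)\preceq 4I$ (the Gershgorin bound for the tridiagonal $\mA$, valid for $0\le\omega\le\sqrt2$, together with $\mA=\mB^{\top}\mB\succeq0$) and the pointwise bounds on $\Gamma''$ from part~1: this gives $(4+180\omega^4)$-smoothness and $[-45(\sqrt3-1)\omega^4]$-weak-convexity. The remaining claim $\norm{\nabla G_{\text{NC}}(\vx;\omega,m)}\ge \omega^3/4$ whenever $x_m=x_{m+1}=0$ is the crux and is exactly the ``zero-chain'' property of the construction: on the subspace $\{x_m=x_{m+1}=0\}$ the quadratic part degenerates to the $(m{-}1)$-dimensional instance, and one shows that an approximate stationary point of it is forced into the staircase pattern of the true minimizer, which is incompatible with the vanishing of the last active coordinate; the linear term $-\omega^2\ve_1$ supplies the residual $\Omega(\omega^3)$ gradient while the cubic/quadratic decay of $\Gamma'$ near $0$ keeps the $\Gamma'$-contributions negligible. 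I expect this last step to be the main obstacle; I would import it from \citet{carmon1711lower} (their Lemma~4) after matching the normalization, namely the $\omega^2$-scaling of the linear term and the $\omega^4$-scaling of the nonconvex correction used there.
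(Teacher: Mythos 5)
Your proposal is correct and follows essentially the same route the paper does: the paper states this proposition purely as a citation to Lemmas~2--4 of \citet{carmon1711lower} with no proof of its own, and you likewise import the zero-chain gradient lower bound in part~3 from that source while supplying the easy calculus and Hessian bookkeeping yourself. The self-contained arguments you give for parts~1--2 and for the smoothness/weak-convexity in part~3 are sound and reproduce the cited constants exactly (the critical points of $\Gamma''$ are $x\in\{-1,\,2\pm\sqrt3\}$ with $\Gamma''(-1)=180$ and $\Gamma''(2-\sqrt3)=-45(\sqrt3-1)$; the identity $\ve_1^\top\mA^{-1}\ve_1=\sup_{\vv\ne 0}v_1^2/\norm{\mB\vv}^2\le\omega^{-2}$ together with $\Gamma\ge0$ gives $\inf G_{\text{NC}}\ge-\omega^2/2$; and $\Gamma(0)=120\big(\tfrac12-\tfrac{\pi}{4}+\tfrac{\ln 2}{2}\big)\approx 7.35<10$).
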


\begin{lemma}\label{lem:solution:z}
Suppose that $0 < \lambda_2 < (2 + 2\sqrt{2}) \lambda_1$, 
then $z = 0$ is the only real solution to the equation 
\begin{align}\label{eq:solution:z}
\lambda_1 z + \lambda_2 \frac{z^2(z-1)}{1+ z^2} = 0.    
\end{align}
\end{lemma}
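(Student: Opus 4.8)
\textbf{Proof plan for Lemma \ref{lem:solution:z}.}

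The plan is to analyze the equation by clearing the denominator and studying the resulting polynomial. Since $1+z^2 > 0$ for all real $z$, equation (\ref{eq:solution:z}) is equivalent to
\begin{align*}
\lambda_1 z (1 + z^2) + \lambda_2 z^2(z-1) = 0,
\end{align*}
which factors as $z \big[ \lambda_1(1+z^2) + \lambda_2 z(z-1) \big] = 0$. So either $z = 0$, or $z$ is a real root of the quadratic $q(z) \triangleq (\lambda_1 + \lambda_2) z^2 - \lambda_2 z + \lambda_1$. It therefore suffices to show that under the hypothesis $0 < \lambda_2 < (2 + 2\sqrt{2})\lambda_1$ the quadratic $q$ has no real root.

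To do this I would compute the discriminant $D = \lambda_2^2 - 4\lambda_1(\lambda_1 + \lambda_2) = \lambda_2^2 - 4\lambda_1 \lambda_2 - 4\lambda_1^2$ and show $D < 0$. Viewing $D$ as a quadratic in $\lambda_2$ (with $\lambda_1 > 0$ fixed), its roots are $\lambda_2 = \frac{4\lambda_1 \pm \sqrt{16\lambda_1^2 + 16\lambda_1^2}}{2} = (2 \pm 2\sqrt{2})\lambda_1$. Hence $D < 0$ exactly when $(2 - 2\sqrt{2})\lambda_1 < \lambda_2 < (2 + 2\sqrt{2})\lambda_1$. Since $\lambda_2 > 0 > (2-2\sqrt{2})\lambda_1$ and $\lambda_2 < (2+2\sqrt{2})\lambda_1$ by assumption, both inequalities hold, so $D < 0$ and $q$ has no real root. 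Consequently $z = 0$ is the only real solution.

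There is essentially no serious obstacle here; the only mild care needed is (i) noting that $1+z^2$ never vanishes so the clearing of denominators introduces no spurious solutions and loses none, and (ii) handling the leading coefficient of $q$: since $\lambda_1 > 0$ and $\lambda_2 > 0$ we have $\lambda_1 + \lambda_2 > 0$, so $q$ is genuinely quadratic and the discriminant test applies. I would present the argument in these two short steps — factor out $z$, then kill the quadratic via its discriminant.
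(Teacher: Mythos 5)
Your proof is correct and follows essentially the same route as the paper: multiply through by $1+z^2$, factor out $z$, and show the remaining quadratic $(\lambda_1+\lambda_2)z^2 - \lambda_2 z + \lambda_1$ has negative discriminant under the hypothesis on $\lambda_2$. The only difference is cosmetic — you spell out the root computation for the discriminant viewed as a quadratic in $\lambda_2$, while the paper simply asserts the sign; both are fine.
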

\begin{proof}
Since $0 < \lambda_2 < (2 + 2\sqrt{2}) \lambda_1$, we have 
\[\lambda_2^2 - 4 \lambda_1 (\lambda_1 + \lambda_2) < 0,\]
and consequently, for any $z$, $(\lambda_1 + \lambda_2)z^2 - \lambda_2 z + \lambda_1 > 0$.

On the other hand, we can rewrite Equation (\ref{eq:solution:z}) as 
\begin{align*}
    z \big((\lambda_1 + \lambda_2)z^2 - \lambda_2 z + \lambda_1\big) = 0.
\end{align*}
Clearly, $z = 0$ is the only real solution to Equation (\ref{eq:solution:z}).
\end{proof}

\begin{lemma}\label{lem:solution:z1z2}
Suppose that $0 < \lambda_2 < (2 + 2\sqrt{2}) \lambda_1$ and $\lambda_3 > 0$, 
then $z_1 = z_2 = 0$ is the only real solution to the equation 
\begin{align}\label{eq:solution:z1z2}
\begin{cases}
\lambda_1 z_1 + \lambda_3 (z_1 - z_2) + \lambda_2 \frac{z_1^2(z_1-1)}{1+ z_1^2} = 0.  \\
\lambda_1 z_2 + \lambda_3 (z_2 - z_1) + \lambda_2 \frac{z_2^2(z_2-1)}{1+ z_2^2} = 0.
\end{cases}
\end{align}
\end{lemma}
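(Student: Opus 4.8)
\textbf{Proof proposal for Lemma~\ref{lem:solution:z1z2}.}

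The plan is to reduce the two-variable system to the one-variable situation already handled in Lemma~\ref{lem:solution:z} by adding the two equations and, separately, subtracting them. First I would add the two equations of \eqref{eq:solution:z1z2}. Since $\lambda_3(z_1-z_2) + \lambda_3(z_2-z_1) = 0$, the coupling term cancels and we obtain
\begin{align*}
\lambda_1(z_1 + z_2) + \lambda_2\left( \frac{z_1^2(z_1-1)}{1+z_1^2} + \frac{z_2^2(z_2-1)}{1+z_2^2} \right) = 0.
\end{align*}
This alone does not immediately force $z_1 = z_2 = 0$, so the subtraction step is where the real work lies. Subtracting the second equation from the first gives
\begin{align*}
\lambda_1(z_1 - z_2) + 2\lambda_3(z_1 - z_2) + \lambda_2\left( \frac{z_1^2(z_1-1)}{1+z_1^2} - \frac{z_2^2(z_2-1)}{1+z_2^2} \right) = 0.
\end{align*}
The key step is to show that the function $g(z) \triangleq \lambda_1 z + 2\lambda_3 z + \lambda_2 \frac{z^2(z-1)}{1+z^2}$ — no, more carefully: I want to write the bracketed difference as $(z_1 - z_2)$ times an expression that, combined with $\lambda_1 + 2\lambda_3 > 0$, is strictly positive, which would force $z_1 = z_2$; then the added equation reduces to $2\lambda_1 z_1 + 2\lambda_2 \frac{z_1^2(z_1-1)}{1+z_1^2} = 0$, i.e. exactly \eqref{eq:solution:z} with the same $\lambda_1, \lambda_2$, whence $z_1 = 0$ by Lemma~\ref{lem:solution:z}.

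So the technical heart is the following claim: for the rational function $\Phi(z) \triangleq \frac{z^2(z-1)}{1+z^2}$, the divided difference $\frac{\Phi(z_1) - \Phi(z_2)}{z_1 - z_2}$ is bounded below by a constant $c$ with $\lambda_1 + 2\lambda_3 + \lambda_2 c > 0$ for all real $z_1 \neq z_2$; equivalently $\Phi'(z)$ is bounded below and $\lambda_1 + 2\lambda_3 + \lambda_2 \inf_z \Phi'(z) > 0$. One computes $\Phi'(z) = \frac{z(z^3 - 2z + 3z^2 \cdot \text{(stuff)})}{(1+z^2)^2}$ — I would carry out the quotient-rule differentiation carefully and then minimize the resulting rational function over $\BR$. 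The relevant bound should come out proportional to the same quantity that appears in Lemma~\ref{lem:solution:z}: recall that lemma's proof used that $(\lambda_1 + \lambda_2)z^2 - \lambda_2 z + \lambda_1 > 0$ under $0 < \lambda_2 < (2 + 2\sqrt{2})\lambda_1$, which is equivalent to $\lambda_1 + \lambda_2 \cdot \frac{z(z-1)}{1+z^2} \cdot \frac{1+z^2}{z^2}$-type positivity; I expect the analogous inequality here, with the extra $2\lambda_3 \ge 0$ only helping, to follow from exactly the same discriminant computation, since $\lambda_1 + 2\lambda_3 \ge \lambda_1$ and the hypothesis $\lambda_2 < (2+2\sqrt2)\lambda_1 \le (2+2\sqrt2)(\lambda_1 + 2\lambda_3)$ still holds.

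The main obstacle I anticipate is handling the divided difference $\frac{\Phi(z_1)-\Phi(z_2)}{z_1-z_2}$ rather than just the derivative $\Phi'(z)$: strict positivity of $\lambda_1 + 2\lambda_3 + \lambda_2 \Phi'(z)$ for every $z$ gives strict monotonicity of $z \mapsto (\lambda_1+2\lambda_3)z + \lambda_2\Phi(z)$, hence injectivity, which is what actually forces $z_1 = z_2$ from the subtracted equation. So I would phrase it that way: show $h(z) \triangleq (\lambda_1 + 2\lambda_3)z + \lambda_2 \Phi(z)$ is strictly increasing (its derivative being a rational function with positive numerator, checked via the discriminant argument mirroring Lemma~\ref{lem:solution:z}), conclude $h(z_1) = h(z_2) \implies z_1 = z_2$ from the subtracted equation, then invoke Lemma~\ref{lem:solution:z} on the added equation to finish. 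A minor point to verify is that substituting $z_1 = z_2$ into the added equation indeed yields precisely \eqref{eq:solution:z}, which is immediate after dividing by $2$.
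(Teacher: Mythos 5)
Your reduction to the single-variable lemma via adding and subtracting is clean algebraically, but the monotonicity claim at the heart of the argument is false under the stated hypotheses, so the proof has a genuine gap. You want $h(z) \triangleq (\lambda_1 + 2\lambda_3)z + \lambda_2\Phi(z)$ to be strictly increasing, where $\Phi(z) = \frac{z^2(z-1)}{1+z^2}$. Computing the derivative gives $\Phi'(z) = \frac{z^4 + 3z^2 - 2z}{(1+z^2)^2}$, whose infimum over $\BR$ is attained at $z = 2 - \sqrt{3}$ with value $\frac{3(1-\sqrt{3})}{8} \approx -0.275$. This is \emph{strictly smaller} than $\frac{1-\sqrt{2}}{2} \approx -0.207$, which is the constant the hypothesis $0<\lambda_2 < (2+2\sqrt{2})\lambda_1$ is calibrated to. Concretely, your monotonicity condition $\lambda_1 + 2\lambda_3 + \lambda_2 \inf_z \Phi'(z) > 0$ is equivalent to $\lambda_2 < \frac{4(\sqrt{3}+1)}{3}(\lambda_1 + 2\lambda_3) \approx 3.64(\lambda_1 + 2\lambda_3)$, and since $\lambda_3$ is only assumed positive (and may be arbitrarily small), this is not implied by $\lambda_2 < (2+2\sqrt{2})\lambda_1 \approx 4.83\lambda_1$. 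For instance, with $\lambda_1 = 1$, $\lambda_2 = 4$, $\lambda_3 = 0.01$, the hypothesis holds but $h'(2-\sqrt{3}) = 1.02 + 4\Phi'(2-\sqrt{3}) < 0$, so $h$ is not injective and the subtraction step does not force $z_1 = z_2$. The lemma is nevertheless true for these parameters, so your route loses information.

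The paper's proof sidesteps this by working with $\Phi(z)/z = \frac{z(z-1)}{1+z^2}$ rather than $\Phi'$. Assuming $z_1 z_2 \neq 0$, it divides the first equation by $\lambda_3 z_1$ to isolate the ratio $\frac{z_2}{z_1}$, giving $\frac{\lambda_1+\lambda_3}{\lambda_3} + \frac{\lambda_2}{\lambda_3}\frac{z_1(z_1-1)}{1+z_1^2} = \frac{z_2}{z_1}$. The function $\frac{z(z-1)}{1+z^2}$ has infimum exactly $\frac{1-\sqrt{2}}{2}$, and the hypothesis $\lambda_2 < (2+2\sqrt{2})\lambda_1$ is precisely $\lambda_1 + \frac{1-\sqrt{2}}{2}\lambda_2 > 0$, which makes the left-hand side strictly greater than $1$. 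So both $\frac{z_2}{z_1} > 1$ and, symmetrically, $\frac{z_1}{z_2} > 1$, a contradiction. If you wanted to salvage the add/subtract route, you would need a sharper analysis of the pair $(h(z_1)-h(z_2),\ \lambda_1(z_1+z_2)+\lambda_2(\Phi(z_1)+\Phi(z_2)))$ jointly, not just injectivity of $h$, and it is unclear that this is any simpler than the paper's direct ratio argument.
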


\begin{proof}
If $z_1 = 0$, then $z_2 = 0$. So let assume that $z_1 z_2 \neq 0$.
Rewrite the first equation of Equations (\ref{eq:solution:z1z2}) as
\begin{align*}
    \frac{\lambda_1 + \lambda_3}{\lambda_3} + \frac{\lambda_2}{\lambda_3} \frac{z_1(z_1-1)}{1+ z_1^2} = \frac{z_2}{z_1}
\end{align*}
Note that 
\begin{align*}
    \frac{1 - \sqrt{2}}{2} \le \frac{z(z-1)}{1+z^2}.
\end{align*}
Thus, we have
\begin{align*}
    \frac{\lambda_1 + \lambda_3}{\lambda_3} + \frac{\lambda_2}{\lambda_3} \frac{1 - \sqrt{2}}{2} 
    \le \frac{z_2}{z_1}.
\end{align*}
Similarly, it also holds
\begin{align*}
    \frac{\lambda_1 + \lambda_3}{\lambda_3} + \frac{\lambda_2}{\lambda_3} \frac{1 - \sqrt{2}}{2} 
    \le \frac{z_1}{z_2}.
\end{align*}

By $0 < \lambda_2 < (2 + 2\sqrt{2}) \lambda_1$, we know that $\lambda_1 + \frac{1 - \sqrt{2}}{2}\lambda_2 > 0$. 
Thus
\begin{align*}
    \frac{\lambda_1 + \lambda_3}{\lambda_3} + \frac{\lambda_2}{\lambda_3} \frac{1 - \sqrt{2}}{2} > 1.
\end{align*}
Since $z_1/z_2 > 1$ and $z_2 / z_1 > 1$ can not hold at the same time, so we get a contradiction.
\end{proof}

\begin{lemma}\label{lem:appendix:gap}
    Define the function
    \begin{align}\label{eq:J_func}
        J_{k, \beta}(y_1, y_2, \dots, y_k) \triangleq y_k^2 + \sum_{i=2}^{k}(y_i - y_{i - 1})^2 + (y_1 - \beta)^2.
    \end{align}
    Then we have $\min J_{k, \beta}(y_1, \dots, y_k) = \frac{\beta^2}{k+1}$.
\end{lemma}
\begin{proof}
    Letting the gradient of $J_{k, \beta}$ equal to zero, we get
    \begin{align*}
        2y_k - y_{k - 1} = 0, ~~ 2y_1 - y_2 - \beta = 0, \text{~and~}
        y_{i+1} - 2y_{i} + y_{i-1} = 0, ~ \text{for~} i = 2, 3, \dots, k-1.
    \end{align*} 
    That is, 
    \begin{align}
        y_i = \frac{k - i + 1}{k + 1} \beta \text{~for~} i = 1, 2, \dots, k. \label{eq:yJ}
    \end{align} 
    Thus by substituting Equation (\ref{eq:yJ}) into the expression of $J_{k, \beta}(y_1, y_2, \dots, y_k)$, we achieve the desired result.
\end{proof}
\section{Proofs for Section \ref{sec:frame}}\label{appendix:frame}
In this section, we present some omitted proofs in Section \ref{sec:frame}.


\subsection{Proofs of Proposition \ref{prop:convex-concave:base} and Lemma \ref{lem:convex-concave:jump}}\label{appendix:frame:cc}
Let $\widetilde{\mB}(m, \zeta)$ denote the last $m$ rows of $\mB(m, 0, \zeta)$ and $\tilde{\vb}_l (m, \zeta) = \vb_{l}(m, 0, \zeta) $ for $0 \le l \le m$. Note that $\tilde{\vb}_0 (m, \zeta) = \vzero$.
For simplicity, we omit the parameters of $\widetilde{\mB}$, $\tilde{\vb}_l$ and $\tilde{r}_i$. Then we have $\widetilde{\mB} = (\tilde{\vb}_1, \tilde{\vb}_2, \dots, \tilde{\vb}_m)^\top$.

Recall that
$$\fL_i = \{ l: 0 \le l \le m, l \equiv i - 1 (\bmod n) \},\, i = 1, 2, \dots, n.$$ 
For $1 \le i \le n$, let $\widetilde{\mB}_i$ be the submatrix of $\widetilde{\mB}$ whose rows are $\big\{ \tilde{\vb}_l^\top \big\}_{l \in \fL_i}$.
Note that $\widetilde{\mB} = \sum_{l=1}^m \ve_l \tilde{\vb}_l^\top$ and $\widetilde{\mB}_i = \sum_{l \in \fL_i} \ve_l \tilde{\vb}_l^\top$.
Then $\tilde{r}_i$ can be written as
\begin{align*}
\tilde{r}_i(\vx, \vy) &= n \inner{\vy}{\widetilde{\mB}_i \vx} + \frac{\tilde{c}_1}{2} \norm{\vx}^2 - \frac{\tilde{c}_2}{2} \norm{\vy}^2 - n \inner{\ve_1}{\vx} \bone_{\{i=1\}}.
\end{align*}
\begin{proof}[Proposition \ref{prop:convex-concave:base}]
Firstly, it is clear that $\tilde{r}_i$ is $(\tilde{c}_1, \tilde{c}_2)$-convex-concave.

Next, note that for $l_1, l_2 \in \fL_i$ and $l_1 \neq l_2$, we have $|l_1 - l_2| \ge n \ge 2$, thus $\tilde{\vb}_{l_1}^{\top} \tilde{\vb}_{l_2} = 0$.
Since $\zeta \le 2$, $\tilde{\vb}_l^{\top} \tilde{\vb}_l \le 2$,
it follows that
\begin{align*}
    \norm{\sum_{l \in \fL_i} \tilde{\vb}_l \ve_l^{\top} \vy}^2 
    &= \sum_{l \in \fL_i} \vy^{\top} \ve_l \tilde{\vb}_l^{\top} \tilde{\vb}_l \ve_l^{\top} \vy 
    \le 2 \sum_{l \in \fL_i} \left( \ve_l^{\top} \vy \right)^2 
    \le 2 \norm{\vy}^2, \\
    \norm{\sum_{l \in \fL_i} \ve_l \tilde{\vb}_l^{\top} \vx}^2 
    &= \sum_{l \in \fL_i} \left( \tilde{\vb}_l^{\top} \vx \right)^2 
    \le \sum_{l \in \fL_i \backslash \{ m \}} 2 \left( x_l^2 + x_{l+1}^2\right) + \zeta^2 x_m^2 \bone_{ \{ m \in \fL_i \} }
    \le 2 \norm{\vx}^2.
\end{align*}
Note that
\begin{align*}
    \nabla_\vx \tilde{r}_i (\vx, \vy)
    = &\, n \widetilde{\mB}_i^\top \vy + \tilde{c}_1 \vx - n \ve_1 \bone_{ \{ i=1 \} }, \\
    \nabla_\vy \tilde{r}_i (\vx, \vy)
    = &\, n \widetilde{\mB}_i \vx - \tilde{c}_2 \vy.
\end{align*}
With $\vu = \vx_1 - \vx_2$ and $\vv = \vy_1 - \vy_2$, we have
\begin{align*}
    &\quad \, \norm{\nabla \tilde{r}_i(\vx_1, \vy_1) - \nabla \tilde{r}_i(\vx_2, \vy_2)}^2 \\
    &= \norm{\nabla_{\vx} \tilde{r}_i(\vx_1, \vy_1) - \nabla_{\vx} \tilde{r}_i(\vx_2, \vy_2)}^2 + \norm{\nabla_{\vy} \tilde{r}_i(\vx_1, \vy_1) - \nabla_{\vy} \tilde{r}_i(\vx_1, \vy_2)}^2 \\
    &= \norm{\tilde{c}_1 \vu + n \sum_{l \in \fL_i} \tilde{\vb}_l \ve_l^{\top}\vv}^2 + \norm{\tilde{c}_2 \vv - n \sum_{l \in \fL_i} \ve_l \tilde{\vb}_l^{\top} \vu }^2 \\
    &\le 2 \left( \tilde{c}_1^2 \norm{\vu}^2 + \tilde{c}_2^2 \norm{\vv}^2 \right) + 2 n^2 \norm{\sum_{l \in \fL_i} \tilde{\vb}_l \ve_l^{\top}\vv}^2 + 2 n^2 \norm{\sum_{l \in \fL_i} \ve_l \tilde{\vb}_l^{\top} \vu}^2 \\
    &\le 2 \left( \tilde{c}_1^2 \norm{\vu}^2 + \tilde{c}_2^2 \norm{\vv}^2 \right) + 4 n^2 \sum_{l \in \fL_i} \left( \ve_l^{\top}\vv \right)^2 + 2 n^2 \sum_{l \in \fL_i} \left( \tilde{\vb}_l^{\top} \vu \right)^2 \\
    &\le \left( 2 \max\{\tilde{c}_1, \tilde{c}_2\}^2 + 4 n^2 \right) \left( \norm{\vu}^2 + \norm{\vv}^2 \right),
\end{align*}
where the first inequality follows from $(a + b)^2 \le 2(a^2 + b^2)$.
In addition,
\begin{align*}
    &\quad \, \frac{1}{n} \sum_{i=1}^n \norm{\nabla \tilde{r}_i(\vx_1, \vy_1) - \nabla \tilde{r}_i(\vx_2, \vy_2)}^2 \\
    & \le 2 \left( \tilde{c}_1^2 \norm{\vu}^2 + \tilde{c}_2^2 \norm{\vv}^2 \right) + 4 n \sum_{l=1}^{m} \left( \ve_l^{\top}\vv \right)^2 + 2 n \sum_{l=1}^m \left( \tilde{\vb}_l^{\top} \vu \right)^2 \\
    & \le 2 \left( \tilde{c}_1^2 \norm{\vu}^2 + \tilde{c}_2^2 \norm{\vv}^2 \right) + 4 n \norm{\vv}^2 + 8n \norm{\vu}^2 \\
    & \le \left( 2 \max\{\tilde{c}_1, \tilde{c}_2\}^2 + 8 n \right) \left(\norm{\vu}^2 + \norm{\vv}^2 \right).
\end{align*}
Thus, $\tilde{r}_i$ is $\sqrt{4 n^2 + 2 \max \{ \tilde{c}_1, \tilde{c}_2 \}^2 }$-smooth, and $\{ \tilde{r}_i \}_{i=1}^n$ is $\sqrt{8n + 2 \max \{ \tilde{c}_1, \tilde{c}_2 \}^2 }$-average smooth.
\end{proof}

\begin{proof}[Proof of Lemma \ref{lem:convex-concave:jump}]
Note that 
\begin{align*}
    \ve_l \tilde{\vb}_l^\top \vx =
    \begin{cases}
        (x_{l} - x_{l+1}) \ve_l, & 1 \le l < m, \\
        \zeta x_m \ve_m, & l=m,
    \end{cases}
    \mbox{  and  }
    \tilde{\vb}_l \ve_l^\top \vy =
    \begin{cases}
        y_l (\ve_{l} - \ve_{l+1}), & 1 \le l < m, \\
        \zeta y_m \ve_m, & l=m.
    \end{cases}
\end{align*}
For $\vx, \vy \in \fF_k$ with $1 \le k < m$, we have
\begin{align}
\label{proof:convex-concave:ebx}
    \ve_l \tilde{\vb}_l^\top \vx \in
    \begin{cases}
        \fF_k, & l = k, \\
        \fF_{k-1}, & l \neq k.
    \end{cases}
\mbox{  and  }
    \tilde{\vb}_l \ve_l^\top \vy \in
    \begin{cases}
        \fF_{k+1}, & l = k, \\
        \fF_{k}, & l \neq k.
    \end{cases}
\end{align}
Recall that
\begin{align*}
    \nabla_\vx \tilde{r}_i (\vx, \vy)
    = &\, n  \sum_{l \in \fL_i} \tilde{\vb}_l \ve_l^\top \vy  + \tilde{c}_1 \vx - n \ve_1 \bone_{ \{ i=1 \} }, \\
    \nabla_\vy \tilde{r}_i (\vx, \vy)
    = &\, n \sum_{l \in \fL_i} \ve_l \tilde{\vb}_l^\top \vx - \tilde{c}_2 \vy.
\end{align*}
By Inclusions (\ref{proof:convex-concave:ebx}),
we have the following results.
\begin{enumerate}
    \item Suppose that $\vx, \vy \in \fF_0$. 
    It holds that 
    $ \nabla_\vx \tilde{r}_1(\vx, \vy) = n \ve_1 \in \fF_1 $, 
    $ \nabla_\vx \tilde{r}_j(\vx, \vy) = \vzero $ for $j \ge 2$ 
    and $ \nabla_\vy \tilde{r}_j(\vx, \vy) = \vzero $ for any $j$.
    
    \item Suppose that $\vx \in \fF_1$ and $\vy \in \fF_0$ and $1 \in \fL_i$. 
    It holds that $\nabla_\vx \tilde{r}_j(\vx, \vy) = \tilde{c}_1 \vx + n \ve_1 \bone_{ \{i=1\}} \in \fF_1 $ for any $j$,
    $\nabla_\vy \tilde{r}_i (\vx, \vy) \in \fF_1$ 
    and $\nabla_\vy \tilde{r}_j (\vx, \vy) = \vzero$ for $j \neq i$.
    
    
    \item Suppose that $\vx \in \fF_{k+1}$, $\vy \in \fF_k$, $1 \le k < m$ and $k+1 \in \fL_i$. 
    It holds that $\nabla_\vx \tilde{r}_j(\vx, \vy) \in \fF_{k+1} $ for any $j$, 
    $\nabla_\vy \tilde{r}_i (\vx, \vy) \in \fF_{k+1}$ 
    and $\nabla_\vy \tilde{r}_j (\vx, \vy) \in \fF_k $ for $j \neq i$.
\end{enumerate}

\vskip 5pt

Now we turn to consider $(\vu_i, \vv_i) = \prox_{\tilde{r}_i}^\gamma (\vx, \vy)$. We have
\begin{align*}
    \nabla_\vx \tilde{r}_i (\vu_i, \vv_i) + \frac{1}{\gamma} (\vu_i - \vx) & = \vzero, \\
    \nabla_\vy \tilde{r}_i (\vu_i, \vv_i) - \frac{1}{\gamma} (\vv_i - \vy) & = \vzero,
\end{align*}
that is
\begin{align*}
    \begin{bmatrix}
    \left( \tilde{c}_1 + \frac{1}{\gamma }\right) \mI_m & n \widetilde{\mB}_i^\top \\
    - n \widetilde{\mB}_i & \left( \tilde{c}_2 + \frac{1}{\gamma} \right) \mI_m
    \end{bmatrix}
    \begin{bmatrix}
    \vu_i \\ \vv_i
    \end{bmatrix}
    =
    \begin{bmatrix}
    \tilde{\vx}_i \\ \tilde{\vy}
    \end{bmatrix},
\end{align*}
where $\tilde{\vx}_i = \vx / \gamma + n \ve_1 \bone_{\{i=1\}}$ and $ \tilde{\vy} = \vy / \gamma$.
Recall that for $l_1, l_2 \in \fL_i$ and $l_1 \neq l_2$, $\tilde{\vb}_{l_1}^\top \tilde{\vb}_{l_2} = 0 $. 
It follows that
\begin{align*}
    \widetilde{\mB}_i \widetilde{\mB}_i^\top
    = \left( \sum_{l \in \fL_i} \ve_l \tilde{\vb}_l^\top \right) \left( \sum_{l \in \fL_i}  \tilde{\vb}_l \ve_l^\top \right)
    = \sum_{l \in \fL_i} \ve_l \tilde{\vb}_l^\top \tilde{\vb}_l \ve_l^\top,
\end{align*}
which is a diagonal matrix.
Assuming that 
\begin{align*}
    \mD_i \triangleq
    \left( \tilde{c}_2 + \frac{1}{\gamma} \right) \mI_m + 
    \frac{n^2}{\tilde{c}_1 + 1 / \gamma} \widetilde{\mB}_i \widetilde{\mB}_i^\top
    = \diag \left( d_{i,1}, d_{i,2}, \dots, d_{j,m} \right),
\end{align*}
we have
\begin{align}
    \begin{bmatrix}
    \vu_i \\ \vv_i
    \end{bmatrix}
    & = \begin{bmatrix}
    \left( \tilde{c}_1 + \frac{1}{\gamma }\right) \mI_m & n \widetilde{\mB}_i^\top \\
    - n \widetilde{\mB}_i & \left( \tilde{c}_2 + \frac{1}{\gamma} \right) \mI_m
    \end{bmatrix}^{-1}
    \begin{bmatrix}
    \tilde{\vx}_i \\ \tilde{\vy}
    \end{bmatrix} \nonumber \\
    & = \begin{bmatrix}
    \frac{1}{\tilde{c}_1 + 1 / \gamma} \mI_m 
    - \frac{n^2}{ \left( \tilde{c}_1 + 1 / \gamma \right)^2 }  \widetilde{\mB}_i^\top \mD_i^{-1} \widetilde{\mB}_i &
    - \frac{n}{\tilde{c}_1 + 1 / \gamma} \widetilde{\mB}_i^\top \mD_i^{-1} \\
    \frac{n}{\tilde{c}_1 + 1 / \gamma} \mD_i^{-1} \widetilde{\mB}_i &
    \mD_i^{-1}
    \end{bmatrix}
    \begin{bmatrix}
    \tilde{\vx}_i \\ \tilde{\vy}
    \end{bmatrix} \nonumber \\
    & = \begin{bmatrix}
    \frac{1}{\tilde{c}_1 + 1 / \gamma} \tilde{\vx}_i 
    - \frac{n^2}{ \left( \tilde{c}_1 + 1 / \gamma \right)^2 } \sum_{l \in \fL_i} d_{i.l}^{-1} \tilde{\vb}_l \tilde{\vb}_l^\top \tilde{\vx}_i 
    - \frac{n}{\tilde{c}_1 + 1 / \gamma} \sum_{l \in \fL_i} \tilde{\vb}_l \ve_l^\top \mD_i^{-1} \tilde{\vy} \\
    \frac{n}{\tilde{c}_1 + 1 / \gamma} \sum_{l \in \fL_i} d_{i.l}^{-1} \ve_l \tilde{\vb}_l^\top \tilde{\vx}_i + \mD_i^{-1} \tilde{\vy} \label{proof:cc:uv-equ}
    \end{bmatrix}.
\end{align}
Note that for $1 \le k \le m$, $\vy \in \fF_k $ implies $\mD_i^{-1} \tilde{\vy} \in \fF_k$ and $\vx \in \fF_k $ implies $ \tilde{\vx}_i \in \fF_k$.
And recall that 
\begin{align*}
    \tilde{\vb}_l \tilde{\vb}_l^\top \vx
    = \begin{cases}
        (x_l - x_{l+1}) (\ve_l - \ve_{l+1}), & l < m, \\
        \zeta^2 x_m \ve_m, & l = m.
    \end{cases}
\end{align*}
Then for $\vx \in \fF_k$ with $1 \le k < m$, we have
\begin{align}
\label{proof:convex-concave:bbx}
    \tilde{\vb}_l \tilde{\vb}_l^\top \vx 
    \in \begin{cases}
        \fF_{k+1}, & l = k, \\
        \fF_k, & l \neq k.
    \end{cases}
\end{align}
By Inclusions (\ref{proof:convex-concave:ebx}),
(\ref{proof:convex-concave:bbx}) and Equations (\ref{proof:cc:uv-equ}), we have the following results.
\begin{enumerate}
    \item Suppose that $\vx, \vy \in \fF_0$. 
    It holds that 
    $\tilde{\vx}_1 \in \fF_1$ and $\tilde{\vx}_j = \vzero$ for $j \ge 2$, 
    which implies $\vu_1 \in \fF_1$ and $\vu_j = \vzero $ for $j \ge 2$. 
    Moreover, $\vv_j = \vzero$ for any $j$.
    
    \item Suppose that $\vx \in \fF_1$, $\vy \in \fF_0$ and $1 \in \fL_i$. 
    It holds that $\vu_i \in \fF_2$, $\vv_i \in \fF_1$ and $\vu_j \in \fF_1$, $\vv_j \in \fF_0$ for $j \neq i$.
    
    
    \item Suppose that $\vx \in \fF_{k+1}$, $\vy \in \fF_k$, $1 \le k < m-1$ and $k + 1 \in \fL_i$. 
    It holds that $\vu_i \in \fF_{k+2} $, $\vv_i \in \fF_{k+1}$ and $\vu_j \in \fF_{k+1}$, $\vv_j \in \fF_k $ for $j \neq i$.
\end{enumerate}
This completes the proof.
\end{proof}

\subsection{Proofs of Corollary \ref{coro:convex-concave:stopping-time} and Lemma \ref{lem:minimiax:base} }\label{appendix:frame:other}
\begin{proof}[Proof of Corollary \ref{coro:convex-concave:stopping-time}]
First, we note that by Lemma \ref{lem:proj}, the projection operations $\fP_{\fX} (\vx)$ and $\fP_{\fY} (\vy)$ do not affect the nonzero elements of the vectors $\vx$ and $\vy$. 

Then we prove the first claim by induction on $k$.
Clearly, it holds that $(\vx_0, \vy_0) = (\vzero, \vzero) \in \fF_{0} \times \fF_{-1}$.
Suppose that $(\vx_t, \vy_t) \in \fF_{k(t)-1} \times \fF_{k(t)-2}$ for any $t \le t_0$ where $k(t)$ is the positive integer such that $T_{k(t)-1} \le t < T_{k(t)}$.
By Lemma \ref{lem:convex-concave:jump},
for $t < T_{k(t_0)-1}$,
$\nabla r_i^{\mathrm{CC}} (\vx_t, \vy_t), \prox_{r_i^{\mathrm{CC}}}^\gamma (\vx_t, \vy_t ) \in \fF_{k(t_0)-1} \times \fF_{k(t_0)-2} $ for any $i$;
for $T_{k(t_0)-1} \le t \le t_0$, $a_t = 0$ and
$\nabla r_{i_j}^{\mathrm{CC}} (\vx_t, \vy_t), \prox_{r_{i_j}^{\mathrm{CC}}}^\gamma (\vx_t, \vy_t ) \in \fF_{k(t_0)-1} \times \fF_{k(t_0)-2} $ for any $t < j \le t_0$.
It remains to check $\nabla r_{i_{t_0+1}}^{\mathrm{CC}} (\vx_t, \vy_t), \prox_{r_{i_{t_0+1}}^{\mathrm{CC}}}^\gamma (\vx_t, \vy_t )$ for $T_{k(t_0)-1} \le t \le t_0$ and the value of $a_{t_0+1}$. 
By Lemma \ref{lem:convex-concave:jump},
\begin{align*}
    \nabla r^{\mathrm{CC}}_{i_{t_0+1}}(\vx_t, \vy_t),\, \prox_{r^{\mathrm{CC}}_{i_{t_0+1}}}^{\gamma} (\vx_t, \vy_t) \in \begin{cases}
        \fF_{k(t_0)} \times \fF_{k(t_0)-1}, ~&\text{ if } i_{t_0+1} \equiv k(t_0)~(\bmod~n), \\
        \fF_{k(t_0)-1} \times \fF_{k(t_0)-2}, ~&\text{ otherwise.}
    \end{cases}
\end{align*}
Thus, if $i_{t_0+1} \equiv k(t_0)~(\bmod~n) $ or $a_{t_0+1} = 1$, we have $T_{k(t_0)} = t_0+1 < T_{k(t_0)+1}$. Thus, $k(t_0+1) = k(t_0)+1$ and $(\vx_{t_0+1}, \vy_{t_0+1}) \in \fF_{k(t_0+1) - 1} \times \fF_{k(t_0 + 1)-2}$.
Otherwise, we still have $T_{k(t_0)-1} \le t_0+1 < T_{k(t_0)} $.
Thus, $k(t_0+1) = k(t_0) $ and $(\vx_{t_0+1}, \vy_{t_0+1}) \in \fF_{k(t_0+1)-1} \times \fF_{k(t_0+1)-2}$.

Consequently, we have $ (\vx_t, \vy_t) \in \fF_{k(t) - 1} \times \fF_{k(t)-2} $ for any $t$.
Since $k(t)$ is monotone increasing,
we have $(\vx_t, \vy_t) \in \fF_{k-1} \times \fF_{k-2}$ for any $t > T_k$ and $k \ge 1$.

Next, note that 
\begin{align*}
    &\quad \pr{T_{k} - T_{k-1} = s} \\
    &= \pr{ i_{T_{k{-}1} {+} 1} {\neq} k', \dots,  i_{T_{k{-}1} {+} s {-} 1} {\neq} k', a_{T_{k{-}1} {+} 1}=0, \dots, a_{T_{k{-}1} {+} s {-} 1} = 0, i_{T_{k{-}1} {+} s} {=} k' \mbox{ or } a_{T_{k{-}1} {+} s} {=} 1 } \\
    &= (1 - p_{k'})^{s-1} (1-q)^{s-1} (p_{k'} + q - p_{k'}q ),
\end{align*}
where $k' \equiv k (\bmod ~n), 1 \le k' \le n$ and the last equality is due to the independence of $\{ (i_t, a_t) \}_{t \ge 1}$. So $Y_k = T_k - T_{k-1}$ is a geometric random variable with success probability $p_{k'} + q - p_{k'}q$. The independence of $\{Y_k\}_{k \ge 1}$ is just according to the independence of $\{(i_t, a_t)\}_{t\ge 1}$. 
\end{proof}

\begin{proof}[Proof of Lemma \ref{lem:minimiax:base}]
For $t \le N$, we have
\begin{align*}
    &\quad \E \left(\max_{\vv \in \fY} r^{\mathrm{CC}}(\vx_t, \vv) - \min_{\vu \in \fX} r^{\mathrm{CC}}(\vu, \vy_t)\right) \\
    &\ge \E \left(\max_{\vv \in \fY} r^{\mathrm{CC}}(\vx_t, \vv) - \min_{\vu \in \fX} r^{\mathrm{CC}}(\vu, \vy_t) \bigg\vert N < T_{M+1}\right) \pr{N < T_{M+1}} \\
    &\ge 9 \eps \pr{N < T_{M+1}},
\end{align*}
where $T_{M+1}$ is defined in (\ref{def:convex-concave:stopping-time}), and the second inequality follows from Corollary \ref{coro:convex-concave:stopping-time} (if $N < T_{M+1}$, then $\vx_t \in \fF_M$ and $\vy_t \in \fF_{M-1} \subset \fF_{M}$ for $t \le N$).

By Corollary \ref{coro:convex-concave:stopping-time}, $T_{M+1}$ can be written as $T_{M+1} = \sum_{l=1}^{M+1} Y_l$,
where $\{Y_l\}_{1 \le l \le M+1}$ are independent random variables, and $Y_l$ follows a geometric distribution with success probability
$q_l \triangleq p_{l'} + q - p_{l'}q$ where
$l' \equiv l (\bmod ~n)$, $1 \le l' \le n$.
Moreover, recalling that $p_1 \le p_2 \le \cdots \le p_n$, we have $\sum_{l=1}^{M+1} q_l \le 
(M+1) \left( \frac{1}{n} + q \right) \le (M+1) (1 + c_0) / n.
$
Therefore, by Lemma \ref{lem:geo}, we have
$$\pr{T_{M+1} > N} = \pr{\sum_{l=1}^{M+1} Y_l > \frac{(M+1)n}{4 (1 + c_0) }} 
\ge \frac{1}{9},$$
which implies our desired result.
\end{proof}

\subsection{Proofs of Proposiiton \ref{prop:nonconvex-strongly:base} and Lemma \ref{lem:nonconvex-strongly:jump}}\label{appendix:frame:ncc}
Let $\widehat{\mB}(m, \omega)$ denote the first $m$ rows of $\mB(m, \omega, 0)$ by  and $\hat{\vb}_{l}(m, \omega) = \vb_l (m, \omega, 0) $ for $0 \le l \le m$. Note that $\hat{\vb}_m (m, \omega) = \vzero$.
For simplicity, we omit the parameters of $\widehat{\mB}$, $\hat{\vb}_l$ and $\hat{r}_i$.
Then we have $\widehat{\mB} = ( \hat{\vb}_0, \hat{\vb}_1, \dots, \hat{\vb}_{m-1} )^\top$.

Let $G(\vx) \triangleq \sum\limits_{i=1}^{m-1} \Gamma (x_i)$. Recall that
$$\fL_i = \{ l: 0 \le l \le m, l \equiv i - 1 (\bmod n) \},\, i = 1, 2, \dots, n.$$ 
For $1 \le i \le n$, let $\widehat{\mB}_i$ be the submatrix whose rows are $\big\{ \hat{\vb}_l^\top \big\}_{l \in \fL_i}$.
Note that $\widehat{\mB} = \sum_{l=0}^{m-1} \ve_{l+1} \hat{\vb}_l^\top$ and $\widehat{\mB}_i = \sum_{l \in \fL_i} \ve_{l+1} \hat{\vb}_l^\top$.
Then $\hat{r}_i$ can be written as
\begin{align*}
\hat{r}_i(\vx, \vy) &= n \inner{\vy}{\widehat{\mB}_i \vx} - \frac{\hat{c}_1}{2} \norm{\vy}^2 +  \hat{c}_2  G (\hat{c}_3  \vx) - n \inner{\ve_1}{\vy} \bone_{\{i=1\}}.
\end{align*}

\begin{proof}[Proof of Proposition \ref{prop:nonconvex-strongly:base}]
Denote $s_i(\vx, \vy) = \hat{r}_i(\vx, \vy) - \hat{c}_2 G ( \hat{c}_3 \vx )$.
Similar to the proof of Proposition \ref{prop:convex-concave:base},
we can establish that for any $\vx_1, \vx_2, \vy_1, \vy_2$,
\begin{align*}
    \norm{ \nabla s_i(\vx_1, \vy_1) - \nabla s_i (\vx_2, \vy_2) }^2 \le \left( 4 n^2 + 2 \hat{c}_1^2 \right) \left( \norm{\vx_1 - \vx_2}^2 + \norm{\vy_1 - \vy_2}^2 \right),
\end{align*}
and 
\begin{align*}
    \frac{1}{n} \sum_{i=1}^n \norm{ \nabla s_i(\vx_1, \vy_1) - \nabla s_i (\vx_2, \vy_2) }^2 \le \left( 8 n + 2 \hat{c}_1^2 \right) \left( \norm{\vx_1 - \vx_2}^2 + \norm{\vy_1 - \vy_2}^2 \right).
\end{align*}
By Proposition \ref{prop:nonconvex:prop:base} and the inequality $(a+b)^2 \le 2 (a^2 + b^2)$, we conclude that $\hat{r}_i$ is $\left( -45(\sqrt{3} - 1) \hat{c}_2 \hat{c}_3^2, \hat{c}_1 \right)$-convex-concave, 
\begin{align*}
    \norm{ \nabla \hat{r}_i(\vx_1, \vy_1) - \nabla \hat{r}_i (\vx_2, \vy_2) } \le \left( \sqrt{4 n^2 + 2 \hat{c}_1^2} + 180 \hat{c}_2 \hat{c}_3^2 \right) \sqrt{ \norm{\vx_1 - \vx_2}^2 + \norm{\vy_1 - \vy_2}^2 },
\end{align*}
and 
\begin{align*}
    \frac{1}{n} \sum_{i=1}^n \norm{ \nabla \hat{r}_i(\vx_1, \vy_1) - \nabla \hat{r}_i (\vx_2, \vy_2) }^2 \le \left( 16 n + 4 \hat{c}_1^2 + 64800 \hat{c}_2 \hat{c}_3^2 \right) \left( \norm{\vx_1 - \vx_2}^2 + \norm{\vy_1 - \vy_2}^2 \right).
\end{align*}
\end{proof}

Now we prove the Lemma \ref{lem:nonconvex-strongly:jump}.
\begin{proof}[Proof of Lemma \ref{lem:nonconvex-strongly:jump}]
Note that 
\begin{align*}
    \ve_{l+1} \hat{\vb}_l^\top \vx =
    \begin{cases}
        \omega x_1 \ve_1, & l = 0, \\
        (x_{l} - x_{l+1}) \ve_{l+1}, & 1 \le l < m.
    \end{cases}
\mbox{ and } 
    \hat{\vb}_l \ve_{l+1}^\top \vy =
    \begin{cases}
        \omega y_1 \ve_1, & l = 0, \\
        y_{l+1} (\ve_{l} - \ve_{l+1}), & 1 \le l < m.
    \end{cases}
\end{align*}
For $\vx, \vy \in \fF_k$ with $1 \le k < m$, we have
\begin{align}
\label{proof:nonconvex-strongly:ebx}
    \ve_{l+1} \hat{\vb}_l^\top \vx \in
    \begin{cases}
        \fF_{k+1}, & l = k, \\
        \fF_{k}, & l \neq k.
    \end{cases}
\mbox{ and }
    \hat{\vb}_l \ve_{l+1}^\top \vy \in
    \begin{cases}
        \fF_{k}, & l = k-1, \\
        \fF_{k-1}, & l \neq k-1.
    \end{cases}
\end{align}
Recall that
\begin{align*}
    \nabla_\vx \hat{r}_i (\vx, \vy)
    = &\, n  \sum_{l \in \fL_i} \hat{\vb}_l \ve_{l+1}^\top \vy + \hat{c}_2 \hat{c}_3 \nabla G (\hat{c}_3 \vx) , \\
    \nabla_\vy \hat{r}_i (\vx, \vy)
    = &\, n \sum_{l \in \fL_i} \ve_{l+1} \hat{\vb}_l^\top \vx - \hat{c}_1 \vy + n \ve_1 \bone_{ \{ i=1 \}}.
\end{align*}
By Inclusions (\ref{proof:nonconvex-strongly:ebx}),
we have the following results.
\begin{enumerate}
    \item Suppose that $\vx, \vy \in \fF_0$. 
    It holds that 
    $ \nabla_\vx \hat{r}_j(\vx, \vy) = \vzero $ for any $j$,
    $ \nabla_\vy \hat{r}_1(\vx, \vy) = n \ve_1 \in \fF_1 $
    and $ \nabla_\vy \hat{r}_j(\vx, \vy) = \vzero $ for $j \ge 2$.
    
    
    \item Suppose that $\vx, \vy \in \fF_k$, $1 \le k < m$ and $k \in \fL_i$. 
    It holds that 
    $ \nabla_\vx \hat{r}_j(\vx, \vy) \in \fF_k $ for any $j$,
    $ \nabla_\vy \hat{r}_i(\vx, \vy) \in \fF_{k+1} $ and
    $ \nabla_\vy \hat{r}_j(\vx, \vy) \in \fF_k $ for $j \neq i$.
    
\end{enumerate}

\vskip 5pt
Now we turn to consider $(\vu_i, \vv_i) = \prox_{\hat{r}_i}^\gamma (\vx, \vy)$. We have
\begin{align*}
    \nabla_\vx \hat{r}_i (\vu_i, \vv_i) + \frac{1}{\gamma} (\vu_i - \vx) & = \vzero, \\
    \nabla_\vy \hat{r}_i (\vu_i, \vv_i) - \frac{1}{\gamma} (\vv_i - \vy) & = \vzero,
\end{align*}
that is
\begin{align*}
    \begin{bmatrix}
    \frac{1}{\gamma } \mI_m & n \widehat{\mB}_i^\top \\
    - n \widehat{\mB}_i & \left( \hat{c}_1 + \frac{1}{\gamma} \right) \mI_m
    \end{bmatrix}
    \begin{bmatrix}
    \vu_i \\ \vv_i
    \end{bmatrix}
    =
    \begin{bmatrix}
    \hat{\vx} - \hat{\vu}_i \\ \hat{\vy}_i
    \end{bmatrix},
\end{align*}
where $\hat{\vx} = \vx / \gamma $, 
$ \hat{\vy}_i = \vy / \gamma + n \ve_1 \bone_{\{i=1\}} $ and
$\hat{\vu}_i = \hat{c}_2 \hat{c}_3 \nabla G(\hat{c}_3 \vu_i)$.
Recall that for $l_1, l_2 \in \fL_i$ and $l_1 \neq l_2$, $\hat{\vb}_{l_1}^\top \hat{\vb}_{l_2} = 0 $. It follows that
\begin{align*}
    \widehat{\mB}_i \widehat{\mB}_i^\top
    = \left( \sum_{l \in \fL_i} \ve_{l+1} \hat{\vb}_l^\top \right) \left( \sum_{l \in \fL_i}  \hat{\vb}_l \ve_{l+1}^\top \right)
    = \sum_{l \in \fL_i} \ve_{l+1} \hat{\vb}_l^\top \hat{\vb}_l \ve_{l+1}^\top,
\end{align*}
which is a diagonal matrix.
Denote
\begin{align*}
    \mD_i \triangleq
    \left( \hat{c}_1 + \frac{1}{\gamma} \right) \mI_m + 
    \gamma n^2 \widehat{\mB}_i \widehat{\mB}_i^\top
    = \diag \left( d_{i,1}, d_{i,2}, \dots, d_{i,m} \right).
\end{align*}
For $0 < l < m$, $l \in \fL_i$ implies $d_{i, l+1} = \hat{c}_1 + \frac{1}{\gamma} + 2 \gamma n^2$.
Then we have
\begin{align*}
    \begin{bmatrix}
    \vu_i \\ \vv_i
    \end{bmatrix}
    & = \begin{bmatrix}
    \frac{1}{\gamma } \mI_m & n \widehat{\mB}_i^\top \\
    - n \widehat{\mB}_i & \left( \hat{c}_1 + \frac{1}{\gamma} \right) \mI_m
    \end{bmatrix}^{-1}
    \begin{bmatrix}
    \hat{\vx} - \hat{\vu}_i \\ \hat{\vy}_i
    \end{bmatrix} \\
    & = \begin{bmatrix}
    \gamma \mI_m 
    -  \gamma^2 n^2  \widehat{\mB}_i^\top \mD_i^{-1} \widehat{\mB}_i &
    - \gamma n \widehat{\mB}_i^\top \mD_i^{-1} \\
    \gamma n \mD_i^{-1} \widehat{\mB}_i &
    \mD_i^{-1}
    \end{bmatrix}
    \begin{bmatrix}
    \hat{\vx} - \hat{\vu}_i \\ \hat{\vy}_i
    \end{bmatrix} \\
    & = \begin{bmatrix}
    \gamma ( \hat{\vx} - \hat{\vu}_i) 
    - \gamma^2 n^2 \sum_{l \in \fL_i} d_{i.l+1}^{-1} \hat{\vb}_l \hat{\vb}_l^\top ( \hat{\vx} - \hat{\vu}_i ) 
    - \gamma \sum_{l \in \fL_i} \hat{\vb}_l \ve_{l+1}^\top \mD_i^{-1} \hat{\vy}_i \\
    \gamma \sum_{l \in \fL_i} d_{i.l+1}^{-1} \ve_{l+1} \hat{\vb}_l^\top ( \hat{\vx} - \hat{\vu}_i ) + \mD_i^{-1} \hat{\vy}_i
    \end{bmatrix},
\end{align*}
that is
\begin{align}
    \vu_i + \gamma \hat{\vu}_i 
    - \gamma^2 n^2 \sum_{l \in \fL_i} d_{i.l+1}^{-1} \hat{\vb}_l \hat{\vb}_l^\top \hat{\vu}_i
    & = \gamma \hat{\vx} - \gamma^2 n^2 \sum_{l \in \fL_i} d_{i.l+1}^{-1} \hat{\vb}_l \hat{\vb}_l^\top  \hat{\vx}
    - \gamma \sum_{l \in \fL_i} \hat{\vb}_l \ve_{l+1}^\top \mD_i^{-1} \hat{\vy}_i. \label{proof:ncsc:u-equ} \\ 
    {\vv}_i & = \gamma \sum_{l \in \fL_i} d_{i.l+1}^{-1} \ve_{l+1} \hat{\vb}_l^\top ( \hat{\vx} - \hat{\vu}_i ) + \mD_i^{-1} \hat{\vy}_i. \label{proof:ncsc:v-equ}
\end{align}
We first focus on Equations (\ref{proof:ncsc:u-equ}).
Recall that $\hat{\vu}_i = \hat{c}_2 \hat{c}_3 \nabla G(\hat{c}_3 \vu_i)$ and
\begin{align*}
    \hat{\vb}_l \hat{\vb}_l^\top x
    = \begin{cases}
        \omega^2 x_1 \ve_1, & l = 0, \\
        (x_l - x_{l+1}) (\ve_l - \ve_{l+1}), & 0 < l < m.
    \end{cases}
\end{align*}
For simplicity, let $\vu_i = ( u_1, u_2, \dots, u_m )^\top$ and $\hat{\vu}_i = ( \hat{u}_1, \hat{u}_2, \dots, \hat{u}_m )^\top$,
and denote the right hand side of Equations (\ref{proof:ncsc:u-equ}) by $\vw$.
Recalling the definition of $G(\vx)$, we have $\hat{u}_l = 120 \hat{c}_2 \hat{c}_3 \frac{ \hat{c}_3^2 u_{l}^2 (\hat{c}_3 u_{l} - 1)}{ 1 + \hat{c}_3^2 u_{l}^2} $ for $l < m$ and $\hat{u}_m = 0$.
We can establish the following claims.

\begin{enumerate}
    \item If $0 < l < m-1$ and $l \in \fL_i$, we have
    \begin{equation}
    \label{proof:nonconvex-strongly:case1}
    \begin{aligned}
        u_l + \left( \gamma - \gamma^2 n^2 d_{i,l+1}^{-1} \right) \hat{u}_l + \gamma^2 n^2 d_{i,l+1}^{-1} \hat{u}_{l+1} & = w_l, \\
        u_{l+1} + \gamma^2 n^2 d_{i,l+1}^{-1} \hat{u}_{l} + \left( \gamma - \gamma^2 n^2 d_{i,l+1}^{-1} \right) \hat{u}_{l+1} & = w_{l+1}.
    \end{aligned}
    \end{equation}
    Setting $w_l = w_{l+1} = 0$ yields
    \begin{equation*}
    \begin{aligned}
        \left( 1 - 2 \gamma n^2 d_{i,l+1}^{-1} \right) u_l + \gamma n^2 d_{i,l+1}^{-1}  (u_l - u_{l+1}) +  \left( \gamma - 2 \gamma^2 n^2 d_{i,l+1}^{-1} \right) \hat{u}_{l} & = 0,\\
        \left( 1 - 2 \gamma n^2 d_{i,l+1}^{-1} \right) u_{l+1} + \gamma n^2 d_{i,l+1}^{-1}  (u_{l+1} - u_{l}) +  \left( \gamma - 2 \gamma^2 n^2 d_{i,l+1}^{-1} \right) \hat{u}_{l+1} & = 0. 
    \end{aligned}
    \end{equation*}
    Recalling that $d_{i,l+1} = \hat{c}_1 + 1 / \gamma + 2 \gamma n^2$, we find $ 1 - 2 \gamma n^2 d_{i,l+1}^{-1} > 0$.
    Since $\gamma < \frac{ \sqrt{2} + 1 }{ 60 \hat{c}_2 \hat{c}_3^2 }$, we can apply Lemma \ref{lem:solution:z1z2} with $z_1 = \hat{c}_3 u_l$ and $z_2 = \hat{c}_3 u_{l+1}$ and conclude that $u_l = u_{l+1} = 0$.
    
    \item If $m-1 \in \fL_i$, we have
    \begin{equation}
    \label{proof:nonconvex-strongly:case2}
    \begin{aligned}
        u_{m-1} + \left( \gamma - \gamma^2 n^2 d_{i,m}^{-1} \right) \hat{u}_{m-1}  & = w_{m-1}, \\
        u_{m} + \gamma^2 n^2 d_{i,m}^{-1} \hat{u}_{m-1} & = w_{m}.
    \end{aligned}
    \end{equation}
    Setting $w_{m-1} = w_m = 0$ yields
    \begin{equation*}
    \begin{aligned}
        u_{m-1} + \left( \gamma - \gamma^2 n^2 d_{i,m}^{-1} \right) \hat{u}_{m-1}  & = 0, \\
        \gamma n^2 d_{i,m}^{-1} u_{m-1} -  \left( 1 -  \gamma n^2 d_{i,m}^{-1} \right) u_m & = 0.
    \end{aligned}
    \end{equation*}
    Recalling that $d_{i,l+1} = \hat{c}_1 + 1 / \gamma + 2 \gamma n^2$ and $\gamma < \frac{ \sqrt{2} + 1 }{ 60 \hat{c}_2 \hat{c}_3^2 }$, we have $0 <  \gamma - \gamma^2 n^2 d_{i,m}^{-1}  <  \gamma  < \frac{ \sqrt{2} + 1 }{ 60 \hat{c}_2 \hat{c}_3^2 } $.
    Applying Lemma \ref{lem:solution:z} with $z = \hat{c}_3 u_{m-1}$,  we conclude that $u_{m-1}=0$. It follows that $u_m = 0$.
    
    \item If $0 < l < m$ and $l, l-1 \notin \fL_i$, we have
    \begin{align}\label{proof:nonconvex-strongly:case3}
        u_l + \gamma \hat{u}_l = w_l.
    \end{align}
    Setting $w_l = 0$ and applying Lemma \ref{lem:solution:z} with $z = \hat{c}_3 u_l$, we conclude that $u_{l} = 0$.
\end{enumerate}
Note that for $1 \le k \le m$, $\vx \in \fF_k$ implies $\hat{\vx} \in \fF_k$ and  $\vy \in \fF_k$ implies $\mD_i^{-1} \hat{\vy}_i \in \fF_k $.
And for $\vx \in \fF_k$ with $1 \le k < m$, we have
\begin{align}
\label{proof:nonconvex-strongly:bbx}
    \hat{\vb}_l \hat{\vb}_l^\top x 
    \in \begin{cases}
        \fF_{k+1}, & l = k, \\
        \fF_k, & l \neq k.
    \end{cases}
\end{align}
Then we can provide the following analysis.
\begin{enumerate}
    \item Suppose that $\vx, \vy \in \fF_0$. Note that $0 \in \fL_1$.
    
    For $j = 1$, we have $\hat{\vx} = \vzero$ and $\hat{\vy}_1 \in \fF_1$. 
    Since $0 \in \fL_1$, Inclusion 
    (\ref{proof:nonconvex-strongly:ebx})
    implies $\vw \in \fF_1$. Then we consider the solution to Equations (\ref{proof:ncsc:u-equ}). Since $n \ge 2$, we have $1 \notin \fL_1$. If $2 \in \fL_1$, we can consider the solution to Equations (\ref{proof:nonconvex-strongly:case1}) or (\ref{proof:nonconvex-strongly:case2}) and conclude that $u_2 = 0$.
    If $2 \notin \fL_1$, we can consider the solution to Equation (\ref{proof:nonconvex-strongly:case3}) and conclude that $u_2 = 0$. Similarly, we obtain $u_l = 0$ for $l \ge 2$, which implies $\vu_1 \in \fF_1$. Since $1 \notin \fL_1$, by Inclusion (\ref{proof:nonconvex-strongly:ebx}) and Equations (\ref{proof:ncsc:v-equ}), we have $\vv_1 \in \fF_1$. 
    
    For $j \neq 1$, we have $\hat{\vx} = \hat{\vy}_j = \vzero$. It follows that $\vw = \vzero$. Note that $0 \not \in \fL_j$. If $1 \in \fL_j$, we can consider the solution to Equations (\ref{proof:nonconvex-strongly:case1}) or (\ref{proof:nonconvex-strongly:case2}) and conclude that $u_1 = 0$.
    If $1 \not \in \fL_j$, we can consider the solution to Equation (\ref{proof:nonconvex-strongly:case3}) and conclude that $u_1 = 0$. Similarly, we obtain $u_l = 0$ for all $l$, which implies $\vu_j = \vzero$. By Equations (\ref{proof:ncsc:v-equ}), we have $\vv_j = \vzero$.
    
    \item Suppose that $\vx, \vy \in \fF_k$, $1 \le k < m$ and $k \in \fL_i$.
    
    For $j = i$, we have $\hat{\vx}, \hat{\vy}_i \in \fF_k$. If $k = m-1$, clearly $\vu_i, \vv_i \in \fF_m$. Now we assume $k < m-1$.  Inclusions (\ref{proof:nonconvex-strongly:bbx}) and 
    (\ref{proof:nonconvex-strongly:ebx}) 
    imply $\vw \in \fF_{k+1}$. Then we consider the solution to Equations (\ref{proof:ncsc:u-equ}). Since $n \ge 2$, we have $k+1 \notin \fL_i$. 
    If $k+2 \in \fL_i$, we can consider the solution to Equations (\ref{proof:nonconvex-strongly:case1}) or (\ref{proof:nonconvex-strongly:case2}) and conclude that $ u_{k+2} = 0$. 
    If $k+2 \notin \fL_i$, we can consider the solution to Equation (\ref{proof:nonconvex-strongly:case3}) and conclude that $u_{k+2} = 0$. Similarly, we obtain $u_l = 0$ for $l \ge k+2$, which implies $\vu_i \in \fF_{k+1}$. Since $k+1 \notin \fL_i$, by Inclusion (\ref{proof:nonconvex-strongly:ebx}) and Equations (\ref{proof:ncsc:v-equ}), we have $\vv_i \in \fF_{k+1}$.
    
    For $j \neq i$, we also have $\hat{\vx}, \hat{\vy}_i \in \fF_k$.
    Since $k \notin \fL_j$, by Inclusions 
    (\ref{proof:nonconvex-strongly:ebx})
    and (\ref{proof:nonconvex-strongly:bbx}), we have $\vw \in \fF_k$.
    If $k+1 \in \fL_j$, we can consider the solution to Equations (\ref{proof:nonconvex-strongly:case1}) or (\ref{proof:nonconvex-strongly:case2}) and conclude that $u_{k+1} = 0$.
    If $k+1 \notin \fL_j$, we can consider the solution to Equation (\ref{proof:nonconvex-strongly:case3}) and conclude that $u_{k+1} = 0$. Similarly, we obtain $u_l = 0$ for $l \ge k+1$, which implies $\vu_j \in \fF_k$. Since $k \notin \fL_j$, by Inclusion (\ref{proof:nonconvex-strongly:ebx}) and Equations (\ref{proof:ncsc:v-equ}), we have $\vv_j \in \fF_k$.
\end{enumerate}
This completes the proof.
\end{proof}
\section{Proofs for Section~\ref{sec:minimax}}\label{appendix:minimax}

\subsection{Proofs for the Strongly-Convex-Strongly-Concave Case}\label{appendix:minimax:scsc}
With $f_\mathrm{SCSC}$ and $\{ f_{\mathrm{SCSC}, i} \}_{i=1}^n$ defined in Definition~\ref{defn:scsc},
we have the following proposition.
\begin{proposition}\label{prop:strongly-strongly}
For any $n \ge 2$, $m \ge 2$, $f_{\mathrm{SCSC}, i}$ and $f_{\mathrm{SCSC}}$ in Definition \ref{defn:scsc} satisfy:
\begin{enumerate}
    \item $\{ f_{\mathrm{SCSC}, i} \}_{i=1}^n$ is $L$-average smooth and each $ f_{\mathrm{SCSC}, i}$ is $(\mu_x, \mu_y)$-convex-concave. Thus, $f_{\mathrm{SCSC}}$ is $(\mu_x, \mu_y)$-convex-concave.
    \item The saddle point of Problem (\ref{prob:scsc}) is
        \begin{align*}
            \begin{cases}
            \vx^* = 
            \frac{2 \beta \mu_y}{1 - q} \sqrt{ \frac{2 n}{L^2 - 2 \mu_y^2} }
            (q, q^{2}, \dots, q^m)^{\top}, \\[0.15cm]
            \vy^* = \beta \left(q, q^2, \dots, q^{m-1}, \sqrt{\frac{\alpha+1}{2}} q^m\right)^{\top},
            \end{cases}
        \end{align*}
        where $q=\frac{\alpha-1}{\alpha+1}$.
    Moreover, $\norm{\vx^*} \le R_x$, $\norm{\vy^*} \le R_y$.
    \item For $1 \le k \le m - 1$, we have
    \begin{align*}
    \min_{\vx \in \fX \cap \fF_k} \phi_{\mathrm{SCSC}} (\vx) -
    \max_{\vy \in \fY \cap \fF_k} \psi_{\mathrm{SCSC}} (\vy) \ge  
    \frac{ \beta^2 \left( L^2 - 2 \mu_y^2 \right) }{ 8n (\alpha + 1) \mu_x } q^{2k}.
    \end{align*}
\end{enumerate}
\end{proposition}

\begin{proof}
\begin{enumerate}
    \item Just recall Proposition  \ref{prop:convex-concave:base} and Lemma \ref{lem:scale}.
    \item It is easy to check
    $
    f_{\mathrm{SCSC}}(\vx, \vy) 
    = 
    \xi \inner{\vy}{ \widetilde{\mB} \left( m, \zeta
    \right) \vx  }  
    + \frac{\mu_x}{2} \norm{\vx}^2 - \frac{\mu_y}{2} \norm{\vy}^2 
    - 
    \beta \xi
    \inner{\ve_1}{\vx}, $
    where $\zeta = \sqrt{ \frac{2}{\alpha + 1} }$ and $\xi = \lambda / \beta^2 = \frac{1}{2} \sqrt{ \frac{ L^2 - 2 \mu_y^2 }{ 2n } }$. Letting the gradient of $f_{ \mathrm{SCSC} } (\vx, \vy)$ be zero, we obtain
    \begin{align}
        &\vy = \frac{\xi}{\mu_y} \widetilde{\mB}(m, \zeta) \vx
        , \quad
        \left( \mu_x \mI + \frac{ \xi^2 }{ \mu_y } \widetilde{\mB}(m, \zeta)^{\top} \widetilde{\mB}(m, \zeta) \right) \vx = \beta \xi \ve_1. \label{eq:Mx=b}
    \end{align}
    Note that 
    \[
    \frac{\mu_x \mu_y }{ \xi^2 }
    = \frac{ 8 n \mu_x \mu_y }{ L^2 - 2 \mu_y^2 }
    = \frac{ 8 n  \mu_x }{ (\kappa_y^2 - 2) \mu_y }
    = \frac{ 8 n }{ \left( \kappa_y - 2 / \kappa_y \right) \kappa_x }
    = \frac{ 4 }{ \alpha^2 - 1 }.
    \]
    One can check $q$ is a root of the equation
    $
        z^2 - \left( 2 + \frac{\mu_x \mu_y}{ \xi^2 } \right)z + 1 = 0. \label{eq:ssp-scc-q}
    $
    By some calculation, 
    the solution of (\ref{eq:Mx=b}) equation is 
    \begin{align*}
        \vx^* = \frac{ \beta \mu_y }{ (1 - q) \xi } (q, q^{2}, \dots, q^m)^{\top} , \quad
        \vy^* = \beta \left( q, q^2, \dots, q^{m-1},  \frac{q^m}{\zeta} \right)^{\top}.
    \end{align*}
    Moreover, from the definition of $\beta$, we have
    \[
    \norm{ \vx^* }^2
    = \frac{\beta^2 \mu_y^2 \left( q^2 - q^{2m+2} \right) }{ (1-q)^2 \left(1-q^2 \right) \xi^2 }
    \le \frac{\beta^2 \mu_y^2 q^2}{ (1-q)^2 \left(1-q^2 \right) \xi^2 }
    \le \frac{\beta^2
    \kappa_x^2 (1 - 2 / \kappa_y^2)
    }{ 8 n \alpha }
    \le R_x^2,
    \]
    and
    \[
    \norm{ \vy^* }^2
    = \beta^2 \left( \frac{ q^2 - q^{2m} }{ 1 - q^2 } + \frac{q^{2m}}{\zeta^2} \right)
    = \beta^2 \frac{ q^2 + q^{2m+1} }{ 1 - q^2 }
    \le \beta^2 \frac{2 q^2}{1 - q^2}
    = \beta^2 \frac{ ( \alpha - 1 )^2 }{ 4 \alpha }
    \le R_y^2.
    \]
    \item Define $\tilde{\phi}_{\mathrm{SCSC}} (\vx) = \max_{\vy \in \BR^m } f_{\mathrm{SCSC}} (\vx, \vy)$ and $\tilde{\psi}_{\mathrm{SCSC}} (\vy) = \min_{\vx \in \BR^m } f_{\mathrm{SCSC}} (\vx, \vy)$. 
    We first show that 
    \[
    \min_{\vx \in \fF_k} \tilde{\phi}_{\mathrm{SCSC}} (\vx) - \max_{\vy \in \fF_k} \tilde{\psi}_{\mathrm{SCSC}} (\vy) \ge \frac{\beta^2 \xi^2}{(\alpha+1) \mu_x } q^{2k},
    \]
    where $\xi = \frac{1}{2} \sqrt{ \frac{  L^2 - 2 \mu_y^2 }{ 2n } }$ .
    Recall that 
    $
    f_{\mathrm{SCSC}}(\vx, \vy) 
    = \xi \inner{\vy}{ \widetilde{\mB} \left( m, \zeta  \right) \vx  }  
    + \frac{\mu_x}{2} \norm{\vx}^2 - \frac{\mu_y}{2} \norm{\vy}^2
    -  \beta \xi \inner{\ve_1}{\vx},
    $
    where $\zeta = \sqrt{ \frac{2}{\alpha + 1} }$. 
    Then we can rewrite $f_{\mathrm{SCSC}} (\vx, \vy) $ as
    \begin{align}
    \label{eq:scsc:maxy}
    f_{\mathrm{SCSC}} (\vx, \vy) = - \frac{\mu_y}{2} \norm{ \vy - \frac{\xi}{\mu_y} \widetilde{\mB} (m, \zeta) \vx }^2 + \frac{\xi^2}{2 \mu_y } \norm{\widetilde{\mB} (m,\zeta) \vx}^2 + \frac{\mu_x}{2} \norm{\vx}^2 -  \beta \xi \inner{\ve_1}{\vx}.
    \end{align}
    Thus $\tilde{\phi}_{\mathrm{SCSC}} (\vx) = \frac{\xi^2}{2 \mu_y } \norm{\widetilde{\mB}(m, \zeta) \vx}^2 + \frac{\mu_x}{2} \norm{\vx}^2 -  \beta \xi \inner{\ve_1}{\vx} $.
    For $\vx \in \fF_k$, let $\tilde{\vx}$ be the first $k$ coordinates of $\vx$.
    Then we can rewrite $\tilde{\phi}_{\mathrm{SCSC}}$ as
    $
    \tilde{\phi}_k (\tilde{\vx}) \triangleq \tilde{\phi}_{\mathrm{SCSC}} (\vx) 
    =  \frac{\xi^2}{2 \mu_y } \norm{\widetilde{\mB}(k,1) \tilde{\vx} }^2 + \frac{\mu_x}{2} \norm{\tilde{\vx}}^2 -  \beta \xi \inner{\ve_1}{\tilde{\vx}},
    $
    where $\hat{\ve}_1$ is the first $k$ coordinates of $\ve_1$.
    Letting $\nabla \tilde{\phi}_k (\tilde{\vx}) = \vzero_k$, we obtain
    \begin{align}\label{eq:scsc:phi}
    \frac{\xi^2}{\mu_y} \widetilde{\mB}(k,1)^\top \widetilde{\mB}(k,1) \tilde{\vx} + \mu_x \tilde{\vx} = \beta \xi \hat{\ve}_1.
    \end{align}
    Recall that 
    $ \frac{\mu_x \mu_y }{ \xi^2 } = \frac{ 4 }{ \alpha^2 - 1 }$
    and $q = \frac{\alpha - 1}{\alpha + 1}$. One can check $q$ and $1/q$ are two roots of the equation
    $z^2 - \left( 2 + \frac{\mu_x \mu_y}{\xi^2} \right)z + 1 = 0 $.
    By some calculation,
    the solution to Equations (\ref{eq:scsc:phi}) is 
    \begin{align*}
        \tilde{\vx}^* = 
        \frac{\beta \mu_y (\alpha + 1) q^{k+1}}{2 \xi \left( 1 + q^{2k+1} \right) }
        \left( q^{-k} - q^{k}, q^{-k+1} - q^{k-1}, \dots, q^{-1} - q \right)^{\top},
    \end{align*}
    and the value of $\min_{\vx \in \fF_k} \tilde{\phi}_{\mathrm{SCSC}}(\vx)$ is
    $
        \min_{\vx \in \fF_k } \tilde{\phi}_{\mathrm{SCSC}}(\vx) = - \frac{\beta^2 \mu_y (\alpha + 1)}{4} ~ \frac{q - q^{2k+1}}{1 + q^{2k+1}}.$
    
    On the other hand, observe that
    \begin{align}
    \label{eq:scsc:minx}
    f_{\mathrm{SCSC}} (\vx, \vy) = \frac{\mu_x}{2} 
    \norm{ \vx + \frac{\xi}{\mu_x} \widetilde{\mB} (m, \zeta)^\top \vy - \frac{\beta \xi}{\mu_x} \ve_1 }^2 
    - \frac{\xi^2}{2 \mu_x} \norm{ \widetilde{\mB} (m, \zeta)^\top \vy - \beta \ve_1 }^2 - \frac{\mu_y}{2} \norm{\vy}^2.
    \end{align}
    It follows that
    $
    \tilde{\psi}_{\mathrm{SCSC}} (\vy) =  - \frac{\xi^2}{2 \mu_x} \norm{ \widetilde{\mB} (m, \zeta)^\top \vy - \beta \ve_1 }^2 - \frac{\mu_y}{2} \norm{\vy}^2.$
    For $\vy \in \fF_k$, let $\tilde{\vy}$ be the first $k$ coordinated of $\vy$.
    Then we can rewrite $\tilde{\psi}_{\mathrm{SCSC}}$ as
    $
    \tilde{\psi}_k (\tilde{\vy}) \triangleq \tilde{\psi}_{\mathrm{SCSC}} (\vy)
    = - \frac{\xi^2}{2 \mu_x} \norm{ \widetilde{\mB}(k, 1)^\top  \tilde{\vy} - \beta \hat{\ve}_1 }^2 - - \frac{\xi^2}{2 \mu_x} \inner{\hat{\ve}_k}{\tilde{\vy}}^2 - \frac{\mu_y}{2} \norm{\tilde{\vy}}^2,$
    where $\hat{\ve}_1, \hat{\ve}_k $ are the first $k$ ordinates of $\ve_1$ and $\ve_k$ respectively.
    Letting $\nabla \tilde{\psi}_k (\tilde{\vy}) = \vzero_k$, we obtain
    \begin{align*}
         \frac{\xi^2}{\mu_x} \left(  \widetilde{\mB}(k, 1) \widetilde{\mB}(k,1)^\top + \hat{\ve}_k \hat{\ve}_k^{\top} \right) \tilde{\vy} + \mu_y \tilde{\vy} = \frac{\beta \xi^2}{\mu_x} \widetilde{\mB}(k,1) \hat{\ve}_1.
    \end{align*}
    Then, we can check that the solution to the above equations is 
    \begin{align*}
        \tilde{\vy}^* = \frac{\beta q^{k+1}}{1 - q^{2k+2}} (q^{-k} - q^{k}, q^{-k+1} - q^{k-1}, \dots, q^{-1} - q)^{\top},
    \end{align*}
    and the optimal value of $ \tilde{\psi}_{\mathrm{SCSC}}(\vy)$ is
    $
        \min_{\vy \in \fF_k } \tilde{\psi}_{\mathrm{SCSC}}(\vy)
        = -\frac{\beta^2 \xi^2 }{\mu_x(\alpha + 1)} ~ \frac{1 + q^{2k+1}}{1 - q^{2k+2}}.$
    It follows that
    \begin{align*}
    & \quad\, \min_{\vx \in \fF_k } \tilde{\phi}_{\mathrm{SCSC}}(\vx) - \max_{\vy \in \fF_k} \tilde{\psi}_{\mathrm{SCSC}}(\vy) \\
    & = - \frac{\beta^2 \mu_y (\alpha + 1)}{4} ~ \frac{q - q^{2k+1}}{1 + q^{2k+1}} + \frac{\beta^2 \xi^2 }{\mu_x(\alpha + 1)} ~ \frac{1 + q^{2k+1}}{1 - q^{2k+2}} \\
    & = - \frac{\beta^2 \xi^2 }{\mu_x(\alpha + 1)} ~ \frac{\mu_x \mu_y (\alpha + 1)^2 q }{4 \xi^2} ~ \frac{1 - q^{2k}}{1 + q^{2k+1}} + \frac{\beta^2 \xi^2 }{\mu_x(\alpha + 1)} ~ \frac{1 + q^{2k+1}}{1 - q^{2k+2}} \\
    &= \frac{\beta^2 \xi^2 }{\mu_x(\alpha + 1)} \left( \frac{1 + q^{2k+1}}{1 - q^{2k+2}} - \frac{1 - q^{2k}}{1 + q^{2k+1}} \right) \\
    &= \frac{\beta^2 \xi^2 }{\mu_x(\alpha + 1)} ~ \frac{2 q^{2k+1} + q^{2k} + q^{2k+2}}{(1 - q^{2k+2})(1 + q^{2k+1})} \\
    &\ge  \frac{\beta^2 \xi^2 }{\mu_x(\alpha + 1)} q^{2k}.
    \end{align*}
    Clearly, we have $\min_{\vx \in \fX \cap \fF_k} \phi_{\mathrm{SCSC}} (\vx) -
    \max_{\vy \in \fY \cap \fF_k} \psi_{\mathrm{SCSC}} (\vy) \ge \min_{\vx \in \fF_k} \phi_{\mathrm{SCSC}} (\vx) -
    \max_{\vy \in \fF_k} \psi_{\mathrm{SCSC}} (\vy)$.
    It remains to show that $\min_{\vx \in \fF_k } \phi_{\mathrm{SCSC}} (\vx) = \min_{\vx \in \fF_k } \tilde{\phi}_{\mathrm{SCSC}} (\vx) $ and $\max_{\vy \in \fF_k } \psi_{\mathrm{SCSC}} (\vy) = \max_{\vy \in \fF_k } \tilde{\psi}_{\mathrm{SCSC}} (\vy) $.
    Recall the expressions (\ref{eq:scsc:maxy}) and (\ref{eq:scsc:minx}). It suffices to prove
    $\norm{\hat{\vx} } \le R_x$ and 
    $\norm{\hat{\vy} } \le R_y$
    where
    \begin{align*}
        \hat{\vx} & =
        - \frac{\xi}{\mu_x} \widetilde{\mB} (m, \zeta) \top
        \begin{bmatrix} \tilde{\vy}^* \\ \vzero_{m-k} \end{bmatrix}
        + \frac{\beta \xi}{\mu_x} \ve_1,  \quad
        \hat{\vy}  = 
        \frac{\xi}{\mu_y} \widetilde{\mB} (m, \zeta) \begin{bmatrix} \tilde{\vx}^* \\ \vzero_{m-k} \end{bmatrix}.
    \end{align*}
    By some calculation, we have
    \begin{align*}
    \norm{ \hat{\vx} }^2 & = \frac{\beta^2 \xi^2 (1-q)^2 }{\mu_x^2 \left( 1 - q^{2k+2} \right)^2 } \left( \frac{1 - q^{4k+2}}{1-q^2} + 2(k+1) q^{2k+1 }\right),\\
    \norm{ \hat{\vy} }^2 & = \frac{\beta^2}{ \left( 1 + q^{2k+1} \right)^2 } \left( \frac{q^2 - q^{4k+2}}{1-q^2} + 2k q^{2k+1} \right) .
    \end{align*}
    Note that 
    $\max_{x > 0} x q^x = \log \frac{1}{q} \, e^{- \left( \log \frac{1}{q} \right)^2}$
    and
    $\log r - r^2 \le - r$ for any $r > 0$.
    It follows that
    $\max_{x > 0} x q^x \le e^{- \log \frac{1}{q}} = q  $. 
    Then we have
    \begin{align*}
        \norm{ \hat{\vx}  }^2
        & \le  \frac{\beta^2 \xi^2 (1-q)^2 }{\mu_x^2 \left( 1 - q \right)^2 } \left( \frac{1 }{1-q^2} + 1 \right) 
        \le \frac{2 \beta^2 \xi^2}{\mu_x^2 \left( 1 - q^2 \right) } 
        = \frac{\beta^2 \left( L^2 - 2 \mu_x^2 \right) (\alpha + 1)^2 }{ 16 n \mu_y^2  \alpha  } 
        \le R_x^2, \\
        \norm{ \hat{\vy}  }^2
        & \le \beta^2 \left( \frac{q^2}{1 - q^2} + q^2 \right) 
        \le \frac{2 \beta^2 q^2}{1 - q^2}
        \le \frac{\beta^2  (\alpha - 1)^2 }{2 \alpha}
        \le R_y^2.
    \end{align*}
\end{enumerate}
    This completes the proof.
\end{proof}

\begin{proof}[Proof of Theorem \ref{thm:scsc:example}]
Let $q = \frac{\alpha - 1}{\alpha + 1}$. 
For $\kappa_x \ge \kappa_y \ge \sqrt{2n + 2}$, we have
$\alpha = \sqrt{ \frac{ \left( \kappa_y - 2 / \kappa_y \right) \kappa_x  }{ 2n } + 1} \ge \sqrt{2}$, 
$q = \frac{\alpha - 1}{\alpha + 1} \ge \frac{\sqrt{2} - 1}{\sqrt{2} + 1}$ and $\kappa_y - 2 / \kappa_y \ge \kappa_y / 2$.

Let $M = \floor{\frac{ \log \left( 9 (\alpha + 1) \mu_x \eps / \beta^2 \xi^2 \right) }{2\log q}}$ where $\xi = \frac{1}{2} \sqrt{ \frac{ L^2 - 2 \mu_y^2 }{2n} }$. Then we have
\[
\min_{\vx \in \fX \cap \fF_M} \phi_{\mathrm{SCSC}} (\vx)
- \max_{\vy \in \fY \cap \fF_M} \psi_{\mathrm{SCSC}} (\vy)
\ge \frac{\beta^2 \xi^2}{(\alpha+1) \mu_x } q^{2M}
\ge 9 \eps.
\]
where the first inequality follows from the third property of Proposition \ref{prop:strongly-strongly}.

First, we need to ensure $1 \le M < m$.
Note that $M \ge 1$ is equivalent to $\eps \le \frac{q^2 \beta^2 \xi^2}{9 (\alpha + 1) \mu_x}$.
Recall that 
{ \small
\[
\beta = \min \left\{ 
2  R_x \sqrt{ \frac{ 2 \alpha n }{ \kappa_x^2 (1 - 2 / \kappa_y^2)  }}
,\, 
\frac{ 4 R_x }{ \alpha + 1 } \sqrt{ \frac{ \alpha n }{ \kappa_x^2 (1 - 2 / \kappa_y^2) } }
,\,
\frac{ \sqrt{ 2 \alpha} R_y  }{ \alpha - 1 }
\right\}  .
\] }
When $\beta = 2 R_x \sqrt{ \frac{ 2 \alpha n }{ \kappa_x^2 (1 - 2 / \kappa_y^2)  }}$, noticing that $\frac{\alpha (\alpha - 1)^2 }{ (\alpha + 1)^3 }$ is increasing for  $\alpha > 1$,
we have
{ \small
\[
\frac{q^2 \beta^2 \xi^2}{9 (\alpha + 1) \mu_x}
=\frac{ \alpha (\alpha - 1)^2 }{9 (\alpha + 1)^3} \mu_x R_x^2
\ge \frac{\sqrt{2} \left( \sqrt{2} - 1 \right)^5 }{9} \mu_x R_x^2,
\] }
When $\beta = \frac{ 4 R_x }{ \alpha + 1 } \sqrt{ \frac{ \alpha n }{ \kappa_x^2 (1 - 2 / \kappa_y^2) } }$, noticing that $\alpha^2 - 1 = \frac{ \left( \kappa_y - 2 / \kappa_y \right) \kappa_x }{2 n} \le \frac{\kappa_x \kappa_y}{2 n}$ and $\frac{\alpha (\alpha - 1)^3}{ (\alpha + 1)^4 }$ is increasing for $\alpha > 1$,  we have
{ \small
\[
\frac{q^2 \beta^2 \xi^2}{9 (\alpha + 1) \mu_x}
= \frac{2 \alpha (\alpha - 1)^3 }{ 9 (\alpha + 1)^5 (\alpha-1)} \mu_x R_x^2
\ge \frac{4 \sqrt{2} \left( \sqrt{2} - 1 \right)^7}{9} \frac{n \mu_x R_x^2}{\kappa_x \kappa_y}. 
\] }
When $\beta = \frac{ \sqrt{ 2 \alpha} R_y  }{ \alpha - 1 }$, noticing that $\frac{\mu_x \mu_y}{\xi^2} = \frac{4}{\alpha^2 - 1}$ and $\frac{\alpha (\alpha - 1)}{ (\alpha + 1)^2 }$ is increasing for $\alpha > 1$, we have
{ \small
\[
\frac{q^2 \beta^2 \xi^2}{9 (\alpha + 1) \mu_x}
= \frac{ \alpha (\alpha - 1) }{18 (\alpha+1)^2 } \mu_y R_y^2
\ge \frac{\sqrt{2} \left( \sqrt{2} - 1 \right)^3 }{18}  \mu_y R_y^2.
\] }
Thus, $\eps \le \frac{1}{800} \min\left\{ \frac{n \mu_x R_x^2}{\kappa_x \kappa_y}, \mu_y R_y^2 \right\}$ is a sufficient condition for $M \ge 1$.
Similarly, we can obtain
{ \small
\begin{align}\label{eq:proof:scsc:tmp1}
    \frac{\beta^2 \xi^2}{9 (\alpha + 1) \mu_x }
    \ge \frac{1}{25} \min\left\{ \frac{n \mu_x R_x^2}{\kappa_x \kappa_y}, \mu_y R_y^2 \right\}.
\end{align} }

On the other hand, since $\frac{\alpha}{\alpha - 1} \le \frac{\sqrt{2}}{\sqrt{2} - 1}$, we have
{ \small
\begin{align}\label{eq:proof:scsc:tmp2}
\frac{\beta^2 \xi^2}{9 (\alpha + 1) \mu_x }
& \le \min \left\{ 
\frac{ \alpha \mu_x R_x^2 }{9 (\alpha + 1)} ,
\frac{2 \alpha (\alpha - 1) \mu_x R_x^2 }{ 9 (\alpha + 1)^3 (\alpha-1)} ,
\frac{ \alpha \mu_y R_y^2 }{18 (\alpha-1) } 
\right\} 
\le \frac{2}{9} \min \{ \mu_x R_x^2, \mu_y R_y^2 \}.
\end{align} }
Note that the function $h(\beta) = \frac{1}{\log \left(\frac{\beta + 1}{\beta - 1}\right)} - \frac{\beta}{2}$ is increasing when $\beta>1$ and $\lim_{\beta \rightarrow +\infty} h(\beta) = 0$. With $q = \frac{\alpha - 1}{\alpha + 1}$,there holds
$
     h(\sqrt{2}) \le -\frac{1}{\log q} - \frac{\alpha}{2} \le 0,
$
which implies $\frac{\alpha}{2} \ge - \frac{1}{\log q} \ge \frac{\alpha}{2} + h (\sqrt{2})$.
Then by (\ref{eq:proof:scsc:tmp2}) we have
\[
m 
= \left\lfloor \frac{\alpha}{4}\log \left(\frac{ 2 \min  \left\{ \mu_x R_x^2, \mu_y R_y^2 \right\} }{9\eps}\right) \right\rfloor + 1
\ge \left\lfloor - \frac{ \log \left(\frac{\beta^2 \xi^2}{9 (\alpha + 1) \mu_x \eps}\right) }{2 \log q}  \right\rfloor + 1
> M.
\]
Thus, we have verified $1 \le M < m$.
Moreover, 
$- \frac{1}{\log q} \ge \frac{\alpha}{2} + h(\sqrt2) $ implies
{
\begin{align*}
    -\frac{1}{\log(q)} &
    \ge 
    \frac{1}{2}\sqrt{\frac{ \left( \kappa_y - 2 / \kappa_y \right) \kappa_x }{ 2n } + 1} + h \left( \sqrt{2} \right) 
    \ge \frac{\sqrt{2}}{4} \left( \sqrt{ \frac{ \kappa_x \kappa_y }{4 n} } + 1 \right) + h \left( \sqrt{2} \right),
\end{align*}}
where the last inequality is due to $\kappa_y - 2 / \kappa_y \ge \kappa_y / 2$ and 
$\sqrt{2(a + b)} \ge \sqrt{a} + \sqrt{b}$ for $a,b > 0$. 

By Lemma \ref{lem:minimiax:base}, for $M \ge 1$ and $N = \frac{(M + 1)n}{4 (1+c_0) }$, we have
$
\min_{t \le N}  \E \phi_{\mathrm{SCSC}} (\vx_t) - \min_{t \le N} \E \psi_{\mathrm{SCSC}} (\vy_t) \ge \eps.
$
Therefore, in order to find $(\hat{\vx}, \hat{\vy}) \in \fX \times \fY$ such that $\E \phi_{\mathrm{SCSC}} (\hat{\vx}) - \E \psi_{\mathrm{SCSC}} (\hat{\vy}) \ge \eps$,  $\fA$ needs at least $N$ PIFO queries, where
{\small\begin{align*}
    N &= \frac{ (M+1) n}{4 (1+c_0) } \\
    &\ge \frac{n}{4(1+c_0)} \left( -\frac{1}{\log(q)} \right) \log \left( \frac{ \beta^2 \xi^2 }{ 9 (\alpha + 1) \mu_x \eps } \right) \\
    &\ge \frac{n}{4(1+c_0)} \left( \sqrt{  \frac{ \kappa_x \kappa_y  }{32n} } + \frac{\sqrt{2}}{4} + h \left( \sqrt{2} \right)  \right) \log \left( \frac{ \min \left\{ n \mu_x R_x^2 / (\kappa_x \kappa_y), \mu_y R_y^2 \right\} }{ 25 \eps }  \right) \\
    &= \Omega\left( \left( n + \sqrt{n \kappa_x \kappa_y} \right) \log\left( \frac{1}{\eps} \right) \right),
\end{align*}}
where the second inequality is by (\ref{eq:proof:scsc:tmp2}).
This completes the proof.
\end{proof}

\begin{proof}[Proof of Theorem \ref{thm:scsc:example:2}]
Let $\alpha = \sqrt{ \frac{ 2 (\kappa_x - 1) }{n} + 1 }$.
Consider the functions $\{ f_{\text{SC}, i} \}_{i=1}^n$ and $f_{\text{SC}}$ defined in Definition \ref{defn:sc} with $\mu$ and $R$ replaced by $\mu_x$ and $R_x$.
We construct  $ \{ G_{\mathrm{SCSC}, i} \}_{i=1}^n, G_{\mathrm{SCSC}}: \BR^m \times \BR^m \rightarrow \BR $ as follows
\begin{align*}
    G_{\mathrm{SCSC}, i} (\vx, \vy) & =  f_{\text{SC},i}(\vx) - \frac{\mu_y}{2} \norm{\vy}^2, \\
    G_{\mathrm{SCSC}} (\vx, \vy) & =  \frac{1}{n} \sum_{i=1}^n G_{\mathrm{SCSC}, i} (\vx, \vy) =  f_{\text{SC}}(\vx) - \frac{\mu_y}{2} \norm{\vy}^2.
\end{align*}
By Proposition \ref{prop:base} and Lemma \ref{lem:scale}, 
we can check that each component function $G_{\mathrm{SCSC}, i}$ is $L$-smooth and $(\mu_x, \mu_y)$-convex-concave. 
Then $G_{\mathrm{SCSC}}$ is $(\mu_x, \mu_y)$-convex-concave. 
Moreover, we have
\[
\max_{\vy \in \fY} G_{\mathrm{SCSC}}(\vx, \vy) = f_{\text{SC}} (\vx)
\quad \text{and} \quad
\min_{\vx \in \fX} G_{\mathrm{SCSC}}(\vx, \vy) = \min_{\vx \in \fX} f_{\text{SC}}(\vy) - \frac{\mu_y}{2} \norm{\vy}^2.
\]
It follows that for any $(\hat{\vx}, \hat{\vy}) \in \fX \times \fY$, we have
\[
\max_{\vy \in \fY} G_{\mathrm{SCSC}}(\hat{\vx}, \vy) - \min_{\vx \in \fX} G_{\mathrm{SCSC}}(\vx, \hat{\vy})
\ge f_{\text{SC}}(\hat{\vx}) -  \min_{\vx \in \fX} f_{\text{SC}} (\vx).
\]
Note that $\kappa_x \ge \sqrt{2n + 2 } = \Omega( \sqrt{n} )$.
By Theorem \ref{thm:average:strongly:example},
for $\tilde{L} = \sqrt{ \frac{n (L^2 - \mu_x^2)}{2} - \mu_x^2 }$,
\[
    \eps \le \frac{\mu_x R_x^2}{18} \left(\frac{\alpha-1}{\alpha+1}\right)^{2} \text{ and } 
    m = \left\lfloor \frac{1}{4}\left(\sqrt{\frac{ 2 ( \tilde{L} / \mu_x - 1) }{n} + 1}\right) \log \left(\frac{\mu_x R_x^2}{9\eps}\right) \right\rfloor + 1,
\]
in order to find $(\hat{\vx}, \hat{\vy}) \in \fX \times \fY$ such that
$\E\left( \max_{\vy \in \fY} G_{\mathrm{SCSC}}(\hat \vx, \vy) - \min_{\vx \in \fX} G_{\mathrm{SCSC}}(\vx, \hat \vy) \right) < \eps$, 
PIFO algorithm $\fA$ needs at least $N = \Omega\left(\left(n + n^{3/4} \sqrt{ \kappa_x}\right)\log\left( \frac{1}{\eps} \right)\right)$ queries.

Moreover, $\kappa_x \ge n/2 + 1$ implies $\alpha \ge \sqrt{2}$. Then we have
$ \left(\frac{\alpha-1}{\alpha+1}\right)^{2} \ge \left( \frac{\sqrt{2} - 1}{\sqrt{2} + 1} \right)^2 \ge \frac{1}{40}$. This completes the proof.
\end{proof}

\begin{proof}[Proof of Lemma \ref{lem:scsc:example:3}]
Consider the functions $\{ H_{\mathrm{SCSC}, i}: \BR \times \BR \rightarrow \BR \}_{i=1}^n$ where
\begin{align*}
    H_{\mathrm{SCSC}, i}(x, y) & = 
    \begin{cases}
    \frac{L}{2} \left( x^2 - y^2 \right) - n L R_x x,  &\text{ for } i = 1, \\
    \frac{L}{2} \left( x^2 - y^2 \right), &\text{ otherwise, }
    \end{cases}
\end{align*}
and $H_{\mathrm{SCSC}} (x,y) = \frac{1}{n} \sum_{i=1}^n H_{\mathrm{SCSC}, i} (x, y) = \frac{L}{2} (x^2 - y^2) - L R_x x$.
It is easy to check that 
$\{ H_{\mathrm{SCSC}, i} \}_{i=1}^n$
$L$-average smooth and $(\mu_x, \mu_y)$-convex-concave for any $0 \le \mu_x, \mu_y \le L$. 
Moreover, we have
\[
\max_{|y| \le R_y} H_{\mathrm{SCSC}}(x,y) = \frac{L}{2} x^2 - L R_x x
\quad \text{and} \quad
\min_{|x| \le R_x} H_{\mathrm{SCSC}}(x,y) = - \frac{L R_x^2}{2} - \frac{L}{2} y^2.
\]
Note that for $i \ge 2$, it holds that
\begin{align*}
    \nabla_x H_{\mathrm{SCSC}, i}(x, y) = L x ~ \text{ and } ~
    \prox_{H_{\mathrm{SCSC}, i}}^{\gamma}(x, y) = \left(\frac{x}{L\gamma + 1}, \frac{y}{L\gamma + 1}\right).
\end{align*}
This implies $x_t = x_0 = 0$ will hold till the PIFO algorithm $\fA$ draws $H_{\mathrm{SCSC}, 1}$. 
Denote $T = \min\{t: i_t = 1\}$. 
Then, the random variable $T$ follows a geometric distribution with success probability $p_1$, and satisfies
$
    \pr{T \ge n/2} = (1 - p_1)^{\floor{(n-1)/2}} \ge (1 - 1/n)^{(n-1)/2} \ge 1/2,
$
where the last inequality is according to that $h(\beta) = (\frac{\beta}{\beta + 1})^{\beta/2}$ is a decreasing function and $\lim_{\beta \to \infty} h(\beta) = 1/\sqrt{e} \ge 1/2$.
Consequently, for $N = n / 2$ and $t < N$, we know that
\begin{align*}
    &\quad \E\left( \max_{|y| \le R_y} H_{\mathrm{SCSC}}(x_t, y) - \min_{|x| \le R_x} H_{\mathrm{SCSC}}(x, y_t) \right) \\
    & \ge \E\left( \max_{|y| \le R_y} H_{\mathrm{SCSC}}(x_t, y) - \min_{|x| \le R_x} H_{\mathrm{SCSC}}(x, y_t) \bigg\vert t < T \right) \pr{T > t} \\
    &= \E\left( \max_{|y| \le R_y} H_{\mathrm{SCSC}}(0, y) - \min_{|x| \le R_x} H_{\mathrm{SCSC}}(0, y_t) \bigg\vert t < T \right) \pr{T > t}  \\
    & \ge \frac{L R_x^2}{2} \pr{T \ge N} \ge L R_x^2/4 \ge \eps.
\end{align*}
Thus, 
to find $(\hat{x}, \hat{y}) \in \fX \times \fY$ such that 
$
\E
\max_{|y| \le R_y} H_{\mathrm{SCSC}}(\hat x, y) - \E \min_{|x| \le R_x} H_{\mathrm{SCSC}}(x, \hat y) 
< \eps, $
PIFO algorithm $\fA$ needs at least $N = \Omega(n)$ queries.
\end{proof}

\subsection{Proofs for the Convex-Strongly-Concave Case}\label{appendix:minimax:csc}
With $f_\mathrm{CSC}$ and $\{ f_{\mathrm{CSC}, i} \}_{i=1}^n$ defined in Definition~\ref{defn:csc},
we have the following proposition.
\begin{proposition}\label{prop:convex-strongly}
For any $n \ge 2$, $m \ge 2$, $f_{\mathrm{CSC}, i}$ and $f_{\mathrm{CSC}}$ in Definition \ref{defn:csc} satisfy:
\begin{enumerate}
    \item $\{ f_{\mathrm{CSC}, i} \}_{i=1}^n$ is $L$-smooth and each $f_{\mathrm{CSC}, i}$ is $(0, \mu_y)$-convex-concave. Thus, $f_{\mathrm{CSC}}$ is $(0, \mu_y)$-convex-concave.
    \item For $1 \le k \le m - 1$, we have
    \begin{align*}
        \min_{\vx \in \fX \cap \fF_k } \phi_{\mathrm{CSC}}(\vx) - \max_{\vy \in \fY \cap \fF_k} \psi_{\mathrm{CSC}}(\vy) 
        \ge - \frac{k \mu_y \beta^2 }{2} + 
        \frac{R_x \beta}{2} \sqrt{ \frac{L^2 - 2 \mu_y^2}{ 2n(k+1) } },
    \end{align*}
    where $\beta  = \min \left\{ \frac{ R_x \sqrt{ ( L^2 / \mu_y^2 - 2 ) / (2n) }  }{ 2(m + 1)^{3/2}}, \frac{R_y}{\sqrt{m}} \right\}$.
\end{enumerate}
\end{proposition}

\begin{proof}
\begin{enumerate}
    \item Just recall Proposition \ref{prop:convex-concave:base} and Lemma \ref{lem:scale}.
    \item It it easy to check
    $
    f_{\mathrm{CSC}}(\vx, \vy) 
    = \xi
    \inner{\vy}{ \widetilde{\mB} \left( m, 1  \right) \vx  }  
    - \frac{\mu_y}{2} \norm{\vy}^2 
    - \beta \xi 
    \inner{\ve_1}{\vx},$
    where $\xi = \lambda / \beta^2 = \frac{1}{2} \sqrt{ \frac{L^2 - 2 \mu_y^2 }{2n} }$.
    Define $\tilde{\phi}_{\mathrm{CSC}} (\vx) = \max_{\vy \in \BR^m} f_{\mathrm{CSC}} (\vx, \vy)$.
    We first show that
    \begin{align*}
        \min_{\vx \in \fX \cap \fF_k } \tilde{\phi}_{\mathrm{CSC}}(\vx) - \max_{\vy \in \fY \cap \fF_k} \psi_{\mathrm{CSC}}(\vy) \ge 
        - \frac{k \mu_y \beta^2 }{2} +  \frac{R_x \xi \beta}{\sqrt{k+1}} .
    \end{align*}
    On one hand, we have
    \begin{equation}
    \label{eq:csc:maxy}
    \begin{aligned}
        \tilde{\phi}_{\mathrm{CSC}}(\vx) 
        & = \max_{\vy \in \BR^m} \left( \xi \inner{\vy}{ \widetilde{\mB} \left( m, 1  \right) \vx } - \frac{\mu_y}{2} \norm{\vy}^2 - \beta \xi \inner{\ve_1}{\vx} \right) \\
        & = \max_{\vy \in \BR^m} \left( - \frac{\mu_y}{2} \norm{ \vy - \frac{\xi}{\mu_y} \widetilde{\mB}(m,1) \vx }^2 + \frac{\xi^2}{2 \mu_y} \norm{\widetilde{\mB}(m,1) \vx}^2 - \beta \xi \inner{\ve_1}{\vx} \right) \\
        & = \frac{\xi^2}{2 \mu_y} \norm{\widetilde{\mB}(m,1) \vx}^2 - \beta \xi \inner{\ve_1}{\vx}.
    \end{aligned}
    \end{equation}
    For $\vx \in \fF_k$, let $\tilde{\vx}$ be the first $k$ coordinates of $\vx$.
    We can rewrite $\tilde{\phi}_{\mathrm{CSC}}(\vx)$ as
    $
        \tilde{\phi}_k(\tilde\vx) \triangleq \tilde{\phi}_{\mathrm{CSC}}(\vx) = \frac{\xi^2}{2 \mu_y} \norm{\widetilde{\mB}(k, 1) \tilde\vx}^2 - \beta \xi \inner{\hat\ve_1}{\tilde\vx}, $
    where $\hat\ve_1$ is the first $k$ coordinates of $\ve_1$. Letting $\nabla \tilde{\phi}_k (\tilde\vx) = \vzero_k$, we get
    $
        \widetilde{\mB}(k, 1)^{\top} \widetilde{\mB}(k, 1) \tilde\vx = \frac{\beta \mu_y}{\xi} \hat\ve_1.$
    The solution is $\tilde{\vx}^* = \frac{\beta \mu_y}{\xi} (k, k-1, \dots, 1)^{\top}$.
    Noting that 
    $
    \norm{\tilde{\vx}^* }^2 = \frac{\beta^2 \mu_y^2}{\xi^2} \frac{k (k+1) (2k+1)}{6} \le \frac{8 n \beta^2 }{L^2 / \mu_y^2 - 2} (m+1)^3 \le R_x^2,$
    we obtain
    $
        \min_{\vx \in \fX \cap \fF_k } \tilde{\phi}_{\mathrm{CSC}}(\vx) = - \frac{k \mu_y \beta^2 }{2}. $
    
    On the other hand,
    \begin{equation}\label{cauchy:example}
    \begin{aligned}
        \min_{\vx \in \fX} \inner{\vx}{ \widetilde{\mB}(m,1)^{\top} \vy - \beta \ve_1} 
        & \ge \min_{\norm{\vx} \le R_x} -\norm{\vx} \norm{ \widetilde{\mB}(m,1)^{\top} \vy - \beta \ve_1} \\
        & \ge - R_x \norm{ \widetilde{\mB}(m,1)^{\top} \vy - \beta \ve_1},
    \end{aligned}
    \end{equation}
    where the equality will hold when either $\vx = -\frac{R_x}{ \widetilde{\mB}(m,1)^{\top} \vy - \beta \ve_1} \left( \widetilde{\mB}(m,1)^{\top} \vy - \beta \ve_1 \right)~$ or $~ \widetilde{\mB}(m,1)^{\top} \vy - \beta \ve_1 = \vzero$.
    It follows that
    \begin{equation}\label{min:example}
    \begin{aligned}
        \psi_{\mathrm{CSC}}(\vy) 
        & = \min_{\vx \in \fX} \left( \xi \inner{\vx}{ \widetilde{\mB} \left( m, 1  \right)^\top \vy - \beta \ve_1 } - \frac{\mu_y}{2} \norm{\vy}^2 \right) \\
        & = - R_x \xi \norm{\widetilde{\mB} \left( m, 1  \right)^\top \vy - \beta \ve_1 } - \frac{\mu_y}{2} \norm{\vy}^2. 
    \end{aligned}
    \end{equation}
    We can upper bound $\max_{\vy \in \fY \cap \fF_k} \psi_{\mathrm{CSC}}(\vy)$ as
    \begin{align*}
        \max_{\vy \in \fY \cap \fF_k} \psi_{\mathrm{CSC}}(\vy) 
        & = \max_{\vy \in \fY \cap \fF_k}  \left( - R_x \xi \norm{\widetilde{\mB}(m,1)^{\top} \vy - \beta \ve_1} - \frac{\mu_y}{2} \norm{\vy}^2 \right) \\
        & \le \max_{\vy \in \fF_k} \left( - R_x \xi \norm{\widetilde{\mB}(m,1)^{\top} \vy - \beta \ve_1} \right)\\
        &= - R_x \xi \sqrt{J_{k, \beta}(y_1, y_2, \dots, y_k)}
        \le - \frac{R_x \xi \beta}{\sqrt{k+1}} ,
    \end{align*}
    where $J_{k,\beta}$ is defined in (\ref{eq:J_func}) and the last inequality follows from Lemma \ref{lem:appendix:gap}.
    
    It remains to prove $\min_{\vx \in \fX \cap \fF_k} \phi_{\mathrm{CSC}} (\vx) = \min_{\vx \in \fX \cap \fF_k} \tilde{\phi}_{\mathrm{CSC}} (\vx)$. Recall the expression (\ref{eq:csc:maxy}). It suffices to show that $\norm{\hat{\vy} } \le R_y$ where
    $
        \hat{\vy} = 
        \frac{\xi}{\mu_y} \widetilde{\mB} (m, 1) \begin{bmatrix} \tilde{\vx}^* \\ \vzero_{m-k} \end{bmatrix}$.
    Since $\beta \le \frac{R_y}{\sqrt{m} }$, one can check $\norm{\hat{\vy} } \le R_y$ does hold.
\end{enumerate}
This completes the proof.
\end{proof}

\begin{proof}[Proof of Theorem \ref{thm:csc:example}]
Since $L / \mu_y \ge 2$, we have $L^2 - 2 \mu_y^2 \ge L^2 / 2$. Then $\eps \le \frac{L^2 R_x^2}{5184\, n \mu_y} \le  \frac{(L^2 - 2\mu_y^2) R_x^2}{2592 n \mu_y}$, which implies that
$m \ge 4$ and
$ \frac{R_x}{6 } \sqrt{\frac{L^2 - 2 \mu_y^2}{ 2 n \mu_y \eps}} - 2 \ge  \frac{R_x}{12 } \sqrt{\frac{L^2 - 2 \mu_y^2}{ 2 n \mu_y \eps}} + 1 $.
It follows that $m \ge \frac{R_x}{12} \sqrt{\frac{L^2 - 2 \mu_y^2}{ 2 n \mu_y \eps}} $.
Then with $\eps \le \frac{\mu_y R_y^2}{36}$, we have
\begin{align*}
    \frac{ R_x \sqrt{ \frac{L^2 / \mu_y^2 - 2}{2n} } }{ 2(m + 1)^{3/2}}
    < \frac{ R_x \sqrt{ \frac{L^2 / \mu_y^2 - 2}{2n}  } }{ 2 m^{3/2}}
    \le 6 \sqrt{\frac{\eps}{\mu_y m }} \le \frac{R_y}{\sqrt{m}},
\end{align*}
which imlpies that 
$\beta = \min\left\{  \frac{ R_x \sqrt{ (L^2 / \mu_y^2 - 2) / (2n) } }{ 2(m + 1)^{3/2}}, \frac{R_y}{\sqrt{m}} \right\} =  \frac{ R_x \sqrt{ (L^2 / \mu_y^2 - 2 ) / (2n)} }{ 2(m + 1)^{3/2}}$.
Following Proposition \ref{prop:convex-strongly}, for $1 \le k \le m-1$, we have
{ \small
\begin{align*}
    \min_{\vx \in \fX \cap \fF_k} \phi_{\mathrm{CSC}}(\vx) {-} \max_{\vy \in \fY \cap \fF_k} \psi_{\mathrm{CSC}}(\vy) 
    {\ge} {-} \frac{k \mu_y \beta^2 }{2} {+} 
    \frac{R_x \beta}{2} \sqrt{ \frac{L^2 {-} 2 \mu_y^2}{ 2n(k+1) } }
    {=} \frac{(L^2 {-} 2\mu_y^2) R_x^2}{16 n \mu_y} \frac{2(m+1)^{3/2} {-} k\sqrt{k+1}}{(m+1)^3 \sqrt{k+1}}.
\end{align*} }
Define $M \triangleq \floor{\frac{m}{2}}$. Then we have $M = \floor{\frac{R_x}{12 } \sqrt{\frac{L^2 - 2 \mu^2}{ 2 n \mu \eps}}} - 1 \ge 2 $ and $M < m$.

Since $2 (M + 1) = 2 \floor{\frac{m}{2}} + 2 \ge m+1$
and 
 $h(\beta) = \frac{2\beta^{3/2} - \beta_0^{3/2}}{\beta^3}$ is a decreasing function when $\beta > \beta_0$,
 for $k=M$
 we have
\begin{align*}
    \min_{\vx \in \fX \cap \fF_M} \phi_{\mathrm{CSC}}(\vx) - \max_{\vy \in \fY \cap \fF_M} \psi_{\mathrm{CSC}}(\vy) 
    \ge \frac{(L^2 - 2\mu_y^2) R_x^2}{16 n \mu_y} \frac{4\sqrt{2} - 1}{8(M+1)^{2}} 
    > \frac{(L^2 - 2\mu_y^2) R_x^2}{32 n \mu_y (M+1)^2} 
    \ge 9\eps,
\end{align*}
where the last inequality is due to $M + 1 \le \frac{R_x}{12 } \sqrt{\frac{L^2 - 2 \mu_y^2}{ 2 n \mu_y \eps}}$.

By Lemma \ref{lem:minimiax:base},
for $N = \frac{n (M+1)}{4 (1+c_0) }$, we know that
$
    \min_{t\le N} \E\left( \phi_{\mathrm{CSC}}(\vx_t) - \psi_{\mathrm{CSC}}(\vy_t) \right) \ge \eps.$
Therefore, in order to find suboptimal solution $(\hat{\vx}, \hat{\vy}) \in \fX \times \fY$ such that $\E \left(\phi_{\mathrm{CSC} } (\hat{\vx}) - \psi_{\mathrm{CSC}} (\hat{\vy})\right) < \eps$, algorithm $\fA$ needs at least $N$ PIFO queries, where 
\begin{align*}
    N =  \frac{n}{4(1+c_0)} \left(\floor{\frac{R_x}{12 } \sqrt{\frac{L^2 - 2 \mu_y^2}{ 2 n \mu_y \eps}}} \right) 
    = \Omega\left( n + 
    R_x L \sqrt{ \frac{n}{\mu_y \eps} } \right).
\end{align*}
This completes the proof.
\end{proof}

\begin{proof}[Proof of Theorem \ref{thm:csc:example:2}]
Consider the functions $\{ f_{\text{C}, i} \}_{i=1}^n$ and $f_{\text{C}}$ defined in Definition \ref{defn:c:average} with $R$ replaced by $R_x$.
We construct  $ \{ G_{\mathrm{CSC}, i} \}_{i=1}^n, G_{\mathrm{CSC}}: \BR^m \times \BR^m \rightarrow \BR $ as follows
\begin{align*}
    G_{\mathrm{CSC}, i} (\vx, \vy) & = f_{\text{C}, i} (\vx) -  \frac{\mu_y}{2} \norm{\vy}^2, \\
    G_{\mathrm{CSC}} (\vx, \vy) & =  \frac{1}{n} \sum_{i=1}^n G_{\mathrm{CSC}, i} (\vx, \vy) = f_{\text{C}} (\vx) - \frac{\mu_y}{2} \norm{\vy}^2.
\end{align*}
By Proposition \ref{prop:base} and Lemma \ref{lem:scale},
we can check that each component function $G_{\mathrm{CSC}, i}$ is $L$-smooth and $(0, \mu_y)$-convex-concave. 
Then $G_{\mathrm{CSC}}$ is $(0, \mu_y)$-convex-concave. 
Moreover, we have
\[
\max_{\vy \in \fY} G_{\mathrm{CSC}}(\vx, \vy) = f_{\text{C}}(\vx) 
\quad \text{and} \quad
\min_{\vx \in \fX} G_{\mathrm{CSC}}(\vx, \vy) = \min_{\vx \in \fX} f_{\text{C}}(\vx) - \frac{\mu_y}{2} \norm{\vy}^2.
\]
It follows that for any $(\hat{\vx}, \hat{\vy}) \in \fX \times \fY$, we have
\[
\max_{\vy \in \fY} G_{\mathrm{CSC}}(\hat{\vx}, \vy) - \min_{\vx \in \fX} G_{\mathrm{CSC}}(\vx, \hat{\vy})
\ge f_{\text{C}}(\hat{\vx}) -  \min_{\vx \in \fX} f_{\text{C}} (\vx).
\]
By Theorem \ref{thm:average:convex:example},
for 
\[
    \eps \le \frac{ \sqrt{2} R_x^2 L}{768 \sqrt{n}} \text{ and } 
     m = \floor{
     \frac{ \sqrt[4]{18} }{12} R_x n^{-1/4} \sqrt{ \frac{L}{\eps} }
     } - 1 ,
\]
in order to find $(\hat{\vx}, \hat{\vy}) \in \fX \times \fY$ such that $\E\left( \max_{\vy \in \fY} G_{\mathrm{CSC}}(\hat \vx, \vy) - \min_{\vx \in \fX} G_{\mathrm{CSC}}(\vx, \hat \vy) \right) < \eps$, PIFO algorithm $\fA$ needs at least $N = \Omega\left(n + R_x n^{3/4} \sqrt{ \frac{L}{\eps} }\right)$ queries.
\end{proof}

\subsection{Proofs for the Convex-Concave Case}\label{appendix:minimax:cc}
With $f_\mathrm{CC}$ and $\{ f_{\mathrm{CC}, i} \}_{i=1}^n$ defined in Definition~\ref{defn:cc},
we have the following proposition.
\begin{proposition}\label{prop:convex-concave}
For any $n \ge 2$, $m \ge 3$, $f_{\mathrm{CC}, i}$ and $f_{\mathrm{CC}}$ in Definition \ref{defn:cc} satisfy:
\begin{enumerate}
    \item $\{ f_{\mathrm{CC}, i} \}_{i=1}^n$ is $L$-average smooth and each $f_{\mathrm{CC}, i} $ convex-concave. Thus, $f_{\mathrm{CC}}$ is convex-concave. 
    \item For $1 \le k \le m-1$, we have
    \begin{align*}
        \min_{\vx \in \fX \cap \fF_k } \phi_{\mathrm{CC}}(\vx) - \max_{\vy \in \fY \cap \fF_k} \psi_{\mathrm{CC}} (\vy) \ge
        \frac{L R_x R_y}{ \sqrt{ 8 nm (k+1)} }.
    \end{align*}
\end{enumerate}
\end{proposition}

\begin{proof}
\begin{enumerate}
    \item Just recall Proposition \ref{prop:convex-concave:base} and Lemma \ref{lem:scale}.
    \item It is easy to check
    $
    f_{\mathrm{CC}}(\vx, \vy) 
    = \frac{L }{ \sqrt{8n} } \inner{\vy}{ \widetilde{\mB} \left( m, 1  \right) \vx  }  
    - \frac{ L R_y }{  \sqrt{8nm}} \inner{\ve_1}{\vx}.$
    By similar analysis from Equation (\ref{cauchy:example}) to Equation (\ref{min:example}) of the proof of Proposition \ref{prop:convex-strongly}, we can conclude that
    \begin{align*}
        \phi_{\mathrm{CC}}(\vx) & = \frac{L R_y}{ \sqrt{8n} } \norm{\widetilde{\mB}(m, 1)\vx} - \frac{L R_y}{ \sqrt{8nm}} \inner{\ve_1}{\vx}, \\
        \psi_{\mathrm{CC}}(\vy) & = - \frac{L R_x}{ \sqrt{8n} } \norm{\widetilde{\mB}(m, 1)^{\top}\vy - \frac{R_y}{\sqrt{m}}\ve_1}.
    \end{align*}
    
    Note that 
    $
        \phi_{\mathrm{CC}}(\vx) = \max_{\vy \in \fY} f_{\mathrm{CC}}(\vx, \vy) \ge \max_{\vy \in \fY} \min_{\vx \in \fX} f_{\mathrm{CC}}(\vx, \vy) = \max_{\vy \in \fY} \psi(\vy) \ge \psi(\vy^*) = 0,$
    where $\vy^* = \frac{R_y}{\sqrt{m}} \vone_m \in \fY$. Therefore, we have
    $
        \min_{\vx \in \fX \cap \fF_k} \phi_{\mathrm{CC}}(\vx) = \phi_{\mathrm{CC}}(\vzero) = 0.$
    On the other hand, following Lemma \ref{lem:appendix:gap}, we can obtain 
    \begin{align*}
        \max_{\vy \in \fY \cap \fF_k } \psi_{\mathrm{CC}}(\vy) = \max_{\vy \in \fY \cap \fF_k } - \frac{L R_x}{ \sqrt{8n} } \norm{\widetilde{\mB}(m, 1)^{\top}\vy - \frac{R_y}{\sqrt{m}}\ve_1} 
        = - \frac{L R_x}{ \sqrt{8n} } \frac{R_y}{\sqrt{m(k+1)}},
    \end{align*}
    where the optimal point is $\tilde\vy^* = \frac{R_y}{(k + 1)\sqrt{m}}(k, k-1, \dots, 1, 0, \dots, 0)^{\top}$, which satisfies
    $
        \norm{\tilde\vy^*} = \frac{R_y}{(k + 1)\sqrt{m}} \sqrt{\frac{k(k+1)(2k+1)}{6}} \le R_y.$
    Finally, note that $k + 1 \ge m/2$. Thus we obtain 
    \begin{align*}
        \min_{\vx \in \fX \cap \fF_k } \phi_{\mathrm{CC}}(\vx) - \max_{\vy \in \fY \cap \fF_k} \psi_{\mathrm{CC}} (\vy) 
        \ge \frac{L R_x R_y}{ \sqrt{ 8nm (k+1)} }.
    \end{align*}
\end{enumerate}
This completes the proof.
\end{proof}

\begin{proof}[Proof of Theorem \ref{thm:cc:example}]
    The assumption on $\eps$ implies $m \ge 3$.
    Let $M \triangleq \floor{(m - 1)/2}
    = \floor{\frac{L R_x R_y}{36 \eps \sqrt{n} }} - 1$. 
    Then we have $M \ge 1$ and $m / 2 \le M + 1 \le (m+1) / 2$.
    By Proposition \ref{prop:convex-concave}, we have
    {\small\begin{align*}
        \min_{\vx \in \fX \cap \fF_M} \phi_{\mathrm{CC}}(\vx) - \max_{\vy \in \fY \cap \fF_M} \psi_{\mathrm{CC}}(\vy) 
        \ge & \frac{L R_x R_y}{ \sqrt{ 8nm (M+1)} }
        \ge \frac{L R_x R_y}{4  (M+1) \sqrt{n} }
        \ge \frac{L R_x R_y}{ 2(m + 1) \sqrt{n} } \ge 9\eps.
    \end{align*}}
    Hence, 
    by Lemma \ref{lem:minimiax:base},
    for $N = \frac{ n (M+1 )}{4 (1 + c_0) }$, we know that
    $
        \min_{t\le N} \E\left( \phi_{\mathrm{CC}}(\vx_t) - \psi_{\mathrm{CC}}(\vy_t) \right) \ge \eps.$
   Thus, to find an approximate solution $(\hat{\vx}, \hat{\vy}) \in \fX \times \fY$ such that 
    $ \E \left(\phi_{\mathrm{CC}}(\hat{\vx}) - \psi_{\mathrm{CC}}(\hat{\vy})\right) < \eps,$
    the PIFO algorithm $\fA$ needs at least $N$ queries, where 
    $
        N =  \frac{n}{4 (1 + c_0) } \left(\floor{\frac{L R_x R_y}{36 \eps \sqrt{n} }} \right) = \Omega\left( n + \frac{\sqrt{n} L R_x R_y}{\eps} \right).$
\end{proof}

\begin{proof}[Proof of Lemma \ref{lem:cc:example:2}]
Consider the functions $\{ H_{\mathrm{CC}, i}: \BR \times \BR \rightarrow \BR \}_{i=1}^n$ where
\begin{align*}
    H_{\mathrm{CC}, i} (x, y) = 
    \begin{cases}
    L x y - n L R_x y,  &\text{ for } i = 1, \\
    L x y, &\text{ otherwise,}
    \end{cases}
\end{align*}
and $H_{\mathrm{CC}}(x,y) = \frac{1}{n} \sum_{i=1}^n H_{\mathrm{CC}, i}(x, y) =  L x y - L R_x y$.
Consider the minimax problem
$$\min_{|x|\le R_x} \max_{|y| \le R_y} H_{\mathrm{CC}} (x, y).$$
It is easy to check that 
$\{ H_{\mathrm{CC}, i} \}_{i=1}^n$ is $L$-smooth and each $H_{\mathrm{CC}, i}$ is convex-concave. 
Moreover, we have
\begin{align*}
    \max_{|y|\le R_y} H_{\mathrm{CC}} (x, y) = L R_y |x - R_x|, ~~~\text{and}~~~ \min_{|x| \le R_x} H_{\mathrm{CC}} (x, y) = -L R_x (|y| + y) \le 0,
\end{align*}
and it holds that
$
    \min_{|x| \le R_x} \max_{|y|\le R_y} H_{\mathrm{CC}} (x, y) = \max_{|y|\le R_y} \min_{|x| \le R_x} H_{\mathrm{CC}} (x, y) = 0.$

Note that for $i \ge 2$, we have
\begin{align*}
    \nabla_x H_{\mathrm{CC}, i}(x, y) = L y, \, \nabla_y H_{\mathrm{CC}, i}(x, y) = L x, \text{~and~}
    \prox_{H_{\mathrm{CC}, i}}^{\gamma}(x, y) = \left(\frac{L \gamma x + y}{L^2 \gamma^2 + 1}, \frac{x - L \gamma y}{L^2 \gamma^2 + 1}\right),
\end{align*}
which implies $x_t = y_t = x_0 = y_0 = 0$ will hold till the PIFO algorithm $\fA$ draws $H_{\mathrm{CC}, 1}$. 

Let $T = \min\{t: i_t = 1\}$. Then, the random variable $T$ follows a geometric distribution with success probability $p_1$, and satisfies
$
    \pr{T \ge n/2} = (1 - p_1)^{\floor{(n-1)/2}} \ge (1 - 1/n)^{(n-1)/2} \ge 1/2,$
where the last inequality is according to that $h(\beta) = (\frac{\beta}{\beta + 1})^{\beta/2}$ is a decreasing function and $\lim_{\beta \to \infty} h(\beta) = 1/\sqrt{e} \ge 1/2$.
For $N = n / 2$ and $t < N$, we know that
\begin{align*}
    &\quad \E\left( \max_{|y| \le R_y} H_{\mathrm{CC}} (x_t, y) - \min_{|x|\le R_x} H_{\mathrm{CC}} (x, y_t) \right) \\
    & \ge \E\left( \max_{|y| \le R_y} H_{\mathrm{CC}} (x_t, y) - \min_{|x|\le R_x} H_{\mathrm{CC}} (x, y_t) ~\big\vert~ t < T \right) \pr{T > t} \\
    &= \E\left( \max_{|y| \le R_y} H_{\mathrm{CC}} (0, y) - \min_{|x|\le R_x} H_{\mathrm{CC}} (x, 0) ~\big\vert~ t < T \right) \pr{T > t} \\
    & = \frac{L R_x R_y}{2} \pr{T \ge N} \ge L R_x R_y/4 \ge \eps.
\end{align*}
Thus, 
to find $(\hat{x}, \hat{y}) \in \fX \times \fY$ such that 
$
\E
\max_{|y| \le R_y} H_{\mathrm{CC}} (\hat x, y) - \E \min_{|x| \le R_x} H_{\mathrm{CC}} (x, \hat y) 
< \eps, $
algorithm $\fA$ needs at least $N = \Omega(n)$ PIFO queries.
\end{proof}

\subsection{Proofs for the Nonconvex-Strongly-Concave Case}\label{appendix:minimax:ncsc}
With $f_\mathrm{NCSC}$, $\phi_{\mathrm{NCSC}}$ and $\{ f_{\mathrm{NCSC}, i} \}_{i=1}^n$ defined in Definition~\ref{defn:ncsc},
we have the following proposition.
\begin{proposition}\label{prop:nonconvex-strongly}
For any $n \ge 2$, $L / \mu_y \ge 4$ and $\eps^2 \le \frac{\Delta L^2 \alpha}{6967296 n \mu_y}$, the following properties hold:
\begin{enumerate}
    \item $\{ f_{\mathrm{NCSC}, i} \}_{i=1}^n$ is $L$-average smooth and each $f_{\mathrm{NCSC}, i}$ is $ (- \mu_x, \mu_y)$-convex-concave. 
    \item $\phi_{\mathrm{NCSC}}(\vzero_{}) - \min_{\vx \in \BR^{m+1}} \phi_{\mathrm{NCSC}}(\vx^*) \le \Delta$.
    \item $m \ge 2$ and for $M = m-1$, $\min_{\vx \in \fF_{M}} \norm{\nabla \phi_{\mathrm{NCSC}}(\vx)} \ge  9 \eps.$
\end{enumerate}
\end{proposition}

\begin{proof}[Proof of Proposition \ref{prop:nonconvex-strongly}]
\begin{enumerate}
    \item By Proposition \ref{prop:nonconvex-strongly:base} and Lemma \ref{lem:scale}, ${f}_{\mathrm{NCSC}, i}$ is $(-\mu_1, \mu_2)$-convex-concave and $\{ {f}_{\mathrm{NCSC}, i} \}_{i=1}^n$ $l$-average smooth where
    \begin{align*}
        \mu_1 & = \frac{45 (\sqrt{3} - 1) L^2 \alpha }{256 n \mu_y} \le \mu_x, 
        \qquad \mu_2  = \mu_y, \\
        l & = \frac{L}{8 \sqrt{n}} \sqrt{4 n + \frac{256 n \mu_y^2}{L^2} + 16200 \frac{\alpha^2 L^2}{256 n \mu_y^2 } } 
        \le \frac{L}{8 \sqrt{n}} \left( 2 \sqrt{n} + 16 \frac{ \sqrt{n} \mu_y}{L} + \frac{ 45 \sqrt{2} \alpha L }{ 8 \sqrt{n} \mu_y } \right) \le L.
    \end{align*}
    Thus each component ${f}_{\text{NCSC, i}}$ is $(-\mu_x, \mu_y)$-convex-concave and $\{ {f}_{\mathrm{NCSC}, i} \}_{i=1}^n$ is $L$-smooth.
    
    \item We first give a closed form expression of ${\phi}_{\mathrm{NCSC}}$. For simplicity, we omit the parameters of $\widehat{\mB}$. It is easy to check 
    { \small
    \begin{align*}
    {f}_{\mathrm{NCSC}} (\vx, \vy) 
    = \frac{L}{16 \sqrt{n}} \inner{\vy}{\widehat{\mB}
    \vx } - \frac{\mu_y}{2} \norm{\vy}^2 + \frac{\sqrt{\alpha} \lambda L }{16 \sqrt{n} \mu_y} \sum\limits_{i=1}^{m} \Gamma \left( \frac{1}{4} \sqrt{\frac{ \sqrt{\alpha} L}{ \lambda \sqrt{n}}} x_i \right) 
    - \frac{1}{4} \sqrt{\frac{\lambda L }{ \sqrt{n} }}\inner{\ve_1}{\vy}.
    \end{align*}}
    Then we can rewrite ${f}_{\mathrm{NCSC}}(\vx, \vy)$ as
    \begin{align*}
        {f}_{\mathrm{NCSC}} (\vx, \vy) = & -\frac{\mu_y}{2} \norm{ \vy -  \frac{1}{\mu_y} \left(  \frac{L}{16 \sqrt{n} } \widehat{\mB} \vx - \frac{1}{4} \sqrt{ \frac{\lambda L}{ \sqrt{n} }} \ve_1  \right)  }^2\\
        & + \frac{1}{2 \mu_y} \norm{\frac{L}{16 \sqrt{n}} \widehat{\mB} \vx - \frac{1}{4} \sqrt{ \frac{\lambda L}{ \sqrt{n} }} \ve_1  }^2 + \frac{\sqrt{\alpha} \lambda L }{16 \sqrt{n} \mu_y} \sum\limits_{i=1}^{m} \Gamma \left( \frac{1}{4} \sqrt{\frac{ \sqrt{\alpha} L}{ \lambda \sqrt{n}}} x_i \right).
    \end{align*}
    It follows that
    \begin{align*}
        {\phi}_{\mathrm{NCSC}} (\vx) & = \frac{1}{2 \mu_y} \norm{\frac{L}{16 \sqrt{n}} \widehat{\mB} \vx - \frac{1}{4} \sqrt{ \frac{\lambda L}{ \sqrt{n} }} \ve_1  }^2 + \frac{\sqrt{\alpha} \lambda L }{16 \sqrt{n} \mu_y} \sum\limits_{i=1}^{m} \Gamma \left( \frac{1}{4} \sqrt{\frac{ \sqrt{\alpha} L}{ \lambda \sqrt{n}}} x_i \right) \\
        & = \frac{L^2}{512 n \mu_y} \norm{\widehat{\mB} \vx}^2 - \frac{L}{64 \mu_y} \sqrt{ \frac{ \sqrt{\alpha} \lambda L}{ n^{3/2} } } \inner{\vx}{\ve_1} + \frac{\sqrt{\alpha} \lambda L}{ 16 \sqrt{n} \mu_y} \sum\limits_{i=1}^{m} \Gamma \left( \frac{1}{4} \sqrt{\frac{ \sqrt{\alpha} L}{ \lambda \sqrt{n} }} x_i \right) + \frac{\lambda L}{32 \sqrt{n} \mu_y}.
    \end{align*}
    Letting $\tilde{\vx} = \frac{1}{4} \sqrt{\frac{ \sqrt{\alpha} L}{ \lambda \sqrt{n}}} \vx$, we have
    \begin{align*}
        \tilde{\phi}_{\mathrm{NCSC}} (\tilde{\vx})
        \triangleq {\phi}_{\mathrm{NCSC}} (\vx)
        = \frac{\lambda L}{16 \mu_y \sqrt{\alpha n}} \left( \frac{1}{2} \norm{\widehat{\mB} \tilde{\vx}}^2 - \sqrt{\alpha} \inner{\tilde{\vx}}{\ve_1} + \alpha \sum\limits_{i=1}^{m} \Gamma (\tilde{x}_i) \right) + \frac{\lambda L}{32 \sqrt{n} \mu_y}.
    \end{align*}
    By Proposition \ref{prop:nonconvex:prop:base},
    \begin{align*}
        {\phi}_{\mathrm{NCSC}} (\vzero_{}) - \min_{\vx \in \BR^{m+1}} {\phi}_{\mathrm{NCSC}} (\vx)
        & = \tilde{\phi}_{\mathrm{NCSC}} (\vzero_{}) - \min_{\tilde{\vx} \in \BR^{m+1}}  \tilde{\phi}_{\mathrm{NCSC}} (\tilde{\vx}) \\
        & = \frac{\lambda L}{16 \mu_y \sqrt{\alpha n}} \left( \frac{\sqrt{\alpha}}{2} + 10 \alpha m \right) \\
        & \le \frac{165888 n \mu_y \eps^2 }{L^2 \alpha} + \frac{3311760 n \mu_y \eps^2 m}{L^2 \sqrt{\alpha} } \\
        & \le \frac{165888}{3483648} \Delta + \frac{3317760}{3483648} \Delta \le \Delta.
    \end{align*}
    
    \item Since $\alpha \le 1$, we have $\frac{\Delta L^2 \sqrt{\alpha}}{3483648 n \eps^2 \mu_y} \ge \frac{\Delta L^2 \alpha}{3483648 n \eps^2 \mu_y} \ge 2$ and consequently $m \ge 2$.
    By Proposition \ref{prop:nonconvex:prop:base},
    \begin{align*}
        \min_{\vx \in \fF_M} \norm{\nabla {\phi}_{\mathrm{NCSC}} (\vx) } 
        & = \frac{1}{4} \sqrt{\frac{ \sqrt{\alpha} L}{ \lambda \sqrt{n}}} \min_{\hat{\vx} \in \fF_M} \norm{ \nabla \tilde{\phi}_{ \mathrm{NCSC} } ( \tilde{\vx}) } \\
        & \ge \frac{1}{4} \sqrt{\frac{ \sqrt{\alpha} L}{ \lambda \sqrt{n} }} \frac{\lambda L}{16 \mu_y \sqrt{\alpha n}} \frac{\alpha^{3/4}}{4} \ge 9 \eps.
    \end{align*}
\end{enumerate}
This completes the proof.
\end{proof}

\subsection{Results for the Smooth Cases}\label{appendix:minimax:smooth}
In this subsection, we give the formal statements of the lower bounds in Table \ref{table-minimax}.

\paragraph{Function class} We develop lower bounds for PIFO algorithms that find a suboptimal solution or near stationary point of Problem~\ref{prob:main} in the following sets
{ \small
\begin{align*}
    \fF^*_{\mathrm{CC}}(R_x, R_y, L, \mu_x, \mu_y) = \bigg\{ f(\vx, \vy) = \frac{1}{n} \sum_{i=1}^n f_i(\vx, \vy) ~\Big\vert~ f\colon \fX \times \fY \to \BR, \, \diam (\fX) \le 2 R_x, \\
    \diam (\fY) \le 2 R_y,\, f_i \text{ is } L\text{-smooth}, f \text{ is } (\mu_x, \mu_y) \text{-convex-concave } \bigg\}, 
    \\
    \fF^*_{\mathrm{NCC}}(\Delta, L, \mu_x, \mu_y) = \bigg\{ f(\vx, \vy) = \frac{1}{n} \sum_{i=1}^n f_i(\vx, \vy) ~\Big\vert~ f\colon \fX \times \fY \to \BR, \,  \phi(\vzero) - \inf_{\vx \in \fX} \phi(\vx) \le \Delta,  \\
    f_i \text{ is } L\text{-smooth}, f \text{ is } (-\mu_x, \mu_y)\text{-convex-concave } \bigg\}.
\end{align*}}
We can also add a condition that each component function is
$(\mu_x, \mu_y)$-convex-concave to the definition. This induces a more restrictive function class but will not affect our construction.
Such a definition better matches the assumptions of some upper bounds, e.g., \citet{luo2019stochastic}.

\paragraph{Optimization complexity}

We formally define the optimization complexity as follows.


\begin{definition}\label{defn:complexity:smooth}
    The optimization complexity with respect to the function class $\fF^*_{\mathrm{CC}}(\Delta, R, L, \mu)$ and ${\fF}^*_{\mathrm{NCC}}(\Delta, R, L, \mu)$ is defined as
    \begin{align*}
        \mathfrak{m}_{*}^{\mathrm{CC}}(\eps, R_x, R_y, L, \mu_x, \mu_y)       & \triangleq \inf_{\fA \in \mathscr{A}} \sup_{f \in \fF_{\mathrm{CC}}(R_x, R_y, L, \mu_x, \mu_y)}  T(\fA, f, \eps),       \\
        \mathfrak{m}_{*}^{\mathrm{NCC}}(\eps, \Delta, L, \mu_x, \mu_y)       & \triangleq \inf_{\fA \in \mathscr{A}} \sup_{f \in \fF_{\mathrm{NCC}}(\Delta, L, \mu_x, \mu_y)}  T(\fA, f, \eps),       
    \end{align*}
    where $T(\fA, f, \eps)$ is defined in Definition \ref{defn:complexity} with $\fF_{\mathrm{CC}}$ and $\fF_{\mathrm{NCC}}$ replaced by $\fF^*_{\mathrm{CC}}$ and $\fF^*_{\mathrm{NCC}}$.
\end{definition}
The lower bounds are listed as follows. Let $\kappa_x = L / \mu_x$ and $\kappa_y = L / \mu_y$ denote the condition number if they are well-defined.

\begin{theorem}\label{thm:strongly-strongly}
Let $n \ge 4$ be a positive integer and 
$L, \mu_x, \mu_y, R_x, R_y, \eps$ be positive parameters. Assume additionally that
$\kappa_x \ge \kappa_y \ge 2$ and
$\eps = \fO \left( \min \left\{ \frac{ n^2 \mu_x R_x^2}{  \kappa_x \kappa_y},  \mu_y R_y^2,  \mu_y R_y^2 \right\} \right)$.
Then we have
    \begin{align*}
        \mathfrak{m}_{*}^{\mathrm{CC}}(\eps, R_x, R_y, L, \mu_x, \mu_y)=
        \begin{cases}
            \Omega\left(\left(n {+} \sqrt{ \kappa_x \kappa_y } \right)\log\left(1/\eps \right)\right), &\text{ for } \kappa_x, \kappa_y = \Omega(n), \\
            \Omega\left(\left(n {+} \sqrt{\kappa_x n }\right)\log\left(1/\eps \right)\right), &\text{ for } \kappa_x = \Omega(n), \kappa_y = \fO(n), \\
            \Omega\left( n \right), &\text{ for } \kappa_x, \kappa_y = \fO(n).
        \end{cases}
    \end{align*}
\end{theorem}

The best known upper bound complexity in this case for IFO/PIFO algorithms is $\fO \Big( \big(n +  \frac{\sqrt{n} L}{\min\{\mu_x, \mu_y\}} \big) \log(1/\eps) \Big)$~\citep{luo2019stochastic}. There still exists a $\sqrt{n}$ gap to our lower bound.
\begin{theorem}\label{thm:convex-strongly}
Let $n \ge 2$ be a positive integer and 
$L, \mu_y, R_x, R_y, \eps$ be positive parameters. Assume additionally that $\kappa_y \ge 2$ and 
$\eps = \fO \left( \min \left\{ L R_x^2, \mu_y R_y^2 \right\} \right)$.
Then we have
    \begin{align*}
        \mathfrak{m}_{*}^{\mathrm{CC}}(\eps, R_x, R_y, L, 0, \mu_y)=
        \begin{cases}
            \Omega\left(n {+} R_x \sqrt{ \frac{n L}{\eps} }  {+} 
            R_x \sqrt{ \frac{L \kappa_y}{\eps} }
            {+} \sqrt{
            n \kappa_y
            } \log \left( \frac{1}{\eps} \right)  \right), &\text{for }
            \kappa_y 
            = \Omega(n), \\
            \Omega\left(n {+} R_x \sqrt{ \frac{n L}{\eps} }  {+} 
            R_x \sqrt{ \frac{L \kappa_y}{\eps} }
            \right), &\text{for } 
           \kappa_y 
           = \fO(n).
        \end{cases}
    \end{align*}
\end{theorem}


\begin{theorem}\label{thm:convex-concave}
Let $n \ge 2$ be a positive integer and 
$L, R_x, R_y, \eps$ be positive parameters. Assume additionally that $\eps \le \frac{L R_x R_y}{4} 
$. Then we have
    \begin{align*}
        \mathfrak{m}_{*}^{\mathrm{CC}}(\eps, R_x, R_y, L, 0, 0)=
        \Omega\left(n {+} \frac{ L R_x R_y }{\eps} {+} (R_x + R_y) \sqrt{ \frac{nL}{\eps} } \right).
    \end{align*}
\end{theorem}


\begin{theorem}\label{thm:nonconvex-strongly}
Let $n \ge 2$ be a positive integer and 
$L, \mu_x, \mu_y, \Delta, \eps$ be positive parameters. Assume additionally that $\eps^2 \le \frac{\Delta L^2 \alpha}{27216 n^2 \mu_y}$, where $\alpha = \min \left\{1, \frac{8(\sqrt{3} + 1)n^2\mu_x \mu_y}{45 L^2}, \frac{n^2\mu_y}{90 L} \right\}$. Then we have
    \begin{align*}
        \mathfrak{m}_{*}^{\mathrm{NCC}}(\eps, \Delta, L, \mu_x, \mu_y)=
        \Omega \left( n + \frac{\Delta L^2 \sqrt{\alpha}}{n \mu_y \eps^2}\right).
    \end{align*}
For $\kappa_y 
\ge n^2 / 90$, we have 
\[
\Omega \left( n + \frac{\Delta L^2 \sqrt{\alpha}}{n \mu_y \eps^2}\right)
= \Omega \left( n + \frac{\Delta L}{\eps^2} \min \left\{ \sqrt{\kappa_y}, \sqrt{\frac{\mu_x}{\mu_y} }  \right\} \right).
\]
\end{theorem}
With $\mu_x = L$, we obtain the result in Table~\ref{table-minimax}.

The proofs of these theorems are similar to those of Theorems \ref{thm:average:strongly-strongly} to \ref{thm:average:nonconvex-strongly}. We just list some key lemmas here and omit the lengthy proofs.

When $f$ is convex-concave, without loss of generality, we still assume $\mu_x \le \mu_y$.
The hard instances for Theorems \ref{thm:strongly-strongly} to \ref{thm:convex-concave} can be directly derived previous constructions.
Specially, for the hard instances constructed in Definitions \ref{defn:scsc}, \ref{defn:csc} and \ref{defn:cc}.
it suffices to replace $L$ by $\tilde{L} \triangleq \sqrt{ \frac{ 2 (L^2 - 2 \mu_y^2) }{n} + 2 \mu_y^2 } $.
One can check that by Proposition \ref{prop:base} and Lemma \ref{lem:scale}, each component function is $L$-smooth and $(\mu_x, \mu_y)$-convex-concave after this replacement.
Moreover, we have $\tilde{L} \ge \sqrt{ \frac{2}{n} } L $ for $n \ge 2$ and $\tilde{L} \le \sqrt{ \frac{4}{n} } L $ as long as $n \ge 4$ and $L^2 / \mu_y^2 \ge n - 2 \ge 2$.
For the hard instances constructed in the proofs of Theorems \ref{thm:scsc:example:2} and \ref{thm:csc:example:2}, there are also corresponding lower bounds in terms of the smoothness parameter.
And the hard instances constructed in the proofs of Lemmas \ref{lem:scsc:example:3} and \ref{lem:cc:example:2} also have $L$-smooth and $(\mu_x,\mu_y)$-convex-concave component functions.
As a result, the lower bounds in terms of the average smooth parameter
can be transformed into those in terms of the smooth parameter.


When $f$ is nonconvex in $\vx$ and strongly-concave in $\vy$, the hard instance is constructed as follows.
\begin{definition}\label{defn:ncsc:smooth}
For fixed $L, \mu_x, \mu_y, \Delta, n$, we define $f_{\mathrm{NCSC}, i}: \BR^{m+1} \times \BR^{m+1}  \rightarrow \BR$ as follows
\begin{align*}
    f^*_{\mathrm{NCSC}, i}(\vx, \vy) = \lambda {r}^{\mathrm{NCC}}_i\left(\vx / \beta, \vy / \beta; 
    m + 1,
    \sqrt[4]{\alpha}, {\vc}^{\mathrm{NCSC}}_* \right), \text{ for } 1 \le i \le n,
\end{align*}
where
\begin{align*}
    \alpha & = \min \left\{1, \frac{n^2 \mu_y}{90 L}, \frac{8 (\sqrt{3} + 1) n^2 \mu_x \mu_y}{45 L^2} \right\}, \;
    {\vc}^{\mathrm{NCSC}}_* = \left( \frac{4 n \mu_y}{L}, \frac{\sqrt{\alpha} L }{4 n \mu_y }, \sqrt[4]{\alpha}  \right) ,\; \\
    \lambda & = \frac{82944 n^3 \mu_y^2 \eps^2}{L^3 \alpha}, \;
    \beta = 2 \sqrt{\lambda n / L}
    \; \mbox{ and } \; 
    m = \floor{\frac{\Delta L^2 \sqrt{\alpha}}{217728 n^2 \eps^2 \mu_y}}.
\end{align*}
Consider the minimax problem
\begin{equation}
    \min_{\vx \in \BR^{m+1}}\max_{\vy \in \BR^{m+1}} f^*_{\mathrm{NCSC}}(\vx, \vy) \triangleq \frac{1}{n}\sum_{i=1}^n f^*_{\mathrm{NCSC},i}(\vx, \vy). \label{prob:ncsc:smooth}
\end{equation}
Define $\phi^*_{\mathrm{NCSC}} (\vx) = \max_{\vy \in \BR^{m+1}} f^*_{\mathrm{NCSC}} (\vx, \vy)$.
\end{definition}
Then we have the following proposition, whose proof is similar to that of Proposition \ref{prop:nonconvex-strongly} and is omitted.

\begin{proposition}\label{prop:nonconvex-strongly:smooth}
For any $n \ge 2$, $L / \mu_y \ge 4$ and $\eps^2 \le \frac{\Delta L^2 \alpha}{435456 n^2 \mu_y}$, the following properties hold:
\begin{enumerate}
    \item $f^*_{\mathrm{NCSC}, i}$ is $L$-smooth and $ (- \mu_x, \mu_y)$-convex-concave. 
    \item $\phi^*_{\mathrm{NCSC}}(\vzero_{m+1}) - \min_{\vx \in \BR^{m+1}} \phi^*_{\mathrm{NCSC}}(\vx^*) \le \Delta$.
    \item $m \ge 2$ and for $M = m-1$, $\min_{\vx \in \fF_{M}} \norm{\nabla \phi^*_{\mathrm{NCSC}}(\vx)} \ge  9 \eps.$
\end{enumerate}
\end{proposition}
The proof of Theorem \ref{thm:nonconvex-strongly} is similar to that of Theorem \ref{thm:average:nonconvex-strongly} and is omitted.

\section{Details for Section \ref{sec:min}}

\subsection{The Setup}\label{sec:min:setup:appendix}

\paragraph{Function class} We develop lower bounds for PIFO algorithms that find the suboptimal solution or near stationary point of Problem~(\ref{prob:min}) in the following four sets.
{ \small
\begin{align*}
    \fF^*_{\mathrm{C}}(R, L, \mu) = \bigg\{ f(\vx) = \frac{1}{n} \sum_{i=1}^n f_i(\vx) ~\Big\vert~ f: \fX \to \BR, \, \diam (\fX) \le 2 R,  
    \\
    f_i \text{ is } L\text{-smooth} , f \text{ is } \mu\text{-strongly convex } \bigg\}, 
    \\
    \fF^*_{\mathrm{NC}}(\Delta, L, \mu) = \bigg\{ f(\vx) = \frac{1}{n} \sum_{i=1}^n f_i(\vx) ~\Big\vert~ f: \fX \to \BR, \,  f(\vzero) - \inf_{\vx \in \fX} f(\vx) \le \Delta,  
    \\
    f_i \text{ is } L\text{-smooth} , f \text{ is } (-\mu)\text{-weakly convex } \bigg\}, 
    \\
    {\fF}_{\mathrm{C}}(R, L, \mu) = \bigg\{f(\vx) = \frac{1}{n} \sum_{i=1}^n f_i(\vx) ~\Big\vert~ f: \fX \to \BR, \, \diam (\fX) \le 2 R, \\
    \{f_i\}_{i=1}^n \text{ is } L\text{-average smooth} , f \text{ is } \mu\text{-strongly convex } \bigg\}.
    \\
    {\fF}_{\mathrm{NC}}(\Delta, L, \mu) = \bigg\{f(\vx) = \frac{1}{n} \sum_{i=1}^n f_i(\vx) ~\Big\vert~ f: \fX \to \BR, \,  f(\vzero) - \inf_{\vx \in \fX} f(\vx) \le \Delta,\\
    \{f_i\}_{i=1}^n \text{ is } L\text{-average smooth} , f \text{ is } (-\mu)\text{-weakly convex } \bigg\}. 
    \\
\end{align*}
}
For the definitions of $\fF^*_{\mathrm{C}}$ and $\fF^*_{\mathrm{NC}}$,
we can also add a condition that each component function is 
$\mu$-strongly convex or $(-\mu)$-weakly convex to the definitions respectively. This induces a more restrictive function class but will not affect our construction.
In fact, the component function of the hard instances constructed in
\citet{woodworth2016tight}, \citet{hannah2018breaking}
and \citet{zhou2019lower}
is also $\mu$-strongly convex or $(-\mu)$-weakly convex.

\paragraph{Optimization complexity}

We formally define the optimization complexity as follows.


\begin{definition}
    For a function $f$, a PIFO algorithm $\fA$ and a tolerance $\eps > 0$, the number of queries needed by $\fA$ to find $\eps$-suboptimal solution to the Problem (\ref{prob:min}) or the $\eps$-stationary point of $f(\vx)$ is defined as
    \begin{align*}
        T(\fA, f, \eps) = \begin{cases}
        \inf \left\{ T \in \BN ~\vert~ \E f(\vx_{\fA, T}) - \min_{\vx \in \fX} f(\vx) < \eps \right\}
        ~~~ \text{if } f \in \fF^*_{\mathrm{C}}(R, L, \mu) \cup {\fF}_{\mathrm{C}}(R, L, \mu) \\
        \inf \left\{ T \in \BN ~\vert~ \E \norm{\nabla f(\vx_{\fA, T})} < \eps \right\}, ~~~\text{if } f \in \fF^*_{\mathrm{NC}}(\Delta, L, \mu) \cup {\fF}_{\mathrm{NC}}(\Delta, L, \mu)
        \end{cases}
    \end{align*}
    where $\vx_{\fA, T}$ is the point obtained by the algorithm $\fA$ at time-step $T$.

    Furthermore, the optimization complexity with respect to 
    these function classes
    are defined as
    \begin{align*}
        \mathfrak{m}_{*}^{\mathrm{C}}(\eps, R, L, \mu)       & \triangleq \inf_{\fA \in \mathscr{A}} \sup_{f \in \fF^*_{\mathrm{C}}(R, L, \mu)}  T(\fA, f, \eps),       \\
        {\mathfrak{m}}^{\mathrm{C}}(\eps, R, L, \mu) & \triangleq \inf_{\fA \in \mathscr{A}} \sup_{f \in {\fF}_{\mathrm{C}}(R, L, \mu)}  T(\fA, f, \eps). \\
        \mathfrak{m}_{*}^{\mathrm{NC}}(\eps, \Delta, L, \mu)       & \triangleq \inf_{\fA \in \mathscr{A}} \sup_{f \in \fF^*_{\mathrm{NC}}(\Delta, L, \mu)}  T(\fA, f, \eps),       \\
        {\mathfrak{m}}_{}^{\mathrm{NC}}(\eps, \Delta, L, \mu) & \triangleq \inf_{\fA \in \mathscr{A}} \sup_{f \in {\fF}_{\mathrm{NC}}(\Delta, L, \mu)}  T(\fA, f, \eps).
    \end{align*}

\end{definition}
\subsection{More Properties of the Hard Instances}\label{sec:min:instance:appendix}
In this subsection, we 
present more properties of
the hard instance $\{ r_i \}_{i=1}^n$ 
constructed in Section~\ref{sec:min:instance}.
First, We can determine the smoothness and strong convexity parameters of $r_i$ as follows.
\begin{proposition}\label{prop:base}
Suppose that $0 \le \omega, \zeta \le \sqrt{2}$ and $ c_1 \ge 0$.
\begin{enumerate}
    \item \label{prop:base-c} \textbf{Convex case.} For $c_2 = 0$, we have that $r_i$ is $(2n + c_1)$-smooth and $c_1$-strongly-convex, and $\{r_i\}_{i=1}^n$ is $L'$-average smooth where
    \begin{align*}
        L' = \sqrt{\frac{4}{n} \left[ (n + c_1)^2 + n^2 \right] + c_1^2}.
    \end{align*}
    \item \label{prop:base-nc} \textbf{Non-convex case.} For $c_1 = 0$, 
    we have that $r_i$ is $(2n + 180 c_2)$-smooth and $[-45 (\sqrt{3} - 1) c_2]$-weakly-convex, and $\{r_i\}_{i=1}^n$ is $4\sqrt{n + 4050 c_2^2}$-average smooth.
\end{enumerate}
\end{proposition}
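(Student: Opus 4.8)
The plan is to reduce both parts to spectral bounds on the Hessians $\nabla^2 r_i$ (and, for average smoothness, on quantities of the form $\frac1n\sum_i(\nabla^2 r_i)^2$), exploiting a single structural observation: the row vectors $\vb_l = \vb_l(m,\omega,\zeta)$ assigned to one index set $\fL_i$ have pairwise disjoint supports. Indeed $\vb_0 = \omega\ve_1$ is supported on $\{1\}$, $\vb_l = \ve_l - \ve_{l+1}$ on $\{l,l+1\}$ for $1\le l\le m-1$, and $\vb_m = \zeta\ve_m$ on $\{m\}$, and since consecutive elements of $\fL_i$ differ by $n\ge 2$, the supports of $\{\vb_l : l\in\fL_i\}$ are disjoint. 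Writing $A_i \triangleq \sum_{l\in\fL_i}\vb_l\vb_l^\top$ (so that $\nabla^2\big[\frac n2\sum_{l\in\fL_i}\norm{\vb_l^\top\vx}^2\big] = nA_i$), disjointness gives $0\preceq A_i\preceq 2I$ (all $\norm{\vb_l}^2\le 2$ since $\omega,\zeta\le\sqrt{2}$) and, because the cross terms vanish, $A_i^2 = \sum_{l\in\fL_i}\norm{\vb_l}^2\vb_l\vb_l^\top\preceq 2A_i$. I will also record $\sum_{i=1}^n A_i = \sum_{l=0}^m\vb_l\vb_l^\top = \mB^\top\mB = \mA$ and $\norm{\mA}\le 4$ (split $\mA$ into its sum over even $l$ plus its sum over odd $l$; each group again has disjoint supports, hence operator norm at most $2$).

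For the convex case $c_2 = 0$, $r_i = \frac n2\vx^\top A_i\vx + \frac{c_1}2\norm{\vx}^2$ plus a linear term present only for $i=1$, so $\nabla^2 r_i = nA_i + c_1 I$ with $c_1 I\preceq\nabla^2 r_i\preceq (2n+c_1)I$, giving $c_1$-strong convexity and $(2n+c_1)$-smoothness. Since the linear term has constant gradient, $\nabla r_i(\vx)-\nabla r_i(\vy) = (nA_i+c_1I)(\vx-\vy)$, hence
\[
\frac1n\sum_{i=1}^n\norm{\nabla r_i(\vx)-\nabla r_i(\vy)}^2 = (\vx-\vy)^\top\Big(n\sum_i A_i^2 + 2c_1\mA + c_1^2 I\Big)(\vx-\vy).
\]
Using $\sum_i A_i^2\preceq 2\mA$ and $\norm{\mA}\le 4$, the middle factor has operator norm at most $8n + 8c_1 + c_1^2$, which is bounded by $\frac4n[(n+c_1)^2+n^2]+c_1^2 = 8n+8c_1+\frac{4c_1^2}{n}+c_1^2$; this is the claimed $L'^2$.

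For the non-convex case $c_1 = 0$, I first establish the elementary bounds $-45(\sqrt{3}-1)\le\Gamma''(x)\le 180$ for all $x\in\BR$. Here $\Gamma'(x) = 120\,\frac{x^2(x-1)}{1+x^2}$ and $\Gamma''(x) = 120\,\frac{x^4+3x^2-2x}{(1+x^2)^2}$; one more differentiation shows the critical points of $\Gamma''$ are the roots of $(x+1)(x^2-4x+1)$, i.e. $x\in\{-1,\,2-\sqrt{3},\,2+\sqrt{3}\}$, and evaluating $\Gamma''$ there (together with $\Gamma''(x)\to120$ as $x\to\pm\infty$) gives $\max\Gamma'' = \Gamma''(-1) = 180$ and $\min\Gamma'' = \Gamma''(2-\sqrt{3}) = -45(\sqrt{3}-1)$. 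Then $\nabla^2 r_i(\vx) = nA_i + c_2\diag(\Gamma''(x_1),\dots,\Gamma''(x_{m-1}),0)$, so $-45(\sqrt{3}-1)c_2 I\preceq\nabla^2 r_i(\vx)\preceq(2n+180c_2)I$, i.e. $r_i$ is $(2n+180c_2)$-smooth and $[-45(\sqrt{3}-1)c_2]$-weakly convex. For average smoothness, the $\Gamma$-part and the linear part are identical across $i$, so $\nabla r_i(\vx)-\nabla r_i(\vy) = nA_i(\vx-\vy)+c_2\vw$ with $\vw=(\Gamma'(x_j)-\Gamma'(y_j))_{j=1}^{m-1}$ (padded by a $0$ in the last coordinate), and $\norm{\vw}\le 180\norm{\vx-\vy}$ since $\Gamma'$ is $180$-Lipschitz by the bound just proved. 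Thus $\norm{\nabla r_i(\vx)-\nabla r_i(\vy)}^2\le 2n^2\norm{A_i(\vx-\vy)}^2 + 2c_2^2\norm{\vw}^2$, and averaging over $i$ while using $\sum_i A_i^2\preceq 2\mA$, $\norm{\mA}\le 4$ yields $(16n + 64800\,c_2^2)\norm{\vx-\vy}^2 = \big(4\sqrt{n+4050c_2^2}\big)^2\norm{\vx-\vy}^2$, as claimed.

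The only genuinely computational step is pinning down the extrema of $\Gamma''$ via the cubic $\Gamma'''=0$; everything else is bookkeeping built on the disjoint-support observation, which is the structural heart of the argument. I expect the one place requiring care is tracking constants so the final expressions match $\frac4n[(n+c_1)^2+n^2]+c_1^2$ and $4\sqrt{n+4050c_2^2}$ exactly — note our spectral bounds are in fact slightly tighter, which is harmless since only the inequality ``$\le L'^2$'' is needed.
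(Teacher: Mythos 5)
Your proof is correct and rests on the same structural idea as the paper's --- the disjoint supports of $\{\vb_l : l\in\fL_i\}$ since consecutive indices in $\fL_i$ differ by $n\ge 2$ --- but you package it in cleaner operator-theoretic form. Where the paper expands $\norm{(n\mB_i^\top\mB_i + c_1\mI)\vu}^2$ explicitly coordinate by coordinate (tracking terms $(n(u_l-u_{l+1})+c_1u_l)^2$ etc.) and then sums over $i$, you abstract the calculation into two small spectral lemmas: $A_i^2 = \sum_{l\in\fL_i}\norm{\vb_l}^2\vb_l\vb_l^\top \preceq 2A_i$ (a direct consequence of disjointness) and $\norm{\mA}\le 4$ (via the even/odd split). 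This collapses the average-smoothness bound to a one-line estimate of $n\sum_i A_i^2 + 2c_1\mA + c_1^2I \preceq (2n+2c_1)\mA + c_1^2I$, and incidentally produces a slightly sharper constant $8n + 8c_1 + c_1^2 \le L'^2$, which is harmless. For the non-convex case the paper invokes the cited result of \citet{carmon1711lower} for the bounds $-45(\sqrt3-1)\le\Gamma''\le 180$, whereas you re-derive them via the critical points of $\Gamma''$; both are fine. Your decomposition $\nabla r_i(\vx)-\nabla r_i(\vy) = nA_i(\vx-\vy)+c_2\vw$ followed by $(a+b)^2\le 2(a^2+b^2)$ mirrors the paper's remark exactly, and the constants $16n + 64800c_2^2 = (4\sqrt{n+4050c_2^2})^2$ check out. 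In short: same idea, tidier bookkeeping --- the operator-form presentation is arguably easier to audit than the componentwise one.
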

The proof of Proposition \ref{prop:base} is given in Appendix~\ref{appendix:min:proof:base}.

Recall the subspaces $\{\fF_k\}_{k=0}^m$ which are defined as
\begin{align*}
\fF_k = \begin{cases}
\spn \{ \ve_1, \ve_{2}, \cdots, \ve_{k}\}, & \text{for } 1 \le k \le m, \\
 \{\vzero\}, & \text{for } k=0.
\end{cases}
\end{align*}
When we apply a PIFO algorithm $\fA$ to solve the Problem (\ref{prob:r}), 
Lemma \ref{lem:jump} implies that $\vx_t = \vzero$ will hold until algorithm $\fA$ draws the component $r_1$ or calls the FO.
Then for any $t < T_1 = \min_t \{t: i_t = 1 \mbox{ or } a_t=1 \}$, we have $\vx_t \in \fF_0$ while $\vx_{T_1} \in \fF_1$ holds. The value of $T_1$ can be regarded as the smallest integer such that $\vx_{T_1} \in \fF_1 \setminus \fF_0$ could hold.
Similarly, for $T_1 \le t < T_2 = \min_t \{ t > T_1: i_t = 2 \mbox{ or } a_t=1 \} $, there holds $\vx_t \in \fF_1$ while we can ensure that $\vx_{T_2} \in \fF_2$.

We can define $T_k$ to be the smallest integer such that $\vx_{T_k} \in \fF_k \setminus \fF_{k-1}$ could hold. We give the formal definition of $T_k$ recursively and connect it to geometrically distributed random variables in the following corollary.

\begin{corollary}\label{coro:stopping-time}
Assume we employ a PIFO algorithm $\fA$ to solve the Problem (\ref{prob:r}). Let
\begin{align}\label{def:stoppong-time}
    T_0 = 0,~\text{ and } 
    ~T_k = \min_t \{t: t > T_{k-1}, i_t \equiv k~(\bmod ~n) \text{ or } a_t=1 \}~\text{ for } k \ge 1.
\end{align}
Then we have 
\[
\vx_t \in \fF_{k-1}, ~~~\text{ for } t < T_k, k \ge 1. 
\]
Moreover, the random variables $\{ Y_k \}_{k \ge 1}$ such that $Y_k \triangleq T_k - T_{k-1} $ are mutual independent and $Y_k$ follows a geometric distribution with success probability $p_{k'} + q - p_{k'}q$ where $k' \equiv k ~ (\bmod ~ n)$ and $k' \in [n]$.
 \end{corollary}
The proof of Corollary \ref{coro:stopping-time} is similar to that of Corollary \ref{coro:convex-concave:stopping-time}.
The basic idea of our analysis is that we guarantee that the minimizer of $r$ does not lie in $\fF_k $ for $k < m$
and assure that the PIFO algorithm extends the space of $\spn\{\vx_0,\vx_1,\dots,\vx_t\}$ slowly with $t$ increasing. 
We know that $\spn\{\vx_0,\vx_1,\dots,\vx_{T_k}\} \subseteq \fF_{k-1}$ by Corollary \ref{coro:stopping-time}.   
Hence, $T_k$ is just the quantity that measures how  $\spn\{\vx_0,\vx_1,\dots,\vx_t\}$ expands. 
Note that $T_k$ can be written as the sum of geometrically distributed random variables.
Recalling Lemma \ref{lem:geo}, 
we can obtain how many PIFO calls we need.
\begin{lemma}\label{lem:base}
Let $H_r(\vx)$ be a criterion of measuring how $\vx$ is close to solution to Problem~(\ref{prob:r}).
If $M$ satisfies $1 \le M < m$,
$\min_{\vx \in \fX \cap \fF_M } H_r(\vx) \ge 9\eps$
and $N = \frac{n(M+1)}{4(1+c_0)}$, 
then we have
\begin{align*}
    \min_{t \le N} \E H_r(\vx_t)  \ge \eps
\end{align*}
\end{lemma}

\begin{remark}
If $r(\vx)$ is convex in $\vx$, we set $H_r(\vx) = r(\vx) - \min_{\vx \in \fX} r(\vx)$. If $r(\vx)$ is nonconvex, we set $H_r(\vx) = \norm{ \nabla r(\vx) }.$
\end{remark}
The proof of Lemma \ref{lem:base} is similar to that of Lemma \ref{lem:minimiax:base}.

\renewcommand{\arraystretch}{1}

\renewcommand{\arraystretch}{1}

\subsection{Main Results}\label{sec:min:result:appendix}
In this subsection, we present the formal statements of the lower bounds in Tables \ref{table-min} and \ref{table-min-average}.

\subsubsection{Smooth Cases}
We first focus on the smooth cases, i.e., the results in Table \ref{table-min}.
When $\mu \neq 0$, the conditon number is denoted by $\kappa = L / \mu$.

When $f$ is strongly-convex, we have the following result.
\begin{theorem}\label{thm:strongly}
Let $n \ge 2$ be a positive integer and $L, \mu, R, \eps$ be positive parameters. Assume additionally that $\kappa=L/\mu \ge 2$ and $\eps \le LR^2/4$. Then we have
    \begin{align*}
        \mathfrak{m}_*^{\mathrm{C}}(\eps, R, L, \mu)=
        \begin{cases}
            \Omega\left(\left(n {+} \sqrt{\kappa n}\right)\log\left(1/\eps \right)\right), &\text{ for } \kappa = \Omega(n), \\
            \Omega\left( n + \left( \frac{n}{1 + (\log(n/\kappa))_{+}} \right)\log\left(1/\eps\right) \right), &\text{ for } \kappa = \fO(n).
        \end{cases}
    \end{align*}
\end{theorem}
From Appendix \ref{sec:min:strongly}, our construction only requires the dimension to be $\fO ( 1 + \sqrt{\kappa / n} \log (1 / \eps)  )$, which is much smaller than $\tilde{\fO} ( \kappa n / \eps )$ in \citet{woodworth2016tight}.

Next, we give the lower bound when the objective function is not strongly-convex.
\begin{theorem}\label{thm:convex}
Let $n \ge 2$ be a positive integer and $L, R, \eps$ be positive parameters. Assume additionally that $\eps \le LR^2/4$.
Then we have
    \begin{align*}
        \mathfrak{m}_{*}^{\mathrm{C}}(\eps, R, L, 0)=
        \Omega\left(n {+} R\sqrt{nL/\eps} \right)
    \end{align*}
\end{theorem}
From Appendix \ref{sec:min:convex}, our construction requires the dimension to be $\fO (1 + R \sqrt{L / (n \eps)} )$, which is much smaller than $\tilde{\fO} (L^2 R^4 / \eps^2 )$ in \citet{woodworth2016tight}.



Finally, we give the lower bound when the objective function is non-convex.


\begin{theorem}\label{thm:nonconvex}
Let $n \ge 2$ be a positive integer and $L, \mu, \Delta, \eps$ be positive parameters. Assume additionally that $\eps^2 \le \frac{\Delta L \alpha}{81648 n}$, where $\alpha = \min \left\{1, \frac{(\sqrt{3} + 1)n\mu}{30 L}, \frac{n}{180} \right\}$.
Then we have
    \begin{align*}
        \mathfrak{m}_{*}^{\mathrm{NC}}(\eps, \Delta, L, \mu)=
        \Omega \left( n + \frac{\Delta L \sqrt{\alpha}}{\eps^2}\right)
    \end{align*}
\end{theorem}
For $n > 180$, we have
\begin{align*}
    \Omega \left(n + \frac{\Delta L \sqrt{\alpha}}{\eps^2}\right) 
    = \Omega \left( n + \frac{\Delta}{\eps^2} \min\{L, \sqrt{n\mu L}\} \right).
\end{align*}
From Appendix \ref{sec:min:nonconvex}, our construction only requires the dimension to be $\fO \left(1 + \frac{\Delta}{\eps^2} \min \{L/n, \sqrt{\mu L /n}\} \right)$, which is much smaller than 
$\fO \left( \frac{\Delta}{\eps^2} \min \{L, \sqrt{n \mu L}\} \right) $ in \citet{zhou2019lower}.


\subsubsection{Average Smooth Case}
Then we give the results for the average smooth cases, i.e., the results in Table \ref{table-min-average}.
When $\mu \neq 0$, the condition number is still denoted by $\kappa = L / \mu$.

When $f$ is strongly-convex, we have the following result.
\begin{theorem}\label{thm:average:strongly}
Let $n \ge 4$ be a positive integer and $L, \mu, R, \eps$ be positive parameters. Assume additionally that $\kappa=L/\mu \ge 2$ and $\eps \le LR^2/4$. Then we have
    \begin{align*}
        {\mathfrak{m}}_{}^{\mathrm{C}}(\eps, R, L, \mu)=
        \begin{cases}
            \Omega\left(\left(n {+} n^{3/4} \sqrt{\kappa}\right)\log\left( 1/\eps \right)\right) , &\text{ for } \kappa = \Omega(\sqrt{n}), \\
            \Omega\left( n + \left( \frac{n}{1 + (\log(\sqrt{n}/\kappa))_{+}} \right)\log\left(1/\eps\right) \right), &\text{ for } \kappa = \fO(\sqrt{n}).
        \end{cases}
    \end{align*}
\end{theorem}
From Appendix \ref{sec:min:strongly}, our construction only requires the dimension to be $\fO ( 1 + \sqrt{\kappa} / \sqrt[4]{n} \log (1 / \eps)  )$, which is much smaller than ${\fO} ( n + n^{3/4} \sqrt{\kappa} \log(1 / \eps)) $ in \citet{zhou2019lower}.

The next theorem gives the lower bound when $f$ is only convex.
\begin{theorem}\label{thm:average:convex}
Let $n \ge 2$ be a positive integer and $L, R, \eps$ be positive parameters. Assume additionally that $\eps \le LR^2/4$. Then we have
    \begin{align*}
        {\mathfrak{m}}_{}^{\mathrm{C}}(\eps, R, L, 0)=
        \Omega\left(n + R n^{3/4} \sqrt{L/\eps} \right)
    \end{align*}
\end{theorem}
From Appendix \ref{sec:min:convex}, our construction requires the dimension to be $\fO (1 + R \sqrt{L / (\sqrt{n} \eps)} )$, which is much smaller than ${\fO} (n + n^{3/4} \sqrt{L / \eps} )$ in \citet{zhou2019lower}.

Finally, we give the lower bound when the objective function is non-convex.

\begin{theorem}\label{thm:average:nonconvex}
Let $n \ge 2$ be a positive integer and $L, \mu, \Delta, \eps$ be positive parameters. Assume additionally that $\eps^2 \le \frac{\Delta L \alpha}{435456 \sqrt{n}}$, where $\alpha = \min \left\{1, \frac{8(\sqrt{3} + 1) \sqrt{n} \mu}{45 L}, \sqrt{\frac{n}{270}} \right\}$.
Then we have
    \begin{align*}
        {\mathfrak{m}}_{\eps}^{\mathrm{NC}}(\eps, \Delta, L, \mu)=
        \Omega \left( n + \frac{\Delta L \sqrt{n \alpha}}{\eps^2}\right)
    \end{align*}
\end{theorem}
For $n > 270$, we have
\begin{align*}
    \Omega \left( n + \frac{\Delta L \sqrt{n \alpha}}{\eps^2}\right) 
    & = \Omega \left( n + \frac{\Delta}{\eps^2} \min \left\{ \sqrt{n} L, n^{3/4} \sqrt{\mu L} \right\} \right).
\end{align*}
From Appendix \ref{sec:min:nonconvex}, our construction only requires the dimension to be $\fO \big(1 + \frac{\Delta}{\eps^2} \min \{L/\sqrt{n}, \sqrt{\mu L /\sqrt{n}}\} \big)$, which is much smaller than 
$\fO \left( \frac{\Delta}{\eps^2} \min \{\sqrt{n}L, n^{3/4} \sqrt{ \mu L}\} \right) $ in \citet{zhou2019lower}.

\subsection{Construction for the Strongly-Convex Case}\label{sec:min:strongly}

The analysis of lower bound complexity for the strongly-convex case depends on the following construction.

\begin{definition}\label{defn:sc}
For fixed $L, \mu, R, n$ such that $L / \mu \ge 2$, 
let $\alpha = \sqrt{ \frac{2(L/\mu - 1)}{n} + 1}$.
We define $f_{\mathrm{SC}, i} : \BR^m \rightarrow \BR$ as follows
\begin{align*}
    f_{\mathrm{SC}, i} (\vx) = \lambda r_i\left(\vx / \beta; m, 0, \sqrt{\frac{2}{\alpha + 1}}, \vc^{\mathrm{SC}} \right), \text{ for } 1 \le i \le n,
\end{align*}
where
$
    \vc^{\mathrm{SC}} = \left( \frac{2 n}{ L / \mu -1 }, 0, 1 \right),\;
    \lambda = \frac{2\mu R^2 \alpha n }{L / \mu - 1}\;
    \text{ and }\; \beta = \frac{ 2R\sqrt{\alpha} n }{ L / \mu - 1 }.$
Consider the minimization problem 
\begin{align}
    \min_{\vx \in \fX} f_{\mathrm{SC}}(\vx) \triangleq \frac{1}{n}\sum_{i=1}^n f_{\mathrm{SC},i}(\vx). \label{prob:sc}\\[-0.8cm]\nonumber
\end{align}
where $\fX = \{ \vx \in \BR^m : \norm{\vx} \le R  \}$.
\end{definition}
With this definition, we have the following proposition.
\begin{proposition}\label{prop:strongly}
For any $n \ge 2$, $m \ge 2$, $f_{\mathrm{SC}, i}$ and $f_{\mathrm{SC}}$ in Definition \ref{defn:sc} satisfy:
\begin{enumerate}
    \item $f_{\mathrm{SC}, i}$ is $L$-smooth and $\mu$-strongly-convex. Thus, $f_{\mathrm{SC}}$ is $\mu$-strongly-convex. \label{scp1}
    \item The minimizer of the function $f_{\mathrm{SC}}$ is
    $$\vx^{*} = \argmin_{\vx \in \BR^m}f_{\mathrm{SC}}(\vx) = 
    \frac{ 2R \sqrt{\alpha} }{\alpha - 1}
    (q^{1}, q^{2}, \dots, q^m)^{\top},
    $$ 
    where $\alpha = \sqrt{\frac{2(L/\mu - 1)}{n} + 1}$ and $q = \frac{\alpha - 1}{\alpha + 1}$.
    Moreover, $f_{\mathrm{SC}}(\vx^*) = - \frac{\mu R^2 \alpha}{ \alpha + 1 } $ and $\norm{\vx^*} \le R$.
    \item For $1 \le k \le m - 1$, we have
    \begin{align*}
    \min_{\vx \in \fX \cap \fF_k} f_{\mathrm{SC}}(\vx) - \min_{\vx \in \fX} f_{\mathrm{SC}}(\vx) \ge 
    \frac{\mu R^2 \alpha}{ \alpha + 1}
    q^{2k}. 
    \end{align*}\label{scp3}
\end{enumerate}
\end{proposition}
\begin{proof}
\begin{enumerate}
    \item Just recall Proposition \ref{prop:base} and Lemma \ref{lem:scale}.
    \item It is easy to check 
    $
    f_{\mathrm{SC}}(\vx) 
    = \frac{\xi}{2} \norm{\mB\left(m, 0, 
    \zeta
    \right) \vx}^2 + \frac{\mu}{2} \norm{\vx}^2 
    - \xi \beta \inner{\ve_1}{\vx},$
    where $\xi = \lambda / \beta^2 = (L - \mu) / (2n)$ and $\zeta = \sqrt{ \frac{2}{\alpha + 1} }$.
    Letting $ \nabla f_{\mathrm{SC}} (\vx) = \vzero $, we have 
    \begin{align}\label{proof:strongly:minimizer0}
    \left( \xi \mA \left(m, 0, 
    \zeta
    \right) + \mu \mI \right)\vx = \xi \beta \ve_1.
    \end{align}
Since $\frac{\mu}{\xi} = \frac{2n\mu}{L-\mu} = \frac{4}{\alpha^2 - 1}$, $q = \frac{\alpha - 1}{\alpha + 1}$ is a root of the equation
$    z^2 - \left(2 + \frac{2n\mu}{L-\mu}\right) z + 1 = 0$.
Note that $\zeta^2 + 1 + \frac{2n\mu}{L-\mu} = \frac{1}{q}$,
one can
check that the solution to Equations (\ref{proof:strongly:minimizer0}) is 
$\vx^{*} = \frac{\beta(\alpha+1)}{2} (q^{1}, q^{2}, \dots, q^m)^{\top} = \frac{ 2 R \sqrt{\alpha} }{\alpha - 1} (q^{1}, q^{2}, \dots, q^m)^{\top},$
and 
$ f_{\mathrm{SC}}(\vx^*) 
= - \frac{ \xi \beta^2 (\alpha + 1) q }{4}
= - \frac{\lambda (\alpha - 1) }{4}
= - \frac{ \mu R^2 \alpha }{\alpha + 1}.
$
Moreover, we have
\[
\norm{\vx^*}^2
= \frac{4R^2 \alpha}{ (\alpha - 1)^2 } \cdot \frac{q^2 - q^{2m+2}}{1 - q^2}
\le \frac{4R^2 \alpha}{ (\alpha - 1)^2 } \cdot \frac{q^2}{1 - q^2}
= 
R^2.
\]

    \item 

If $\vx \in \fF_k$, $1 \le k < m$, then $x_{k+1} = x_{k+2} = \cdots = x_{m} = 0$. 

Let $\vy$ be the first $k$ coordinates of $\vx$
and $\mA_k$ be first $k$ rows and columns of $\mA(m,0,\zeta)$.
Then we can rewrite $f_{\mathrm{SC}}(\vx)$ as
    $f_k(\vy) \triangleq f_{\mathrm{SC}}(\vx) = \frac{\xi}{2} \vy^{\top} \mA_k \vy - 
    \xi \beta
    \inner{\hat{\ve}_1}{\vy},$
where $\hat{\ve}_1$ is the first $k$ coordinates of $\ve_1$.
Let $\nabla f_k (\vy) = \vzero_k$.
By some calculation, the solution 
is 
\begin{align*}
    \frac{ (\alpha - 1) \beta q^{k}}{2 (1+q^{2k+1}) } \left( q^{-k} - q^k, q^{-k+1} - q^{k-1}, \dots, q^{-1} - q^{1} \right)^{\top}.
\end{align*}
Thus,
$\min_{\vx \in \fF_k} f_{\mathrm{SC}}(\vx) = \min_{\vy \in \BR^k} f_k(\vy) = -\frac{ \lambda ( \alpha - 1) }{4} \cdot \frac{1 - q^{2k}}{1 + q^{2k+1}} 
,$
and 
\begin{align*}
    & \quad \min_{\vx \in \fX \cap \fF_k} f_{\mathrm{SC}}(\vx) - \min_{\vx \in \fX} f_{\mathrm{SC}}(\vx) \\
    & \ge \min_{\vx \in \fF_k} f_{\mathrm{SC}}(\vx) - f_{\mathrm{SC}}(\vx^*) 
    = \frac{\lambda (\alpha - 1) }{4}
    \left( 1 - \frac{1-q^{2k}}{1+q^{2k+1}} \right) \\
    & = \frac{\lambda (\alpha - 1) }{4}
    q^{2k}\frac{1+q}{1+q^{2k+1}} 
    \ge \frac{\mu R^2 \alpha}{\alpha + 1 } q^{2k}
\end{align*}
\end{enumerate}
This completes the proof.
\end{proof}
With this hard instance, we have the following result.
\begin{theorem}\label{thm:strongly:example}
Consider the minimization problem (\ref{prob:sc}) and $\eps > 0$. Suppose that
\begin{align*}
    n \ge 2,\;\,
    \eps \le \frac{\mu R^2}{18} \left(\frac{\alpha-1}{\alpha+1}\right)^{2}\text{ and } 
    m = \left\lfloor \frac{1}{4}\left(\sqrt{2\frac{L/\mu - 1}{n} + 1}\right) \log \left(\frac{\mu R^2}{9\eps}\right) \right\rfloor + 1,
\end{align*}
where $\alpha = \sqrt{\frac{2(L/\mu - 1)}{n} + 1}$.
In order to find $\hat{\vx} \in \fX$ such that $ \E f_{\mathrm{SC}} (\hat{\vx}) - \min_{\vx \in \fX} f_{\mathrm{SC}} (\vx) < \eps$, PIFO algorithm $\fA$ needs at least $N$ queries, where
\begin{align*}
    N =
    \begin{cases}
    \Omega\left(\left(n + \sqrt{\frac{nL}{\mu}}\right)\log\left( \frac{1}{\eps} \right)\right), &\text{ for } \frac{L}{\mu} \ge \frac{n}{2} + 1, \\
    \Omega\left(n + \left(\frac{n}{1 + (\log(n\mu/L))_+}\right)\log\left( \frac{1}{\eps} \right)\right), &\text{ for } 2 \le \frac{L}{\mu} < \frac{n}{2} + 1.
    \end{cases}
\end{align*}
\end{theorem}
\begin{proof}
Let $\Delta = \frac{\mu R^2 \alpha}{ \alpha + 1 }$.
Since $\alpha > 1$, we have $\frac{\mu R^2}{2} < \Delta < \mu R^2$.
Let $M = \floor{\frac{\log (9\eps/\Delta)}{2\log q}}$, then we have 
$    \min_{\vx \in \fX \cap \fF_M} f_{\mathrm{SC}}(\vx) - \min_{\vx \in \fX} f_{\mathrm{SC}}(\vx) \ge \Delta q^{2M} \ge 9 \eps,$
where the first inequality is according to the third property of Proposition \ref{prop:strongly}.

By Lemma \ref{lem:base}, if $1 \le M < m$ and $N = \frac{(M+1)n}{4(1+c_0)}$, we have
   $ \min_{t \le N} \E f_{\mathrm{SC}}(\vx_t) - \min_{\vx \in \fX }f_{\mathrm{SC}}(\vx) \ge \eps.$
Therefore, in order to find $\hat{\vx} \in \fX$ such that $\E f_{\mathrm{SC}}(\hat{\vx}) - \min_{\vx \in \fX} f_{\mathrm{SC}}(\vx) < \eps$, $\fA$ needs at least $N$ queries.

We estimate $-\log(q)$ and $N$ in two cases.
\begin{enumerate}
\item If $L/\mu \ge n/2 + 1$, then $\alpha = \sqrt{2 \frac{L/\mu - 1}{n} + 1} \ge \sqrt{2}$. 
Observe that function $h(\beta) = \frac{1}{\log \left(\frac{\beta + 1}{\beta - 1}\right)} - \frac{\beta}{2}$ is increasing when $\beta>1$.
Thus, we have
{ \small
\begin{align*}
    -\frac{1}{\log(q)} &= \frac{1}{\log \left(\frac{\alpha + 1}{\alpha - 1}\right)} 
    \ge \frac{\alpha}{2} + h(\sqrt{2}) 
    = \frac{1}{2}\sqrt{2 \frac{L/\mu - 1}{n} + 1} + h(\sqrt{2}) 
    \ge \frac{\sqrt{2}}{4} \left( \sqrt{2\frac{L/\mu - 1}{n}} + 1 \right) + h(\sqrt{2}) 
\end{align*} }
and 
{ \small
\begin{align*}
    N &= \frac{(M+1)n}{4(1 + c_0)} = \frac{n}{4(1+c_0) } \left(\floor{\frac{\log (9\eps/\Delta)}{2\log q}} + 1\right) 
    \ge \frac{n}{8 (1 + c_0)} \left( -\frac{1}{\log(q)} \right) \log\left( \frac{\Delta}{9\eps} \right) \\
    &\ge \frac{n}{8 (1 + c_0)}\left( \frac{1}{2} \sqrt{\frac{L/\mu - 1}{n}} + \frac{\sqrt{2}}{4} + h(\sqrt{2})  \right) \log\left( \frac{\mu R^2}{18\eps} \right) 
    = \Omega\left( \left( n + \sqrt{\frac{n L}{\mu}} \right) \log\left( \frac{1}{\eps} \right) \right).
\end{align*} }

\item If $2 \le L/\mu < n/2 + 1$, then we have
{ \small
\begin{align}
    \notag -\log(q) &= \log\left( \frac{\alpha+1}{\alpha-1} \right)
    = \log \left( 1 + \frac{2(\alpha - 1)}{\alpha^2 - 1} \right) 
    = \log\left( 1 + \frac{\sqrt{2\frac{L/\mu - 1}{n} + 1} - 1}{\frac{L/\mu - 1}{n}} \right) \\
    & \le \log\left( 1 + \frac{(\sqrt{2} - 1)n}{L/\mu - 1} \right) 
    \label{ineq:strongly:large-n} 
    \le \log\left( \frac{(\sqrt{2} - 1/2)n}{L/\mu - 1} \right)
    \le \log\left( \frac{(2\sqrt{2} - 1)n}{L/\mu} \right),
\end{align} }
where the first inequality and second inequality follow from $L/\mu - 1 < n/2$ and the last inequality is according to $\frac{1}{x - 1} \le \frac{2}{x}$ for $x \ge 2$. 

Note that $n \ge 2$, thus $\frac{n}{n - 1} \le 2 \le \frac{n}{L/\mu - 1}$,
and hence $n \ge L/\mu$, i.e. $\log(n\mu/L) \ge 0$.

Therefore, 
\begin{align*}
    N &= \frac{(M+1)n}{4 (1 + c_0)} \ge \frac{n}{8 (1 + c_0)} \left( -\frac{1}{\log(q)} \right) \log\left( \frac{\mu R^2}{18 \eps} \right) 
    = \Omega \left( \left( \frac{n}{1 + \log(n\mu/L)} \right) \log\left( \frac{1}{\eps} \right)\right).
\end{align*}

Recalling that we assume that $q^2 \ge \frac{18\eps}{\mu R^2} > \frac{9 \eps}{\Delta}$, thus we have 
\begin{align*}
        N &\ge \frac{n}{8(1 + c_0)} \left( -\frac{1}{\log(q)} \right) \log\left( \frac{\Delta}{9 \eps} \right) 
        \ge \frac{n}{8(1 + c_0)} \left( -\frac{1}{\log(q)} \right) \left( -2\log(q) \right) 
        = \frac{n}{4(1 + c_0)}.
    \end{align*}
    
Therefore, $N = \Omega\left(n + \left( \frac{n}{1 + \log(n\mu/L)} \right) \log\left( \frac{1}{\eps} \right) \right)$.
\end{enumerate}

At last, we must ensure that $1 \le M < m$, that is
$    1 \le \frac{\log (9\eps/\Delta)}{2\log q} < m.$
Note that $\lim_{\beta \rightarrow +\infty} h(\beta) = 0$, so $- 1/ \log(q) \le \alpha /2$.
Thus the above conditions are satisfied when 
\begin{align*}
    m = \left\lfloor \frac{\log (\mu R^2 / (9\eps))}{2 (-\log q)} \right\rfloor + 1 \le
    \frac{1}{4}\left(\sqrt{2\frac{L/\mu - 1}{n} + 1}\right) \log \left(\frac{\mu R^2}{9\eps}\right) + 1
    = \fO \left( \sqrt{\frac{L}{n\mu}} \log \left(\frac{1}{\eps}\right)\right),
\end{align*}
and 
    $ \frac{\eps}{\Delta} 
    \le \frac{1}{9} \left(\frac{\alpha-1}{\alpha+1}\right)^{2}.$
\end{proof}
For the average smooth case, the hard instance can be directly derived from Definition \ref{defn:sc}.
\begin{definition}\label{defn:sc:average}
For fixed $L, \mu, R, n$ such that $L/\mu \ge 2$, consider $\{ f_{\mathrm{SC}, i} \}_{i=1}^n$, $f_{\mathrm{SC}}$ and Problem (\ref{prob:sc}) defined in Definition \ref{defn:sc} with $L$ replaced by $\sqrt{\frac{n(L^2 - \mu^2)}{2} - \mu^2}$.
\end{definition}
The following proposition ensures the hard instance is $L$-average smooth and gives the relationship between the smoothness parameter and the average smoothness parameter.
\begin{proposition}\label{prop:average:strongly:example}
Consider  $\{ f_{\mathrm{SC}, i} \}_{i=1}^n$ and $f_{\mathrm{SC}}$ defined in Definition \ref{defn:sc:average} and let $\tilde{L} = \sqrt{\frac{n(L^2 - \mu^2)}{2} - \mu^2} $.
For $n \ge 4$ and $\kappa = \frac{L}{\mu} \ge 2$, we have that
\begin{enumerate}
    \item $f_{\mathrm{SC}}(\vx)$ is $\mu$-strongly-convex and $\{f_{\mathrm{SC}, i}\}_{i=1}^n$ is $L$-average smooth.
    \item 
    $\frac{\sqrt{n}}{2} L \le \tilde{L} \le \sqrt{\frac{n}{2}} L$ and $\tilde{\kappa} = \frac{ \tilde{ L} }{\mu} \ge 2$.
\end{enumerate}
\end{proposition}

\begin{proof}
~
\begin{enumerate}
\item It is easy to check that $f_{\mathrm{SC}}(\vx)$ is $\mu$-strongly-convex.
By Proposition \ref{prop:base} and Lemma \ref{lem:scale}, $\{f_{\mathrm{SC}, i}\}_{i=1}^n$ is $\hat{L}$-average smooth, 
where
\begin{align*}
    \hat{L} 
    = \frac{ \tilde{L} - \mu}{ 2n } \sqrt{ \frac{4}{n} \left[ \left( \frac{n \tilde{L}/ \mu + n}{ \tilde{L} / \mu - 1}  \right)^2 + n^2 \right] + \left( \frac{2n}{ \tilde{L} / \mu - 1}  \right)^2 } 
    = \sqrt{\frac{2(\tilde{L}^2 + \mu^2)}{n} + \mu^2} = L.
\end{align*}

\item Clearly, $\tilde{L} = \sqrt{\frac{n(L^2 - \mu^2)}{2} - \mu^2} \le \sqrt{\frac{n}{2}} L$. \\
Furthermore, according to $\kappa \ge 2$ and $n \ge 4$, we have
\begin{align*}
    \tilde{L}^2 - \frac{n}{4} L^2
    = \frac{n}{4} L^2 - \frac{n}{2} \mu^2 - \mu^2 
    = \mu^2 \left( \frac{n}{4} \kappa^2 - \frac{n}{2} - 1 \right) 
     \ge \mu^2 \left( \frac{n}{2} - 1 \right) \ge 0.
\end{align*}
and $\tilde{\kappa} = \frac{\tilde{L}}{\mu} \ge \frac{\sqrt{n} L}{2\mu} \ge \kappa \ge 2$.
\end{enumerate}
This completes the proof.
\end{proof}
Recalling Theorem \ref{thm:strongly:example}, we have the following result.
\begin{theorem}\label{thm:average:strongly:example}
Consider the minimization problem (\ref{prob:sc}) and $\eps > 0$.
Suppose that
$\kappa = L / \mu \ge 2$, $n \ge 4$,
$ \eps \le \frac{\mu R^2}{18} \left(\frac{\alpha-1}{\alpha+1}\right)^{2}$
and
$m = \left\lfloor \frac{1}{4}\left(\sqrt{2\frac{ \tilde{L}/\mu - 1}{n} + 1}\right) \log \left(\frac{\mu R^2}{9\eps}\right) \right\rfloor + 1$
where 
$\alpha = \sqrt{\frac{2(  \tilde{L}/\mu - 1)}{n} + 1}$,
and
$\tilde{L} = \sqrt{\frac{n(L^2 - \mu^2)}{2} - \mu^2}$,
In order to find $\hat{\vx} \in \fX$ such that $ \E f_{\mathrm{SC}} (\hat{\vx}) - \min_{\vx \in \fX} f_{\mathrm{SC}} (\vx) < \eps$, PIFO algorithm $\fA$ needs at least $N$ queries, where
    \begin{align*}
        N =
        \begin{cases}
            \Omega\left(\left(n {+} n^{3/4} \sqrt{\kappa}\right)\log\left( 1/\eps \right)\right) , &\text{ for } 
            \kappa = \Omega(\sqrt{n}), \\
            \Omega\left( n + \left( \frac{n}{1 + (\log(\sqrt{n}/\kappa))_{+}} \right)\log\left(1/\eps\right) \right), &\text{ for }
            \kappa = \fO(\sqrt{n}).
        \end{cases}
    \end{align*}
\end{theorem}
For larger $\eps$, we can apply the following Lemma.
\begin{lemma}\label{lem:strongly:example:2}
For any $L, \mu, n, R, \eps$ such that $n \ge 2$ and $\eps \le L R^2 /4$, there exist $n$ functions $\{f_i: \BR \rightarrow \BR\}_{i=1}^n$ such that $f_i(x)$ is $L$-smooth, $\{ f_i \}_{i=1}^n$ is $L$-average smooth and $f(x) = \frac{1}{n} \sum_{i=1}^n f_i(x)$ is $\mu$-strongly-convex.
In order to find $|\hat{x}| \le R$ such that $\E f (\hat{x}) - \min_{|x| \le R} f (x) < \eps$, PIFO algorithm $\fA$ needs at least $N = \Omega(n)$ queries.
\end{lemma}

\begin{proof}
Consider the following functions $\{ G_{\mathrm{SC}, i} \}_{1 \le i \le n}$, $G_{\mathrm{SC}}: \BR \rightarrow \BR$, where
\begin{align*}
    G_{\mathrm{SC}, i}(x) &= \frac{L}{2} x^2 - n L R x, \,\text{ for } i = 1, \\
    G_{\mathrm{SC}, i}(x) &= \frac{L}{2} x^2,  \qquad \qquad \text{ for } i = 2, 3, \dots, n, 
\end{align*}
and  $G_{\mathrm{SC}}(x) = \frac{1}{n} \sum_{i=1}^n G_{\mathrm{SC}, i}(x) = \frac{L}{2} x^2 - L R x$.
Note that $\{ G_{\mathrm{SC}, i} \}_{i=1}^n$ is $L$ smooth and $\mu$-strongly-convex for any $\mu \le L$.
Observe that $x^* = \argmin_{x \in \BR} G_{\mathrm{SC}}(x) = R$, $    G_{\mathrm{SC}}(0) - G_{\mathrm{SC}}(x^*) = \frac{LR^2}{2}$
and $|x^*| = R$.
Thus $x^* = \argmin_{|x| \le R} G_{\mathrm{SC}}(x)$.

For $i > 1$, we have $\frac{d G_{\mathrm{SC}, i}(x)}{d x} \vert_{x = 0} = 0$ and $\prox_{G_{\mathrm{SC}, i}}^{\gamma} (0) = 0$.
Thus $x_t = 0$ will hold till our first-order method $\fA$ draws the component $G_{\mathrm{SC}, 1}$.
That is, for $t < T = \argmin \{t: i_t = 1\}$, we have $x_t = 0$.

Hence, for $t \le \frac{1}{2p_1}$, we have
{ 
\begin{align*}
    \E G_{\mathrm{SC}}(x_t) - G_{\mathrm{SC}}(x^*) &\ge \E \left[G_{\mathrm{SC}}(x_t) - G_{\mathrm{SC}}(x^*) \Big\vert \frac{1}{2p_1} < T\right] \pr{\frac{1}{2p_1} < T} 
    = \frac{LR^2}{2} \pr{\frac{1}{2p_1} < T}.
\end{align*}}

Note that $T$ follows a geometric distribution with success probability $p_1 \le 1/n$, and
\begin{align*}
    \pr{ T > \frac{1}{2p_1} } = \pr{ T > \floor{\frac{1}{2p_1}} } = (1 - p_1)^{\floor{\frac{1}{2p_1}}} 
    \ge (1 - p_1)^{\frac{1}{2p_1}} \ge (1 - 1/n)^{n/2} \ge \frac{1}{2},
\end{align*}
where the second inequality follows from $h(z) = \frac{\log(1 - z)}{2z}$ is a decreasing function.

Thus, for $t \le \frac{1}{2p_1}$, we have
$
    \E G_{\mathrm{SC}}(x_t) - G_{\mathrm{SC}}(x^*) \ge \frac{LR^2}{4} \ge \eps.$
Thus, in order to find $|\hat{x}| \le R$ such that $\E G_{\mathrm{SC}}(\hat{x}) - G_{\mathrm{SC}}(x^{*}) < \eps$, $\fA$ needs at least $ \frac{1}{2p_1} \ge n/2 = \Omega\left( n \right)$ queries.
\end{proof}


\begin{proof}[Proof of Theorem \ref{thm:strongly}]
It remains to explain that the lower bound in Lemma \ref{lem:strongly:example:2} is the same as the lower bound in Theorem \ref{thm:strongly:example} for $\eps > \frac{\mu R^2 }{18} \left(\frac{\alpha-1}{\alpha+1}\right)^2$. 
Suppose that
  $  \frac{\eps}{\mu R^2} > \frac{1}{18} \left( \frac{\alpha - 1}{\alpha + 1} \right)^2, \ 
    \alpha = \sqrt{2\,\frac{\kappa - 1}{n} + 1}
    \  \text{and} \ 
    \kappa = \frac{L}{\mu}.$

\begin{enumerate}
    \item If $\kappa \ge n/2 + 1$, then we have $\alpha \ge \sqrt{2}$ and
\begin{align*}
    &\quad \left( n + \sqrt{\kappa n} \right) \log\left( \frac{\mu R^2}{18 \eps} \right) 
    \le 2\left( n + \sqrt{\kappa n} \right) \log\left(\frac{\alpha + 1}{\alpha - 1}\right)\\
    &\le \frac{4\left( n + \sqrt{\kappa n} \right)}{\alpha - 1} 
    = \fO(n) + \frac{4 \sqrt{\kappa n}}{(1 - \sqrt{2}/2)\alpha} \\
    &\le \fO(n) + \frac{4}{\sqrt{2} - 1} \frac{\sqrt{\kappa n}}{\sqrt{\kappa/n}} = \fO(n),
\end{align*}
where the second inequality follows from $\log(1 + x) \le x$ and the last inequality is according to $\alpha \ge \sqrt{2\kappa/n}$.
Then we have
$
    \Omega(n) = \Omega\left(\left( n + \sqrt{\kappa n} \right) \log\left( \frac{1}{\eps} \right)\right).$

    \item If $2 \le L / \mu < n/2 + 1$, then we have
\begin{align*}
    &\quad \left( \frac{n}{1 + (\log(n\mu/L) )_+} \right) \log\left( \frac{\mu R^2}{18\eps} \right)
    \le \left( \frac{n}{1 + (\log(n\mu/L) )_+} \right) \left( 2 \log\left(\frac{\alpha + 1}{\alpha - 1}\right) \right) \\
    &\le \left( \frac{n}{1 + (\log(n\mu/L) )_+} \right) \left( 2 \log\left( \frac{(2\sqrt{2} - 1)n}{L/\mu} \right) \right) = \fO(n),
\end{align*}
where the second inequality 
is by (\ref{ineq:strongly:large-n}).
Then we have
    $\Omega(n) = \Omega\left(\left( \frac{n}{1 + \left( \log(n\mu/L) \right)_+}  \right) \log\left( \frac{1}{\eps} \right) + n\right).$
\end{enumerate}
This completes the proof.
\end{proof}
The proof of Theorem \ref{thm:average:strongly} is similar to that of Theorem \ref{thm:strongly}.

\subsection{Construction for the Convex Case}\label{sec:min:convex}

The analysis of lower bound complexity for the convex case depends on the following construction.
\begin{definition}\label{defn:c}
For fixed $L, R, n$, we define $f_{\mathrm{C}, i}: \BR^m \rightarrow \BR$ as follows
\begin{align*}
    f_{\mathrm{C}, i}(\vx) = \lambda r_i\left(\vx / \beta; m, 0, 1, \vc^{\mathrm{C}} \right), \text{ for } 1 \le i \le n,
\end{align*}
where 
$
    \vc^{\mathrm{C}} = (0,0,1), \; 
    \lambda = \frac{3 LR^2}{2 n (m+1)^3}\;
    \text{ and } \beta = \frac{ \sqrt{3} R }{ (m+1)^{3/2} }.$
Consider the minimization problem
\begin{align}
    \min_{\vx \in \fX} f_{\mathrm{C}}(\vx) \triangleq \frac{1}{n}\sum_{i=1}^n f_{\mathrm{C},i}(\vx). \label{prob:c}\\[-0.8cm]\nonumber
\end{align}
where $\fX = \{ \vx \in \BR^m : \norm{\vx} \le R  \}$.
\end{definition}
Then we have the following proposition.
\begin{proposition}\label{prop:convex}
For any $n \ge 2$, $m \ge 2$, the following properties hold:
\begin{enumerate}
    \item $f_{\mathrm{C}, i}$ is $L$-smooth and convex. Thus, $f_{\mathrm{C}}$ is convex.
    \item The minimizer of the function $f_{\mathrm{C}}$ is
    $$\vx^{*} = \argmin_{\vx \in \BR^m} f_{\mathrm{C}}(\vx) = \frac{2\xi}{L} \left(m, m-1, \dots, 1\right)^{\top},$$ 
    where $\xi = \frac{\sqrt{3}}{2} \frac{RL}{(m+1)^{3/2}}$.
    Moreover, $f_{\mathrm{C}}(\vx^*) = - \frac{m \xi^2}{n L}$ and $\norm{\vx^*} \le R$.
    \item For $1 \le k \le m$, we have
    \begin{align*}
    \min_{\vx \in \fX \cap \fF_k} f_{\mathrm{C}}(\vx) - \min_{\vx \in \fX} f_{\mathrm{C}}(\vx) = \frac{\xi^2}{n L} (m - k).
    \end{align*}
\end{enumerate}
\end{proposition}
\begin{proof}
\begin{enumerate}
    \item Just recall Proposition \ref{prop:base} and Lemma \ref{lem:scale}.
    \item It is easy to check
    $f_{\mathrm{C}} (\vx) 
    = \frac{L}{4n} \norm{\mB(m, 1) \vx}^2 - 
    \frac{\xi}{n}
    \inner{\ve_1}{\vx}$,
    where $\xi = \frac{\sqrt{3}}{2}\frac{BL}{(m+1)^{3/2}n}$. Let $\nabla f_{\mathrm{C}}(\vx) = \vzero$, that is
    $\frac{L}{2n}\mA(m, 0, 1) \vx = \frac{\xi}{n} \ve_1.$
One can 
check that the solution 
is 
$\vx^{*} = \frac{2\xi}{L} (m, m-1, \dots, 1)^{\top},$
and 
$ f_{\mathrm{C}}(\vx^*) = -\frac{m\xi^2}{n L}. $
Moreover, we have
\begin{align*}
    \norm{\vx^*}^2 &= \frac{4\xi^2}{L^2} \frac{m(m+1)(2m+1)}{6} 
    \le \frac{4\xi^2}{3L^2}(m+1)^3 = R^2.
\end{align*}

\item The second property implies $\min_{\vx \in \fX} f_{\mathrm{C}} (\vx) = - \frac{m \xi^2}{n L}$.
    By similar calculation to above proof, we have
$\argmin_{\vx \in \fX \cap \fF_k} f_{\mathrm{C}}(\vx) = \frac{2\xi}{L} (k, k-1, \dots, 1, 0, \dots, 0)^{\top},$
and 
$ \min_{\vx \in \fX \cap \fF_k} f_{\mathrm{C}}(\vx) = - \frac{k \xi^2}{nL}$.
Thus 
    $\min_{\vx \in \fX \cap \fF_k} f_{\mathrm{C}}(\vx) - \min_{\vx \in \fX} f_{\mathrm{C}}(\vx) = \frac{\xi^2}{n L} (m - k)$.
\end{enumerate}
This completes the proof.
\end{proof}
Next we show the lower bound for functions $f_{\mathrm{C}, i}$ defined above.
\begin{theorem}\label{thm:convex:example}
Consider the minimization problem (\ref{prob:c}) and $\eps > 0$. Suppose that
\begin{align*}
    n \ge 2, \;\,
    \eps \le \frac{R^2 L}{384 n}\; \mbox{ and } \; m = \floor{\sqrt{\frac{R^2 L}{24 n \eps}}} - 1.
\end{align*}
In order to find $\hat{\vx} \in \fX$ such that $ \E f_{\mathrm{C}} (\hat{\vx}) - \min_{\vx \in \fX} f_{\mathrm{C}} (\vx) < \eps$, PIFO algorithm $\fA$ needs at least $N$ queries, where
\begin{align*}
    N =
    \Omega\left(n {+} R\sqrt{nL/\eps} \right).
\end{align*}
\end{theorem}

\begin{proof}
Since $\eps \le \frac{R^2 L}{384 n}$, we have $m \ge 3$.
Let $\xi = \frac{\sqrt{3}}{2} \frac{RL}{(m+1)^{3/2}}$.

For $M = \floor{\frac{m-1}{2}} \ge 1$, we have $m - M \ge (m+1) /2$, and
\begin{align*}
    \min_{\vx \in \fX \cap \fF_M} f_{\mathrm{C}}(\vx) - \min_{\vx \in \fX} f_{\mathrm{C}}(\vx) &= \frac{\xi^2}{n L} (m - M) = \frac{3 R^2 L}{4n} \frac{m-M}{(m+1)^3} 
    \ge \frac{3 R^2 L}{8n} \frac{1}{(m+1)^2} \ge 9\eps,
\end{align*}
where the first equation is according to the 3rd property in Proposition \ref{prop:convex} and the last inequality follows from $m + 1 \le R \sqrt{L/(24n\eps)}$.

Similar to the proof of Theorem \ref{thm:strongly:example}, by Lemma \ref{lem:base},  we have
$
    \min_{t \le N} \E f_{\mathrm{C}}(\vx_t) - \min_{\vx \in \fX} f_{\mathrm{C}}(\vx) \ge \eps.$
In other words, in order to find $\hat{\vx} \in \fX$ such that $\E f_{\mathrm{C}}(\hat{\vx}) - \min_{\vx \in \fX} f_{\mathrm{C}}(\vx) < \eps$, $\fA$ needs at least $N$ queries.

At last, observe that
\begin{align*}
    N = \frac{(M+1)n}{4 (1 + c_0) } = \frac{n}{4 (1 + c_0) } \floor{\frac{m+1}{2}} 
    \ge \frac{n(m-1)}{8} 
    \ge \frac{n}{8} \left(\sqrt{\frac{R^2 L}{24 n \eps}} - 2\right) 
    = \Omega \left( n + R\sqrt{\frac{n L}{\eps}} \right),
\end{align*}
where we have recalled $\eps \le \frac{B^2 L}{384 n}$ in last equation.
\end{proof}
The hard instance for the average smooth case can be derived from Definition \ref{defn:c}.
\begin{definition}\label{defn:c:average}
For fixed $L, R,n$,
consider $\{ f_{\mathrm{C}, i} \}_{i=1}^n$ and $f_{\mathrm{C}}$ defined in Definition \ref{defn:c} with $L$ replaced by $ \sqrt{ \frac{n}{2} } L $.
\end{definition}
It follows from Proposition \ref{prop:base} and Lemma \ref{lem:scale} that
$f_{\mathrm{C}}$ is convex and
$\{f_{\mathrm{C}, i}\}_{i=1}^n$ is $L$-average smooth.
By Theorem \ref{thm:convex:example}, we have the following conclusion.
\begin{theorem}\label{thm:average:convex:example}
Consider the minimization problem (\ref{prob:c}) and $\eps > 0$.
Suppose that
\begin{align*}
    n \ge 2, \;
    \eps \le \frac{\sqrt{2}}{768} \frac{R^2 L}{\sqrt{n}}\; \mbox{ and } \;  m = \floor{ \frac{\sqrt[4]{18}}{12} R n^{-1/4}\sqrt{\frac{L}{\eps}}} - 1.
\end{align*}
In order to find $\hat{\vx} \in \fX$ such that $ \E f_{\mathrm{C}} (\hat{\vx}) - \min_{\vx \in \fX} f_{\mathrm{C}} (\vx) < \eps$, PIFO algorithm $\fA$ needs at least $N$ queries, where
    \begin{align*}
        N =
        \Omega\left(n + R n^{3/4} \sqrt{\frac{L}{\eps}}\right).
    \end{align*}
\end{theorem}

\begin{proof}[Proof of Theorem \ref{thm:convex}]
To derive Theorem \ref{thm:convex}, 
it remains to consider the 
case $\eps > \frac{ \sqrt{2} R^2 L}{ 768 \sqrt{n} }$.
By Lemma \ref{lem:strongly:example:2}, there exist $n$ functions $\{ f_i: \BR \rightarrow \BR \}$ such that $f_i(x)$ is $L$-smooth and $f(x) = \frac{1}{n} \sum_{i=1}^n f_i(x)$ is convex.
In order to find $|\hat{x}| \le R$ such that $\E f (\hat{x}) - \min_{|x| \le R} f (x) < \eps$, PIFO algorithm $\fA$ needs at least $N = \Omega(n)$ queries.
Since $\eps > \frac{ \sqrt{2} R^2 L}{768 \sqrt{n} }$, $\Omega (n) = \Omega \left( n + R\sqrt{\frac{nL}{\eps}} \right)$. This completes the proof.
\end{proof}
The proof of Theorem \ref{thm:average:convex} is similar.



\subsection{Construction for the Nonconvex Case}\label{sec:min:nonconvex}
The analysis of lower bound complexity for the nonconvex case depends on the following construction.

\begin{definition}\label{defn:nc}
For fixed $L, \mu, \Delta, n$, we define $f_{\mathrm{NC}, i}: \BR^{m+1} \rightarrow \BR$ as follows
\begin{align*}
    f_{\mathrm{NC}, i}(\vx) = \lambda r_i\left(\vx / \beta; 
    m + 1,
    \sqrt[4]{\alpha}, 0, \vc^{\mathrm{NC}} \right), \text{ for } 1 \le i \le n,
\end{align*}
where
\begin{align*}
    \alpha & = \min \left\{1, \frac{(\sqrt{3} + 1)n\mu}{30 L}, \frac{n}{180} \right\}, \; 
    \vc^{\mathrm{NC}} = \left( 0, \alpha, \sqrt{\alpha} \right),\; \\
    m & = \floor{\frac{\Delta L \sqrt{\alpha}}{40824 n \eps^2}},\; 
    \lambda  = \frac{3888 n \eps^2}{L \alpha^{3/2}} \; \text{ and } \beta = \sqrt{3 \lambda n / L}.
\end{align*}
Consider the minimization problem
\begin{align}
    \min_{\vx \in \BR^{m+1}} f_{\mathrm{NC}}(\vx) \triangleq \frac{1}{n}\sum_{i=1}^n f_{\mathrm{NC},i}(\vx). \label{prob:nc}\\[-0.8cm]\nonumber
\end{align}
\end{definition}
Then we have the following proposition.
\begin{proposition}\label{prop:nonconvex}
For any $n \ge 2$ and $\eps^2 \le \frac{\Delta L \alpha}{81648 n}$, the following properties hold:
\begin{enumerate}
    \item $f_{\mathrm{NC}, i}$ is $L$-smooth and $ (- \mu)$-weakly-convex. Thus, $f_{\mathrm{NC}}$ is $(-\mu)$-weakly-convex.
    \item $f_{\mathrm{NC}}(\vzero_{}) - \min_{\vx \in \BR^{m+1}} f_{\mathrm{NC}}(\vx) \le \Delta$.
    \item $m \ge 2$ and for $M = m-1$, $\min_{\vx \in \fF_{M}} \norm{\nabla f_{\mathrm{NC}}(\vx)} \ge  9 \eps.$
\end{enumerate}
\end{proposition}
\begin{proof}
\begin{enumerate}
\item By Proposition \ref{prop:base} and Lemma \ref{lem:scale}, $f_{\mathrm{NC}, i}$ is $(-l_1)$-weakly convex and $l_2$-smooth where
\begin{align*}
    l_1 &= \frac{45 (\sqrt{3} - 1) \alpha\lambda}{\beta^2} = \frac{45 (\sqrt{3} - 1)L}{3n} \alpha 
    \le \frac{45 (\sqrt{3} - 1) L}{3n} \frac{(\sqrt{3} + 1)n\mu}{30 L} = \mu, \\
    l_2 &= \frac{(2n + 180 \alpha)\lambda}{\beta^2} = \frac{L}{3n} (2n + 180 \alpha) \le L.
\end{align*}
Thus each $f_i$ is $L$-smooth and $(-\mu)$-weakly convex.
\item By Proposition \ref{prop:nonconvex:prop:base}, we know that 
\begin{align*}
    f_{\mathrm{NC}}(\vzero_{}) - \min_{\vx \in \BR^{m+1}} f_{\mathrm{NC}}(\vx) &\le \lambda (\sqrt{\alpha}/2 + 10 \alpha m) = \frac{1944 n \eps^2}{L \alpha} + \frac{38880 n \eps^2}{L \sqrt{\alpha}} m \\
    &\le \frac{1944}{40824} \Delta + \frac{38880}{40824} \Delta = \Delta.
\end{align*}
\item Since $\alpha \le 1$, we have $\frac{\Delta L^2 \sqrt{\alpha}}{40824 n \eps^2 } \ge \frac{\Delta L^2 \alpha}{40824 n \eps^2} $ and consequently $m \ge 2$.
By Proposition \ref{prop:nonconvex:prop:base}, we know that 
\begin{align*}
    \min_{\vx \in \fF_{M}} \norm{\nabla f_{\mathrm{NC}}(\vx)} \ge \frac{\alpha^{3/4}\lambda}{4\beta} 
    = \frac{\alpha^{3/4}\lambda}{4\sqrt{3\lambda n/L}} = \sqrt{\frac{\lambda L}{3n}} \frac{\alpha^{3/4}}{4} = 9 \eps.
\end{align*}
\end{enumerate}
This completes the proof.
\end{proof}
Next we prove Theorem \ref{thm:nonconvex}.

\begin{proof}[Proof of Theorem \ref{thm:nonconvex}]
By Lemma \ref{lem:base} and the third property of Proposition \ref{prop:nonconvex}, in order to find $\hat{\vx} \in \BR^{m+1}$ such that $\E \norm{\nabla f_{\mathrm{NC} }(\hat{\vx})} < \eps$, PIFO algorithm $\fA$ needs at least $N$ queries,
where
$
    N 
    = \frac{n m} {4(1 + c_0)} = \Omega \left(\frac{\Delta L \sqrt{\alpha}}{\eps^2}\right).$
Since $\eps^2 \le \frac{\Delta L \alpha}{81648 n}$ and $\alpha \le 1$, we have $\Omega \left(\frac{\Delta L \sqrt{\alpha}}{\eps^2}\right) = \Omega \left(n + \frac{\Delta L \sqrt{\alpha}}{\eps^2}\right) $.
\end{proof}

The analysis of lower bound complexity for the non-convex case under the average smooth assumption depends on the following construction.
\begin{definition}\label{defn:average:nc}
For fixed $L, \mu, \Delta, n$, we define $\bar{f}_{\mathrm{NC}, i}: \BR^{m+1} \rightarrow \BR$ as follows
\begin{align*}
    \bar{f}_{\mathrm{NC}, i}(\vx) = \lambda r_i\left(\vx / \beta; 
    m + 1,
    \sqrt[4]{\alpha}, 0, \bar{\vc}^{\mathrm{NC}} \right), \text{ for } 1 \le i \le n,
\end{align*}
where
\begin{align*}
\small
    \alpha & = \min \left\{1, \frac{8(\sqrt{3} + 1)\sqrt{n}\mu}{45 L}, \sqrt{\frac{n}{270}} \right\}, \;
    \bar{\vc}^{\mathrm{NC}} = \left( 0, \alpha, \sqrt{\alpha} \right),\; \\
    m & = \floor{\frac{\Delta L \sqrt{\alpha}}{217728 \sqrt{n} \eps^2}},\;
    \lambda = \frac{20736 \sqrt{n} \eps^2}{L \alpha^{3/2}}\;
    \text{ and } \beta = 4 \sqrt{\lambda \sqrt{n} / L}.
\end{align*}
Consider the minimization problem 
\begin{align}
    \min_{\vx \in \BR^{m+1}} \bar{f}_{\mathrm{NC}}(\vx) \triangleq \frac{1}{n}\sum_{i=1}^n \bar{f}_{\mathrm{NC},i}(\vx).\\[-0.8cm]\nonumber
\end{align}
\end{definition}
Then we have the following proposition.
\begin{proposition}\label{prop:average:nonconvex}
For any $n \ge 2$  and $\eps^2 \le \frac{\Delta L \alpha}{435456 \sqrt{n}}$, the following properties hold:
\begin{enumerate}
    \item $\bar{f}_{\mathrm{NC}, i}$ is $ (- \mu)$-weakly-convex and $\{ \bar{f}_{\mathrm{NC}, i} \}_{i=1}^n$ is $L$-average smooth. Thus, $f_{\mathrm{NC}}$ is $(-\mu)$-weakly-convex.
    \item $f_{\mathrm{NC}}(\vzero_{}) - \min_{\vx \in \BR^{m+1}} f_{\mathrm{NC}}(\vx) \le \Delta$.
    \item $m \ge 2$ and for $M = m-1$, $\min_{\vx \in \fF_{M}} \norm{\nabla f_{\mathrm{NC}}(\vx)} \ge  9 \eps.$
\end{enumerate}
\end{proposition}

\begin{proof}
\begin{enumerate}
\item By Proposition \ref{prop:base} and Lemma \ref{lem:scale}, $\bar{f}_{\mathrm{NC}, i}$ is $(-l_1)$-weakly convex and $\{ \bar{f}_{\mathrm{NC}, i} \}_{i=1}^n$ is $l_2$-average smooth where
\begin{align*}
    l_1 &= \frac{45 (\sqrt{3} - 1) \alpha\lambda}{\beta^2} = \frac{45 (\sqrt{3} - 1)L'}{16\sqrt{n}} \alpha 
    \le \frac{45 (\sqrt{3} - 1) L'}{16\sqrt{n}} \frac{8(\sqrt{3} + 1)\sqrt{n}\mu}{45 L'} = \mu, \\
    l_2 &= 4\sqrt{n + 4050 \alpha^2}\frac{\lambda}{\beta^2} = \frac{L'}{4\sqrt{n}} \sqrt{n + 4050 \alpha^2} \le L'.
\end{align*}
\item By Proposition \ref{prop:nonconvex:prop:base}, we know that 
\begin{align*}
    f_{\mathrm{NC}}(\vzero_{}) - \min_{\vx \in \BR^{m+1}} f_{\mathrm{NC}}(\vx) &\le \lambda (\sqrt{\alpha}/2 + 10 \alpha m) = \frac{10368 \sqrt{n} \eps^2}{L' \alpha} + \frac{207360 \sqrt{n} \eps^2}{L' \sqrt{\alpha}} m \\
    &\le \frac{10368}{217728} \Delta + \frac{207360}{217728} \Delta = \Delta.
\end{align*}
\item Since $\alpha \le 1$, we have $\frac{\Delta L' \sqrt{\alpha}}{217728 \sqrt{n} \eps^2 } \ge \frac{\Delta L' \alpha}{217728 \sqrt{n} \eps^2} $ and consequently $m \ge 2$.
By Proposition \ref{prop:nonconvex:prop:base}, we know that 
\begin{align*}
    \min_{\vx \in \fF_{M}} \norm{\nabla f_{\mathrm{NC}}(\vx)} \ge \frac{\alpha^{3/4}\lambda}{4\beta} 
    = \frac{\alpha^{3/4}\lambda}{4\sqrt{16\lambda \sqrt{n}/L'}} = \frac{\sqrt{\lambda L'}}{\sqrt[4]{n}} \frac{\alpha^{3/4}}{16} = 9 \eps.
\end{align*}
\end{enumerate}
This completes the proof.
\end{proof}
Next we prove Theorem \ref{thm:average:nonconvex}.

\begin{proof}[Proof of Theorem \ref{thm:average:nonconvex}]
By Lemma \ref{lem:base} and the third property of Proposition \ref{prop:average:nonconvex}, in order to find $\hat{\vx} \in \BR^{m+1}$ such that $\E \norm{\nabla f_{\mathrm{NC} }(\hat{\vx})} < \eps$, PIFO algorithm $\fA$ needs at least $N$ queries 
, where
$
    N 
    = \frac{n m}{4(1+c_0)} = \Omega \left(\frac{\Delta L \sqrt{n \alpha}}{\eps^2}\right).$
Since $\eps^2 \le \frac{\Delta L \alpha}{435456 \sqrt{n}}$ and $\alpha \le 1$, we have $ \Omega \left(\frac{\Delta L \sqrt{n \alpha}}{\eps^2}\right) = \Omega \left( n + \frac{\Delta L \sqrt{n \alpha}}{\eps^2}\right) $.
\end{proof}

\subsection{Proofs of Proposition \ref{prop:base} and Lemma \ref{lem:jump}}\label{appendix:min:proof:base}

We use $\Norm{\mA}$ to denote the spectral radius of $\mA$.
Recall that $\vb_{l-1}^{\top}$ is the $l$-th row of $\mB$,
$G(\vx) = \sum\limits_{i=1}^{m-1} \Gamma (x_i)$ and $$\fL_i = \{ l: 0 \le l \le m, l \equiv i - 1 (\bmod n) \}, i = 1, 2, \dots, n.$$
For simplicity, we omit the parameters of $\mB$, $\vb_l$ and $r_i$.

For $1 \le i \le n$, let $\mB_i$ be the submatrix whose rows are $\big\{ \vb_l^\top \big\}_{l \in \fL_i}$.
Then $r_i$ can be written as
\begin{align*}
r_i(\vx) &= \frac{n}{2} \norm{\mB_i \vx}^2 + \frac{c_1}{2} \norm{\vx}^2 + c_2 G(\vx) - c_3 n \inner{\ve_1}{\vx} \bone_{\{i=1\}}.
\end{align*}
\begin{proof}[Proof of Proposition \ref{prop:base}]
\begin{enumerate}
\item 

For the convex case,
\begin{align*}
r_i(\vx) &= \frac{n}{2} \norm{\mB_i \vx}^2 + \frac{c_1}{2} \norm{\vx}^2 - c_3 n \inner{\ve_1}{\vx} \bone_{\{i=1\}}.
\end{align*}
Obviously, $r_i$ is $c_1$-strongly convex.
Note that 
\begin{align*}
    \inner{\vu}{\mB_i^{\top}\mB_i \vu} &= \norm{\mB_i \vu}^2 \\
    &= \sum_{l \in \fL_i} (\vb_l^{\top} \vu)^2 \\\
    &= \sum_{l \in \fL_i\backslash \{0,m\}} (u_{l} - u_{l+1})^2 + \omega^2 u_1^2 \bone_{\{0 \in \fL_i\}} + \zeta^2 u_m^2 \bone_{\{m \in \fL_i\}} \\
    &\le 2 \norm{\vu}^2,
\end{align*}
where the last inequality is according to $(x + y)^2 \le 2(x^2 + y^2)$, and $|l_1 - l_2| \ge n \ge 2$ for $l_1, l_2 \in \fL_i$. 
Hence, $\Norm{\mB_i^{\top}\mB_i} \le 2$, and
\begin{align*}
    \Norm{\nabla^2 r_i(\vx)} = \Norm{n \mB_i^{\top} \mB_i + c_1 \mI} \le 2n  + c_1.
\end{align*}

\vskip 10pt
Next, observe that
\begin{align*}
    \norm{\nabla r_i(\vx_1) - \nabla r_i(\vx_2)}^2 
    = \norm{(n \mB_i^{\top} \mB_i + c_1 \mI) (\vx_1 - \vx_2)}^2
\end{align*}
Let $\vu = \vx_1 - \vx_2$.
Note that
\begin{align*}
    \vb_l \vb_l^{\top} \vu = 
    \begin{cases}
    (u_{l} - u_{l+1}) (\ve_{l} - \ve_{l+1}), &0 < l < m, \\
    \omega^2 u_1 \ve_1, &l = 0, \\
    \zeta^2 u_m \ve_m, &l = m.
    \end{cases}
\end{align*}

Thus,
\begin{align*}
    & \norm{(n \mB_i^{\top} \mB_i + c_1 \mI) \vu}^2 \\
    =& \norm{n \sum_{l \in \fL_i \backslash \{0,m\} } (u_{l} - u_{l+1}) (\ve_{l} - \ve_{l+1}) + n\omega^2 u_1^2 \bone_{\{0 \in \fL_i\}} + n\zeta^2 u_m^2 \bone_{\{m \in \fL_i\}}  + c_1 \vu}^2 \\
    =& \sum_{l \in \fL_i \backslash \{0,m\} } \left[(n (u_{l} - u_{l+1}) + c_1 u_{l})^2 + (-n (u_{l} - u_{l+1}) + c_1 u_{l+1})^2 \right] \\
    &+ (n \omega^2 + c_1)^2 u_1^2 \bone_{\{0 \in \fL_i\}}
    + (n \zeta^2 + c_1)^2 u_m^2 \mathds{1}_{\{m \in \fL_i\}}
    + \sum_{\substack{l-1, l \notin \fL_i \\ l \neq 0, m}} c_1^2 u_{l}^2 \\
    \le& 2\left[ (n + c_1)^2 + n^2 \right] \left[ \sum_{l \in \fL_i \backslash \{0, m\} } (u_l^2 + u_{l+1}^2) + u_1^2 \bone_{\{0 \in \fL_i\}} + u_m^2 \bone_{\{m \in \fL_i\}} \right]
    + c_1^2 \norm{\vu}^2,
\end{align*}
where we have used $(2n + c_1)^2 \le 2 \left[ (n + c_1)^2 + n^2 \right]$.

Therefore, we have
\begin{align*}
    &\quad \frac{1}{n} \sum_{i=1}^n \norm{\nabla r_i(\vx_1) - \nabla r_i(\vx_2)}^2 \\
    &\le \frac{1}{n}  \sum_{l=0}^{m} 4\left[ (n + c_1)^2 + n^2 \right] u_l^2  + c_1^2 \norm{\vu}^2 \\
    &\le \frac{4}{n} \left[ \left[ (n + c_1)^2 + n^2 \right] \right] \norm{\vu}^2 + c_1^2 \norm{\vu}^2,
\end{align*}
In summary, we get that $\{r_i\}_{i=1}^n$ is $L'$-average smooth, where
\begin{align*}
    L' = \sqrt{\frac{4}{n} \left[ (n + c_1)^2 + n^2 \right] + c_1^2}.
\end{align*}
\item
The results of the non-convex case follow from the above proof, Proposition \ref{prop:nonconvex:prop:base} and the inequality $(a+b)^2 \le 2(a^2 + b^2)$.
\end{enumerate}
This completes the proof.
\end{proof}

\begin{proof}[Proof of Lemma \ref{lem:jump}]
\begin{enumerate}
\item
For the convex case,
\begin{align*}
r_j(\vx) &= \frac{n}{2} \norm{\mB_j \vx}^2 + \frac{c_1}{2} \norm{\vx}^2 - c_3 n \inner{\ve_1}{\vx} \bone_{\{j=1\}}.
\end{align*}
Recall that
\begin{align*}
    \vb_l \vb_l^{\top} \vx = 
    \begin{cases}
    (x_{l} - x_{l+1}) (\ve_{l} - \ve_{l+1}), &0 < l < m, \\
    \omega^2 x_1 \ve_1, &l = 0, \\
    \zeta^2 x_m \ve_m, &l = m.
    \end{cases}
\end{align*}
For $\vx \in \fF_0$, we have $\vx = \vzero$, and
\begin{align*}
\nabla r_1(\vx) = c_3 n \ve_1 \in \fF_1, \\
\nabla r_j(\vx) = \vzero ~(j \ge 2).
\end{align*}
For $\vx \in \fF_k ~(1 \le k < m)$, we have 
\begin{align*}
    \vb_l \vb_l^{\top} \vx \in
    \begin{cases}
    \fF_{k}, &l \neq k, \\
    \fF_{k+1}, &l = k.
    \end{cases}
\end{align*}
Moreover, we suppose $k \in \fL_{i}$. Since
\begin{align*}
    \nabla r_j(\vx) &= n \mB_j^{\top} \mB_j \vx + c_1 \vx - c_3 n \ve_1 \bone_{\{j=1\}} \\
    &= n \sum_{l \in \fL_j} \vb_l \vb_l^{\top} \vx + c_1 \vx - c_3 n \ve_1 \bone_{\{j=1\}},
\end{align*}
it follows that $\nabla r_{i} (\vx) \in \fF_{k+1}$ and $\nabla r_j (\vx) \in \fF_{k} ~(j \neq i)$.

\vskip 5pt
Now, we turn to consider $\vu = \prox_{r_j}^{\gamma} (\vx)$. We have
\begin{align*}
    \left(n \mB_j^{\top} \mB_j + \left(c_1 + \frac{1}{\gamma}\right)\mI  \right) \vu = c_3 n \ve_1 \bone_{\{j=1\}} + \frac{1}{\gamma} \vx,
\end{align*}
i.e.,
\begin{align*}
    \vu = d_1 (\mI + d_2 \mB_j^{\top} \mB_j)^{-1} \vy,
\end{align*}
where $d_1 = \frac{1}{c_1 + 1/\gamma}$, $d_2 = \frac{n}{c_1 + 1/\gamma}$, and $\vy = c_3 n \ve_1 \bone_{\{j=1\}} + \frac{1}{\gamma} \vx$. 

Note that 
\begin{align*}
    (\mI + d_2 \mB_j^{\top} \mB_j)^{-1} = \mI - \mB_j^{\top} \left( \frac{1}{d_2}\mI + \mB_j \mB_j^{\top} \right)^{-1} \mB_j.
\end{align*}
If $k = 0$ and $j > 1$, we have $\vy = \vzero$ and $\vu = \vzero$. \\
If $k = 0$ and $j = 1$, we have $\vy = c_3 n \ve_1$. Since $\omega=0$, $\mB_1 \ve_1 = \vzero$, so $\vu = c_1 \vy \in \fF_1$. 

For $k \ge 1$, we know that $\vy \in \fF_k$.
And observe that if $|l - l'| \ge 2$, then $\vb_{l}^{\top} \vb_{l'} = 0$, and consequently $\mB_j \mB_j^{\top}$ is a diagonal matrix, 
so we can assume that $\frac{1}{d_2}\mI + \mB_j \mB_j^{\top} = \diag(\beta_{j,1}, \dots, \beta_{j, |\fL_j|})$.
Therefore,
\begin{align*}
    \vu = d_1 \vy - d_1 \sum_{s = 1}^{|\fL_j|} \beta_{j, s} \vb_{l_{j, s}} \vb_{l_{j, s}}^{\top} \vy, 
\end{align*}
where we assume that $\fL_j = \{l_{j, 1}, \dots, l_{j, |\fL_j|}\}$.

Thus, we have $\prox_{r_{i}}^{\gamma} (\vx) \in \fF_{k+1}$ for $k \in \fL_i$
and $\prox_{r_{j}}^{\gamma} (\vx) \in \fF_{k} ~(j \neq i)$.

\item
For the non-convex case,
\begin{align*}
r_j(\vx) &= \frac{n}{2} \norm{\mB_j \vx}^2 + c_2 G(\vx)- c_3 n \inner{\ve_1}{\vx} \bone_{\{j=1\}}.
\end{align*}
Let $\Gamma'(x)$ be the derivative of $\Gamma(x)$.
First note that $\Gamma'(0) = 0$, so if $\vx \in \fF_k$, then 
\[\nabla G(\vx) = \big( \Gamma'(x_1), \Gamma'(x_2), \dots, \Gamma'(x_{m-1}), 0 \big)^{\top} \in \fF_k. \]

For $\vx \in \fF_0$, we have $\vx = \vzero$, and
\begin{align*}
\nabla r_1(\vx) = c_3 n \ve_1 \in \fF_1, \\
\nabla r_j(\vx) = \vzero ~(j \ge 2).
\end{align*}

For $\vx \in \fF_k ~(1 \le k < m)$, recall that
\begin{align*}
    \vb_l \vb_l^{\top} \vx = 
    \begin{cases}
    (x_{l} - x_{l+1}) (\ve_{l} - \ve_{l+1}), &0 < l < m, \\
    \omega^2 x_1 \ve_1, &l = 0, \\
    \zeta^2 x_m \ve_m, &l = m.
    \end{cases}
\end{align*}
Suppose $k \in \fL_i$. Since
\begin{align*}
    \nabla r_j(\vx) &= n \mB_j^{\top} \mB_j \vx + c_2 \nabla G(\vx) - c_3 n \ve_1 \bone_{\{j=1\}} \\
    &= n \sum_{l \in \fL_j} \vb_l \vb_l^{\top} \vx + c_2 \nabla G(\vx) - c_3 n \ve_1 \bone_{\{j=1\}},
\end{align*}
it follows that $\nabla r_{i} (\vx) \in \fF_{k+1}$ and $\nabla r_j (\vx) \in \fF_{k} ~(j \neq i)$.

\vskip 5pt
Now, we turn to consider $\vu = \prox_{r_j}^{\gamma} (\vx)$. 
We have
\begin{align*}
    \nabla r_j(\vu) + \frac{1}{\gamma} (\vu - \vx) = \vzero,
\end{align*}
that is
\begin{align*}
    \left(n \sum_{l \in \fL_j} \vb_l \vb_l^{\top} + \frac{1}{\gamma} \mI \right) \vu + c_2 \nabla G(\vu) = \vy,
\end{align*}
where $\vy = c_3 n \ve_1 \bone_{\{j=1\}} + \frac{1}{\gamma} \vx$. Since $\gamma < \frac{\sqrt{2} + 1}{60 c_2}$, we have the following claims.

\begin{enumerate}
\item If $0 < l < m-1$ and $l \in \fL_j$, we have
\begin{equation}
\label{proof:nonconvex:case1}
\begin{aligned}
    n(u_l - u_{l+1}) + \frac{1}{\gamma} u_l + 120 c_2 \frac{ u_l^2 (u_l - 1) }{ 1 + u_l^2 } & =  y_l\\
    n(u_{l+1} - u_{l}) + \frac{1}{\gamma} u_{l+1} + 120 c_2 \frac{ u_{l+1}^2 (u_{l+1} - 1) }{ 1 + u_{l+1}^2 } & =  y_{l+1}.
\end{aligned}
\end{equation}
By Lemma \ref{lem:solution:z1z2}, $y_l = y_{l+1} = 0$ implies $u_l = u_{l+1} = 0$.
\item If $m-1 \in \fL_j$, we have
\begin{equation}
\label{proof:nonconvex:case2}
\begin{aligned}
    n(u_{m-1} - u_{m}) + \frac{1}{\gamma} u_{m-1} + 120 c_2 \frac{ u_{m-1}^2 (u_{m-1} - 1) }{ 1 + u_{m-1}^2 } & =  y_{m-1}\\
    n(u_{m} - u_{m-1}) + \frac{1}{\gamma} u_{m} & =  y_{m}.
\end{aligned}
\end{equation}
If $y_{m-1} = y_m = 0$, we obtain
\begin{equation*}
\begin{aligned}
    \frac{ 1 + 2 \gamma n }{\gamma (1 + \gamma n) } u_{m-1} +  120 c_2 \frac{ u_{m-1}^2 (u_{m-1} - 1) }{ 1 + u_{m-1}^2 }  & =  0 \\
    \left( n + \frac{1}{\gamma} \right) u_m - \frac{1}{\gamma} u_{m-1} & = 0.
\end{aligned}
\end{equation*}
By Lemma \ref{lem:solution:z}, $u_{m-1} = u_m = 0$.
\item If $m \in \fL_j$, we have
\begin{align}
\label{proof:nonconvex:case3}
    n \zeta^2 u_m + \frac{1}{\gamma} u_m = y_m.
\end{align}
$y_m = 0$ implies $u_m = 0$.
\item If $l > 0$ and $l-1, l  \notin \fL_j$, we have
\begin{align}
\label{proof:nonconvex:case4}
    \frac{1}{\gamma} u_l + 120 c_2 \frac{ u_l^2 (u_l - 1) }{ 1 + u_l^2 } \bone_{\{ l < m \}}  = y_l.
\end{align}
By Lemma \ref{lem:solution:z}, $y_l = 0$ implies $u_l = 0$.
\end{enumerate}

For $\vx \in \fF_0$ and $j = 1$, we have $\vx = \vzero$ and $\vy = n \omega^2 \ve_1$. Since $n \ge 2$, we have $1 \notin \fL_1$. If $2 \in \fL_1$, we can consider the solution to Equations (\ref{proof:nonconvex:case1}), (\ref{proof:nonconvex:case2}) or (\ref{proof:nonconvex:case3}) and conclude that $u_2 = 0$. If $2 \notin \fL_1$, we can consider the solution to Equation (\ref{proof:nonconvex:case4}) and conclude that $u_2=0$. Similarly, we can obtain $u_l = 0$ for $l \ge 2$, which implies $\vu \in \fF_1$.

For $\vx \in \fF_0$ and $j > 1$, we have $\vy=\vzero$ and $0 \notin \fL_j $. If $1 \in \fL_j$, we can consider the solution to Equations (\ref{proof:nonconvex:case1}) or (\ref{proof:nonconvex:case2}) and conclude that $u_1 = 0$. If $1 \notin \fL_j$, we can consider the solution to Equation (\ref{proof:nonconvex:case4}) and conclude that $u_1=0$. Similarly, we can obtain $u_l = 0$ for all $l$, which implies $\vu = \vzero \in \fF_0$.

For $k \ge 1$, we know that $\vy \in \fF_k$. Suppose $k \in \fL_i$. 

If $j = i$, we have $k+1 \notin \fL_i$. If $k = m-1$, clearly we have $\vu \in \fF_{k+1}$. Now we suppose $k < m-1$. If $k+2 \in \fL_i$, we can consider the solution to Equations (\ref{proof:nonconvex:case1}), (\ref{proof:nonconvex:case2}) or (\ref{proof:nonconvex:case3}) and conclude that $u_{k+2} = 0$. If $k+2 \notin \fL_1$, we can consider the solution to Equation (\ref{proof:nonconvex:case4}) and conclude that $u_{k+2}=0$. Similarly, we can obtain $u_l = 0$ for $l \ge k+2$, which implies $\vu \in \fF_{k+1}$.

If $j \neq i$, we have $k \notin \fL_j$. If $k+1 \in \fL_j$, we can consider the solution to Equations (\ref{proof:nonconvex:case1}), (\ref{proof:nonconvex:case2}) or (\ref{proof:nonconvex:case3}) and conclude that $u_{k+1} = 0$. If $k+1 \notin \fL_j$, we can consider the solution to Equation (\ref{proof:nonconvex:case4}) and conclude that $u_{k+1}=0$. Similarly, we can obtain $u_l = 0$ for $l \ge k+1$, which implies $\vu \in \fF_k$.
\end{enumerate}
This completes the proof.
\end{proof}

\end{document}